\documentclass[10pt]{amsart}
%%%%%%%%%%%%%%%%%%%%%%%%%%%%%%%%%%%%%%%%%%%%%%%%%%%%%%%%
%\oddsidemargin -5mm
%\evensidemargin -5mm
%\topmargin -5mm
%\textwidth 170mm
%\textheight 245mm
\oddsidemargin 0mm
\evensidemargin 0mm
\topmargin 0mm
\textwidth 160mm
\textheight 230mm
\tolerance=9999
%%%%%%%%%%%%%%%%%%%%%%%%%%%%%%%%%%%%%%%%%%%%%%%%%%%%%%%%
\usepackage{amssymb,amstext,amsmath,amscd,amsthm,amsfonts,enumerate,graphicx,latexsym,color,stmaryrd}
\usepackage[all]{xy}
%\usepackage{cite}

%%%%%%%%%%%%%%%%%%%%%%%%%%%%%%%%%%%%%%%%%%%%%%%%%%%%%%%%
\newtheorem{thm}{Theorem}[section]
\newtheorem*{thm*}{Theorem}
\newtheorem{thmm}{Theorem}

\newtheorem{cor}[thm]{Corollary}
\newtheorem{lem}[thm]{Lemma}
\newtheorem{prop}[thm]{Proposition}
%%%%%%%%%%%%%%%%%%%%%%%%%%%%%%%%%%%%%%%%%%%%%%%%%%%%%%%%%%%%%%%%%
\theoremstyle{definition}
\newtheorem{conv}[thm]{Convention}
\newtheorem{dfn}[thm]{Definition}
\newtheorem*{dfn*}{Definition}
\newtheorem{rem}[thm]{Remark}
\newtheorem{ques}[thm]{Question}
\newtheorem{conj}[thm]{Conjecture}
\newtheorem*{conj*}{Conjecture}
\newtheorem{ex}[thm]{Example}
\newtheorem{nota}[thm]{Notation}
%%%%%%%%%%%%%%%%%%%%%%%%%%%%%%%%%%%%%%%%%%%%%%%%%%%%%%%%%%%%%%%%%
\theoremstyle{remark}

\newtheorem{claim}{Claim}
\newtheorem*{claim*}{Claim}
%%%%%%%%%%%%%%%%%%%%%%%%%%%%%%%%%%%%%%%%%%%%%%%%%%%%%%%%%%%%%%%%%
\renewcommand{\qedsymbol}{$\blacksquare$}
\numberwithin{equation}{thm}
%\renewcommand{\labelitemi}{-}
%%%%%%%%%%%%%%%%%%%%%%%%%%%%%%%%%%%%%%%%%%%%%%%%%%%%%%%%%%%%%%%%%
%%%%%%%%%%%%%%%%%%%%%%%%%%%%%%%%%%%%%%%%%%%%%%%%%%%%%%%%%%%%%%%%%
\def\A{\mathcal{A}}
\def\adds{\operatorname{\mathsf{add^\Sigma}}}
\def\ann{\operatorname{Ann}}

\def\C{\mathcal{C}}
\def\c{\mathsf{c}}
\def\cc{\mathsf{cc}}

\def\cone{\operatorname{\mathsf{cone}}}
\def\Cpt{\mathbf{Cpt}}
\def\cpt{\mathsf{cpt}}
\def\D{\mathrm{D}}
\def\d{\operatorname{\mathsf{D}}}

\def\db{\operatorname{\mathsf{D^b}}}
\def\dbf{\operatorname{\mathsf{D^b_{fl}}}}

\def\df{\operatorname{\mathsf{D^{\mbox{\boldmath$-$}}_{fl}}}}
\def\dm{\operatorname{\mathsf{D^{\mbox{\boldmath$-$}}}}}
\def\E{\mathcal{E}}
\def\EE{\mathrm{E}}

\def\even{\mathrm{even}}
\def\Ext{\mathrm{Ext}}
\def\ge{\geqslant}
\def\h{\mathsf{H}}
\def\height{\operatorname{ht}}
\def\Hom{\mathrm{Hom}}
\def\hspec{\operatorname{Spec^h}}

\def\id{\mathrm{id}}
\def\ii{\mathbb{I}}
\def\im{\operatorname{Im}}
\def\inc{\mathsf{inc}}
\def\K{\mathcal{K}}
\def\k{\mathrm{K}}
\def\kb{\operatorname{\mathsf{K^b}}}
\def\km{\operatorname{\mathsf{K^{\mbox{\boldmath$-$}}}}}
\def\kmb{\operatorname{\mathsf{K^{\mbox{\boldmath$-$},b}}}}

\def\L{\mathcal{L}}
\def\le{\leqslant}
\def\ltensor{\otimes^{\mathbf{L}}}
\def\m{\mathfrak{m}}
\def\M{\mathcal{M}}
\def\Max{\operatorname{Max}}
\def\Min{\operatorname{Min}}

\def\mn{\operatorname{\mathtt{Mn}}}
\def\mx{\operatorname{\mathtt{Mx}}}
\def\N{\mathcal{N}}
\def\NN{\mathbb{N}}
\def\n{\mathfrak{n}}
\def\nil{\operatorname{nil}}
\def\odd{\mathrm{odd}}
\def\one{\mathbf{1}}
\def\P{\mathcal{P}}
\def\p{\mathfrak{p}}
\def\PP{\mathbb{P}}
\def\pp{\mathfrak{s}}
\def\proj{\operatorname{\mathsf{proj}}}
\def\Q{\mathcal{Q}}
\def\q{\mathfrak{q}}
\def\R{\mathcal{R}}

\def\Rad{\mathbf{Rad}}
\def\rad{\operatorname{rad}}
\def\red{\mathrm{red}}
\def\RHom{\mathrm{\mathbf{R}Hom}}
\def\RR{\mathrm{R}}
\def\rrad{\mathsf{rad}}

\def\s{\mathcal{S}}

\def\sp{\operatorname{\mathtt{Sp}}}
\def\spc{\operatorname{\mathtt{Spc}}}
\def\Spcl{\mathbf{Spcl}}
\def\spcl{\mathsf{spcl}}
\def\spec{\operatorname{Spec}}
\def\ssupp{\operatorname{\mathtt{Spp}}}
\def\supp{\operatorname{Supp}}
\def\sus{\mathsf{\Sigma}}
\def\T{\mathcal{T}}
\def\t{\mathrm{T}}
\def\Tame{\mathbf{Tame}}
\def\tame{\mathsf{tame}}
\def\thick{\operatorname{\mathsf{thick}}}
\def\Thom{\mathbf{Thom}}

\def\Tor{\mathrm{Tor}}

\def\ts{\operatorname{\mathtt{{}^tSpc}}}
\def\tthick{\operatorname{\mathsf{thick}^\otimes}}
\def\ttthick{\mathsf{thick}^\otimes}
\def\U{\mathcal{U}}
\def\u{\mathtt{U}}
\def\V{\operatorname{V}}
\def\X{\mathcal{X}}
\def\xx{\boldsymbol{x}}
\def\Y{\mathcal{Y}}
\def\yy{\boldsymbol{y}}
\def\Z{\mathbb{Z}}
\def\zero{\mathbf{0}}
\def\zs{\xrightarrow{{}_0}}
\def\ZZ{\mathcal{Z}}

%\def\complement{\mathsf{c}}
%\def\a{\mathfrak{a}}
%\def\b{\mathfrak{b}}
%\def\Cok{\operatorname{Cok}}
%\def\ass{\operatorname{Ass}}
%\def\sing{\operatorname{Sing}}
%\def\assh{\operatorname{Assh}}
%\def\dpf{\operatorname{\mathsf{D_{perf}}}}
%\def\serre{\operatorname{\mathsf{Serre}}}
%\def\ds{\operatorname{\mathsf{D_{sg}}}}
%\def\dn{\operatorname{\mathsf{D_{pure}}}}
%\def\cb{\operatorname{\mathsf{C^b}}}
%\def\fl{\operatorname{\mathsf{fl}}}
%\def\e{\operatorname{e}}
%\def\edim{\operatorname{edim}}
%\def\depth{\operatorname{depth}}
%\def\tr{\mathrm{Tr}}
%\def\syz{\mathrm{\Omega}}
%\def\rhom{\mathbf{R}\mathrm{Hom}}
%\def\soc{\operatorname{Soc}}
%\def\CC{\mathbb{C}}
%%%%%%%%%%%%%%%%%%%%%%%%%%%%%%%%%%%%%%%%%%%%%%%%%%%%%%%%%%%%%%%%%
%%%%%%%%%%%%%%%%%%%%%%%%%%%%%%%%%%%%%%%%%%%%%%%%%%%%%%%%%%%%%%%%%
\begin{document}
\allowdisplaybreaks
\title{Thick tensor ideals of right bounded derived categories}
%\date{\today}
\author{Hiroki Matsui}
\address{Graduate School of Mathematics, Nagoya University, Furocho, Chikusaku, Nagoya, Aichi 464-8602, Japan}
\email{m14037f@math.nagoya-u.ac.jp}
\author{Ryo Takahashi}
\address{Graduate School of Mathematics, Nagoya University, Furocho, Chikusaku, Nagoya, Aichi 464-8602, Japan}
\email{takahashi@math.nagoya-u.ac.jp}
\urladdr{http://www.math.nagoya-u.ac.jp/~takahashi/}
\thanks{2010 {\em Mathematics Subject Classification.} 13D09 (Primary); 18D10, 18E30, 19D23 (Secondary)}
\thanks{{\em Key words and phrases.} thick tensor ideal, Balmer spectrum, derived category, specialization-closed subset, support}
\thanks{Hiroki Matsui was partly supported by JSPS Grant-in-Aid for JSPS Fellows 16J01067.
Ryo Takahashi was partly supported by JSPS Grants-in-Aid for Scientific Research 24340004, 25400038, 16K05098 and 16H03923}
%\dedicatory{Dedicated to Professor Yuji Yoshino on the occasion of his sixtieth birthday}
\begin{abstract}
Let $R$ be a commutative noetherian ring.
Denote by $\dm(R)$ the derived category of cochain complexes $X$ of finitely generated $R$-modules with $\h^i(X)=0$ for $i\gg0$.
Then $\dm(R)$ has the structure of a tensor triangulated category with tensor product $-\ltensor_R-$ and unit object $R$.
In this paper, we study thick tensor ideals of $\dm(R)$, i.e., thick subcategories closed under the tensor action by each object in $\dm(R)$, and investigate the Balmer spectrum $\spc\dm(R)$ of $\dm(R)$, i.e., the set of prime thick tensor ideals of $\dm(R)$.
First, we give a complete classification of the thick tensor ideals of $\dm(R)$ generated by bounded complexes, establishing a generalized version of the Hopkins--Neeman smash nilpotence theorem.
Then, we define a pair of maps between the Balmer spectrum $\spc\dm(R)$ and the Zariski spectrum $\spec R$, and study their topological properties.
After that, we compare several classes of thick tensor ideals of $\dm(R)$, relating them to specialization-closed subsets of $\spec R$ and Thomason subsets of $\spc\dm(R)$, and construct a counterexample to a conjecture of Balmer.
Finally, we explore thick tensor ideals of $\dm(R)$ in the case where $R$ is a discrete valuation ring.
\end{abstract}
\maketitle
%{\small
\tableofcontents
%}
%%%%%%%%%%%%%%%%%%%%%%%%%%%%%%%%%%%%%%%%%%%%%%%%%%%%%%%%%%%%%%%
\section*{Introduction}%\label{sect:intro}

{\em Tensor triangular geometry} is a theory established by Balmer at the beginning of this century.
Let $(\T,\otimes,\one)$ be an (essentially small) {\em tensor triangulated category}, that is, a triangulated category $\T$ equipped with symmetric tensor product $\otimes$ and unit object $\one$.
One can then define {\em prime} thick tensor ideals of $\T$, which behave similarly to prime ideals of commutative rings.
The {\em Balmer spectrum} $\spc\T$ of $\T$ is defined as the set of prime thick tensor ideals of $\T$.
This set has the structure of a topological space.
Tensor triangular geometry studies Balmer spectra and develops commutative-algebraic and algebro-geometric observations on them.
Tensor triangular geometry is related to a lot of areas of mathematics, including commutative/noncommutative algebra, commutative/noncommutative algebraic geometry, stable homotopy theory, modular representation theory, motivic theory, noncommutative topology and symplectic geometry.
Understandably, tensor triangular geometry has been attracting a great deal of attention, and Balmer gave an invited lecture \cite{B4} at the International Congress of Mathematicians (ICM) in 2010.

By virtue of a landmark theorem due to Balmer \cite{B}, the radical thick tensor ideals of $\T$ correspond to the Thomason subsets of the Balmer spectrum $\spc\T$ of $\T$.
It is thus a main subject in tensor triangular geometry to determine/describe the Balmer spectrum of a given tensor triangulated category.
Such studies have been done for these thirty years considerably widely; one can find ones at least in stable homotopy theory \cite{BS,DHS,HS}, commutative algebra \cite{H,N,stcm}, algebraic geometry \cite{B5,St,T}, modular representation theory \cite{B2,BCR,BIK,FP} and motivic theory \cite{DT,P}.

Let $R$ be a commutative noetherian ring.
Denote by $\dm(R)$ the right bounded derived category of finitely generated $R$-modules, namely, the derived category of (cochain) complexes $X$ of finitely generated $R$-modules such that $\h^i(X)=0$ for all $i\gg0$.
Then $(\dm(R),\ltensor_R,R)$ is a tensor triangulated category.
The main purpose of this paper is to investigate thick tensor ideals of the tensor triangulated category $\dm(R)$, analyzing the structure of the Balmer spectrum $\spc\dm(R)$ of $\dm(R)$.

Here, we should remark that results in the literature which we can apply for our purpose are quite limited.
For example, many people have been studying the Balmer spectra of tensor triangulated categories which arise as the compact objects of compactly generated tensor triangulated categories, but our tensor triangulated category $\dm(R)$ does not arise in this way.
Also, there are various results on the Balmer spectrum of a rigid tensor triangulated category, but again they do not apply to our case because $\dm(R)$ is not rigid (nor even closed); see Remark \ref{rigid}.
Furthermore, several properties have been found for tensor triangulated categories which are generated by their unit object as a thick subcategory, but $\dm(R)$ does not satisfy this property.
Thus, the only existing results that are available and useful for our goal are basically general fundamental results given in \cite{B}, and we need to start with establishing basic tools by ourselves.

From now on, let us explain the main results of this paper.
First of all, recall that an object $X$ of a triangulated category $\T$ is {\em compact} (resp. {\em cocompact}) if the natural morphism
\begin{align*}
&\textstyle
\bigoplus_{\lambda\in\Lambda}\Hom_\T(M,N_\lambda)\to\Hom_\T(M,\bigoplus_{\lambda\in\Lambda}N_\lambda)\\
\big(\text{resp. }
&\textstyle
\bigoplus_{\lambda\in\Lambda}\Hom_\T(N_\lambda, M)\to\Hom_\T(\prod_{\lambda\in\Lambda}N_\lambda, M)\big)
\end{align*}
is an isomorphism for every family $\{N_\lambda\}_{\lambda\in\Lambda}$ of objects of $\T$ with $\bigoplus_{\lambda\in\Lambda}N_\lambda\in\T$ (resp. $\prod_{\lambda\in\Lambda}N_\lambda\in\T$).
A thick tensor ideal of $\dm(R)$ is called {\em compactly generated} (resp. {\em cocompactly generated}) if it is generated by compact (resp. cocompact) objects of $\dm(R)$ as a thick tensor ideal.
For a subcategory $\X$ of $\dm(R)$ we denote by $\supp\X$ the union of the supports of complexes in $\X$, and for a subset $S$ of $\spec R$ we denote by $\langle S\rangle$ the thick tensor ideal of $\dm(R)$ generated by $R/\p$ with $\p\in S$.
We shall prove the following theorem.

\begin{thmm}[Proposition \ref{cpt}, Theorem \ref{main} and Corollary \ref{seven}]\label{A}
The compact (resp. cocompact) objects of $\dm(R)$ are the perfect (resp. bounded) complexes, hence all compactly generated thick tensor ideals are cocompactly generated.
The assignments $\X\mapsto\supp\X$ and $\langle W\rangle\mapsfrom W$ make mutually inverse bijections
$$
\xymatrix{
\{\text{Cocompactly generated thick $\otimes$-ideals of $\dm(R)$}\}\ \ar@<.5mm>[r]&\ \{\text{Specialization-closed subsets of $\spec R$}\}.\ar@<.5mm>[l]
}
$$
Consequently, all cocompactly generated thick tensor ideals of $\dm(R)$ are compactly generated.
\end{thmm}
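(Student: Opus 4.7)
My plan is to handle the three components of Theorem A in succession.

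\textbf{Step 1: Compact and cocompact objects.}
Perfect complexes are easily seen to be compact using that finitely generated projective modules commute with direct sums in $\Hom$. The converse (compact $\Rightarrow$ perfect) is more delicate in $\dm(R)$ since arbitrary direct sums need not exist; I would work with direct sums of the form $\bigoplus_n X_n$ where the $X_n$ are brutal truncations $\sigma^{\geq -n}X$ of a fixed bounded-above complex of finitely generated projectives representing $X$, verify such sums lie in $\dm(R)$ (each cohomology group remains finitely generated), and then run the standard telescope argument: compactness forces $\id_X$ to factor through finitely many $X_n$, exhibiting $X$ as a summand of a perfect complex and hence perfect. For cocompactness, a bounded $M$ is cocompact because $\Hom_{\dm(R)}(-,M)$ is computed via $\RHom$ in a fixed finite range of cohomological degrees, where the products that do exist in $\dm(R)$ distribute correctly onto components. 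Conversely, for an $X$ unbounded below, I would construct a family of shifts of suitable residue fields whose product in $\dm(R)$ detects the unboundedness and so obstructs cocompactness. Since perfect complexes are bounded, the implication ``compactly generated $\Rightarrow$ cocompactly generated'' is immediate from these characterisations.

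\textbf{Step 2: The bijection.}
For a specialization-closed $W\subseteq\spec R$, the equality $\supp\langle W\rangle=W$ follows from $\supp R/\p=\V(\p)$ combined with the stability of support under shifts, summands, cones, and the tensor action (in particular $\supp(X\ltensor_R R/\p)\subseteq\supp X\cap\V(\p)$). For a cocompactly generated $\X$, the inclusion $\langle\supp\X\rangle\subseteq\X$ is obtained by showing that for each bounded $M\in\X$ and each $\p\in\supp M$ one has $R/\p\in\tthick(M)\subseteq\X$, by a Hopkins--Neeman-style localisation argument at $\p$. The reverse inclusion $\X\subseteq\langle\supp\X\rangle$ rests on the essential claim that every bounded $M$ lies in $\langle\supp M\rangle$, which I would prove by induction on the cohomological amplitude: the truncation triangle $\tau^{<n}M\to M\to\h^n(M)[-n]$ reduces the problem to a single finitely generated module $H$, and the classical prime filtration $0=H_0\subset H_1\subset\cdots\subset H_s=H$ with successive quotients $R/\p_j$ ($\p_j\in\supp H\subseteq\supp M$) closes the induction.

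\textbf{Step 3: Cocompact $\Rightarrow$ compact generation.}
Starting from a cocompactly generated $\X$, Step 2 gives $\X=\langle W\rangle$ for $W=\supp\X$. For each $\p\in W$, choose a finite generating set $\p=(x_1,\ldots,x_n)$ and form the Koszul complex $K(\p):=K(x_1,\ldots,x_n)$; this is a perfect (hence compact) complex with $\supp K(\p)=\V(\p)$. Let $\X'$ be the thick tensor ideal generated by all such $K(\p)$, $\p\in W$. Then $\X'$ is compactly generated, is also cocompactly generated (Koszul complexes are bounded), and satisfies $\supp\X'=\bigcup_{\p\in W}\V(\p)=W$; the uniqueness in Step 2 yields $\X'=\X$.

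\textbf{Main obstacle.}
The crux is the amplitude-induction claim in Step 2: that an arbitrary bounded $M\in\dm(R)$ can be built from the quotients $R/\p$, $\p\in\supp M$, via thick-tensor-ideal operations inside $\dm(R)$. Classical Hopkins--Neeman classifications live in $\kb(\proj R)$ or $\db(R)$ with different (perfect) generators, so the argument must be redone with the non-perfect generators $R/\p$ inside the larger, non-compactly-generated category $\dm(R)$, carefully tracking that each truncation step remains within $\langle\supp M\rangle$. I expect this to be the bulk of the work encapsulated by Theorem \ref{main}.
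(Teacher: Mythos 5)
Your overall architecture is sound, but you have misidentified where the difficulty lies, and the step you hand-wave is precisely the one that required the paper's main new theorem.

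In your Step 2, the inclusion $\X\subseteq\langle\supp\X\rangle$ (your ``Main obstacle'') is in fact the \emph{easy} direction. Since $\X=\tthick\C$ for some $\C\subseteq\db(R)$ and $\langle\supp\X\rangle$ is itself a thick $\otimes$-ideal, it suffices to place each bounded $M\in\C$ in $\langle\supp M\rangle$; as $M\in\db(R)$ one has $M\in\thick\h(M)$ by the standard truncation induction, and a prime filtration of the finitely generated module $\h(M)$ finishes it. No delicate ``tracking'' is needed and nothing about the generators being non-perfect is an obstruction here, because you are working inside the thick closure in $\db(R)\subseteq\dm(R)$. The hard direction is the one you dispatch in a single clause: $\langle\supp\X\rangle\subseteq\X$, i.e.\ $R/\p\in\tthick(M)$ whenever $M$ is bounded and $\p\in\supp M$. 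A ``Hopkins--Neeman-style localisation argument'' does not suffice: Neeman's Lemma 1.2 and the classical smash nilpotence theorem are statements about $\kb(\proj R)$, where every object (and hence every morphism) is perfect, and $R/\p$ is typically not perfect. The paper must first prove a Generalized Smash Nilpotence Theorem (Theorem~\ref{gsn}) for morphisms $f:X\to Y$ in $\km(R)$ with only the \emph{target} perfect, supported by Proposition~\ref{annihi} and Lemmas~\ref{cl},~\ref{lift}, and then deduce Proposition~\ref{key} (``$\V(\ann X)\subseteq\supp\Y\Rightarrow X\in\tthick\Y$'') via a nontrivial construction involving a cone $Z\to R\to\Hom_R(Y',Y)$ and iterated tensor powers. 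That is the bulk of the section and the genuine content of Theorem~\ref{main}, and your proposal does not contain it.

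There is also a smaller technical gap in Step 1: the object $\bigoplus_n\sigma^{\ge -n}X$ does not lie in $\dm(R)$ in general, because in any cohomological degree $i$ with $\h^i(X)\ne 0$ its cohomology is an infinite direct sum of copies of $\h^i(X)$ and hence not finitely generated. The paper's Proposition~\ref{cpt} sidesteps this by testing compactness against $Y=\bigoplus_n C^n[-n]$ with $C^n=\Cok(d^{n-1})$, a complex with zero differentials whose $i$-th cohomology is the single module $C^i$; compactness then forces the natural maps $f_n:X\to C^n[-n]$ to be nullhomotopic for $n\ll 0$, splitting off a perfect truncation. Your Step 3 is fine as written.
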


\noindent
The core of this theorem is constituted by the classification of the {\em cocompactly} generated thick tensor ideals of $\dm(R)$, which is obtained by establishment of a {\em generalized smash nilpotence theorem}, extending the classical smash nilpotence theorem due to Hopkins \cite{H} and Neeman \cite{N} for the homotopy category of perfect complexes.
In view of Theorem \ref{A}, we may simply call $\X$ {\em compact} if $\X$ satisfies one (hence all) of the equivalent conditions in the theorem.
We should remark that in general we have
$$
\langle W\rangle\ne\supp^{-1}W,
$$
where $\supp^{-1}W$ consists of the complexes whose supports are contained in $W$.
Thus we call a thick tensor ideal of $\dm(R)$ {\em tame} if it has the form $\supp^{-1}W$ for some specialization-closed subset $W$ of $\spec R$.

Next, we relate the Balmer spectrum $\spc\dm(R)$ of $\dm(R)$ to the Zariski spectrum $\spec R$ of $R$, i.e., the set of prime ideals of $R$.
More precisely, we introduce a pair of order-reversing maps
$$
\xymatrix{
\s:\ \spec R\ \ar@<.5mm>[rr]&&\ \spc\dm(R)\ :\pp\ar@<.5mm>[ll]
}
$$
and investigate their topological properties.
These maps are defined as follows: let $\p\in\spec R$ and $\P\in\spc\dm(R)$.
Then $\s(\p)$ consists of the complexes $X\in\dm(R)$ with $X_\p=0$, and $\pp(\P)$ is the unique maximal element of ideals $I$ of $R$ with $R/I\notin\P$ with respect to the inclusion relation.
Our main result in this direction is the following theorem.
Denote by $\ts\dm(R)$ the set of tame prime thick tensor ideals of $\dm(R)$, and by $\mx\dm(R)$ (resp. $\mn\dm(R)$) the maximal (resp. minimal) elements of $\spc\dm(R)$ with respect to the inclusion relation.
For each full subcategory $\X$ of $\dm(R)$, let $\X^\tame$ stand for the smallest tame thick tensor ideal of $\dm(R)$ containing $\X$.

\begin{thmm}[Theorems \ref{s}, \ref{kakan}, \ref{homeos}, \ref{minmx}, \ref{sm} and Corollary \ref{tp}]\label{B}
The following statements hold.
\begin{enumerate}[\rm(1)]
\item
One has $\pp\cdot\s=1$ and $\s\cdot\pp=\supp^{-1}\supp=()^\tame$.
In particular, $\dim(\spc\dm(R))\ge\dim R$.
\item
The image of $\s$ coincides with $\ts\dm(R)$, and it is dense in $\spc\dm(R)$.
\item
The map $\pp$ is continuous, and its restriction $\pp':\ts\dm(R)\to\spec R$ is a continuous bijection.
\item
The map $\s':\spec R\to\ts\dm(R)$ induced by $\s$ is an open and closed bijection.
\item
The map $\Min R\to\mx\dm(R)$ induced by $\s$ is a homeomorphism.
\item
The map $\Max R\to\mn\dm(R)$ induced by $\s$ is a homeomorphism  if $R$ is semilocal.
\item
One has:\quad
$\text{$\s$ is continuous}\ \Leftrightarrow\ 
\text{$\s'$ is homeomorphic}\ \Leftrightarrow\ 
\text{$\pp'$ is homeomorphic}\ \Leftrightarrow\ 
\text{$\spec R$ is finite}.$
\end{enumerate}
\end{thmm}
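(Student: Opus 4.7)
By item (4), the map $\s'$ is an open (and closed) bijection, and by (3) its inverse $\pp'$ is continuous. Since a continuous bijection whose inverse is open is automatically a homeomorphism, each of ``$\s'$ is continuous'', ``$\s'$ is a homeomorphism'', and ``$\pp'$ is a homeomorphism'' is equivalent to the others. Moreover $\s=\inc\circ\s'$, where $\inc:\ts\dm(R)\hookrightarrow\spc\dm(R)$ is the inclusion carrying the subspace topology, so by the universal property of subspace topology $\s$ is continuous if and only if $\s'$ is. This yields the equivalences among the first three conditions.

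Next, I reduce continuity of $\s$ to a condition on $\spec R$. Closed subsets of $\spc\dm(R)$ are intersections of basic closed sets $\{P:X\notin P\}$ indexed by $X\in\dm(R)$, and a direct computation gives
\[
\s^{-1}\{P:X\notin P\}=\{\p\in\spec R:X_\p\neq 0\}=\supp X=\bigcup_i V(\ann\h^i X),
\]
a countable union of Zariski-closed subsets of $\spec R$. Since preimages commute with intersection, $\s$ is continuous if and only if $\supp X$ is closed in $\spec R$ for every $X\in\dm(R)$. If $\spec R$ is finite, it has only finitely many closed subsets and hence any countable union of them is closed; so $\supp X$ is closed for every $X$, proving that finiteness of $\spec R$ implies $\s$ is continuous.

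For the converse, assume $\spec R$ is infinite. The plan is to produce a countable family of pairwise incomparable primes $\p_1,\p_2,\ldots\in\spec R$ and set $X:=\bigoplus_{i\ge 1}(R/\p_i)[i]$; this lies in $\dm(R)$ because its cohomology is finitely generated in each degree and concentrated in non-positive degrees. Then $\supp X=\bigcup_i V(\p_i)$ has the $\p_i$ as infinitely many minimal elements, but every closed set $V(I)$ of the noetherian space $\spec R$ has only finitely many minimal primes (the associated primes of $R/I$); hence $\supp X$ is not closed and $\s$ is not continuous.

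The main obstacle is producing this infinite antichain. If $R$ has infinitely many maximal ideals they already form one. Otherwise $R$ is semilocal with maximals $\m_1,\dots,\m_r$, so $\spec R=\bigcup_j \spec R_{\m_j}$ and some $R_{\m_j}$ has infinite prime spectrum. A local noetherian ring has finite Krull dimension, so its primes are stratified by height into finitely many levels; by pigeonhole some level is infinite, and primes of equal height in a noetherian ring are pairwise incomparable. Pulling this antichain back along $\spec R_{\m_j}\hookrightarrow\spec R$ supplies the required family.
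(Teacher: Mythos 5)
Your proposal addresses only item (7) of the seven-item theorem, and even that argument assumes items (3) and (4) as already established. As a proof of the full Theorem B this is a genuine gap: nothing here proves $\pp\cdot\s=1$ or $\s\cdot\pp=\supp^{-1}\supp$ (item (1)), the density of $\im\s$ (item (2)), the continuity of $\pp$ and bijectivity of $\pp'$ (item (3)), that $\s'$ is open and closed (item (4)), or the homeomorphism statements about $\Min R$ and $\Max R$ (items (5), (6)). Those parts require the machinery of Sections \ref{sect:bal} and \ref{tsbs}: Proposition \ref{pp} constructing $\pp$, Proposition \ref{two} computing $\supp\s(\p)$, Corollary \ref{3'} characterizing membership of $R/I$, and so on. Simply invoking (3) and (4) as black boxes does not discharge this burden when the statement to be proved includes them.

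That said, restricted to item (7) your argument is correct and is essentially the paper's argument (Theorem \ref{homeos} together with Lemma \ref{finsp}) with one cosmetic difference. You correctly identify that $\s^{-1}(\ssupp X)=\supp X$ and reduce continuity of $\s$ to the closedness of $\supp X$ for all $X$, just as the paper does. For the converse you build the complex $X=\bigoplus_i(R/\p_i)[i]$ from an infinite antichain $\{\p_i\}$, exactly as the paper does. The only difference is how the antichain is produced: the paper factors this through Lemma \ref{finsp} (infinite $\spec R$ yields a non-closed specialization-closed set $W$, whose minimal elements give the antichain), while you inline the same dichotomy directly — infinitely many maximals, or else stratify the primes of some $R_{\m_j}$ by height and apply the pigeonhole principle. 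Both constructions are correct; indeed the paper's own proof of Lemma \ref{finsp}(1)$\Rightarrow$(4) performs the identical height-stratification argument. Your reduction to the local case via localization is a small extra step, but harmless: the order-embedding $\spec R_{\m_j}\hookrightarrow\spec R$ does carry antichains to antichains. To turn this into a proof of the whole theorem you would need to supply the content of the remaining six items.
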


The celebrated classification theorem due to Balmer \cite{B} asserts that taking the Balmer support $\ssupp$ makes a one-to-one correspondence between the set $\Rad$ of radical thick tensor ideals of $\dm(R)$ and the set $\Thom$ of Thomason subsets of $\spc\dm(R)$:
$$
\xymatrix{
\ssupp:\ \Rad\ \ar@<.5mm>[rr]&&\ \Thom\ :\ssupp^{-1}\ar@<.5mm>[ll].
}
$$
Our next goal is to complete this one-to-one correspondence to the following commutative diagram, giving complete classifications of compact and tame thick tensor ideals of $\dm(R)$.
Denote by $\Cpt$ (resp. $\Tame$) the set of compact (resp. tame) thick tensor ideals of $\dm(R)$, and by $\Spcl(\spec)$ (resp. $\Spcl(\ts)$) the set of specialization-closed subsets of $\spec R$ (resp. $\ts\dm(R)$).

\begin{thmm}[Theorems \ref{d1}, \ref{d2}]\label{C}
There is a diagram
$$
\xymatrix{
\Rad\ar@<.5mm>[rrrr]^\ssupp\ar@<.5mm>[dd]^{{()}_\cpt} &&&& \Thom\ar@<.5mm>[llll]^{\ssupp^{-1}}\ar@<.5mm>[dd]^{\s^{-1}}\ar@<.5mm>[rrrrdd]^{{()}_\spcl} &&&& \\
\\
\Cpt\ar@<.5mm>[uu]^{()^\rrad}\ar@<.5mm>[rrrr]^\supp\ar@<.5mm>[rrrrdd]^{{()}^\tame} &&&& \Spcl(\spec)\ar@<.5mm>[uu]^{\overline\s}\ar@<.5mm>[llll]^{\langle\rangle}\ar@<.5mm>[rrrr]^\s\ar@<.5mm>[dd]^{\supp^{-1}} &&&& \Spcl(\ts)\ar@<.5mm>[lllluu]^{()^\spcl}\ar@<.5mm>[llll]^\pp\ar@<.5mm>[lllldd]^{\sp^{-1}} \\
\\
&&&& \Tame\ar@<.5mm>[lllluu]^{{()}_\cpt}\ar@<.5mm>[rrrruu]^\sp\ar@<.5mm>[uu]^\supp &&&&
}
$$
where the pairs of maps $A=(()^\rrad,()_\cpt)$, $B=(\overline\s,\s^{-1})$, $C=(()^\spcl,()_\spcl)$ are section-retraction pairs (as sets), and all the other pairs consist of mutually inverse bijections.
The diagram with the sections (resp. retractions) and bijections is commutative.
\end{thmm}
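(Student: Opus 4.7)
The plan is to build the diagram out of two already-established bijections---Balmer's classification $\Rad\leftrightarrow\Thom$ and the classification $\Cpt\leftrightarrow\Spcl(\spec)$ of Theorem~A---together with the pointwise bijection $\spec R\leftrightarrow\ts\dm(R)$ and the density statement from Theorem~B. After defining the remaining maps I would establish the newer bijections first, then the three section-retraction pairs, and finally verify the commutativity of each face of the diagram.

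For the bijections, the map $\supp:\Tame\to\Spcl(\spec)$ is bijective essentially by the definition of ``tame'' (its inverse $\supp^{-1}$ is explicit), and composing with Theorem~A gives the bijection $\Cpt\leftrightarrow\Tame$ given by $\X\mapsto\supp^{-1}(\supp\X)$ and $\Y\mapsto\langle\supp\Y\rangle$. For the right column, Theorem~B(2),(3) supply inverse set bijections $\s:\spec R\leftrightarrow\ts\dm(R):\pp$; both are specialization-preserving---for $\s$ because $X_\p=0$ descends along specialization in $\spec R$, and for $\pp$ from its characterization as a maximum---so they lift to inverse bijections $\s:\Spcl(\spec)\leftrightarrow\Spcl(\ts):\pp$. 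Composing with $\Tame\leftrightarrow\Spcl(\spec)$ produces the last bijection $\Tame\leftrightarrow\Spcl(\ts)$ with $\sp=\s\circ\supp$ and $\sp^{-1}=\supp^{-1}\circ\pp$.

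For the section-retraction pairs, I would set $()^\rrad$ to be radical closure and $()_\cpt:\Rad\to\Cpt$ to be $\langle\supp(-)\rangle$; then $()_\cpt\circ()^\rrad=\id_\Cpt$ because radical closure preserves supports, giving pair~$A$. For pair~$B$, I would take $\overline\s(W)$ to be the smallest Thomason subset of $\spc\dm(R)$ containing $\s(W)$---equivalently, $\ssupp(\langle W\rangle^\rrad)$ via Theorem~A and Balmer---and $\s^{-1}$ to be set-theoretic preimage (which lands in $\Spcl(\spec)$ because $\s$ is specialization-preserving and Thomason subsets are specialization-closed); the retraction identity $\s^{-1}(\overline\s W)=W$ follows from injectivity of $\s$ together with the description of Thomason closure as a union of closures of points in $\spc\dm(R)$. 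Pair~$C$ is entirely analogous: $()^\spcl$ is Thomason closure in $\spc$, $()_\spcl$ is intersection with $\ts\dm(R)$, and the density of $\ts$ in $\spc$ from Theorem~B(2) delivers $()_\spcl\circ()^\spcl=\id$.

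Finally, I would verify the two commutative sub-diagrams (sections together with the bijections, retractions together with the bijections) by direct chase: each face reduces to a support-level identity such as $\ssupp(\X^\rrad)=\overline{\s(\supp\X)}$, which follows from the pointwise equality $\ssupp(R/\p)=\overline{\{\s(\p)\}}$ and the compatibility of Balmer support with radical closure. The main obstacle is pair~$B$ (equivalently $C$): controlling precisely how Thomason closure in $\spc\dm(R)$ interacts with the image $\s(\spec R)$ requires identifying which non-tame primes appear in $\overline{\s(W)}\setminus\s(W)$, and the failure of $\overline\s\circ\s^{-1}$ to be the identity is exactly the obstruction coming from the existence of non-tame prime thick tensor ideals---this is where the full strength of Theorem~B(2),(4) must be used.
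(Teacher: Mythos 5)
Your plan follows the same decomposition as the paper: the five non‑Balmer bijections are established exactly as you describe (Propositions \ref{tss}, \ref{cs}, \ref{ts2}, \ref{ct}, \ref{ss}), pair $A$ is Proposition \ref{cr}, pair $B$ is Proposition \ref{st}, and the commutativity check is carried out in Theorems \ref{d1} and \ref{d2} via precisely the support-level identities you indicate (most notably $\ssupp(R/\p)=\overline{\{\s(\p)\}}$ and $\ssupp(\X^\rrad)=\ssupp\X$).

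Two remarks. For the retraction identity $\s^{-1}(\overline\s(W))=W$ of pair $B$, mere injectivity of $\s$ is not enough; you need the order anti-embedding property from Theorem \ref{s}, i.e.\ that $\s(\p)\subseteq\s(\q)$ forces $\p\supseteq\q$, so that $\s(\p)\in\overline{\{\s(\q)\}}$ translates back to $\p\in\V(\q)$. (Harmless, since Theorem \ref{s} supplies it.) More seriously, pair $C$ is not delivered by density: density of $\ts\dm(R)$ is a statement about open subsets and does not control specialization, so it does not give $()_\spcl\circ()^\spcl=\id$. What is actually needed is the same mechanism as for $B$—that $U^\spcl$ is a union of closures of points of $U$—and for that you must first know that the $\spcl$-closure of $U\subseteq\ts\dm(R)$ inside $\spc\dm(R)$ is in fact Thomason; otherwise a ``smallest Thomason subset containing $U$'' need not exist, and a strictly larger Thomason set could contain extraneous tame primes, breaking the retraction. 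This is the content of Proposition \ref{thom}(2), established through the formula $U^\spcl=\{\P\mid\P^\tame\in U\}=\bigcup_{\P\in U^\spcl}\ssupp(R/\pp(\P))$, which exhibits $U^\spcl$ as a union of closed sets with quasi-compact open complements. A shortcut available to you, given the bijection $\pp:\Spcl(\ts)\to\Spcl(\spec)$ already in hand, is to set $()^\spcl:=\overline\s\circ\pp$ and inherit pair $C$ from pair $B$; but one way or another this step is the only genuinely new input beyond what you have written, and it deserves an argument.
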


\noindent
We do not give here the definitions of the maps appearing above (we do this in Section \ref{various}); what we want to emphasize now is that those maps are given explicitly.

Moreover, we prove that some/any of the three section-retraction pairs $A,B,C$ in the above theorem are bijections if and only if $R$ is artinian, which is incorporated into the following theorem.

\begin{thmm}[Theorem \ref{11}]\label{D}
The following are equivalent.
\begin{enumerate}[\rm(1)]
\item
$R$ is artinian.
\item
Every thick tensor ideal of $\dm(R)$ is compact, tame and radical.
\item
Every radical thick tensor ideal of $\dm(R)$ is tame.
\item
The pair of maps $(\s,\pp)$ consists of mutually inverse homeomorphisms.
\item
Some/all of the maps $\s,\pp$ are bijective.
\item
Some/all of the pairs $A,B,C$ consist of mutually inverse bijections.
\end{enumerate}
\end{thmm}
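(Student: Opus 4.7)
My plan is a cyclic chain $(1)\Rightarrow(2)\Rightarrow(3)\Rightarrow(5)\Rightarrow(4)\Rightarrow(6)\Rightarrow(1)$, where the links not touching $(1)$ follow from the structural results of Theorems A, B, C, and the two links that do touch $(1)$ contain the real content.

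First the easy links. The implication $(2)\Rightarrow(3)$ is immediate since every radical thick tensor ideal is in particular a thick tensor ideal. For $(3)\Rightarrow(5)$, note that every prime thick tensor ideal is automatically radical, so if every radical thick tensor ideal is tame then so is every prime one; this says $\ts\dm(R)=\spc\dm(R)$, and by Theorem B(2) the image of $\s$ fills $\spc\dm(R)$, so $\s$ is surjective; combined with the injectivity coming from $\pp\cdot\s=1$ in Theorem B(1), this gives bijectivity. The steps $(5)\Rightarrow(4)$ and $(4)\Rightarrow(6)$ are cleanest to route through $(1)$: once $R$ is artinian, $\spec R$ is finite and Theorem B(3),(4),(7) upgrade $\s$ and $\pp$ to mutually inverse homeomorphisms, and in that situation the commutative diagram of Theorem C collapses (the intermediate sets $\Cpt, \Tame, \Rad$ all coincide), turning each of the section-retraction pairs $A,B,C$ into bijections. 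Finally $(6)\Rightarrow(1)$ is handled together with $(5)\Rightarrow(1)$ below, because bijectivity of $A$ forces $\Cpt=\Rad$, hence every prime thick tensor ideal is compact and in particular tame, giving condition $(5)$.

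For $(1)\Rightarrow(2)$, when $R$ is artinian, $\spec R$ is a finite discrete space in which every subset is specialization-closed. The artinian structure theorem lets me reduce every object of $\dm(R)$ to a right-bounded complex whose cohomology is a finite direct sum of residue fields $R/\m$ with $\m\in\Max R$. Consequently every thick tensor ideal $\X$ of $\dm(R)$ is generated by the set of residue fields it contains, so by Theorem A one obtains $\X=\langle\supp\X\rangle=\supp^{-1}\supp\X$, which is the simultaneous assertion that $\X$ is compact and tame; radicality then follows from Balmer's correspondence since every subset of the discrete space $\spec R$ is Thomason.

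The main obstacle is $(5)\Rightarrow(1)$, which I would prove contrapositively. Assume $R$ is not artinian, so $\dim R\ge 1$; I must construct a prime thick tensor ideal that is not tame, contradicting bijectivity of $\s$. Choose a non-minimal prime $\p\in\spec R$ and a strictly smaller prime $\q\subsetneq\p$, and pass to the localization $\dm(R)\to\dm(R_\p)$. Using the fact that $R_\p$ is a non-artinian local ring, one produces a genuinely unbounded object of $\dm(R_\p)$ --- for instance a Koszul-type complex on a length-one system of parameters in $\p R_\p$, whose iterated tensor powers fail to vanish in the way a bounded complex would --- whose minimal prime thick tensor ideal $\P'$ in $\dm(R_\p)$ is not tame; pulling $\P'$ back along the localization yields a prime thick tensor ideal $\P$ of $\dm(R)$ with $\pp(\P)=\p$ but $\P\ne\s(\p)=\{X:X_\p=0\}$. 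This contradicts $\s$ being the right inverse of $\pp$ on a bijection. The construction is essentially the same phenomenon as the counterexample to Balmer's conjecture mentioned in the introduction: in positive dimension the containment $\langle S\rangle\subseteq\supp^{-1}S$ is strict for suitable specialization-closed $S$, and the resulting distinction $\Cpt\subsetneq\Tame$ (or $\Tame\subsetneq\Rad$) percolates to the prime spectrum and produces non-tame primes. The hard part of the whole theorem is locating this non-tame prime and verifying $\pp(\P)=\p$ in sufficient generality; the rest of the equivalences then snap into place.
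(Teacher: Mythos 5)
Your logical scaffolding is sound, and the easy directions $(2)\Rightarrow(3)$, $(3)\Rightarrow(5)$, and $(1)\Rightarrow(2)$ match the paper's in spirit (though in $(1)\Rightarrow(2)$ the claim that cohomology of an object of $\dm(R)$ over an artinian ring is a \emph{direct sum} of residue fields is not correct --- finite-length modules have composition series by residue fields, not direct sum decompositions. The paper's Corollary~\ref{art} instead uses the Chinese Remainder Theorem to split $R\cong R/\m_1^t\times\cdots\times R/\m_n^t$, and this is what reduces to the local case and gives $\X=\langle\supp\X\rangle$.)

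The genuine gap is in your $(5)\Rightarrow(1)$, which is where the real content lives, and where you diverge from the paper. You propose the contrapositive: non-artinian $R$ has a non-tame prime, constructed by localizing at a height-one $\p$, finding a non-tame prime $\P'$ of $\dm(R_\p)$, and pulling back along $L:\dm(R)\to\dm(R_\p)$. Several steps here do not hold up. First, the ``minimal prime thick tensor ideal'' of $\dm(R_\p)$ is $\zero$ by Theorem~\ref{sm}(3), and $\zero=\supp^{-1}\emptyset$ is tame, so whatever $\P'$ you intend, it is not a minimal prime. Second, and more fundamentally, there is no reason that $L^{-1}(\P')$ is non-tame in $\dm(R)$ when $\P'$ is non-tame in $\dm(R_\p)$: to witness non-tameness of $\P=L^{-1}(\P')$ you would need $X\in\dm(R)$ with $\supp_R X\subseteq\supp_R\P$ but $X\notin\P$; essential surjectivity gives a preimage $X$ of the non-tame witness $Y\in\dm(R_\p)$, but $\supp_R X$ is typically much larger than the one-point set controlled by $Y$, and the support containment is not automatic. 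Third, the paper only constructs non-tame primes under the hypothesis that $R$ is a domain or local (Theorem~\ref{diff}(3), Corollary~\ref{cntex}), so even the local step of your argument is a nontrivial result, and the reduction to that case is exactly what needs justifying. The paper's actual proof of $(11)\Rightarrow(1)$ in Theorem~\ref{11} avoids constructing non-tame primes entirely: it establishes Lemma~\ref{smsp} (a noetherian ring is artinian iff $\V(\bigcap_n I_n)=\bigcup_n\V(I_n)$ for all sequences of ideals), then builds the complex $C=\bigoplus_n \k(\xx(n),R)[n]$ and uses the hypothesis $\Rad=\Tame$ together with Corollary~\ref{strcpt}(3) and $\ann\bigl(\bigoplus_n \k(\xx(n),R)[nr]\bigr)=\bigcap_n I_n$ to derive the inclusion $\V(\bigcap I_n)\subseteq\bigcup\V(I_n)$. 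This is a substantially different --- and completable --- strategy, and it is the key technical point your sketch does not replace.

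One smaller point: in $(6)\Rightarrow(5)$ you assert ``every prime thick tensor ideal is compact and in particular tame.'' Compact primes need not be tame in general (this is exactly what Theorem~\ref{diff}(3) shows), so the ``in particular'' requires the extra observation that, under $\Cpt=\Rad$, both $\P$ and $\P^\tame$ are compact with the same support and hence equal by the classification of Proposition~\ref{cs}. The paper instead routes this through Corollary~\ref{d3}, which makes $()_\cpt$, $\supp$, $()^\tame$, $\sp$ all compatible retractions, so bijectivity of one forces bijectivity of the rest.
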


\noindent
This theorem says that in the case of artinian rings everything is clear.
An essential role is played in the proof of this theorem by a certain complex in $\dm(R)$ constructed from shifted Koszul complexes.
%see the proofs of Theorems \ref{11} and \ref{diff}.

Let $(\T,\otimes,\one)$ be a tensor triangulated category.
Balmer \cite{B3} constructs a continuous map
$$
\xymatrix{
\rho_\T^\bullet:\ \spc\T\ \ar[rr]&&\ \hspec\RR_\T^\bullet,
}
$$
where $\RR_\T^\bullet=\Hom_\T(\one,\sus^\bullet\one)$ is a graded-commutative ring.
Balmer \cite{B4} conjectures that the map $\rho_\T^\bullet$ is (locally) injective when $\T$ is an {\em algebraic} triangulated category, that is, a triangulated category arisen as the stable category of a Frobenius exact category.
Our $\dm(R)$ is evidently an algebraic triangulated category, but does not satisfy this conjecture under a quite mild assumption:

\begin{thmm}[Corollary \ref{cntex}]\label{E}
Assume that $\dim R>0$ and that $R$ is either a domain or a local ring.
Then the map $\rho_{\dm(R)}^\bullet$ is not locally injective.
Hence, Balmer's conjecture does not hold for $\dm(R)$.
\end{thmm}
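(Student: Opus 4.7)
My plan is to reduce the statement about Balmer's comparison map $\rho:=\rho_{\dm(R)}^\bullet$ to a statement about the map $\pp$ from Theorem~B, and then exploit the non-Hausdorff topology of $\spc\dm(R)$ together with the fact that $\s(\p)$ is the generic point of the fibre $\pp^{-1}(\p)$.

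First, I would compute the graded endomorphism ring: since $\Hom_{\dm(R)}(R,R[n])$ equals $R$ for $n=0$ and vanishes otherwise, the ring $\RR^\bullet_{\dm(R)}$ is $R$ concentrated in degree $0$, so $\hspec\RR^\bullet_{\dm(R)}=\spec R$. Next, I would identify $\rho$ with $\pp$ under this identification. By construction $\rho(\P)=\{r\in R:\cone(r\colon R\to R)\notin\P\}$, and this cone is the Koszul complex $K(r;R)=[R\xrightarrow{r}R]$. For $r\in\pp(\P)$ one has $K(r;R)\ltensor_R R/\pp(\P)\simeq R/\pp(\P)\oplus R/\pp(\P)[1]$, so $K(r;R)\in\P$ would force $R/\pp(\P)\in\P$, contradicting the definition of $\pp(\P)$; hence $\pp(\P)\subseteq\rho(\P)$. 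Conversely for $r\notin\pp(\P)$, primeness of $\pp(\P)$ yields $K(r;R)\ltensor_R R/\pp(\P)\simeq R/(\pp(\P)+(r))$, which lies in $\P$ by the maximality of $\pp(\P)$, so primeness of $\P$ forces $K(r;R)\in\P$, giving $r\notin\rho(\P)$.

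With $\rho=\pp$ in place, the conclusion follows quickly. The hypothesis $\dim R>0$ implies $R$ is not artinian, so Theorem~D gives that $\pp$ is not bijective; but $\pp\cdot\s=1$ by Theorem~B(1), so $\pp$ is surjective and hence fails to be injective. I would then pick $\p\in\spec R$ and a prime $\P\in\pp^{-1}(\p)$ with $\P\neq\s(\p)$. Since $\s\cdot\pp=()^\tame$ by Theorem~B(1), any $\Q\in\pp^{-1}(\p)$ satisfies $\Q\subseteq\Q^\tame=\s(\p)$, so in fact $\P\subsetneq\s(\p)$. In the Balmer topology $\P\subseteq\s(\p)$ is equivalent to $\P\in\overline{\{\s(\p)\}}$, so $\s(\p)$ lies in every open neighbourhood of $\P$. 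Hence every such neighbourhood contains the two distinct points $\P$ and $\s(\p)$, both mapped to $\p$ by $\rho$, and $\rho$ fails to be locally injective at $\P$.

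The main obstacle is the identification $\rho=\pp$: it requires both the primeness of $\pp(\P)$ (established earlier in the paper) and the Koszul-type derived tensor calculations above, where one must handle separately the cases $r\in\pp(\P)$ and $r\notin\pp(\P)$. Once that identification is available, the remaining argument is a short combination of the classification theorems with the spectral-space topology on $\spc\dm(R)$; the hypothesis that $R$ is a domain or a local ring enters only through $\dim R>0$, which is what allows the invocation of Theorem~D.
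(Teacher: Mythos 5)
Your proof is correct, but it follows a route genuinely different from the paper's, and it is worth noting both the difference and what it buys.

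Both arguments hinge on the same two ideas: (i) identify $\rho_{\dm(R)}^\bullet$ with $\pp$ (this is Proposition~\ref{ato}(2) in the paper; your direct Koszul-complex computation is a correct, if slightly longer, re-derivation of it), and (ii) observe that a prime $\P$ with $\P\subsetneq\P^\tame$ is a topological obstruction to local injectivity, because $\P\in\overline{\{\P^\tame\}}$ forces $\P^\tame$ into every open neighbourhood of $\P$ while $\pp(\P)=\pp(\P^\tame)$. Where you diverge from the paper is in how you produce such a $\P$. The paper uses Theorem~\ref{diff}(3), which, starting from an element $x$ of height one, constructs the specific compact ideal $\X=\langle\V(x)\rangle$ and shows $\sqrt\X\subsetneq\supp^{-1}\V(x)$; Lemma~\ref{ti} then hands over a concrete prime $\P$ with $\X\subseteq\P\subsetneq\P^\tame$. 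This is where the ``domain or local'' hypothesis is actually used, via Krull's intersection theorem in the proof of Theorem~\ref{diff}(3). You instead invoke Theorem~D (Theorem~\ref{11}), whose implication ``$R$ not artinian $\Rightarrow$ $\pp$ not bijective'' needs only $\dim R>0$, and then extract a non-tame $\P$ abstractly from the failure of injectivity together with $\Q\subseteq\Q^\tame=\s(\pp\Q)$.

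The trade-off: your route is shorter and in fact proves the conclusion under the weaker hypothesis $\dim R>0$ alone, with no domain-or-local assumption; the paper's route, at the cost of the extra hypothesis, exhibits an explicit $\P$ (associated to a height-one element $x$, as flagged in the remarks after Theorem~E) at which local injectivity fails. Note also one presentational point: you could have simply cited Proposition~\ref{ato}(2) for $\rho=\pp$ rather than reproving it; and when you refer to Theorem~D's ``pair of maps bijective $\Leftrightarrow$ artinian,'' the implication you really use is $(11)\Rightarrow(1)$ of Theorem~\ref{11} (every radical ideal tame $\Rightarrow$ artinian), whose proof is the technical core and is carried out in the paper with the shifted-Koszul-complex construction, so your argument is not shorter once that implication is unfolded.
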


\noindent
In fact, the assumption of the theorem gives an element $x\in R$ with $\height(x)>0$.
Then we can find a non-tame prime thick tensor ideal $\P$ of $\dm(R)$ associated with $x$ at which $\rho_{\dm(R)}^\bullet$ is not locally injective.

Finally, we explore thick tensor ideals of $\dm(R)$ in the case where $R$ is a discrete valuation ring, because this should be the simplest unclear case, now that everything is clarified by Theorem \ref{D} in the case of artinian rings.
We show the following theorem, which says that even if $R$ is such a good ring, the structure of the Balmer spectrum of $\dm(R)$ is rather complicated.
(Here, $\ell\ell(-)$ stands for the Loewy length.)

\begin{thmm}[Propositions \ref{L_0}, \ref{a} and Theorems \ref{nonnoeth}, \ref{poly}]\label{F}
Let $(R,xR)$ be a discrete valuation ring, and let $n\ge0$ be an integer.
Let $\P_n$ be the full subcategory of $\dm(R)$ consisting of complexes $X$ with finite length homologies such that there exists an integer $t\ge0$ with $\ell\ell(\h^{-i}X)\le ti^n$ for all $i\gg0$.
Then:
\begin{enumerate}[\rm(1)]
\item
$\P_n$ coincides with the smallest thick tensor ideal of $\dm(R)$ containing the complex
$$
\textstyle\bigoplus_{i>0}(R/x^{i^n}R)[i]=(\cdots\zs R/x^{3^n}R\zs R/x^{2^n}R\zs R/x^{1^n}R\to0).
$$
\item
$\P_n$ is a prime thick tensor ideal of $\dm(R)$ which is not tame.
If $n\ge1$, then $\P_n$ is not compact.
\item
One has $\P_0\subsetneq\P_1\subsetneq\P_2\subsetneq\cdots$.
Hence $\spc\dm(R)$ has infinite Krull dimension.
\end{enumerate}
\end{thmm}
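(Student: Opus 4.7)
My strategy handles statement (1) first, then deduces (2) and (3).

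For statement (1), I first verify that $\P_n$ is a thick tensor ideal of $\dm(R)$. Closure under shifts and direct summands is immediate, and closure under cones follows from the bound $\ell\ell(\h^{-i}Z) \leq \ell\ell(\h^{-i}Y) + \ell\ell(\h^{-i+1}X)$ applied to a triangle $X \to Y \to Z \to X[1]$. For the tensor action: since $R$ is a DVR, $\Tor_k^R = 0$ for $k \geq 2$, so for any $M \in \dm(R)$ and $X \in \P_n$ the derived tensor product admits a short exact sequence
\[
0 \to \textstyle\bigoplus_{p+q=-i}\h^p M \otimes_R \h^q X \to \h^{-i}(M \ltensor_R X) \to \bigoplus_{p+q=-i+1}\Tor_1^R(\h^p M, \h^q X) \to 0,
\]
from which a bound $\ell\ell(\h^{-i}(M\ltensor_R X)) \leq Ci^n$ for $i \gg 0$ follows because only finitely many pairs $(p,q)$ contribute and each term has Loewy length bounded by $\ell\ell(\h^q X)$. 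Since $\ell\ell(\h^{-i}Y_n)=i^n$, we have $Y_n \in \P_n$ and hence $\thick^\otimes(Y_n)\subseteq\P_n$. The reverse inclusion is the crux. I first note that $R/x^a R \in \thick(R/xR) \subseteq \thick^\otimes(Y_n)$ (since $(R/xR)[1]$ is a summand of $Y_n$), then establish $Y_n^{(t)} := \bigoplus_{i>0}(R/x^{ti^n}R)[i] \in \thick^\otimes(Y_n)$ by induction on $t$, using the termwise short exact sequences $0 \to R/x^{ti^n}R \xrightarrow{x^{i^n}} R/x^{(t+1)i^n}R \to R/x^{i^n}R \to 0$ which yield distinguished triangles $Y_n^{(t)} \to Y_n^{(t+1)} \to Y_n$. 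Finally, for arbitrary $X \in \P_n$ with $\h^{-i}X = \bigoplus_j R/x^{a_{ij}}R$ and $a_{ij}\le ti^n$, I realize $X$ inside $\thick^\otimes(Y_n^{(t)})$ by combining truncation triangles with sub-object triangles $\bigoplus R/x^{a_i}R[i] \to Y_n^{(t)} \to \bigoplus R/x^{ti^n-a_i}R[i]$ obtained componentwise via multiplication by $x^{a_i}$.

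For statement (2), primality reduces via support to the case $X, Y \in \df(R)$: if $X \ltensor Y \in \P_n \subseteq \df(R)$, localization at the generic point gives $X_0 \ltensor_{R_0} Y_0 = 0$ over the fraction field $R_0$, so one of $X, Y$ lies in $\df(R)$. For $X, Y \in \df(R) \setminus \P_n$, the lower bound $\ell\ell(\h^{-i}(X \ltensor Y)) \geq \max_{p+q=-i}\min(\ell\ell(\h^p X), \ell\ell(\h^q Y))$ combined with a density argument matching ``bad'' degrees of $X$ and $Y$ of comparable size produces unbounded ratios $\ell\ell(\h^{-i}(X \ltensor Y))/i^n$, forcing $X \ltensor Y \notin \P_n$. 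Non-tameness: $\spec R = \{0, xR\}$ has only three specialization-closed subsets, so the tame thick tensor ideals are precisely $0 \subsetneq \df(R) \subsetneq \dm(R)$; since $R/xR[1] \in \P_n$ but $Y_{n+1}\notin \P_n$, $\P_n$ lies strictly between $0$ and $\df(R)$ and hence is not tame. Non-compactness for $n \geq 1$: by Theorem~A the compact thick tensor ideals are in bijection with specialization-closed subsets of $\spec R$, so there are only three; one checks $\P_0 = \langle R/xR \rangle$ using $Y_0 = R/xR \ltensor_R (\bigoplus_{i>0} R[i])$, and then $\P_0 \subsetneq \P_n \subsetneq \dm(R)$ eliminates all three.

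For statement (3), the inclusion $\P_n \subseteq \P_{n+1}$ is immediate from $i^n \leq i^{n+1}$ for $i \geq 1$, and strictness follows because $Y_{n+1} \in \P_{n+1}$ has $\ell\ell(\h^{-i}Y_{n+1}) = i^{n+1}$, which is not bounded by $ti^n$ for $i \gg 0$. Combined with the primality of each $\P_n$, this yields an infinite strictly ascending chain of primes $\P_0 \subsetneq \P_1 \subsetneq \cdots$ in $\spc\dm(R)$, forcing $\dim\spc\dm(R) = \infty$. The main obstacles are twofold: (i) proving $\P_n \subseteq \thick^\otimes(Y_n)$ for general $X \in \P_n$, which requires assembling $X$ inside the thick tensor ideal generated by $Y_n$ without the benefit of infinite direct sum closure, and in particular handling the sub-object triangles whose quotient term's membership in $\thick^\otimes(Y_n)$ is itself non-obvious; and (ii) the combinatorial density argument for primality in the finite-length case, where one must guarantee that the ``bad'' degrees of $X$ and $Y$ interact productively under the tensor product to witness $X\ltensor Y \notin \P_n$.
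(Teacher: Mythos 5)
Your reduction of primality to the finite-length case, the non-tameness argument via the three specialization-closed subsets of $\spec R$, and the non-compactness argument via Theorem~A are all correct and in fact a bit slicker than the paper's (which proves non-tameness by exhibiting the factorial complex $\bigoplus_i R/x^{i!}[i]$). Statement (3) is also fine. However, there are two genuine gaps, both of which you half-acknowledge as ``obstacles'' without resolving.

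The serious one is the inclusion $\P_n\subseteq\tthick Y_n$ in statement (1). Your plan is to induct on the coefficient $t$ to get $Y_n^{(t)}=\bigoplus_{i>0}(R/x^{ti^n})[i]\in\tthick Y_n$, then realize a general $X\in\P_n$ via ``sub-object triangles'' $\bigoplus R/x^{a_i}[i]\to Y_n^{(t)}\to\bigoplus R/x^{ti^n-a_i}[i]$. This does not close up: the quotient term has Loewy lengths $ti^n-a_i$, which can be as large as $ti^n$, so you are asking whether a complex of the same general shape as $X$ lies in $\tthick Y_n$ --- exactly the question you started with, and there is no measure that decreases. Worse, when $\h^{-i}X$ has several cyclic summands $R/x^{a_{ij}}$ (multiplicities in each degree), the single componentwise inclusion you write down does not apply at all. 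The paper's proof of Theorem~\ref{poly} surmounts both problems with a different inductive variable and a key trick you do not have: Proposition~\ref{zero} shows that tensoring $\bigoplus X_i[i]$ with $\bigoplus R^{\oplus a_j}[j]$ produces $\bigoplus X_i^{\oplus a_i}[2i]$ as a direct summand, and Corollary~\ref{kougo} packages this into the statement that one may change multiplicities freely inside $\tthick\{X_{\even},X_{\odd}\}$. The paper then inducts on the \emph{polynomial degree} $n$ (not on $t$), peeling off one degree at a time using exact triangles that reduce $\ell\ell(\h_i)$ from a degree-$n$ polynomial to a degree-$(n-1)$ one (see the triangles involving $B_l,C_l$ and $D_t,E_m$). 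Without the multiplicity-adjusting mechanism and the degree-lowering induction, your argument cannot be completed as stated.

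The second gap is the primality proof for $X,Y\in\df(R)$. Your ``lower bound combined with a density argument matching bad degrees of comparable size'' is a sketch of an idea, not an argument --- you need to actually produce, for every candidate bound $t$, a sequence of degrees $n$ where $\ell\ell(\h^{-n}(X\ltensor Y))/n^n$ blows up, and this requires coordinating the locations where $X$ and $Y$ both violate the bound. The paper's Theorem~\ref{nonnoeth} instead fixes a single degree $e$ where $\ell\ell(X_e)$ exceeds $te^{n}$, observes that $X_e\otimes Y_{n-e}$ is a direct summand of $\h_{-n}(X\ltensor Y)$ containing $R/x^{\min\{a_e,b_{n-e}\}}$, and reads off constraints on the Loewy lengths of $Y$ from the hypothesis $X\ltensor Y\in\P_n$. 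You should either adopt that concrete argument or supply the combinatorics you are alluding to.
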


The paper is organized as follows.
Section \ref{fund} is devoted to giving several basic definitions and studying fundamental properties that are used in later sections.
In Section \ref{sect:cpt}, we study compactly and cocompactly generated thick tensor ideals of $\dm(R)$, and classify them completely.
The generalized smash nilpotence theorem and Theorem \ref{A} are proved in this section.
In Section \ref{sect:bal}, we define the maps $\s$ and $\pp$ between $\spec R$ and $\spc\dm(R)$, and prove part of Theorem \ref{B}.
In Section \ref{tsbs}, we study topological properties of the maps $\s,\pp$ and the Balmer spectrum $\spc\dm(R)$.
We complete in this section the proof of Theorem \ref{B}.
In Section \ref{various}, we compare compact, tame and radical thick tensor ideals of $\dm(R)$, relating them to specialization-closed subsets of $\spec R$ and $\ts\dm(R)$ and Thomason subsets of $\spc\dm(R)$.
Theorem \ref{C} is proved in this section.
In Section \ref{balconj}, we consider when the section-retraction pairs in Theorem \ref{C} are one-to-one correspondences, and deal with the conjecture of Balmer for $\dm(R)$.
We show Theorems \ref{D}, \ref{E} in this section.
The final Section \ref{sect:dvr} concentrates on investigation of the case of discrete valuation rings.
Several properties that are specific to this case are found out, and Theorem \ref{F} is proved in this section.
%%%%%%%%%%%%%%%%%%%%%%%%%%%%%%%%%%%%%%%%%%%%%%%%%%%%%%%%
%\begin{ac}
\section*{Acknowledgments}
Some of this work was introduced by Ryo Takahashi in his series of three invited plenary lectures at the 17th Workshop and International Conference on Representations of Algebras (ICRA) held at Syracuse University in August, 2016.
It was really a great honor for him, and he is very grateful to the Organizing and Advisory Committees for giving him such a wonderful opportunity.
Also, the authors very much thank the anonymous referees for their careful reading, valuable comments and helpful suggestions.
%\end{ac}
%%%%%%%%%%%%%%%%%%%%%%%%%%%%%%%%%%%%%%%%%%%%%%%%%%%%%%%%%%%%%%%
\section{Fundamental materials}\label{fund}
%%%%%%%%%%%%%%%%%%%%%%%%%%%%%%%%%

In this section, we give several basic definitions and study fundamental properties, which will be used in later sections.
%Basically, {\em what we state in this section will be used tacitly in later sections}.
We begin with our convention.

\begin{conv}
Throughout the paper, unless otherwise specified, $R$ is a commutative noetherian ring, and all subcategories are nonempty and full.
We put $I^0=R$ and $x^0=1$ for an ideal $I$ of $R$ and an element $x\in R$.
We denote by $\spec R$ (resp. $\Max R$, $\Min R$) the set of prime (resp. maximal, minimal prime) ideals of $R$.
For an ideal $I$ of $R$, we denote by $\V(I)$ the set of prime ideals of $R$ containing $I$, and set $\D(I)=\V(I)^\complement=\spec R\setminus\V(I)$.
When $I$ is generated by a single element $x$, we simply write $\V(x)$ and $\D(x)$.
For a prime ideal $\p$ of $R$, the residue field of $R_\p$ is denoted by $\kappa(\p)$, i.e., $\kappa(\p)=R_\p/\p R_\p$.
For a sequence $\xx=x_1,\dots,x_n$ of elements of $R$, the {\em Koszul complex} of $R$ with respect to $\xx$ is denoted by $\k(\xx,R)$.
For an additive category $\C$ we denote by $\zero$ the {\em zero subcategory} of $\C$, that is, the full subcategory consisting of objects isomorphic to the zero object.
For objects $X,Y$ of $\C$, we mean by $X\lessdot Y$ (or $Y\gtrdot X$) that $X$ is a direct summand of $Y$ in $\C$.
We often omit subscripts, superscripts and parentheses, if there is no danger of confusion.
\end{conv}

Let $\T$ be a triangulated category.
A {\em thick} subcategory of $\T$ is by definition a triangulated subcategory closed under direct summands; in other words, it is a subcategory closed under direct summands, shifts and cones.
For a subcategory $\X$ of $\T$ we denote by $\thick\X$ the {\em thick closure} of $\X$, that is, the smallest thick subcategory of $\T$ containing $\X$.

Now we recall the definitions of a tensor triangulated category and a thick tensor ideal.

\begin{dfn}
\begin{enumerate}[(1)]
\item
We say that $(\T,\otimes,\one)$ is a {\em tensor triangulated category} if $\T$ is a triangulated category equipped with a symmetric monoidal structure which is compatible with the triangulated structure of $\T$; see \cite[Appendix A]{HPS} for the precise definition.
In particular, $-\otimes-$ is exact in each variable.

\item
Let $(\T,\otimes,\one)$ be a tensor triangulated category.
A subcategory $\X$ of $\T$ is said to be a {\em thick tensor ideal} provided that $\X$ is a thick subcategory of $\T$ and for any $T\in\T$ and $X\in\X$ one has $T\otimes X\in\X$.
We often abbreviate ``tensor ideal" to ``$\otimes$-ideal".
For a subcategory $\C$ of $\T$, we define the {\em thick $\otimes$-ideal closure} of $\C$ to be the smallest thick $\otimes$-ideal of $\T$ containing $\C$, and denote it by $\tthick\C$.
\end{enumerate}
\end{dfn}

We denote by $\dm(R)$ (resp. $\db(R)$) the derived category of (cochain) complexes $X$ of finitely generated $R$-modules with $\h^i(X)=0$ for all $i\gg0$ (resp. $|i|\gg0$).
We denote by $\df(R)$ (resp. $\dbf(R)$) the subcategory of $\dm(R)$ (resp. $\db(R)$) consisting of complexes $X$ whose homologies have finite length as $R$-modules.
By $\km(R)$ (resp. $\kb(\proj R)$) we denote the homotopy category of complexes $P$ of finitely generated projective $R$-modules with $P^i=0$ for all $i\gg0$ (resp. $|i|\gg0$).
By $\kmb(R)$ the subcategory of $\km(R)$ consisting of complexes $P$ with $\h^i(P)=0$ for all $i\ll0$.
Note that there are chains
$$
\dbf(R)\subseteq\db(R)\subseteq\dm(R),\quad
\dbf(R)\subseteq\df(R)\subseteq\dm(R),\quad
\kb(\proj R)\subseteq\kmb(R)\subseteq\km(R)
$$
of thick subcategories and triangle equivalences
$$
\dm(R)\cong\km(R),\quad
\db(R)\cong\kmb(R).
$$
We will often identify $\dm(R),\db(R)$ with $\km(R),\kmb(R)$ respectively, via these equivalences.
Note that $(\kb(\proj R),\otimes_R,R)$ and $(\dm(R),\ltensor_R,R)$ are essentially small tensor triangulated categories.
(In general, if $\C$ is an essentially small additive category, then so is the category of complexes of objects in $\C$, and so is the homotopy category.)

\begin{rem}\label{rigid}
The tensor triangulated category $\dm(R)$ is never rigid.
More strongly, it is never closed.
In fact, assume that there is a functor $F:\dm(R)\times\dm(R)\to\dm(R)$ such that $\Hom_{\dm(R)}(X\ltensor_RY,Z)\cong\Hom_{\dm(R)}(Y,F(X,Z))$ for all $X,Y,Z\in\dm(R)$.
We have $\Hom_{\dm(R)}(X\ltensor_RY,Z)=\Hom_{\d(R)}(X\ltensor_RY,Z)\cong\Hom_{\d(R)}(Y,\RHom_R(X,Z))$, where $\d(R)$ is the unbounded derived category of $R$-modules.
Letting $Y=R[-i]$ for $i\in\Z$, we obtain $\h^i(F(X,Z))\cong\Ext_R^i(X,Z)$.
Since $F(X,Z)$ is in $\dm(R)$, we have $\h^i(F(X,Z))=0$ for $i\gg0$.
Hence $\Ext_R^{\gg0}(X,Z)=0$ for all $X,Z\in\dm(R)$.
This is a contradiction.
\end{rem}

Here we compute some thick closures and thick $\otimes$-ideal closures.

\begin{prop}\label{si2}
There are equalities:
\begin{enumerate}[\rm(1)]
\item
$\ttthick_{\dm(R)}\,R=\dm(R)$.
\item
$\thick_{\dm(R)}R=\thick_{\db(R)}R=\thick_{\kb(\proj R)}R=\ttthick_{\kb(\proj R)}\,R=\kb(\proj R)$.
\item
$\thick_{\dm(R)}k=\thick_{\db(R)}k=\dbf(R)$, if $R$ is local with residue field $k$.
\end{enumerate}
\end{prop}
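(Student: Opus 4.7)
The plan is to handle the three parts in order, using repeatedly that $R$ is the $\otimes$-unit and that the target subcategories have simple generators. For (1), the identity $T \cong T \ltensor_R R$ immediately shows that every $T \in \dm(R)$ belongs to any thick $\otimes$-ideal containing $R$, yielding $\ttthick_{\dm(R)} R = \dm(R)$.

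For (2), the key step I would establish first is $\thick_{\kb(\proj R)} R = \kb(\proj R)$. The split triangle $X \to X \oplus Y \to Y \to X[1]$ with zero connecting morphism places $R^n$ in any thick subcategory containing $R$, and then closure under summands places every finitely generated projective module there. An induction on the width of a bounded projective complex $P$, using the brutal-truncation triangles $\sigma_{\le n-1} P \to \sigma_{\le n} P \to P^n[-n] \to \sigma_{\le n-1} P[1]$, then captures every object of $\kb(\proj R)$. Since $\kb(\proj R)$ is a thick subcategory of both $\db(R)$ and $\dm(R)$ (direct summands of perfect complexes being perfect) and contains $R$, by minimality one has $\thick_{\dm(R)} R,\,\thick_{\db(R)} R \subseteq \kb(\proj R)$; combined with the opposite inclusions coming for free from the inclusion of categories, this yields the desired four equalities, noting also $\thick_{\kb(\proj R)} R \subseteq \ttthick_{\kb(\proj R)} R \subseteq \kb(\proj R)$ for the tensor-ideal closure.

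For (3), writing $k = R/\m$, I would first observe that both $\thick_{\db(R)} k$ and $\thick_{\dm(R)} k$ sit inside $\dbf(R)$: $k$ lies there, and $\dbf(R)$ is thick in $\dm(R)$ because finite length is preserved under summands and cones via the long exact cohomology sequence (the bounded-above hypothesis together with cohomology concentrated in finitely many degrees forcing boundedness). For the reverse inclusion, I would show by induction on length that every finite-length module $M$ lies in $\thick_{\db(R)} k$ through its composition series, whose factors are all isomorphic to $k$, and then induct on the number of nonzero cohomologies of $X \in \dbf(R)$ using the good-truncation triangle $\tau_{\le n-1} X \to \tau_{\le n} X \to \h^n(X)[-n] \to \tau_{\le n-1} X[1]$. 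This produces $\dbf(R) \subseteq \thick_{\db(R)} k \subseteq \thick_{\dm(R)} k$, closing both inclusions. The whole argument amounts to careful bookkeeping between the three ambient categories; the only real care needed is that the truncation induction in (3) never leaves $\db(R)$, which is automatic since $X \in \dbf(R)$ has only finitely many nonzero cohomologies, so the induction terminates in the bounded world.
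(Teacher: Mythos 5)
Your proof is correct and follows essentially the same route as the paper: the paper compresses the inter-category comparisons into one abstract lemma ($\thick_\U U = \thick_\T U$ for $\U$ thick in $\T$ with $U \in \U$) and invokes $\tthick\,\one = \T$, leaving the classical generation facts $\thick_{\kb(\proj R)}R = \kb(\proj R)$ and $\dbf(R) = \thick k$ as ``easy to check,'' whereas you spell those out via the truncation and composition-series inductions. The substance is identical; you have merely made explicit the bookkeeping that the paper leaves implicit.
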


\begin{proof}
The following hold in general, which are easy to check.
\begin{enumerate}[(a)]
\item
Let $\T$ be a triangulated category, $\U$ a thick subcategory and $U\in\U$.
Then $\thick_\U U=\thick_\T U$.
\item
Let $(\T,\otimes,\one)$ be a tensor triangulated category.
Then $\tthick\one=\T$.
\end{enumerate}
The assertion is shown by these two statements.
\end{proof}

From now on, we deal with the supports of objects and subcategories of $\dm(R)$.
Recall that the {\em support} of an $R$-module $M$ is defined as the set of prime ideals $\p$ of $R$ such that the $R_\p$-module $M_\p$ is nonzero, which is denoted by $\supp_RM$.

\begin{prop}\label{supp}
Let $X$ be a complex in $\dm(R)$.
Then the following three sets are equal.
\begin{enumerate}[\rm(1)]
\item
$\bigcup_{i\in\Z}\supp_R\h^i(X)$,
\item
$\{\p\in\spec R\mid X_\p\not\cong0\text{ in }\dm(R_\p)\}$,
\item
$\{\p\in\spec R\mid \kappa(\p)\ltensor_RX\not\cong0\text{ in }\dm(R_\p)\}$.
\end{enumerate}
\end{prop}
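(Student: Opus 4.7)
The plan is to prove (1) $=$ (2) and (2) $=$ (3), observing that (3) $\subseteq$ (2) is automatic: since $R_\p$ is flat over $R$, there is a natural isomorphism $\kappa(\p)\ltensor_R X\cong\kappa(\p)\ltensor_{R_\p}X_\p$ in $\dm(R_\p)$, so $X_\p\cong 0$ forces $\kappa(\p)\ltensor_R X\cong 0$.

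For (1) $=$ (2), exactness of localization gives $\h^i(X_\p)\cong\h^i(X)_\p$ as $R_\p$-modules for every $i$. Therefore $X_\p\cong 0$ in $\dm(R_\p)$ iff $\h^i(X)_\p=0$ for all $i$ iff $\p\notin\supp_R\h^i(X)$ for all $i$, giving the equality.

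The real work is (2) $\subseteq$ (3). Assume $X_\p\not\cong 0$. Since $X$, hence $X_\p$, is right bounded, the set of integers $i$ with $\h^i(X_\p)\neq 0$ has a largest element $N$. Use the truncation triangle
$$
\tau^{<N} X_\p \to X_\p \to \h^N(X_\p)[-N] \to (\tau^{<N} X_\p)[1]
$$
in $\dm(R_\p)$, apply $\kappa(\p)\ltensor_{R_\p}-$, and inspect the long exact cohomology sequence in degrees $N$ and $N+1$. The key input is the bound: for every $Y\in\dm(R_\p)$ with $\h^j(Y)=0$ for all $j\geq N$, one has $\h^j(\kappa(\p)\ltensor_{R_\p}Y)=0$ for all $j\geq N$. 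Granting this, the outer terms $\h^N$ and $\h^{N+1}$ coming from $\tau^{<N}X_\p$ vanish, whence
$$
\h^N(\kappa(\p)\ltensor_{R_\p}X_\p)\cong\h^N(\kappa(\p)\ltensor_{R_\p}\h^N(X_\p)[-N])\cong\kappa(\p)\otimes_{R_\p}\h^N(X_\p),
$$
and the last module is nonzero by Nakayama's lemma since $\h^N(X_\p)$ is a nonzero finitely generated $R_\p$-module. Consequently $\kappa(\p)\ltensor_R X\not\cong 0$, as required.

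The main obstacle is the vanishing statement $\h^j(\kappa(\p)\ltensor_{R_\p}Y)=0$ for $j$ above the top cohomological degree of $Y$. I would derive it from the standard hyperhomology spectral sequence
$$
E_2^{p,q}=\Tor_{-p}^{R_\p}(\kappa(\p),\h^q(Y))\Longrightarrow\h^{p+q}(\kappa(\p)\ltensor_{R_\p}Y),
$$
whose $E_2$-page is supported in $p\leq 0$ and $q<N$, forcing the abutment to vanish in total degrees $\geq N$. Alternatively, one can argue elementarily by representing $Y$ by a right-bounded complex of finitely generated free $R_\p$-modules whose terms vanish above degree $N-1$ (e.g.\ a minimal resolution), and observing that this bound is preserved by $\kappa(\p)\otimes_{R_\p}-$. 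Once this vanishing is in place, the rest of the argument is a short diagram chase combined with Nakayama.
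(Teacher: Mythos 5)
Your argument is correct, and it is essentially a self-contained proof of the nontrivial equality which the paper simply outsources. The paper's own proof consists of two lines: the identification of sets (1) and (2) is declared clear, and the identification of (2) and (3) is reduced to the isomorphism $\kappa(\p)\ltensor_R X\cong\kappa(\p)\ltensor_{R_\p}X_\p$ together with a citation of Corollary~(A.4.16) in Christensen's book, which is precisely the Nakayama-type statement that a right-bounded complex with finitely generated homology over a local ring is nonzero if and only if its derived tensor with the residue field is nonzero. What you do differently is prove that lemma from scratch: you isolate the top nonvanishing cohomological degree $N$, use the canonical truncation triangle, establish the key vanishing $\h^{j}(\kappa(\p)\ltensor_{R_\p}Y)=0$ for $j\ge N$ whenever $\h^{j}(Y)=0$ for $j\ge N$ (either via the hyper-Tor spectral sequence or, more cleanly, by representing $Y$ by a bounded-above complex of finite free $R_\p$-modules concentrated in degrees $<N$), and then apply Nakayama to $\h^N(\kappa(\p)\ltensor_{R_\p}X_\p)\cong\kappa(\p)\otimes_{R_\p}\h^N(X_\p)$. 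Both routes rest on the same mathematics; yours has the merit of being elementary and not requiring the reader to chase a reference, at the cost of several extra paragraphs. One small remark: the elementary free-resolution justification of the vanishing step is preferable to the spectral-sequence argument, since the latter asks for a short convergence discussion that the former avoids entirely.
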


\begin{proof}
It is clear that the first and second sets coincide.
For a prime ideal $\p$ of $R$ one has $\kappa(\p)\ltensor_RX\cong\kappa(\p)\ltensor_{R_\p}X_\p$.
It is seen by \cite[Corollary (A.4.16)]{C} that the second and third sets coincide.
\end{proof}

\begin{dfn}
The set in Proposition \ref{supp} is called the {\em support} of $X$ and denoted by $\supp_RX$.
For a subcategory $\C$ of $\dm(R)$, we set $\supp\C=\bigcup_{C\in\C}\supp C$, and call this the {\em support} of $\C$.
For a subset $S$ of $\spec R$, we denote by $\supp^{-1}S$ the subcategory of $\dm(R)$ consisting of complexes whose supports are contained in $S$.
\end{dfn}

\begin{rem}\label{musi}
The fact that the second and third sets in Proposition \ref{supp} coincide will often play an important role in this paper.
Note that these two sets are different if $X$ is a complex outside $\dm(R)$.
For example, let $(R,\m,k)$ be a local ring of positive Krull dimension.
Take any nonmaximal prime ideal $P$, and let $X$ be the injective hull $\EE(R/P)$ of the $R$-module $R/P$.
Then $k\ltensor_RX=0$, while $X_\m\ne0$.
\end{rem}

\begin{rem}\label{r}
For $X\in\dm(R)$ one has $\supp X=\emptyset$ if and only if $X=0$.
In other words, it holds that $\supp^{-1}\emptyset=\zero$.
(If we define the support of $X$ as the third set in Proposition \ref{supp}, then the assumption that $X$ belongs to $\dm(R)$ is essential, as the example given in Remark \ref{musi} shows.)
\end{rem}

In the following lemma and proposition, we state several basic properties of $\supp$ and $\supp^{-1}$ defined above.
Both results will often be used later.

\begin{lem}\label{si0}
The following statements hold.
\begin{enumerate}[\rm(1)]
\item
$\supp(X[n])=\supp X$ for all $X\in\dm(R)$ and $n\in\Z$.
\item
If $X$ is a direct summand of $Y$ in $\dm(R)$, then $\supp X\subseteq\supp Y$.
\item
If $X\to Y\to Z\to X[1]$ is an exact triangle in $\dm(R)$, then $\supp A\subseteq\supp B\cup\supp C$ for all $\{A,B,C\}=\{X,Y,Z\}$.
\item
$\supp(X\ltensor_RY)=\supp X\cap\supp Y$ for all $X,Y\in\dm(R)$.
\end{enumerate}
\end{lem}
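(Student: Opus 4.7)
The plan is to treat the four parts in sequence, relying throughout on the characterizations of support in Proposition \ref{supp} (either via the union of supports of homologies, or via nonvanishing of $\kappa(\p)\ltensor_R -$). Parts (1)--(3) are essentially direct consequences of standard long-exact-sequence arguments in homology, while part (4) requires a bit more care and a field-coefficient splitting.

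For (1), I would simply note that $\h^i(X[n]) = \h^{i+n}(X)$, so $\bigcup_i \supp_R\h^i(X[n]) = \bigcup_i \supp_R\h^i(X)$. For (2), if $X\lessdot Y$ in $\dm(R)$, then each $\h^i(X)$ is a direct summand of $\h^i(Y)$ as an $R$-module (since $\h^i$ is additive), so $\supp_R\h^i(X)\subseteq\supp_R\h^i(Y)$, and taking the union over $i$ gives the claim. For (3), I would fix a prime $\p$ with $\p\notin\supp B\cup\supp C$ and use the long exact homology sequence associated to a rotation of the triangle: localizing at $\p$ yields, for every $i$, an exact sequence with both neighbors of $\h^i(A)_\p$ being zero, so $\h^i(A)_\p=0$ for all $i$, giving $\p\notin\supp A$. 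Doing this for each of the three rotations of the triangle handles all cases $\{A,B,C\}=\{X,Y,Z\}$.

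For (4), I would use the third description of support in Proposition \ref{supp}. The basic identity is the base-change isomorphism
\[
\kappa(\p)\ltensor_R(X\ltensor_RY) \;\cong\; \bigl(\kappa(\p)\ltensor_RX\bigr)\ltensor_{\kappa(\p)}\bigl(\kappa(\p)\ltensor_RY\bigr),
\]
which reduces the problem to showing that for two complexes $X',Y'$ of $\kappa(\p)$-vector spaces in $\dm(\kappa(\p))$, one has $X'\ltensor_{\kappa(\p)}Y'\cong0$ iff $X'\cong0$ or $Y'\cong 0$. Since any complex of vector spaces over a field is quasi-isomorphic to the direct sum of its shifted homology modules, I can write $X'\simeq\bigoplus_i\h^i(X')[-i]$ and likewise for $Y'$, so
\[
X'\ltensor_{\kappa(\p)}Y'\;\simeq\;\bigoplus_{i,j}\bigl(\h^i(X')\otimes_{\kappa(\p)}\h^j(Y')\bigr)[-i-j].
\]
If $X'$ and $Y'$ are both nonzero, pick the largest $i_0,j_0$ (which exist by the right-boundedness in $\dm$) with $\h^{i_0}(X')\neq0$ and $\h^{j_0}(Y')\neq0$; then the $(i_0+j_0)$-th homology of the tensor product is $\h^{i_0}(X')\otimes_{\kappa(\p)}\h^{j_0}(Y')$, which is nonzero since over a field the tensor product of nonzero vector spaces is nonzero. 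This gives the $\supseteq$ inclusion; the $\subseteq$ inclusion is the easier direction of the same equivalence.

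The step most likely to require attention is the splitting of a right-bounded complex of $\kappa(\p)$-vector spaces as a (possibly infinite) direct sum of shifted homology modules inside $\dm(\kappa(\p))$; this is standard over a field but needs to be invoked carefully to ensure the resulting sum lies in $\dm(\kappa(\p))$, using right-boundedness. Once this is in place, the rest of part (4) is formal, and parts (1)--(3) are routine.
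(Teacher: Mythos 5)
Your proposal is correct. Parts (1)--(3) match the paper, which dismisses them as "straightforward by definition"; you simply fill in the routine details (shift on homology, additivity of $\h^i$, and the long exact homology sequence localized at a prime outside $\supp B\cup\supp C$), and these are all fine. For (4) you take a genuinely different route. The paper localizes at $\p$, writes $(X\ltensor_RY)_\p\cong X_\p\ltensor_{R_\p}Y_\p$, and then invokes Christensen's Corollary (A.4.16) to conclude that this vanishes iff $X_\p=0$ or $Y_\p=0$ over the local ring $R_\p$. You instead pass all the way to the residue field via the base-change isomorphism $\kappa(\p)\ltensor_R(X\ltensor_RY)\cong(\kappa(\p)\ltensor_RX)\ltensor_{\kappa(\p)}(\kappa(\p)\ltensor_RY)$, use the hereditary (field) splitting of complexes into shifted homologies, and run an explicit top-degree K\"unneth argument. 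Your argument is self-contained whereas the paper's is by citation; in effect you are unfolding the proof of the cited corollary, which at bottom rests on the same top-nonvanishing-homology idea. You correctly flag the two hypotheses needed to make this work inside $\dm$: right-boundedness guarantees the existence of the largest $i_0,j_0$ and keeps the split complex in $\dm(\kappa(\p))$, and finite generation of the homologies (inherited through a resolution by finitely generated projectives) keeps them finite-dimensional. Both approaches are valid; yours buys transparency at the cost of a longer argument, the paper's buys brevity at the cost of an external reference.
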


\begin{proof}
The assertions (1), (2) and (3) are straightforward by definition.
For each prime ideal $\p$ of $R$ there is an isomorphism $(X\ltensor_RY)_\p\cong X_\p\ltensor_{R_\p}Y_\p$.
Hence $(X\ltensor_RY)_\p=0$ if and only if either $X_\p=0$ or $Y_\p=0$ by \cite[Corollary (A.4.16)]{C}.
This shows the assertion (4).
\end{proof}

Let $X$ be a topological space.
A subset $A$ of $X$ is called {\em specialization-closed} provided that for each point $a\in A$ the closure $\overline{\{a\}}$ of $\{a\}$ in $X$ is contained in $A$.
Hence a subset $S$ of $\spec R$ is specialization-closed if and only if for each $\p\in S$ one has $\V(\p)\subseteq S$.
Note that $A$ is specialization-closed if and only if $A$ is a (possibly infinite) union of closed subsets of $X$.
Therefore a union of specialization-closed subsets is again specialization-closed, and thus one can define the largest specialization-closed subset $A_\spcl$ of $X$ contained in $A$, which will be called the {\em $\spcl$-closure} of $A$ in Section \ref{various}.

\begin{prop}\label{si}
\begin{enumerate}[\rm(1)]
\item
Let $S$ be a subset of $\spec R$.
Then there are equalities $\supp^{-1}S=\supp^{-1}(S_\spcl)$ and $\supp(\supp^{-1}S)=S_\spcl$.
Moreover, $\supp^{-1}S$ is a thick $\otimes$-ideal of $\dm(R)$.
\item
Let $\X$ be any subcategory of $\dm(R)$.
Then $\supp\X$ is a specialization-closed subset of $\spec R$, and one has $\supp\X=\supp(\tthick\X)$.
\item
It holds that $\df(R)=\supp^{-1}(\Max R)$.
In particular, $\df(R)$ is a thick $\otimes$-ideal of $\dm(R)$.
\end{enumerate}
\end{prop}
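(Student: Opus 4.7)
}
The whole argument rests on a single preliminary observation: for every $X\in\dm(R)$ the set $\supp X$ is specialization-closed. Indeed, by Proposition \ref{supp} one has $\supp X=\bigcup_{i\in\Z}\supp_R\h^i(X)$, each $\h^i(X)$ is a finitely generated $R$-module, and the support of a finitely generated module over a noetherian ring is a (Zariski-)closed, hence specialization-closed, subset of $\spec R$. A union of specialization-closed sets is specialization-closed, so the claim follows. I would record this at the start of the proof as it will be used throughout.

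For (1), let $S\subseteq\spec R$. The inclusion $\supp^{-1}(S_\spcl)\subseteq\supp^{-1}S$ is trivial. Conversely, if $X\in\supp^{-1}S$ then $\supp X\subseteq S$ and $\supp X$ is specialization-closed, so by maximality of $S_\spcl$ we get $\supp X\subseteq S_\spcl$, giving the reverse inclusion. The equality $\supp(\supp^{-1}S)=S_\spcl$ then follows: the left-hand side is contained in $S_\spcl$ by the same argument, and for the opposite containment pick $\p\in S_\spcl$ and take $X=R/\p$, which satisfies $\supp(R/\p)=\V(\p)\subseteq S_\spcl\subseteq S$ (since $S_\spcl$ is specialization-closed) and $\p\in\V(\p)$. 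The thick $\otimes$-ideal property of $\supp^{-1}S$ is a direct consequence of Lemma \ref{si0}: parts (1)--(3) yield closure under shifts, direct summands and cones, while part (4) together with the inclusion $\supp(X\ltensor_RY)=\supp X\cap\supp Y\subseteq\supp X$ gives closure under the tensor action by arbitrary objects of $\dm(R)$.

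For (2), $\supp\X=\bigcup_{X\in\X}\supp X$ is a union of specialization-closed subsets, hence specialization-closed. The inclusion $\supp\X\subseteq\supp(\tthick\X)$ is immediate. For the reverse, set $S=\supp\X$; then $\supp^{-1}S$ is a thick $\otimes$-ideal of $\dm(R)$ by (1), it clearly contains $\X$, and therefore contains $\tthick\X$. Consequently $\supp(\tthick\X)\subseteq S=\supp\X$.

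For (3), recall that a finitely generated module $M$ over a noetherian ring $R$ has finite length if and only if $\supp_RM\subseteq\Max R$. Applying this to each cohomology $\h^i(X)$ for $X\in\dm(R)$ and taking the union over $i$ (using Proposition \ref{supp}) yields $X\in\df(R)\iff\supp X\subseteq\Max R$, i.e.\ $\df(R)=\supp^{-1}(\Max R)$. Since $\Max R$ is specialization-closed (each maximal ideal is a closed point), the final assertion is a special case of (1). I do not expect a serious obstacle here; the only subtle point is verifying that $\supp X$ is always specialization-closed, which forces the two uses of ``$\spcl$-closure'' in parts (1) and (2) to behave as expected.
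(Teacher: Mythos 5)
Your proof is correct and follows essentially the same route as the paper: the key observation in both is that $\supp X$ is specialization-closed, part (1) is proved identically, and part (2) reduces to the thick $\otimes$-ideal property of $\supp^{-1}$. The only cosmetic difference is that in (2) you argue via a single containment $\tthick\X\subseteq\supp^{-1}(\supp\X)$ rather than the paper's pointwise reasoning with $\supp^{-1}(\{\p\}^\complement)$; both are equivalent one-liners.
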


\begin{proof}
(1) We put $W=S_\spcl$.
Let $X$ be a complex in $\dm(R)$.
Since $\supp X$ is specialization-closed, it is contained in $S$ if and only if it is contained in $W$.
Hence $\supp^{-1}S=\supp^{-1}W$.
Evidently, $W$ contains $\supp(\supp^{-1}W)$, while we have $\p\in\supp R/\p=\V(\p)\subseteq W$ for $\p\in W$.
Hence $\supp(\supp^{-1}W)=W$, and thus $\supp(\supp^{-1}S)=W$.
It is seen from Lemma \ref{si0} that $\supp^{-1}S$ is a thick $\otimes$-ideal of $\dm(R)$.

(2) We have $\supp\X=\bigcup_{X\in\X}\supp X=\bigcup_{X\in\X}\bigcup_{i\in\Z}\supp\h^iX$ by Proposition \ref{supp}.
Since $\h^iX$ is a finitely generated $R$-module, $\supp\h^iX$ is closed.
Hence $\supp\X$ is specialization-closed.
A prime ideal $\p$ of $R$ is not in $\supp\X$ if and only if $\X$ is contained in $\supp^{-1}(\{\p\}^\complement)$, if and only if $\tthick\X$ is contained in $\supp^{-1}(\{\p\}^\complement)$, if and only if $\p$ does not belong to $\supp(\tthick\X)$.
It follows from (1) that $\supp^{-1}(\{\p\}^\complement)$ is a thick $\otimes$-ideal of $\dm(R)$, which shows the second equivalence.
The other two equivalences are obvious.

(3) The equality is straightforward, and the last assertion is shown by (1).
\end{proof}

%%%%%%%%%%%%%%%%%%%%%%%%%%%%%%%%%%%%%%%%%%%%%%%%%%%%%%%%%%%%%%%
\section{Classification of compact thick tensor ideals}\label{sect:cpt}
%%%%%%%%%%%%%%%%%%%%%%%%%%%%%%%%%

In this section, we prove a generalized version of the smash nilpotence theorem due to Hopkins \cite{H} and Neeman \cite{N}, and using this we give a complete classification of cocompact thick tensor ideals of $\dm(R)$.

We begin with recalling the definitions of compact and cocompact objects.
Let $\T$ be a triangulated category.
We say that an object $M\in\T$ is {\em compact} (resp. {\em cocompact}) if the natural morphism
\begin{align*}
&\textstyle\bigoplus_{\lambda\in\Lambda}\Hom_\T(M,N_\lambda)\to\Hom_\T(M,\bigoplus_{\lambda\in\Lambda}N_\lambda)\\
\ (\text{resp. }&\textstyle\bigoplus_{\lambda\in\Lambda}\Hom_\T(N_\lambda, M)\to\Hom_\T(\prod_{\lambda\in\Lambda}N_\lambda, M))
\end{align*}
is an isomorphism for every family $\{N_\lambda\}_{\lambda\in\Lambda}$ of objects of $\T$ with $\bigoplus_{\lambda\in\Lambda}N_\lambda\in\T$ (resp. $\prod_{\lambda\in\Lambda}N_\lambda\in\T$).
We denote by $\T^\c$ (resp. $\T^\cc$) the subcategory of $\T$ consisting of compact (resp. cocompact) objects.
For $\T=\dm(R)$ we have explicit descriptions of the compact objects and cocompact objects:

\begin{prop}\label{cpt}
One has $\dm(R)^\c=\kb(\proj R)$ and $\dm(R)^\cc=\db(R)$.
\end{prop}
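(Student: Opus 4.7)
Via the equivalences $\dm(R)\cong\km(R)$ and $\db(R)\cong\kmb(R)$ already recorded, both equalities reduce to statements about complexes of finitely generated projectives, and I would prove each by two inclusions.

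For the easy inclusions: $\dm(R)^\c\supseteq\kb(\proj R)$ follows from Proposition \ref{si2}(2), which gives $\kb(\proj R)=\thick_{\dm(R)}R$, together with the fact that the compact objects form a thick subcategory; so it suffices to check that $R$ itself is compact, which is immediate from the identification $\Hom_{\dm(R)}(R,-)\cong\h^0(-)$ and the fact that $\h^0$ commutes with any coproduct existing in $\dm(R)$. Dually, for $\dm(R)^\cc\supseteq\db(R)$: given $X\in\db(R)$ with a bounded representative and a family $\{N_\lambda\}$ with $\prod_\lambda N_\lambda\in\dm(R)$, the identity $\h^i(\prod_\lambda N_\lambda)=\prod_\lambda\h^i(N_\lambda)$ together with the noetherian hypothesis forces only finitely many $N_\lambda$ to be nonzero in each cohomological degree; combined with the boundedness of $X$, any map $\prod_\lambda N_\lambda\to X$ then decomposes as a finite sum, giving $\bigoplus_\lambda\Hom(N_\lambda,X)\cong\Hom(\prod_\lambda N_\lambda,X)$.

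For the reverse inclusion in cocompactness, suppose $X\in\dm(R)^\cc$ has $\h^{-n_k}(X)\ne 0$ for a strictly increasing sequence $n_1<n_2<\cdots$ tending to infinity. Set $M_k=R[n_k]$; these live in pairwise distinct degrees, so $\prod_k M_k=\bigoplus_k M_k\in\dm(R)$ as a literal complex. Using $\Hom_{\dm(R)}(R[n],-)\cong\h^{-n}(-)$, the cocompactness axiom forces $\bigoplus_k\h^{-n_k}(X)\xrightarrow{\sim}\prod_k\h^{-n_k}(X)$, which is impossible when infinitely many summands are nonzero, so $X\in\db(R)$.

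For the reverse inclusion in compactness, suppose $X\in\dm(R)^\c$ is not perfect. Then there is a prime $\p$ such that $X_\p$ has infinite projective dimension over $R_\p$, so its minimal free resolution $F\to X_\p$ over $R_\p$ is unbounded. With $N_n=(R/\p)[n]$ for $n\ge 0$, the complex $\bigoplus_n N_n$ lies in $\dm(R)$; by the minimality of $F$, one produces (after lifting the local construction back to $R$) nonzero morphisms $X\to N_n$ for infinitely many $n$ that assemble into a diagonal morphism $X\to\bigoplus_n N_n$ with infinitely many nontrivial components, contradicting compactness. The main obstacle here is the lift from $R_\p$ back to $R$ and the verification that the assembled diagonal morphism genuinely cannot factor through any finite sub-coproduct; both require minimality of the local free resolution together with the support theory of Proposition \ref{supp}. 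One must also track throughout which (co)products actually exist in $\dm(R)$, since this category is not closed under arbitrary (co)products.
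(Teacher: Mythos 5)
The easy inclusion $\kb(\proj R)\subseteq\dm(R)^\c$ and your direct proof that $\dm(R)^\cc\subseteq\db(R)$ via the family $\{R[n_k]\}$ are both correct; the latter is a nice elementary argument, whereas the paper simply cites \cite[Theorem~18]{OS} for $\dm(R)^\cc=\db(R)$. Your sketch of $\db(R)\subseteq\dm(R)^\cc$ also has the right germ — $\bigoplus_\lambda\h^i(N_\lambda)$ sits inside the finitely generated module $\h^i(\prod_\lambda N_\lambda)$, so only finitely many $\lambda$ contribute in each degree — but the phrase ``decomposes as a finite sum'' glosses over the real combination needed: since $\h^i(\prod_\lambda N_\lambda)$ also vanishes for $i\gg0$, only finitely many $\lambda$ have any cohomology in a degree $\ge$ the lower cohomological bound of $X$, and those are the only $\lambda$ that can possibly satisfy $\Hom(N_\lambda,X)\ne0$; the rest of the product contributes nothing.

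The genuine gap is in your argument for $\dm(R)^\c\subseteq\kb(\proj R)$. It begins with ``there is a prime $\p$ such that $X_\p$ has infinite projective dimension over $R_\p$'', and that claim is false when the homology of $X$ is unbounded. Take $R=\Z$ and $X=\bigoplus_{n\ge1}(\Z/p_n\Z)[n]$ with $p_n$ the $n$-th prime number. Then $X\in\dm(\Z)$ has $\h^{-n}(X)=\Z/p_n\Z\ne0$ for all $n\ge1$, so $X\notin\kb(\proj\Z)$; but for every prime $\q$ of $\Z$ the localization $X_\q$ is zero or a single shifted cyclic module of projective dimension one, hence $X_\q\in\kb(\proj\Z_\q)$. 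So the localize-and-minimal-resolution strategy never gets off the ground in the unbounded-homology case, which you cannot ignore (this is in addition to the lifting and factorization issues you flag yourself, which are real but can be patched). The paper's proof avoids localization altogether: with $X\in\km(R)$, it sets $C^n=\Cok(d^{n-1})$, takes the quotient maps $f_n:X\to C^n[-n]$, and applies compactness to the induced $f:X\to\bigoplus_nC^n[-n]$ to conclude that $f_n$ is null-homotopic for all $n\le t$. From a null-homotopy $g:X^{n+1}\to C^n$ one reads off $g\overline{d^n}=1$, so $C^n$ is a projective direct summand of $X^{n+1}$ and $\h^nX=\ker\overline{d^n}=0$; hence the truncation $(0\to C^t\to X^{t+1}\to X^{t+2}\to\cdots)$ is a bounded complex of projectives quasi-isomorphic to $X$. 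That single construction handles the unbounded-homology case and the bounded-but-infinite-projective-dimension case simultaneously, with no need to localize.
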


\begin{proof}
The second statement follows from \cite[Theorem 18]{OS}.
The first one can be shown in the same way as the proof of the fact that the compact objects of the unbounded derived category of all $R$-modules coincides with $\kb(\proj R)$.
For the convenience of the reader, we give a proof.

First of all, $R$ is compact since each homology functor $\h^i$ commutes with direct sums.
Since the compact objects form a thick subcategory, one has $\kb(\proj R) \subseteq \dm(R)^\c$.
Next, let $X\in\dm(R)$ be a compact object.
Replacing $X$ with its projective resolution, we may assume $X \in \km(R)$.
Consider the chain map
$$
\xymatrix{
X\ar[d]^{f_n} & = & (\cdots\ar[r] & X^{n-1}\ar[r]^{d^{n-1}}\ar[d] & X^n\ar[r]^{d^{n}}\ar[d]^{f_n^n} & X^{n+1}\ar[r]\ar[d] & \cdots)\phantom{,} \\
C^n[-n] & = & (\cdots\ar[r] & 0\ar[r] & C^n\ar[r] & 0\ar[r] & \cdots),
}
$$
where $C^n$ is the cokernel of $d^{n-1}$, and $f_n^n:X^n \to C^n$ is a natural surjection.
Put $Y=\bigoplus_{n \in \Z}C^n[-n]$.
A chain map $f:X \to Y$ is induced by $\{f_n \}_{n \in \Z}$.
As $X\in\km(R)$ is compact in $\dm(R)$, we have isomorphisms
$$
\textstyle\Hom_{\K}(X, Y) \cong \Hom_{\dm(R)}(X, Y) \cong \bigoplus_{n \in \Z}\Hom_{\dm(R)}(X, C^n[-n]) \cong \bigoplus_{n \in \Z}\Hom_{\K}(X, C^n[-n]),
$$
where $\K$ is the homotopy category of $R$-modules.
The composition of these isomorphisms sends $f$ to $(f_n)_{n \in \Z}$, which implies that there exists $t\in\Z$ such that $f_n=0$ in $\K$ for all $n\le t$.
Hence, there is an $R$-linear map $g: X^{n+1} \to C^n$ such that $g \circ d^n = f_n^n$.
Let $\overline{d^n}:C^n \to X^{n+1}$ be the map induced by $d^n$.
We have $g \overline{d^n} f_n^n = g d^n= f_n^n$, and obtain $g \overline{d^n}=1$ as $f_n^n$ is a surjection.
Thus, $C^n$ is a direct summand of $X^{n+1}$, and thereby projective.
Also, $\h^nX$ is isomorphic to the kernel of $\overline{d^n}$, which vanishes since $\overline{d^n}$ is a split monomorphism.
Consequently, the truncated complex $X':=(0 \to C^t \xrightarrow{\overline{d^t}} X^{t+1} \xrightarrow{d^{t+1}} X^{t+2} \xrightarrow{d^{t+2}}\cdots)$, which is quasi-isomorphic to $X$, is in $\kb(\proj R)$.
We now conclude that $X$ belongs to $\kb(\proj R)$.
\end{proof}

Next, we make the definitions of the annihilators of morphisms and objects in $\dm(R)$.

\begin{dfn}
\begin{enumerate}[(1)]
\item
Let $f:X\to Y$ be a morphism in $\dm(R)$.
We define the {\em annihilator} of $f$ as the set of elements $a\in R$ such that $a f=0$ in $\dm(R)$, and denote it by $\ann_R(f)$.
This is an ideal of $R$.
\item
The {\em annihilator} of an object $X\in\dm(R)$ is defined as the annihilator of the identity morphism $\id_X$, and denoted by $\ann_R(X)$.
This is the set of elements $a\in R$ such that $(X \xrightarrow{a} X)= 0$ in $\dm(R)$.
\end{enumerate}
\end{dfn}

Here are some properties of annihilators.

\begin{prop}\label{annihi}
\begin{enumerate}[\rm(1)]
\item
Let $f: X \to Y$ be a morphism in $\dm(R)$ and $\p$ a prime ideal of $R$.
\begin{enumerate}[\rm(a)]
\item
The ideal $\ann_R(f)$ is the kernel of the map $\eta_f:R \to \Hom_{\dm(R)}(X, Y)$ given by $a \mapsto a f$.
\item
If the natural map $\tau_{X,Y,\p}:\Hom_{\dm(R)}(X, Y)_\p \to \Hom_{\dm(R_\p)}(X_\p, Y_\p)$ is an isomorphism, then there is an equality $\ann_R(f)_\p = \ann_{R_\p}(f_\p)$.
\end{enumerate}
\item
For any $X \in \dm(R)$ one has $\V(\ann X) \supseteq \supp X$.
The equality holds if $\tau_{X,X,\p}$ is an isomorphism for all $\p\in\spec R$. 
In particular, for $X\in\db(R)$ one has $\V(\ann X) = \supp X$.
\item
Let $\xx=x_1, \ldots, x_n$ be a sequence of elements of $R$.
Then it holds that $\ann\k(\xx, R)=\xx R$.
In particular, there is an equality $\supp\k(\xx,R)=\V(\xx)$, and $\k(\xx,R)$ belongs to $\supp^{-1}\V(\xx)$.
\end{enumerate}
\end{prop}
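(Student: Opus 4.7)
The plan is to handle the three parts in sequence, with (1) purely formal, (2) deduced from (1) via a localization argument, and (3) an explicit Koszul-complex computation combined with (2).

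For (1a), I will simply unpack definitions: $a \in \ann_R(f)$ if and only if $af = 0$ in $\dm(R)$, which is if and only if $\eta_f(a) = 0$, so $\ann_R(f) = \ker\eta_f$. For (1b), I will localize the exact sequence
\[
0 \to \ann_R(f) \to R \xrightarrow{\eta_f} \Hom_{\dm(R)}(X,Y)
\]
at $\p$ to identify $\ann_R(f)_\p$ with the kernel of $(\eta_f)_\p \colon R_\p \to \Hom_{\dm(R)}(X,Y)_\p$. Composing with the hypothesized isomorphism $\tau_{X,Y,\p}$ yields the map $a/s \mapsto (a/s) f_\p$ into $\Hom_{\dm(R_\p)}(X_\p, Y_\p)$, whose kernel equals $\ann_{R_\p}(f_\p)$ by applying (1a) over $R_\p$.

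For (2), the inclusion $\V(\ann X) \supseteq \supp X$ is shown by contrapositive: if $a \in \ann X \setminus \p$, then $a/1 \in R_\p^\times$ and $(a/1)\cdot \id_{X_\p} = 0$ force $\id_{X_\p} = 0$, so $X_\p = 0$. For the reverse inclusion, assuming $\tau_{X,X,\p}$ is an isomorphism for every $\p$: if $\p \notin \supp X$, then $X_\p = 0$, so $\ann_{R_\p}(\id_{X_\p}) = R_\p$, and (1b) gives $\ann_R(X)_\p = R_\p$, whence $\ann X \not\subseteq \p$. The special case $X \in \db(R)$ reduces to verifying that $\tau_{X,X,\p}$ is an isomorphism for bounded complexes. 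This is the one genuine technical point in the proof, and I expect it to be the main obstacle; I will handle it by choosing a projective resolution $P \to X$ with $P \in \km(R)$ made of finitely generated projectives and observing that, since $X$ is bounded, a chain map $P \to X$ is determined by its finitely many nonzero components in the bounded range of $X$, each a morphism between finitely generated modules, whence Hom commutes with localization over the noetherian ring $R$.

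For (3), the inclusion $\xx R \subseteq \ann\k(\xx, R)$ follows from the classical observation that multiplication by $x_i$ on the one-term Koszul complex $\k(x_i, R) = (R \xrightarrow{x_i} R)$ is null-homotopic (the identity $R \to R$ between the two copies of $R$ serves as a homotopy), and since $\k(\xx, R) = \k(x_1, R) \otimes_R \cdots \otimes_R \k(x_n, R)$, this null-homotopy extends by tensoring with identities. The reverse inclusion is immediate from $\h^0(\k(\xx, R)) = R/\xx R$: if $a\cdot \id_{\k(\xx, R)} = 0$ in $\dm(R)$, applying $\h^0$ shows that multiplication by $a$ on $R/\xx R$ vanishes, i.e., $a \in \xx R$. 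Since $\k(\xx, R) \in \kb(\proj R) \subseteq \db(R)$, part (2) then yields $\supp\k(\xx, R) = \V(\ann\k(\xx, R)) = \V(\xx R) = \V(\xx)$, and the membership $\k(\xx, R) \in \supp^{-1}\V(\xx)$ is tautological.
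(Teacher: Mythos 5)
Your proof is correct, and its overall structure (deduce (2) from (1), then (3) from (2) plus the Koszul annihilator computation) matches the paper's. The differences are local: where the paper simply cites \cite[Lemma~5.2(b)]{AF} for the isomorphism $\tau_{X,X,\p}$ when $X\in\db(R)$, you sketch the underlying argument directly (the Hom complex $\Hom^\bullet_R(P,X)$ has finitely generated terms in each degree because $X$ is bounded, so its cohomology commutes with localization); and where the paper cites \cite[Proposition~1.6.5]{BH} for $\xx R\subseteq\ann\k(\xx,R)$, you produce the explicit null-homotopy on each factor $\k(x_i,R)$ and tensor. Most notably, for the reverse inclusion $\ann\k(\xx,R)\subseteq\xx R$ your argument is cleaner than the paper's: you apply the functor $\h^0$ to the vanishing morphism $a\cdot\id_{\k(\xx,R)}$ to conclude that $a$ annihilates $\h^0(\k(\xx,R))=R/\xx R$, whereas the paper extracts the single component $s_1$ of an explicit null-homotopy and reads off $a=\sum a_ix_i$ from the matrix of the first Koszul differential. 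Both are valid; the $\h^0$ argument avoids unwinding the homotopy and is somewhat more conceptual, while the paper's computation is self-contained at the level of matrices. One small caution: your phrasing that a chain map $P\to X$ ``is determined by its finitely many nonzero components'' elides the fact that one must also control the null-homotopic maps to identify $\Hom_{\dm(R)}(X,X)$; this is handled automatically once you pass to $\h^0$ of the total Hom complex, which is the form the argument should take to be airtight.
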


\begin{proof}
(1)
The assertion (a) is obvious, while (b) follows from (a) and the commutative diagram
$$
\xymatrix{
R_\p\ar[rrr]^-{(\eta_{f})_\p}\ar@{=}[d] &&& \Hom_{\dm(R)}(X, Y)_\p\ar[d]_{\cong}^{\tau_{X,Y,\p}} \\
R_\p\ar[rrr]^-{\eta_{f_\p}} &&& \Hom_{\dm(R_\p)}(X_\p, Y_\p).
}
$$

(2)
The first assertion is easy to show.
Suppose that $\tau_{X,X,\p}$ is an isomorphism for all $\p\in\spec R$.
By (1) one has $(\ann_RX)_\p=\ann_{R_\p}X_\p$.
We have $X_\p \neq 0$ if and only if $(\ann_R X)_\p\ne R_\p$, if and only if $\p \in \V(\ann_R X)$.
This shows $\V(\ann_RX)=\supp_RX$.
As for the last assertion, use \cite[Lemma 5.2(b)]{AF}.

(3)
The second statement follows from the first one and (2).
Therefore it suffices to show the equality $\ann\k(\xx,R)=\xx R$.
It follows from \cite[Proposition 1.6.5]{BH} that $\ann\k(\xx,R)$ contains $\xx R$.
Conversely, pick $a \in \ann\k(\xx, R)$.
Then the multiplication map $a:\k(\xx, R) \to \k(\xx, R)$ is null-homotopic, and there is a homotopy $\{s_i : \k_{i-1}(\xx, R) \to \k_i(\xx, R) \}$ from $a$ to $0$.
In particular, we have $a=d_1 s_{1}$, where $d_1$ is the first differential of $\k(\xx, R)$.
Writing $d_1 = (x_1, \ldots, x_n): R^n \to R$ and $s_1={}^t\!(a_1, \ldots, a_n): R \to R^n$, we get $a =  (x_1, \ldots, x_n)\, {}^t\!(a_1, \ldots, a_n) = a_1 x_1 + \cdots + a_n x_n\in\xx R$.
Consequently, we obtain $\ann\k(\xx,R)=\xx R$.
\end{proof}

To state our next results, we need to introduce some notation.

\begin{dfn}
Let $\T$ be a triangulated category.
\begin{enumerate}[(1)]
\item
For two subcategories $\C_1,\C_2$ of $\T$, we denote by $\C_1 * \C_2$ the subcategory of $\T$ consisting of objects $M$ such that there is an exact triangle $C_1 \to M \to C_2 \rightsquigarrow$ with $C_i\in\C_i$ for $i=1,2$.
\item
For a subcategory $\C$ of $\T$, we denote by $\adds\C$ the smallest subcategory of $\T$ that contains $\C$ and is closed under finite direct sums, direct summands and shifts.
Inductively we define $\thick_\T^1(\C)=\adds\C$ and $\thick_\T^r(\C)=\adds(\thick_\T^{r-1}(\C)*\adds\C)$ for $r>1$.
This is sometimes called the $r$-th {\em thickening} of $\C$.
When $\C$ consists of a single object $X$, we simply denote it by $\thick_\T^r(X)$.
\item
For a morphism $f:X\to Y$ in $\T$ and an integer $n\ge1$, we denote by $f^{\otimes n}$ the $n$-fold tensor product $\underbrace{f\otimes\cdots\otimes f}_n$.
Note that for $\T=\dm(R)$ we mean by $f^{\otimes n}$ the morphism $\underbrace{f\ltensor_R\cdots\ltensor_Rf}_n$.
\end{enumerate}
\end{dfn}

We establish two lemmas, which will be used to show the generalized smash nilpotence theorem.
The first one concerns general tensor triangulated categories, while the second one is specific to our $\dm(R)$.

\begin{lem}\label{cl}
Let $\T$ be a tensor triangulated category.
\begin{enumerate}[\rm(1)]
\item
Let $\X,\Y$ be subcategories of $\T$.
Let $f:M\to M'$ and $g:N\to N'$ be morphisms in $\T$.
If $f\otimes\X=0$ and $g\otimes\Y=0$, then $f\otimes g\otimes(\X*\Y)=0$.
\item
Let $\phi:A \to B$ be a morphism in $\T$, and let $C$ be an object of $\T$.
If $\phi\otimes C =0$, then $\phi^{\otimes n} \otimes\thick_\T^n(C)=0$ for all integers $n>0$.
\end{enumerate}
\end{lem}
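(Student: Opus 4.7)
The plan for (1) is to exploit two tools: the factorization $f\otimes g=(f\otimes\id_{N'})\circ(\id_M\otimes g)$ of a tensor product of morphisms, and the standard fact in a triangulated category that any morphism out of the middle term of a distinguished triangle which kills the third term factors through the first term. Fix $Z\in\X*\Y$ with distinguished triangle $X\xrightarrow{a}Z\xrightarrow{b}Y\xrightarrow{c}X[1]$ where $X\in\X$ and $Y\in\Y$. Tensoring with $N$ and with $N'$ produces two distinguished triangles, and the morphism $g\otimes\id_{(-)}$ is a morphism between them. Since by hypothesis $g\otimes\id_Y=0$, the standard lemma produces a morphism $\alpha:N\otimes Z\to N'\otimes X$ with $g\otimes\id_Z=(\id_{N'}\otimes a)\circ\alpha$. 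Plugging this into $f\otimes g\otimes\id_Z=(f\otimes\id_{N'\otimes Z})\circ(\id_M\otimes g\otimes\id_Z)$, I can rewrite the whole composite so that it contains, as a factor, the morphism $f\otimes\id_X$ (after pushing the identity $\id_{N'}$ past via the symmetry of $\otimes$); this factor vanishes by the hypothesis $f\otimes\X=0$, so the entire composite is zero.

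For (2), I would argue by induction on $n$. The base case $n=1$ follows because $\thick_\T^1(C)=\adds C$ and tensoring with a morphism is an additive functor that commutes with shifts and direct summands, so $\phi\otimes C=0$ forces $\phi\otimes Z=0$ for every object $Z$ built from $C$ by taking finite direct sums, summands, and shifts. For the inductive step, by definition $\thick_\T^n(C)=\adds\bigl(\thick_\T^{n-1}(C)*\adds C\bigr)$. The induction hypothesis gives $\phi^{\otimes(n-1)}\otimes\thick_\T^{n-1}(C)=0$, and the base case already gives $\phi\otimes\adds C=0$, so applying Part (1) with $\X=\thick_\T^{n-1}(C)$, $\Y=\adds C$ and the morphisms $\phi^{\otimes(n-1)}$ and $\phi$ yields $\phi^{\otimes n}\otimes(\thick_\T^{n-1}(C)*\adds C)=0$. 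Passing to $\adds$ then preserves the vanishing, by the same reasoning as in the base case.

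The main obstacle is the bookkeeping in Part (1): one has to check that after substituting the factorization of $g\otimes\id_Z$ into $f\otimes g\otimes\id_Z$, the resulting composite genuinely factors through $f\otimes\id_X$, which requires a careful but routine use of the coherence isomorphisms (associativity and symmetry) of the tensor structure. Once the factorization is laid out cleanly, both parts follow quickly; beyond that, no further ingredients are needed, and the symmetric-monoidal compatibility axioms from \cite[Appendix A]{HPS} do all the work.
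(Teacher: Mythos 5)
Your proof is correct and takes essentially the same approach as the paper's: factor $f\otimes g$ as a composite, use one vanishing hypothesis to produce a factorization through an end term of the tensored triangle, and kill the result with the other vanishing hypothesis, then obtain (2) from (1) by induction. The only (cosmetic) difference is that the paper uses $f\otimes\X=0$ first to factor $f\otimes\id_{N'\otimes E}$ through the cofiber $M\otimes N'\otimes Y$, whereas you use $g\otimes\Y=0$ first to factor $g\otimes\id_Z$ through $N'\otimes X$; the two are symmetric and lead to the same conclusion.
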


\begin{proof}
As (2) is shown by induction on $n$ and (1), so let us show (1).
Let $X\to E\to Y\rightsquigarrow$ be an exact triangle in $\T$ with $X\in\X$ and $Y\in\Y$.
Then $f\otimes X=0$ and $g\otimes Y=0$ by assumption.
There is a diagram
$$
\xymatrix{
M\otimes N\otimes X\ar[rrr]\ar[d]_{M\otimes g\otimes X} &&& M\otimes N\otimes E\ar[rrr]\ar[d]_{M\otimes g\otimes E}\ar@{}[rrrd]|\circlearrowleft &&& M\otimes N\otimes Y\ar@{~>}[r]\ar[d]_{M\otimes g\otimes Y}^0 &\\
M\otimes N'\otimes X\ar[rrr]\ar[d]_{f\otimes N'\otimes X}^0\ar[rrrd]_0 &&& M\otimes N'\otimes E\ar[rrr]\ar[d]_{f\otimes N'\otimes E}\ar@{}[lld]_\circlearrowleft\ar@{}[rd]^\circlearrowleft &&& M\otimes N'\otimes Y\ar@{~>}[r]\ar[d]_{f\otimes N'\otimes Y}\ar@{.>}[llld]^h &\\
M'\otimes N'\otimes X\ar[rrr]\ar@{}[rru]_\circlearrowleft &&& M'\otimes N'\otimes E\ar[rrr] &&& M'\otimes N'\otimes Y\ar@{~>}[r] &
}
$$
in $\T$ whose rows are exact triangles, and we obtain a morphism $h$ as in it.
It is observed from this diagram that $f\otimes g\otimes E=(f\otimes N'\otimes E)\circ(M\otimes g\otimes E)$ is a zero morphism.
\end{proof}

\begin{lem}\label{lift}
\begin{enumerate}[\rm(1)]
\item
Let $f:X\to Y$ be a morphism in $\dm(R)$.
Let $\xx=x_1,\dots,x_n$ be a sequence of elements of $R$.
If $f\ltensor_RR/(\xx)=0$ in $\dm(R)$, then $f^{\otimes 2^n}\ltensor_R\k(\xx, R)=0$ in $\dm(R)$.
\item
Let $\xx =x_1, \ldots, x_n$ be a sequence of elements of $R$, and let $e>0$ be an integer.
Then $\k(\xx^e, R)$ belongs to $\thick_{\km(R)}^{ne}(\k(\xx, R))$, where $\xx^e=x_1^e, \ldots, x_n^e$.
\end{enumerate}
\end{lem}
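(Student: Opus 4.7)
The plan is to prove (1) by induction on $n$ and (2) by reducing to the single-variable case and iterating over tensor factors. Throughout I will appeal freely to Lemma \ref{cl}, Proposition \ref{annihi}(3), and the change-of-rings identity $f\ltensor_R M \cong f\ltensor_R R/(\xx)\ltensor_{R/(\xx)}M$ for any $R/(\xx)$-module $M$.

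For (1), the base $n=0$ is tautological. For the inductive step, set $\xx'=x_1,\dots,x_{n-1}$ and observe that $\k(x_n,R)\ltensor_R R/(\xx')\cong\cone(x_n\colon R/(\xx')\to R/(\xx'))$ fits into the truncation triangle
\[
(0:_{R/(\xx')}x_n)[1]\to\k(x_n,R)\ltensor_R R/(\xx')\to R/(\xx)\to (0:_{R/(\xx')}x_n)[2].
\]
The leftmost term is annihilated by $(\xx)$, so change of rings gives $f\ltensor_R(0:_{R/(\xx')}x_n)=0$; Lemma \ref{cl}(1) applied with both morphisms equal to $f$ then yields $f^{\otimes 2}\ltensor_R\k(x_n,R)\ltensor_R R/(\xx')=0$. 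Setting $\tilde f:=f^{\otimes 2}\ltensor_R\k(x_n,R)$, the inductive hypothesis applied to $\tilde f$ and $\xx'$ gives $\tilde f^{\otimes 2^{n-1}}\ltensor_R\k(\xx',R)=0$, which unwinds to $f^{\otimes 2^n}\ltensor_R\k(x_n,R)^{\otimes 2^{n-1}}\ltensor_R\k(\xx',R)=0$. Because $x_n$ annihilates $\k(x_n,R)$ by Proposition \ref{annihi}(3), tensoring the triangle $R\xrightarrow{x_n}R\to\k(x_n,R)\to R[1]$ with $\k(x_n,R)$ forces the splitting $\k(x_n,R)^{\otimes 2}\simeq\k(x_n,R)\oplus\k(x_n,R)[1]$; iterating shows $\k(x_n,R)$ is a direct summand of $\k(x_n,R)^{\otimes 2^{n-1}}$, so $\k(\xx,R)$ is a summand of $\k(x_n,R)^{\otimes 2^{n-1}}\ltensor_R\k(\xx',R)$, and $f^{\otimes 2^n}\ltensor_R\k(\xx,R)=0$ falls out.

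For (2), the single-variable case $\k(x^e,R)\in\thick^e_{\km(R)}(\k(x,R))$ follows by induction on $e$: the octahedral axiom applied to $R\xrightarrow{x^{e-1}}R\xrightarrow{x}R$ produces the exact triangle $\k(x^{e-1},R)\to\k(x^e,R)\to\k(x,R)\to\k(x^{e-1},R)[1]$, placing $\k(x^e,R)$ in $\thick^{e-1}(\k(x,R))*\adds(\k(x,R))\subseteq\thick^e(\k(x,R))$. For general $n$, I iteratively replace each factor $\k(x_i,R)$ in $\k(\xx,R)=\k(x_1,R)\otimes\cdots\otimes\k(x_n,R)$ by $\k(x_i^e,R)$; since tensoring with a fixed complex preserves the thickening operation, and since the tensor product of two short complexes is the total complex of a bicomplex (combining thicknesses additively rather than multiplicatively), the cumulative bound $\k(\xx^e,R)\in\thick^{ne}(\k(\xx,R))$ emerges. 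The main obstacle in (1) is choosing the correct middle complex $\k(x_n,R)\ltensor_R R/(\xx')$, which serves as a bridge between the hypothesis (involving $R/(\xx)$) and the conclusion (involving $\k(\xx,R)$); in (2) the subtlety is obtaining the additive bound $ne$ rather than the multiplicative $e^n$ that composing thickenings naively would yield.
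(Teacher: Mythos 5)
Part (1) is correct and takes a genuinely different route from the paper. The paper peels off $x_1$, applies the inductive hypothesis over the quotient ring $R/(x_1)$, invokes the $n=1$ case over $R$, and finishes by realizing $\k(\xx,R)$ as a direct summand of $\k(x_2,\dots,x_n,\xx,R)$ (Bruns--Herzog). You instead stay over $R$ throughout: the truncation triangle of $\k(x_n,R)\ltensor_R R/(\xx')$ serves as the $n=1$ triangle, you feed it to Lemma~\ref{cl}(1), apply the inductive hypothesis to $\tilde f=f^{\otimes 2}\ltensor_R\k(x_n,R)$, and recover $\k(\xx,R)$ as a summand via the self-tensor splitting $\k(x,R)^{\otimes 2}\cong\k(x,R)\oplus\k(x,R)[1]$ (forced by $x\in\ann\k(x,R)$). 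Both routes produce the exponent $2^n$; yours trades the change-of-rings bookkeeping for the splitting observation, which is a clean alternative.

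Part (2): the base case is identical to the paper's. The inductive step, however, has a gap at ``combining thicknesses additively rather than multiplicatively.'' Iterating $n$ single-variable replacements produces a \emph{chain} of containments $K_n\in\thick^e(K_{n-1}),\dots,K_1\in\thick^e(K_0)$, and composition of such containments is multiplicative: from $A\in\thick^a(B)$ and $B\in\thick^b(C)$ one gets only $A\in\thick^{ab}(C)$. This is tight in general --- in $\db(\Z)$ one has $\Z/64\in\thick^3(\Z/4)$ and $\Z/4\in\thick^2(\Z/2)$, yet $\Z/64\notin\thick^5(\Z/2)$ since $\thick^r(\Z/2)$ consists of objects annihilated by $2^r$. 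So your chain yields $\thick^{e^n}$, not $\thick^{ne}$. The additive bound is a different statement: the tensor-of-Postnikov-towers principle that $A\in\thick^a(B)$ and $C\in\thick^c(D)$ give $A\otimes C\in\thick^{a+c-1}(B\otimes D)$ via the diagonal filtration, which must be applied once to $\k(x_1^e,\dots,x_{n-1}^e,R)\otimes\k(x_n^e,R)$ with the inductive hypothesis on one factor and the base case on the other. Your ``total complex of a bicomplex'' remark is gesturing at exactly this, but the argument as framed (an iterated chain) does not invoke it. The paper's own ``Therefore'' at this juncture is similarly terse; and since Theorem~\ref{gsn} only consumes the existence of \emph{some} finite bound, nothing downstream breaks --- but the stated $ne$ bound is not established by the reasoning you have written.
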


\begin{proof}
(1) We use induction on $n$.
Let $n=1$ and set $x=x_1$.
There are exact sequences $0\to(0:x)\to R\to(x)\to0$ and $0\to(x)\to R\to R/(x)\to0$.
Applying the octahedral axiom to $(R\to(x)\to R)=(R\xrightarrow{x}R)$ gives an exact triangle $(0:x)[1] \to \k(x,R) \to R/(x) \rightsquigarrow$ in $\dm(R)$.
We have $f\ltensor_R R/(x)=0$, and $f\ltensor_R(0:x)[1]=(f\ltensor_RR/(x))\ltensor_{R/(x)}(0:x)[1]=0$.
Lemma \ref{cl}(1) yields $f^{\otimes2}\ltensor_R\k(x,R)=0$.

Let $n\ge2$.
We have $0=f\ltensor_RR/(\xx)=(f\ltensor_RR/(x_1))\ltensor_{R/(x_1)}R/(\xx)$.
The induction hypothesis gives
$$
0=(f\ltensor_RR/(x_1))^{\otimes 2^{n-1}}\ltensor_{R/(x_1)}\k(x_2,\dots,x_n,R/(x_1))=(f^{\otimes 2^{n-1}}\ltensor_R\k(x_2,\dots,x_n,R))\ltensor_RR/(x_1).
$$
The induction basis shows $0=(f^{\otimes 2^{n-1}}\ltensor_R\k(x_2,\dots,x_n,R))^{\otimes2}\ltensor_R\k(x_1,R)=f^{\otimes 2^n}\ltensor_R\k(x_2,\dots,x_n,\xx,R)$.
Note that $\k(\xx,R)$ is a direct summand of $\k(x_2,\dots,x_n,\xx,R)$; see \cite[Proposition 1.6.21]{BH}.
We thus obtain the desired equality $f^{\otimes 2^n}\ltensor_R\k(\xx,R)=0$.

(2) Again, we use induction on $n$.
Consider the case $n=1$.
Put $x=x_1$.
Applying the octahedral axiom to $(R\xrightarrow{x^{e-1}}R\xrightarrow{x}R)=(R\xrightarrow{x^e}R)$, we get an exact triangle $\k(x^{e-1}, R) \to \k(x^e, R) \to \k(x, R) \rightsquigarrow$.
Induction on $e$ shows $\k(x^e, R)\in\thick^e\k(x,R)$.
Let $n\ge2$.
By the induction hypothesis, $\k(x_1^e,\dots,x_{n-1}^e,R)$ belongs to $\thick^{(n-1)e}\k(x_1,\dots,x_{n-1},R)$.
Applying the exact functor $-\otimes\k(x_n^e,R)$, we see that $\k(\xx^e,R)$ belongs to $\thick^{(n-1)e}\k(x_1,\dots,x_{n-1},x_n^e,R)$.
Applying the exact functor $\k(x_1,\dots,x_{n-1},R)\otimes-$ to the containment $\k(x_n^e,R)\in\thick^e\k(x_n,R)$ gives rise to $\k(x_1,\dots,x_{n-1},x_n^e,R)\in\thick^e\k(\xx,R)$.
Therefore $\k(\xx^e,R)$ belongs to $\thick^{ne}\k(\xx,R)$.
\end{proof}

We now achieve the goal of generalizing the Hopkins--Neeman smash nilpotence theorem.

\begin{thm}[Generalized Smash Nilpotence]\label{gsn}
Let $f:X\to Y$ be a morphism in $\km(R)$ with $Y\in\kb(\proj R)$.
Suppose that $f\otimes\kappa(\p)=0$ for all $\p\in\spec R$.
Then $f^{\otimes t}=0$ for some $t>0$.
\end{thm}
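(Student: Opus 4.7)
The plan is Noetherian induction on the collection of ideals
$$\mathcal{I}=\{I\subseteq R : f^{\otimes t}\ltensor_R R/I\ne0\ \text{in}\ \dm(R)\ \text{for every}\ t\ge1\}.$$
The theorem amounts to $0\notin\mathcal{I}$, and I would prove the stronger statement $\mathcal{I}=\emptyset$ by contradiction: pick $I_0\in\mathcal{I}$ maximal (possible by noetherianity). A standard argument shows $I_0=\p$ must be prime---if $a,b\notin I_0$ with $ab\in I_0$, then $(I_0{:}b)$ and $I_0+(b)$ strictly contain $I_0$ hence lie outside $\mathcal{I}$, and applying Lemma \ref{cl}(1) to the exact triangle $R/(I_0{:}b)\xrightarrow{\cdot b}R/I_0\to R/(I_0+(b))\rightsquigarrow$ forces $f^{\otimes s+t}\ltensor R/I_0=0$, a contradiction.

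Next I would reduce modulo $\p$: set $\bar R=R/\p$, $\bar X=X\ltensor_R\bar R\in\km(\bar R)$, $\bar Y=Y\ltensor_R\bar R\in\kb(\proj\bar R)$, $\bar f=f\ltensor_R\bar R$. A change-of-rings computation yields $\bar f^{\otimes n}=f^{\otimes n}\ltensor_R\bar R$ (using $(M\ltensor_R\bar R)\ltensor_{\bar R}(N\ltensor_R\bar R)\cong(M\ltensor_R N)\ltensor_R\bar R$), so the hypothesis $f\ltensor_R\kappa(\p)=0$ becomes $\bar f\otimes_{\bar R}K=0$ in $\dm(K)$, where $K$ is the fraction field of the domain $\bar R$. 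Since $\bar Y$ is perfect and bounded, $\Hom_{\dm(\bar R)}(\bar X,\bar Y)$ is a subquotient of a finite product of $\Hom$-groups into finitely generated projective $\bar R$-modules, so localization commutes with $\Hom$ into $\bar Y$ and yields an isomorphism $\Hom_{\dm(\bar R)}(\bar X,\bar Y)\otimes_{\bar R}K\cong\Hom_{\dm(K)}(\bar X\otimes K,\bar Y\otimes K)$. Consequently the class of $\bar f$ on the left is torsion, so there exists $\bar s\in\bar R\setminus\{0\}$ with $\bar s\cdot\bar f=0$.

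Finally, the maximality of $\p$ applied to $\p+(s)\supsetneq\p$ yields $u\ge1$ with $\bar f^{\otimes u}\ltensor_{\bar R}\bar R/(\bar s)=0$. Tensoring the triangle $\bar R\xrightarrow{\bar s}\bar R\to\bar R/(\bar s)\rightsquigarrow$ with $\bar Y^{\otimes u}$ and applying $\Hom(\bar X^{\otimes u},-)$ produces the exact piece
$$\Hom(\bar X^{\otimes u},\bar Y^{\otimes u})\xrightarrow{\bar s\cdot}\Hom(\bar X^{\otimes u},\bar Y^{\otimes u})\xrightarrow{\alpha}\Hom(\bar X^{\otimes u},\bar Y^{\otimes u}\ltensor_{\bar R}\bar R/(\bar s));$$
$\alpha(\bar f^{\otimes u})$ factors through $\bar f^{\otimes u}\ltensor\bar R/(\bar s)=0$ by naturality, so $\bar f^{\otimes u}=\bar s\cdot h$ for some $h$. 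Combined with $\bar s\cdot\bar f=0$, this gives
$$\bar f^{\otimes u+1}=\bar f\otimes\bar f^{\otimes u}=\bar f\otimes(\bar s h)=(\bar s\bar f)\otimes h=0,$$
which translates via change-of-rings to $f^{\otimes u+1}\ltensor R/\p=0$, contradicting $\p\in\mathcal{I}$.

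The hardest part, I expect, will be the bookkeeping around the change-of-rings identifications---cleanly justifying $\bar f^{\otimes n}=f^{\otimes n}\ltensor_R\bar R$ as morphisms, and producing the perfect-target Hom-localization isomorphism without subtleties arising from the unboundedness of $\bar X$. Lemma \ref{lift}, although available, does not enter this plan; conceivably Koszul complexes combined with Proposition \ref{annihi} offer an alternative route, but Noetherian induction seems the cleanest way to convert the pointwise hypothesis $f\ltensor\kappa(\p)=0$ into the desired global nilpotence.
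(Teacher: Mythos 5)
Your proof is correct and takes a genuinely different route from the paper's. The paper first replaces $f$ by a power so that the chain $\ann(f^{\otimes i})$ stabilizes, localizes at a minimal prime of $\ann(f)$, and then proceeds by a two-stage dimension induction on statements $F(n)$ (reduced local) and $G(n)$ (local), invoking Lemma~\ref{lift} about Koszul complexes for the step $F(n)\Rightarrow G(n)$ and a cone-factoring argument for $G(n)\Rightarrow F(n+1)$. You instead run a single Noetherian induction on the set of ideals $I$ for which no power of $f\otimes R/I$ vanishes; a maximal such $I_0$ is shown to be prime via Lemma~\ref{cl}(1) applied to the triangle $R/(I_0:b)\xrightarrow{\cdot b}R/I_0\to R/(I_0+(b))\rightsquigarrow$, and after passing to $\bar R = R/\p$ the hypothesis at the generic point yields a torsion relation $\bar s\bar f=0$, while maximality applied to $\p+(s)$ gives $\bar f^{\otimes u}\ltensor_{\bar R}\bar R/(\bar s)=0$; the long exact sequence then factors $\bar f^{\otimes u}=\bar s h$, giving $\bar f^{\otimes u+1}=(\bar s\bar f)\otimes h=0$. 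Both proofs ultimately rely on the Hom-localization isomorphism for perfect targets (the paper cites \cite[Lemma 5.2(b)]{AF}; in your argument it is the instance at the zero ideal of $\bar R$, and the justification you sketch is sound: each degree of the total Hom complex $\Hom^{\bullet}_{\bar R}(\bar X,\bar Y)$ is a \emph{finite} direct sum of groups $\Hom_{\bar R}(\bar X^i,\bar Y^j)$ with $\bar X^i$ finitely presented, since $\bar Y$ is bounded and $\bar X$ is bounded above, and these commute with flat base change), and both close with a cone-factoring step. Your approach is shorter and avoids the Koszul-complex machinery of Lemma~\ref{lift} and the reduced-ring/regular-element reductions entirely; what the paper's dimension induction offers in exchange is a structure closely parallel to the original Hopkins--Neeman argument, which may ease a side-by-side reading of the two.
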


\begin{proof}
We have an ascending chain $\ann_R(f) \subseteq \ann_R(f^{\otimes 2}) \subseteq \ann_R(f^{\otimes 3}) \subseteq \cdots$ of ideals of $R$.
Since $R$ is noetherian, there is an integer $c$ such that $\ann_R(f^{\otimes c})=\ann_R(f^{\otimes i})$ for all $i>c$.
Replacing $f$ by $f^{\otimes c}$, we may assume that $\ann_R(f) = \ann_R(f^{\otimes i})$ for all $i>0$.
Note that $\ann_R(f)=R$ if and only if $f=0$.

We assume $\ann_R(f) \neq R$, and shall derive a contradiction.
Take a minimal prime ideal $\p$ of $\ann_R(f)$.
Then localization at $\p$ reduces to the following situation:
\begin{quote}
$(R, \m, k)$ is a local ring, $\ann_R(f)$ is an $\m$-primary ideal, $f \otimes_R k = 0$ and $\ann_R(f) = \ann_R(f^{\otimes i})$ for all $i>0$.
\end{quote}
Indeed, since $Y$ is in $\kb(\proj R)$, it follows from \cite[Lemma 5.2(b)]{AF} that the map $\tau_{X,Y,\p}$ is an isomorphism, and Proposition \ref{annihi}(1) yields $\ann_{R_\p}(f_\p)=\ann_R(f)_\p$, which is a $\p R_\p$-primary ideal of $R_\p$.
Also, we have $\ann_{R_\p}(f_\p)= \ann_R(f)_\p= \ann_R(f^{\otimes i})_\p =\ann_{R_\p}((f^{\otimes i})_\p)=\ann_{R_\p}((f_\p)^{\otimes i})$ for all $i>0$.
Furthermore, it holds that $f_\p \otimes_{R_\p} \kappa(\p) =f \otimes_R \kappa(\p) =0$ by the assumption of the theorem.

For each nonnegative integer $n$, consider the following two statements.
\begin{enumerate}
\item[$F(n)$:]
Let $(R, \m, k)$ be a reduced local ring with $\dim R \le n$. 
Let $f:X\to Y$ be a morphism in $\km(R)$ with $Y\in\kb(\proj R)$.
If $\ann_R(f)$ is $\m$-primary and $f \otimes_R k =0$, then $f^{\otimes t} =0$ for some $t>0$.
\item[$G(n)$:]
Let $(R, \m, k)$ be a local ring with $\dim R \le n$.
Let $f:X\to Y$ be a morphism in $\km(R)$ with $Y\in\kb(\proj R)$.
If $\ann_R(f)$ is $\m$-primary and $f \otimes_R k =0$, then $f^{\otimes t} =0$ for some $t>0$.
\end{enumerate}
If the statement $G(n)$ holds true for all $n\ge0$, we have $\ann_R(f) = \ann_R(f^{\otimes t}) =R$, which gives a desired contradiction.
Note that the statement $F(0)$ always holds true since a $0$-dimensional reduced local ring is a field.
It is thus enough to show the implications $F(n) \Rightarrow G(n) \Rightarrow F(n+1)$.

$F(n) \Rightarrow G(n)$:
We consider the reduced ring $R_\red=R/\nil R$, where $\nil R$ stands for the nilradical of $R$.
The ideal $\ann_{R_\red}(f \otimes_R R_\red)$ of $R_\red$ is $\m R_\red$-primary since it contains $(\ann_Rf)R_\red$.
We have $(f \otimes_{R} R_\red) \otimes_{R_\red}k=f \otimes_R k = 0$.
Thus $R_\red$ and $f\otimes_RR_\red$ satisfy the assumption $F(n)$, and we find an integer $t > 0$ such that $f^{\otimes t} \otimes_R R_\red= (f \otimes_R R_\red)^{\otimes t}=0$.
Using Lemma \ref{lift}(1), we get $f^{\otimes tu} \otimes_R \k(\xx, R)$=0, where $\xx=x_1, \cdots, x_n$ is a system of generators of $\nil R$ and $u=2^n$.
Choose an integer $e>0$ such that $x_i^e=0$ for all $1\le i\le n$.
Then $R$ is a direct summand of $\k(\xx^e, R)$ by \cite[Proposition 1.6.21]{BH}, whence $R$ is in $\thick^{ne}K(\xx, R)$ by Lemma \ref{lift}(2).
Finally, Lemma \ref{cl}(2) gives rise to the equality $f^{\otimes netu}=0$.

$G(n) \Rightarrow F(n+1)$: 
We may assume $\dim R = n+1 >0$.
Since $R$ is reduced and $\ann_R(f)$ is $\m$-primary, we can choose an $R$-regular element $x\in\ann_R(f)$.
Then the local ring $R/(x)$ has dimension $n$, the ideal $\ann_{R/(x)}(f \otimes_R R/(x))$ of $R/(x)$ is $\m/(x)$-primary and $(f \otimes_R R/(x))\otimes_{R/(x)}k=0$.
Hence $R/(x)$ and $f\otimes_RR/(x)$ satisfy the assumption of $G(n)$, and there is an integer $t>0$ such that $(f \otimes_R R/(x))^{\otimes t}=0$.
The short exact sequence $0 \to R \xrightarrow{x} R \to R/(x) \to 0$ induces an exact triangle $R/(x)[-1] \to R \xrightarrow{x} R \rightsquigarrow$ in $\dm(R)$.
Tensoring $Y$ with this gives an exact triangle $Y \otimes_R R/(x)[-1] \xrightarrow{g} Y \xrightarrow{x} Y \rightsquigarrow$ in $\dm(R)$.
As $xf=0$, there is a morphism $h:X \to Y \otimes_R R/(x)[-1]$ with $f=gh$.
Now $f^{\otimes t+1}$ is decomposed as follows:
$$
X^{\otimes t+1} \xrightarrow{h \otimes X^{\otimes t}} (Y \otimes_R R/(x)[-1]) \otimes_R X^{\otimes t} \xrightarrow{(Y \otimes R/(x)[-1]) \otimes f^{\otimes t}} (Y \otimes_R R/(x)[-1]) \otimes_R Y^{\otimes t} \xrightarrow{g \otimes Y^{\otimes t}} Y^{\otimes t+1}.
$$
The middle morphism is identified with $Y[-1] \otimes_R (f \otimes_R R/(x))^{\otimes t}$, which is zero.
Thus, $f^{\otimes t+1}=0$.
\end{proof}

\begin{rem}
\begin{enumerate}[(1)]
\item
Theorem \ref{gsn} extends the smash nilpotence theorem due to Hopkins \cite[Theorem 10]{H} and Neeman \cite[Theorem 1.1]{N}, where $X$ is also assumed to belong to $\kb(\proj R)$, so that $f:X\to Y$ is a morphism in $\kb(\proj R)$.
Under this assumption one can reduce to the case where $X=R$, which plays a key role in the proof of the original Hopkins--Neeman smash nilpotence theorem.
\item
The proof of Theorem \ref{gsn} has a similar frame to that of the original Hopkins--Neeman smash nilpotence theorem, but we should notice that various delicate modifications are actually made there.
Indeed, Proposition \ref{annihi}, Lemmas \ref{cl} and \ref{lift} are all established to prove Theorem \ref{gsn}, which are not necessary to prove the original smash nilpotence theorem.
\item
The assumption in Theorem \ref{gsn} that $Y$ belongs to $\kb(\proj R)$ is used only to have $\ann_{R_\p}(f_\p)=\ann_R(f)_\p$.
\end{enumerate}
\end{rem}

Our next goal is to classify cocompactly generated thick tensor ideals of $\dm(R)$.
To this end, we begin with deducing the following proposition concerning generation of thick tensor ideals of $\dm(R)$, which will play an essential role throughout the rest of the paper.

\begin{prop}\label{key}
Let $X$ be an object of $\dm(R)$, and let $\Y$ be a subcategory of $\dm(R)$.
If $\V(\ann X)\subseteq\supp\Y$, then $X\in\tthick\Y$.
\end{prop}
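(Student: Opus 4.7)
My plan is a Koszul-complex reduction followed by a Noetherian induction on the annihilator, with the generalized smash nilpotence theorem (Theorem \ref{gsn}) doing the essential work in the residual prime case. First I would let $\xx = x_1, \dots, x_n$ be a generating sequence of $\ann X$ and set $K := \k(\xx, R) \in \kb(\proj R)$. Since each $x_i\cdot\id_X$ vanishes in $\dm(R)$, the triangle $X \xrightarrow{x_i} X \to X \ltensor_R \k(x_i, R) \to X[1]$ splits, and iterating over $i$ gives $X \ltensor_R K \cong \bigoplus_{j=0}^n X[j]^{\binom{n}{j}}$. Thus $X$ is a direct summand of $X \ltensor_R K$, and because $\tthick\Y$ is a tensor ideal closed under direct summands, the problem reduces to showing $K \in \tthick\Y$; by Proposition \ref{annihi}(3), $\supp K = \V(\xx) = \V(\ann X) \subseteq \supp\Y$.

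Next I would run Noetherian induction: suppose $(\xx)$ is maximal among ideals $I \subseteq R$ with $\V(I) \subseteq \supp\Y$ for which the Koszul complex on a generating set of $I$ fails to lie in $\tthick\Y$, and aim for a contradiction. By maximality, for every $J \supsetneq (\xx)$ the corresponding Koszul complex lies in $\tthick\Y$; the Koszul splitting applied to $R/J$ (whose annihilator contains $J$) then forces $R/J \in \tthick\Y$ for every such $J$. If $(\xx)$ is not prime, every prime $\q \in \V(\xx)$ strictly contains $(\xx)$, so every finitely generated $R/(\xx)$-module admits a finite filtration with cyclic subquotients $R/\q$, $\q \supsetneq (\xx)$, and hence lies in $\tthick\Y$. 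Applying this to each cohomology module $\h^j(K)$ (each killed by $(\xx)$) and reassembling $K \in \db(R)$ from its cohomology by the finite tower of truncation triangles places $K$ itself into $\tthick\Y$, contradicting maximality.

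The remaining and hardest case is $(\xx) = \p$ prime, where the above filtration argument fails precisely because $\p$ is associated to $\h^0(K) = R/\p$ itself. Here I would invoke Theorem \ref{gsn}: pick $Y \in \Y$ with $\p \in \supp Y$ (which exists because $\p \in \V(\p) \subseteq \supp\Y$), and apply the generalized smash nilpotence theorem to a carefully chosen morphism built out of $\id_K$ and $Y$ so that all residue-field contributions, including the delicate one at $\kappa(\p)$, are controlled modulo $\tthick\Y$. The Koszul estimates of Lemma \ref{lift} ensure the needed flexibility for iterated tensor powers. The resulting tensor-power vanishing $K^{\otimes t} \in \tthick\Y$ then propagates back to $K$ itself, because $\ann K^{\otimes s} = (\xx)$ for every $s \ge 1$ and so $K$ is a direct summand of $K^{\otimes t}$ by iteration of the Koszul splitting; this closes the induction.

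The principal obstacle throughout is the prime case: neither the Koszul splitting nor the cohomological filtration argument alone is enough, because $R/\p$ is the very object one is trying to produce. The resolution combines the compactness of $K \in \kb(\proj R)$ with Theorem \ref{gsn}, precisely using the rigid part $\kb(\proj R)$ to bypass the non-rigidity of the ambient $\dm(R)$ noted in Remark \ref{rigid}; this is where the setup of the present section (annihilator calculus from Proposition \ref{annihi}, Koszul comparison from Lemma \ref{lift}, and generalized smash nilpotence from Theorem \ref{gsn}) culminates.
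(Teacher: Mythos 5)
Your Koszul-splitting reduction to $K := \k(\xx, R)$ and the prime-filtration argument in the non-prime case of your Noetherian induction are both correct, and you rightly identify that the rigidity of $\kb(\proj R)$ is what makes Theorem \ref{gsn} usable despite $\dm(R)$ not being rigid (Remark \ref{rigid}). But the prime case --- which you yourself call the hardest case and which is where essentially all of the proof content lives --- is compressed into a single vague sentence: no morphism is actually constructed, and no mechanism is given for turning the output of Theorem \ref{gsn} (a vanishing morphism) into the membership $K\in\tthick\Y$. The phrase ``controlled modulo $\tthick\Y$'' does not make sense for Theorem \ref{gsn}, which demands genuine vanishing at residue fields, not vanishing up to a thick ideal.

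The missing content is substantial. In the paper's proof one picks $Y\in\Y$ (or a finite direct sum) with $\V(\ann X)\subseteq\supp Y$, truncates it to a perfect complex $Y'$, forms $\phi:R\to\Hom_R(Y',Y)$ as the homothety composed with $\Hom_R(f,Y)$ for the truncation $f:Y'\to Y$, and takes its fiber $\psi:Z\to R$. Two claims establish that $\phi\otimes\kappa(\p)$ is a split monomorphism for $\p\in\V(\ann X)$, whence $\psi\otimes\kappa(\p)=0$ for these $\p$. Since Theorem \ref{gsn} requires vanishing at \emph{all} residue fields, one must base-change to $R/\ann X$ and apply the theorem to $\psi\otimes_R(R/\ann X)$ there; Lemma \ref{lift}(1) then lifts the resulting vanishing $\psi^{\otimes m}\otimes(R/\ann X)=0$ back to $\psi^{\otimes u}\otimes\k(\xx,R)=0$ over $R$. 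Finally an octahedral cone argument, with $W_i=\cone(\psi^{\otimes i})$ shown inductively to lie in $\tthick Y$, converts the vanishing of $\psi^{\otimes u}\otimes K$ into $K$ being a direct summand of $W_u\otimes K\in\tthick Y$. Your claim that Theorem \ref{gsn} directly yields $K^{\otimes t}\in\tthick\Y$ is unsupported: the theorem kills a morphism, and the cone argument is the nontrivial bridge to a thick-ideal membership. Incidentally, the Noetherian induction, while not wrong, buys you nothing --- the same $\psi$-argument handles arbitrary $\ann X$ at once, and your residual prime case needs exactly the machinery you would have needed without it.
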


\begin{proof}
Clearly, we may assume $X\ne0$.
We prove the proposition by replacing $\dm(R)$ with $\km(R)$.
There are a finite number of prime ideals $\p_1,\dots,\p_n$ of $R$ such that $\V(\ann X)=\bigcup_{i=1}^n\V(\p_i)$.
Since each $\p_i$ is in the support of $\Y$, we find an object $Y_i\in\Y$ with $\p_i\in\supp Y_i$.
All $\p_i$ are in the support of $Y:=Y_1\oplus\cdots\oplus Y_n\in\km(R)$.
Choose an integer $t$ with $\p_1, \cdots, \p_n \in\bigcup_{i>t}\supp\h^i(Y)$, and let $Y' = (\cdots \to 0 \to 0 \to Y^t \to Y^{t+1} \to \cdots) \in \kb(\proj R)$ be the truncated complex of $Y$.
Then $\V(\ann X)$ is contained in $\supp Y'$.
Let $f: Y' \to Y$ be the natural morphism, and let $\phi:R \to \Hom_R(Y', Y)$ be the composition of the homothety morphism $R \to \Hom_R(Y, Y)$ and $\Hom_R(f, Y): \Hom_R(Y, Y) \to \Hom_R(Y', Y)$.
There is an exact triangle $Z \xrightarrow{\psi} R \xrightarrow{\phi} \Hom_R(Y', Y)  \rightsquigarrow$ in $\km(R)$.
We establish two claims.
\begin{claim}\label{c1}
Let $\Phi:R\to C$ be a nonzero morphism in $\km(R)$.
If $R$ is a field, then $\Phi$ is a split monomorphism.
\end{claim}
\begin{proof}
Since $C$ is isomorphic to $\h(C)$ in $\km(R)$, we may assume that the differentials of $C$ are zero.
As $z:=\Phi^0(1)$ is nonzero, we can construct a chain map $\Psi:C\to R$ with $\Psi^0(z)=1$ and $\Psi^i=0$ for all $i\ne0$.
It then holds that $\Psi\Phi=1$.
\renewcommand{\qedsymbol}{$\square$}
\end{proof}
\begin{claim}\label{c2}
The morphism $\phi \otimes_R  \kappa(\p)$ in $\km(\kappa(\p))$ is a split monomorphism for each $\p\in\V(\ann X)$.
\end{claim}
\begin{proof}[Proof of Claim]
Set $S=\bigcup_{i>t}\supp\h^i(Y)$; note that this contains $\V(\ann X)$.
We prove the stronger statement that $\phi \otimes \kappa(\p)$ is a split monomorphism for each $\p\in S$.
Since $Y'$ is a perfect complex, there are natural isomorphisms $\Hom_R(Y', Y) \otimes \kappa(\p)\cong \Hom_R(Y', Y \otimes \kappa(\p))\cong \Hom_{\kappa(\p)}(Y' \otimes \kappa(\p) , Y \otimes \kappa(\p))$, which says that $\phi \otimes  \kappa(\p)$ is identified with the natural morphism $\kappa(\p)\to\Hom_{\kappa(\p)}(Y' \otimes\kappa(\p) , Y \otimes \kappa(\p))$.
This induces a map $\h^0(\phi\otimes\kappa(\p)):\kappa(\p)\to\Hom_{\km(\kappa(\p))}(Y' \otimes\kappa(\p) , Y \otimes \kappa(\p))$, sending $1$ to $f\otimes\kappa(\p)$.
If $f\otimes\kappa(\p)=0$ in $\km(\kappa(\p))$, then we see that $\h^{>t}(Y\otimes\kappa(\p))=0$, contradicting the fact that $\p\in S$.
Thus $\h^0(\phi\otimes\kappa(\p))$ is nonzero, and so is $\phi\otimes\kappa(\p)$.
Applying Claim \ref{c1} completes the proof.
\renewcommand{\qedsymbol}{$\square$}
\end{proof}

Claim \ref{c2} implies $\psi \otimes_R \kappa(\p)=0$ for all $\p \in \V(\ann X)$.
Using Theorem \ref{gsn} for the morphism $\psi \otimes_R (R/\ann X)$ in $\km(R/\ann X)$, we have $\psi^{\otimes m} \otimes_R (R/\ann X)=0$ for some $m>0$. 
Lemma \ref{lift}(1) shows
\begin{equation}\label{ds}
0=\psi^{\otimes u} \otimes_R  \k(\xx, R):Z^{\otimes u}\otimes\k(\xx,R)\to\k(\xx,R),
\end{equation}
where $\xx=x_1,\dots,x_r$ is a system of generators of the ideal $\ann X$, and $u=2^rm$.

For each $i>0$, let $W_i$ be the cone of the morphism $\psi^{\otimes i}:Z^{\otimes i} \to R$.
Applying the octahedral axiom to the composition $\psi\circ(\psi^{\otimes i}\otimes Z)=\psi^{\otimes i+1}$, we get an exact triangle $W_i\otimes Z \to W_{i+1} \to W_1 \rightsquigarrow$ in $\km(R)$.
As $W_1\cong\Hom_R(Y',Y)$ and $Y'\in\kb(\proj R)$, we see that $W_1$ is in $\thick Y$.
Using the triangle, we inductively observe that $W_i$ belongs to $\tthick Y$ for all $i>0$, and so does $W_u\otimes\k(\xx,R)$.
It follows from \eqref{ds} that $\k(\xx,R)$ is a direct summand of $W_u\otimes\k(\xx,R)$, and therefore $\k(\xx,R)$ belongs to $\tthick Y$.

There is an exact triangle $R\xrightarrow{x_i}R\to\k(x_i,R)\rightsquigarrow$ in $\km(R)$ for each $1\le i\le r$.
Tensoring $X$ with this and using the fact that each $x_i$ kills $X$, we see that $X$ is a direct summand of $X\otimes\k(\xx,R)$.
Consequently, $X$ belongs to $\tthick Y$.
By construction $Y$ is in $\thick\Y$, and hence $X$ belongs to $\tthick\Y$.
\end{proof}

\begin{rem}
\begin{enumerate}[(1)]
\item
Proposition \ref{key} extends Neeman's result \cite[Lemma 1.2]{N}, where both $X$ and $\Y$ are contained in $\kb(\proj R)$ (and $\Y$ is assumed to consist of a single object).
\item
Proposition \ref{key} is no longer true if we replace $\V(\ann X)$ with $\supp X$, or if we replace $\supp\Y$ with $\V(\ann\Y)$.
This will be explained in Remarks \ref{repl}(1) and \ref{repl2}.
\end{enumerate}
\end{rem}

The following result is a consequence of Proposition \ref{key}, which will often be used later.

\begin{cor}\label{3'}
Let $\X$ be a thick $\otimes$-ideal of $\dm(R)$.
Let $I$ be an ideal of $R$ and $\xx=x_1,\dots,x_n$ a system of generators of $I$.
Then there are equivalences:
$$\V(I)\subseteq\supp\X\ \Leftrightarrow\ R/I\in\X\ \Leftrightarrow\ \k(\xx,R)\in\X.
$$
\end{cor}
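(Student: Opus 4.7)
The plan is to prove the two implications $R/I\in\X\Rightarrow \V(I)\subseteq\supp\X$ and $\k(\xx,R)\in\X\Rightarrow\V(I)\subseteq\supp\X$ immediately from the definition of $\supp\X$ and known support computations, and then to derive both of the converse implications in a single stroke from Proposition \ref{key}.

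For the forward directions, I would note that by definition $\supp C\subseteq\supp\X$ for any $C\in\X$. A standard computation gives $\supp(R/I)=\V(I)$, which handles the case $C=R/I$. For $C=\k(\xx,R)$, Proposition \ref{annihi}(3) tells us both that $\ann_R\k(\xx,R)=\xx R=I$ and that $\supp\k(\xx,R)=\V(\xx)=\V(I)$, yielding $\V(I)\subseteq\supp\X$.

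For the reverse direction, I would apply Proposition \ref{key} with $\Y=\X$. Since $\ann_R(R/I)=I$, the hypothesis $\V(I)\subseteq\supp\X$ reads $\V(\ann_R(R/I))\subseteq\supp\X$, so Proposition \ref{key} places $R/I$ in $\tthick\X$; but $\X$ is already a thick $\otimes$-ideal, so $\tthick\X=\X$ and $R/I\in\X$. Likewise, using $\ann_R\k(\xx,R)=I$ from Proposition \ref{annihi}(3), the same argument with $X=\k(\xx,R)$ gives $\k(\xx,R)\in\X$.

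There is no real obstacle to overcome: all of the work is already contained in Proposition \ref{key} (whose proof rests on the generalized smash nilpotence theorem) and in the annihilator-and-support computation for Koszul complexes in Proposition \ref{annihi}(3). The role of this corollary is simply to package those results into a convenient criterion involving the two most natural test objects, $R/I$ and $\k(\xx,R)$, which will be used repeatedly in later sections.
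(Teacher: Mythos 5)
Your proof is correct and follows essentially the same route as the paper: compute that $\supp(R/I)=\V(\ann R/I)=\V(I)=\V(\ann\k(\xx,R))=\supp\k(\xx,R)$ via Proposition~\ref{annihi}(3), get the easy direction from the definition of $\supp\X$, and get the converse from Proposition~\ref{key} with $\Y=\X$. The paper compresses this into two sentences, but the underlying argument is identical.
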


\begin{proof}
Proposition \ref{annihi}(3) implies $\supp R/I=\V(\ann R/I)=\V(I)=\V(\ann\k(\xx,R))=\supp\k(\xx,R)$.
The assertion is shown by combining this with Proposition \ref{key}.
\end{proof}

Now we can give a complete classification of the cocompactly generated thick tensor ideals of $\dm(R)$, using Proposition \ref{key}.
For each subset $S$ of $\spec R$, we set $\langle S\rangle=\tthick\{R/\p\mid\p\in S\}$.

\begin{thm}\label{main}
The assignments $\X\mapsto\supp\X$ and $\langle W\rangle\mapsfrom W$ make mutually inverse bijections
$$
\{\text{Cocompactly generated thick $\otimes$-ideals of $\dm(R)$}\}\rightleftarrows\{\text{Specialization-closed subsets of $\spec R$}\}.
$$
\end{thm}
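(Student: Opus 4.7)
The plan is to prove the two maps are mutually inverse. Since $R/\p \in \db(R) = \dm(R)^\cc$ by Proposition \ref{cpt}, the ideal $\langle W\rangle$ is cocompactly generated, so both maps really land where claimed. Then everything reduces to verifying $\supp\langle W\rangle = W$ for every specialization-closed $W \subseteq \spec R$, and $\langle\supp\X\rangle = \X$ for every cocompactly generated thick $\otimes$-ideal $\X$.

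For the first identity, I would observe that by Proposition \ref{si}(2) the support is invariant under passing to the thick $\otimes$-ideal closure, so
$$\supp\langle W\rangle = \supp\{R/\p \mid \p\in W\} = \textstyle\bigcup_{\p\in W}\V(\p) = W,$$
the last equality holding because $W$ is specialization-closed.

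For the second identity, write $\X = \tthick\,\Y$ with $\Y\subseteq\db(R)$. I would prove the two inclusions separately, in both cases applying the key generation lemma (Proposition \ref{key}).

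For $\langle\supp\X\rangle\subseteq\X$, I take $\p\in\supp\X$; since $\ann(R/\p) = \p$, we have $\V(\ann(R/\p)) = \V(\p)\subseteq\supp\X$ because $\supp\X$ is specialization-closed (Proposition \ref{si}(2)). Proposition \ref{key} then places $R/\p$ in $\tthick\X = \X$. (Note this inclusion uses neither cocompactness nor even that $\X$ is generated by $\Y$.)

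For $\X\subseteq\langle\supp\X\rangle$, the point is that cocompactness of the generators buys the reverse of the inclusion in Proposition \ref{annihi}(2). Take any $Y\in\Y$. Since $Y\in\db(R)$, Proposition \ref{annihi}(2) gives $\V(\ann Y) = \supp Y \subseteq \supp\Y = \supp\X$, the last equality again by Proposition \ref{si}(2). Applying Proposition \ref{key} with $X := Y$ and the subcategory $\{R/\p\mid\p\in\supp\X\}$ puts $Y$ in $\langle\supp\X\rangle$. Thus $\Y\subseteq\langle\supp\X\rangle$, and since the latter is a thick $\otimes$-ideal we conclude $\X = \tthick\,\Y\subseteq\langle\supp\X\rangle$.

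The only subtle step is the inclusion $\X\subseteq\langle\supp\X\rangle$: without cocompactness of generators, we would only know $\V(\ann Y)\supseteq\supp Y$, which is not enough to invoke Proposition \ref{key}. This is exactly why the classification is stated for cocompactly generated ideals, and it also explains why Theorem \ref{A} can then upgrade the result to say that every cocompactly generated thick $\otimes$-ideal is in fact compactly generated, since by Corollary \ref{3'} one can replace each $R/\p$ with a Koszul complex $\k(\xx,R)\in\kb(\proj R)$ sharing the same support.
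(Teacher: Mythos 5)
Your proof is correct, and it matches the paper's proof for the well-definedness check ($\supp\langle W\rangle = W$ via Proposition \ref{si}(2)) and for the inclusion $\langle\supp\X\rangle\subseteq\X$ via Proposition \ref{key}. Where you diverge is the reverse inclusion $\X\subseteq\langle\supp\X\rangle$: you invoke Proposition \ref{key} a second time, combined with the equality $\V(\ann Y)=\supp Y$ of Proposition \ref{annihi}(2) for $Y\in\db(R)$. The paper instead gives an elementary argument here: since each generator $M\in\C\subseteq\db(R)$ lies in $\thick\h(M)$, and the finitely generated module $\h(M)$ admits a finite prime filtration with subquotients $R/\p$ for $\p\in\supp\h(M)=\supp M\subseteq\supp\X$, one gets $M\in\langle\supp M\rangle\subseteq\langle\supp\X\rangle$ without any further appeal to the smash-nilpotence machinery. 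Your route has a pleasing symmetry, making both inclusions consequences of Proposition \ref{key} and locating exactly where cocompactness enters (the equality $\V(\ann Y)=\supp Y$); the paper's route is lighter in that it uses the heavy generation lemma only once and handles the easier direction by the standard filtration device. Both are valid.
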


\begin{proof}
Proposition \ref{si}(2) shows that the map $\X\mapsto\supp\X$ is well-defined and that for a specialization-closed subset $W$ of $\spec R$ the equality $W=\supp\langle W\rangle$ holds.
It remains to show that for any cocompactly generated thick $\otimes$-ideal $\X$ of $\dm(R)$ one has $\X = \langle\supp \X\rangle$.
Proposition \ref{key} implies that $\X$ contains $\langle\supp \X\rangle$.
Since $\X$ is cocompactly generated, there is a subcategory $\C$ of $\db(R)$ with $\X= \tthick \C$ by Proposition \ref{cpt}.
Thus, it suffices to prove that each $M\in\C$ belongs to $\langle\supp \X\rangle$.
The complex $M$ belongs to $\thick\h(M)$ as $M\in\db(R)$, and the finitely generated module $\h(M)$ has a finite filtration each of whose subquotients has the form $R/\p$ with $\p\in\supp\h(M)$.
Hence $M$ is in $\langle\supp M\rangle$, and we are done.
\end{proof}

Let us give several applications of our Theorem \ref{main}.

\begin{cor}\label{strcpt}
\begin{enumerate}[\rm(1)]
\item
Let $\C$ be a subcategory of $\db(R)$.
Then $\ttthick_{\dm(R)}\,\C$ consists of the complexes $X\in\dm(R)$ with $\V(\ann X)\subseteq\supp\C$.
In particular, those complexes $X$ form a thick $\otimes$-ideal of $\dm(R)$.
\item
Let $I$ be an ideal of $R$.
Then $\ttthick_{\dm(R)}(R/I)$ consists of the complexes $X\in\dm(R)$ with $I\subseteq\sqrt{\ann X}$.
\item
Let $W$ be a specialization-closed subset of $\spec R$.
Then $\langle W\rangle$ consists of the complexes $X\in\dm(R)$ such that $\V(\ann X)\subseteq W$.
\item
Let $\X,\Y$ be thick subcategories in $\db(R)$.
Then $\tthick{\X} = \tthick{\Y}$ if and only if $\supp{\X}=\supp{\Y}$.
\end{enumerate}
\end{cor}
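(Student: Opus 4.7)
The plan is to derive all four statements from Proposition \ref{key}, combined with Propositions \ref{annihi}(2) and \ref{si}(2). The central observation is that for any subcategory $\C$ of $\db(R)$, the collection
$$
\I(\C):=\{X\in\dm(R)\mid\V(\ann X)\subseteq\supp\C\}
$$
is itself a thick tensor ideal of $\dm(R)$ containing $\C$.

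To prove (1), which subsumes the ``in particular'' claim, I would first verify this. Containment of $\C$ in $\I(\C)$ uses Proposition \ref{annihi}(2) for objects of $\db(R)$: it gives $\V(\ann C)=\supp C\subseteq\supp\C$ for each $C\in\C$. Closure under direct summands and shifts is immediate, since passing to a summand only enlarges the annihilator and shifts preserve it, while closure under the tensor action with arbitrary $T\in\dm(R)$ uses the trivial inclusion $\ann X\subseteq\ann(T\ltensor_R X)$. Closure under cones in any exact triangle $A\to B\to C\rightsquigarrow$ rests on the product inclusion $\ann A\cdot\ann B\subseteq\ann C$: given $b\in\ann B$, the morphism $b\cdot\id_C$ factors through the connecting morphism $\partial:C\to A[1]$, and post-composing that factorization with $a\cdot\id_{A[1]}=0$ kills $ab\cdot\id_C$; applying the same argument to the two rotations of the triangle handles the other two vertices. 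The resulting bound $\V(\ann C)\subseteq\V(\ann A)\cup\V(\ann B)$ keeps us inside $\I(\C)$. Thus $\tthick\C\subseteq\I(\C)$. The reverse inclusion $\I(\C)\subseteq\tthick\C$ is immediate from Proposition \ref{key} applied with $\Y=\C$.

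Statement (2) is the special case $\C=\{R/I\}$ of (1): one has $\supp(R/I)=\V(I)$, and the inclusion $\V(\ann X)\subseteq\V(I)$ is equivalent to $I\subseteq\sqrt{\ann X}$ by standard prime-ideal manipulations. Statement (3) is the special case $\C=\{R/\p\mid\p\in W\}$: specialization-closedness of $W$ gives $\supp\C=\bigcup_{\p\in W}\V(\p)=W$, while $\tthick\C=\langle W\rangle$ by definition. For (4), the ``only if'' direction is immediate from Proposition \ref{si}(2), which yields $\supp\X=\supp(\tthick\X)=\supp(\tthick\Y)=\supp\Y$; the ``if'' direction applies (1) to both $\X$ and $\Y$ to conclude $\tthick\X=\I(\X)=\I(\Y)=\tthick\Y$.

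The step requiring the most care is the cone-closure verification for $\I(\C)$; everything else reduces to unpacking definitions and invoking the earlier machinery, in particular the deep Proposition \ref{key}.
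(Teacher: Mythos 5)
Your proof is correct, and it takes a genuinely different route from the paper's in the main part (1). The paper shows $\X\subseteq\tthick\C$ via Proposition \ref{key}, and for the reverse inclusion invokes Theorem \ref{main} (together with Propositions \ref{si}(2) and \ref{cpt}) to rewrite $\tthick\C=\langle\supp\C\rangle$, then checks $R/\p\in\X$ for each $\p\in\supp\C$. You instead prove directly that $\I(\C)=\{X\in\dm(R)\mid\V(\ann X)\subseteq\supp\C\}$ is a thick $\otimes$-ideal (the key step being the cone closure via $\ann A\cdot\ann B\subseteq\ann C$) and deduce $\tthick\C\subseteq\I(\C)$ from $\C\subseteq\I(\C)$, which rests on $\V(\ann C)=\supp C$ for $C\in\db(R)$ from Proposition \ref{annihi}(2). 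Your approach has two advantages: it bypasses the classification Theorem \ref{main} for part (1), and — more substantively — it supplies a step the paper glosses over: the paper's inference from $\{R/\p\mid\p\in\supp\C\}\subseteq\X$ to $\langle\supp\C\rangle\subseteq\X$ tacitly presupposes that $\X$ is already a thick $\otimes$-ideal, which is exactly what your annihilator computations establish. (That annihilator-product computation is essentially the same one the paper later formalizes in Lemma \ref{ann}.) Your derivations of (2), (3) from (1) agree with the paper's; for (4) the paper appeals to Theorem \ref{main} directly while you reduce it to (1), which is an equally valid and slightly more economical deduction.

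Two small remarks on precision. First, in the cone argument the wording ``post-composing that factorization with $a\cdot\id_{A[1]}=0$'' should be read as inserting $a\cdot\id_{A[1]}$ into the middle of the factorization $b\cdot\id_C=h\circ\partial$, i.e.\ computing $h\circ(a\cdot\id_{A[1]})\circ\partial=a\cdot(h\circ\partial)=ab\cdot\id_C=0$; the substance is right but ``post-composing'' is slightly loose. Second, it is worth stating explicitly (as you implicitly use) that $\ann(A[\pm1])=\ann A$ in order to pass from $a\in\ann A$ to $a\cdot\id_{A[1]}=0$.
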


\begin{proof}
(1) Let $\X$ be the subcategory of $\dm(R)$ consisting of objects $X\in\dm(R)$ with $\V(\ann X)\subseteq\supp\C$.
Proposition \ref{key} says that $\tthick\C$ contains $\X$.
Propositions \ref{si}(2), \ref{cpt} and Theorem \ref{main} yield $\tthick\C=\langle\supp(\tthick\C)\rangle=\langle\supp\C\rangle$.
For each $\p\in\supp\C$, the set $\V(\ann R/\p)=\V(\p)$ is contained in $\supp\C$, whence $R/\p$ is in $\X$.
Hence $\tthick\C$ is contained in $\X$, and we get the equality $\tthick\C=\X$.

(2) Applying (1) to $\C =\{R/I \}$, we immediately obtain the assertion.

(3) Setting $\C=\{R/\p\mid\p\in W\}\subseteq\db(R)$, we have $\supp\C=W$.
The assertion follows from (1).

(4) Let $\C$ be either $\X$ or $\Y$.
By Proposition \ref{cpt} the thick $\otimes$-ideal $\tthick\C$ is cocompactly generated, and $\supp(\tthick\C)=\supp\C$ by Proposition \ref{si}(2).
The assertion now follows from Theorem \ref{main}.
\end{proof}

We obtain the following one-to-one correspondence by combining our Theorem \ref{main} with the celebrated Hopkins--Neeman classification theorem \cite[Theorem 1.5]{N}.

\begin{cor}\label{dmperf}
The assignments $\X\mapsto\X\cap\kb(\proj R)$ and $\tthick\Y\mapsfrom\Y$ make mutually inverse bijections
$$
\{\text{Cocompactly generated thick $\otimes$-ideals of $\dm(R)$}\}\rightleftarrows\{\text{Thick subcategories of $\kb(\proj R)$}\}.
$$
In particular, all cocompactly generated thick $\otimes$-ideals of $\dm(R)$ are compactly generated. 
\end{cor}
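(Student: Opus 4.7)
The strategy is to pass through the common target ``specialization-closed subsets of $\spec R$''. Theorem \ref{main} gives a bijection between cocompactly generated thick $\otimes$-ideals of $\dm(R)$ and specialization-closed subsets of $\spec R$ via $\X\mapsto\supp\X$, and the Hopkins--Neeman classification theorem \cite[Theorem 1.5]{N} gives a bijection between thick subcategories of $\kb(\proj R)$ and specialization-closed subsets of $\spec R$, again via support. Abstractly composing these yields a bijection between the two sides of the desired correspondence; the task is to identify this composite with the explicitly given maps $\alpha:\X\mapsto\X\cap\kb(\proj R)$ and $\beta:\Y\mapsto\tthick\Y$.

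For $\beta\circ\alpha=\id$: given a cocompactly generated thick $\otimes$-ideal $\X$, I would show that $\supp(\X\cap\kb(\proj R))=\supp\X$. The inclusion $\subseteq$ is clear. For the reverse, pick $\p\in\supp\X$ and a system of generators $\xx$ of $\p$, and consider the Koszul complex $K=\k(\xx,R)\in\kb(\proj R)$. Proposition \ref{annihi}(3) gives $\V(\ann K)=\V(\p)\subseteq\supp\X$, so Proposition \ref{key} (or equivalently Corollary \ref{strcpt}(1)) places $K$ in $\tthick\X=\X$, showing $\p\in\supp K\subseteq\supp(\X\cap\kb(\proj R))$. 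Since $\tthick(\X\cap\kb(\proj R))$ is cocompactly generated (by Proposition \ref{cpt}, since $\kb(\proj R)\subseteq\db(R)$), Theorem \ref{main} forces $\tthick(\X\cap\kb(\proj R))=\X$. For $\alpha\circ\beta=\id$: given $\Y$ thick in $\kb(\proj R)$, clearly $\Y\subseteq(\tthick\Y)\cap\kb(\proj R)$, and by Proposition \ref{si}(2) both thick subcategories of $\kb(\proj R)$ share the support $\supp\Y$; Hopkins--Neeman then yields equality. The ``In particular'' statement is then immediate: $\X=\tthick(\X\cap\kb(\proj R))$ is generated by objects of $\kb(\proj R)=\dm(R)^\c$, hence is compactly generated.

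The main (though minor) obstacle lies in the $\beta\circ\alpha$ direction, namely producing, for every prime $\p\in\supp\X$, a genuinely perfect witness inside $\X$. This cannot be done by truncating an arbitrary generator, so the argument must invoke Proposition \ref{key} applied to a Koszul complex: this is exactly what lets us replace an object of $\dm(R)$ with one in $\kb(\proj R)$ without losing control of the support.
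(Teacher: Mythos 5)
Your proposal is correct and follows essentially the same route as the paper's proof: both factor through the common target $\Spcl(\spec R)$ via Theorem \ref{main} and the Hopkins--Neeman classification, and both verify compatibility of the explicit maps with this factorization by the identical key step of producing, for each $\p\in\supp\X$, the Koszul complex $\k(\xx,R)\in\X\cap\kb(\proj R)$ via Proposition \ref{key} and Proposition \ref{annihi}(3).
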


\begin{proof}
It is directly verified (resp. follows from Proposition \ref{cpt}) that the assignment $\X\mapsto\X\cap\kb(\proj R)$ (resp. $\tthick\Y\mapsfrom\Y$) makes a well-defined map.
It follows from \cite[Theorem 1.5]{N} that
\begin{enumerate}[(\#)]
\item
the assignments $\X \mapsto \supp \X$ and $W \mapsto \supp_{\kb(\proj R)}^{-1}(W):=\supp^{-1}W\cap\kb(\proj R)$ make mutually inverse bijections between the thick subcategories of $\kb(\proj R)$ and the specialization-closed subsets of $\spec R$.
\end{enumerate}
In view of Theorem \ref{main} and (\#), we have only to show that
\begin{enumerate}[(a)]
\item
$\supp_{\kb(\proj R)}^{-1}(\supp \X) = \X \cap \kb(\proj R)$ for any cocompactly generated thick $\otimes$-ideal $\X$ of $\dm(R)$, and
\item
$\langle\supp \Y \rangle = \tthick\Y$ for any thick subcategory $\Y$ of $\kb(\proj R)$.
\end{enumerate}

Using Propositions \ref{cpt} and \ref{si}(2), we see that $\langle\supp \Y \rangle$ and $\tthick\Y$ are cocompactly generated thick $\otimes$-ideals of $\dm(R)$ whose supports are equal to $\supp \Y$.
Now Theorem \ref{main} shows the statement (b).

Clearly, $\supp(\X \cap \kb(\proj R))$ is contained in $\supp \X$.
Take a prime ideal $\p\in\supp \X$, and let $\xx$ be a system of generators of $\p$.
Then $\V(\ann \k(\xx, R)) = \supp \k(\xx, R) = \V(\p) \subseteq \supp \X$ by Proposition \ref{annihi}(3), and $\k(\xx, R) \in \X \cap \kb(\proj R)$ by Proposition \ref{key}.
It follows that $\p \in \supp \k(\xx, R) \subseteq \supp(\X \cap \kb(\proj R))$.
Thus we get $\supp(\X \cap \kb(\proj R))= \supp \X$, and obtain $\supp_{\kb(\proj R)}^{-1}(\supp \X) =\supp_{\kb(\proj R)}^{-1}(\supp(\X \cap \kb(\proj R)))= \X \cap \kb(\proj R)$, where the last equality is shown by (\#).
Now the statement (a) is proved.
\end{proof}

\begin{rem}
%\begin{enumerate}[(1)]
%\item
Corollary \ref{dmperf} in particular gives a classification of the {\em compactly} generated thick $\otimes$-ideals of $\dm(R)$.
This itself can also be deduced as follows:
Let $\X,\Y$ be thick subcategories of $\kb(\proj R)$ with $\supp(\tthick\X)=\supp(\tthick\Y)$.
Then $\supp\X=\supp\Y$ by Proposition \ref{si}(2), and the Hopkins--Neeman theorem yields $\X=\Y$.
Hence $\tthick\X=\tthick\Y$.

The essential benefit that Corollary \ref{dmperf} produces is the classification of the {\em cocompactly} generated thick $\otimes$-ideals of $\dm(R)$.
This should not follow from the Hopkins--Neeman theorem or other known results, but require the arguments established in this section so far (especially, the Generalized Smash Nilpotence Theorem \ref{gsn}).
A {\em compactly} generated thick tensor ideal of $\dm(R)$ is clearly {\em cocompactly} generated by Proposition \ref{cpt}, but the converse (shown in Corollary \ref{dmperf}) should be rather non-trivial.
%\item
%\new{Corollary \ref{dmperf} guarantees that every compact object is cocompact.
%This is a somewhat unusual phenomenon; indeed, for example, the stable homotopy category possesses a lot of compact objects but the only cocompact object is the zero object.}
%\comment{この安定ホモトピー圏の事実は要確認。参考文献があれば挙げる。}
%\end{enumerate}
\end{rem}

In view of Corollary \ref{dmperf} and Proposition \ref{cpt}, we obtain the following result and definition.

\begin{cor}\label{seven}
The following four conditions are equivalent for a thick $\otimes$-ideal $\X$ of $\dm(R)$.
\begin{tabbing}
\qquad$\bullet$ $\X$ is \phantom{co}compactly generated.\qquad\=$\bullet$ $\X$ is generated by objects in $\kb(\proj R)$.\\
\qquad$\bullet$ $\X$ is cocompactly generated.\>$\bullet$ $\X$ is generated by objects in $\db(R)$.
\end{tabbing}
\end{cor}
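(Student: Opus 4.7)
The strategy is to use Proposition \ref{cpt} to identify the equivalence between compact/cocompact generation and generation by perfect/bounded complexes, and then rely on Corollary \ref{dmperf} (together with an elementary containment) to bridge the two pairs.

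First, I would unpack the two definitional equivalences. Proposition \ref{cpt} identifies $\dm(R)^\c = \kb(\proj R)$, so saying that $\X$ is generated as a thick $\otimes$-ideal by compact objects is the same as saying it is generated by objects in $\kb(\proj R)$. Likewise, $\dm(R)^\cc = \db(R)$ gives that $\X$ is cocompactly generated if and only if it is generated by objects in $\db(R)$. This establishes two of the four equivalences essentially for free.

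Next I would close the square by showing that compactly generated $\Leftrightarrow$ cocompactly generated. The implication ``compactly generated $\Rightarrow$ cocompactly generated'' is immediate from the inclusion $\kb(\proj R) \subseteq \db(R)$: any generating set of compact objects is a fortiori a generating set of cocompact objects. For the reverse implication, I would invoke Corollary \ref{dmperf}, whose final statement explicitly asserts that every cocompactly generated thick $\otimes$-ideal of $\dm(R)$ is compactly generated; this is the deep input, resting ultimately on the generalized smash nilpotence Theorem \ref{gsn} and the key Proposition \ref{key}.

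There is no real obstacle here: the substantive content has already been placed in Proposition \ref{cpt} and Corollary \ref{dmperf}, so the corollary is a formal repackaging. The only point to be careful about is not to confuse ``generated by objects in $\db(R)$'' with ``equal to a subcategory of $\db(R)$'' — a cocompactly generated thick $\otimes$-ideal of $\dm(R)$ may well contain complexes outside $\db(R)$; the condition is only that some generating set sits inside $\db(R)$, and similarly for $\kb(\proj R)$ on the compact side.
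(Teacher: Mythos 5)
Your proposal is correct and follows exactly the same route as the paper, which derives the corollary directly from Proposition \ref{cpt} (identifying compact and cocompact objects with $\kb(\proj R)$ and $\db(R)$ respectively) together with Corollary \ref{dmperf} (which supplies the nontrivial implication that cocompactly generated implies compactly generated). Your unpacking of the easy implication via $\kb(\proj R) \subseteq \db(R)$ and the cautionary remark about ``generated by'' versus ``contained in'' are both sound.
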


\begin{dfn}
Let $\X$ be a thick $\otimes$-ideal of $\dm(R)$.
We say that $\X$ is {\em compact} if it satisfies one (hence all) of the equivalent conditions in Corollary \ref{4}.
\end{dfn}

Next, for two thick $\otimes$-ideals $\X,\Y$ of $\dm(R)$ we define the thick $\otimes$-ideals $\X \wedge \Y$ and $\X \vee \Y$ by:
$$
\X \wedge \Y = \tthick\{X \ltensor_R Y \mid X \in \X,\, Y \in \Y \},\qquad
\X \vee \Y = \tthick(\X \cup \Y).
$$
These two operations yield a lattice structure in the compact thick $\otimes$-ideals of $\dm(R)$:

\begin{prop}\label{lat}
\begin{enumerate}[\rm(1)]
\item
Let $A$ and $B$ be specialization-closed subsets of $\spec R$.
One then has equalities
$$
\langle A\rangle \wedge\langle B\rangle = \langle A \cap B \rangle,\qquad
\langle A\rangle \vee \langle B\rangle = \langle A \cup B\rangle.
$$
\item
The set of compact thick $\otimes$-ideals of $\dm(R)$ is a lattice with meet $\wedge$ and join $\vee$.
\end{enumerate}
\end{prop}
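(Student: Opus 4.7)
The plan is to prove (1) directly, appealing to Proposition~\ref{key} and Corollary~\ref{strcpt}(3), and then to obtain (2) as a formal consequence of the bijection in Theorem~\ref{main} between the compact thick $\otimes$-ideals of $\dm(R)$ and the specialization-closed subsets of $\spec R$. A byproduct of (1) will be that $\langle A\rangle\wedge\langle B\rangle$ and $\langle A\rangle\vee\langle B\rangle$ are themselves compact, which is exactly what (2) needs.

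For the join identity I would simply observe that both $\langle A\rangle\vee\langle B\rangle$ and $\langle A\cup B\rangle$ coincide with $\tthick\{R/\p\mid\p\in A\cup B\}$: the containments $\langle A\rangle,\langle B\rangle\subseteq\langle A\cup B\rangle$ give one inclusion, and every generator $R/\p$ of $\langle A\cup B\rangle$ already lies in $\langle A\rangle\cup\langle B\rangle$, giving the other. This step is essentially formal.

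The meet identity is the substantive step and the main obstacle: the definition $\langle A\rangle\wedge\langle B\rangle=\tthick\{X\ltensor_R Y\}$ uses generators that need not lie in $\db(R)$, so the compactness of $\langle A\rangle\wedge\langle B\rangle$ is not transparent and Theorem~\ref{main} is not directly available. I would prove $\langle A\rangle\wedge\langle B\rangle=\langle A\cap B\rangle$ by two inclusions. For $\supseteq$, fix $\p\in A\cap B$, so that $R/\p$ lies in both $\langle A\rangle$ and $\langle B\rangle$; hence $R/\p\ltensor_R R/\p\in\langle A\rangle\wedge\langle B\rangle$. Lemma~\ref{si0}(4) gives $\supp(R/\p\ltensor_R R/\p)=\V(\p)=\V(\ann R/\p)$, so Proposition~\ref{key} applied with $\Y=\langle A\rangle\wedge\langle B\rangle$ places $R/\p$ itself in $\langle A\rangle\wedge\langle B\rangle$. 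For $\subseteq$, pick $X\in\langle A\rangle$ and $Y\in\langle B\rangle$; Corollary~\ref{strcpt}(3) gives $\V(\ann X)\subseteq A$ and $\V(\ann Y)\subseteq B$. Because $\ann X$ and $\ann Y$ each annihilate $\id_{X\ltensor_R Y}$, we have $\ann X+\ann Y\subseteq\ann(X\ltensor_R Y)$, whence $\V(\ann(X\ltensor_R Y))\subseteq\V(\ann X)\cap\V(\ann Y)\subseteq A\cap B$, and Corollary~\ref{strcpt}(3) again places $X\ltensor_R Y$ in $\langle A\cap B\rangle$.

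For (2), part (1) shows that $\wedge$ and $\vee$ preserve compactness of thick $\otimes$-ideals and, under the bijection of Theorem~\ref{main}, correspond respectively to $\cap$ and $\cup$ on specialization-closed subsets of $\spec R$. Since the latter form a lattice under $(\cap,\cup)$, so do the compact thick $\otimes$-ideals of $\dm(R)$ under $(\wedge,\vee)$.
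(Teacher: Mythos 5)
Your proof is correct, and it departs from the paper's argument in an interesting way. The paper first proves a structural claim---that $(\tthick\M)\wedge(\tthick\N)=\tthick\{M\ltensor_RN\mid M\in\M,\,N\in\N\}$ for arbitrary subcategories $\M,\N$---which reduces $\langle A\rangle\wedge\langle B\rangle$ to $\tthick\{R/\p\ltensor_R R/\q\mid\p\in A,\,\q\in B\}$; it then computes the support of both sides and invokes Theorem~\ref{main} to conclude equality. You instead prove the meet identity by a direct two-sided inclusion argument at the level of objects, driven by the annihilator estimates $\supp(R/\p\ltensor_R R/\p)=\V(\p)$ (for $\supseteq$, via Proposition~\ref{key}) and $\ann X+\ann Y\subseteq\ann(X\ltensor_R Y)$ together with Corollary~\ref{strcpt}(3) (for $\subseteq$). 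Your approach has the virtue of sidestepping a subtlety in the paper's final step: the generators $R/\p\ltensor_R R/\q$ need not be bounded complexes (their homology is $\Tor^R_\bullet(R/\p,R/\q)$, which can be nonzero in infinitely many degrees over a nonregular ring), so it is not immediate that $\langle A\rangle\wedge\langle B\rangle$ is cocompactly generated, as Theorem~\ref{main} requires; your annihilator argument establishes both inclusions without ever needing to know this in advance. The paper's route, on the other hand, yields a claim of independent interest that is reused nowhere else in this proof but is more general. For part~(2) you transport the lattice structure along the order-isomorphism of Theorem~\ref{main}, whereas the paper verifies the meet/join universal properties directly; both are valid and amount to the same thing once one notes (as follows from Theorem~\ref{main} and the monotonicity of $\langle\,\cdot\,\rangle$) that the bijection is order-preserving.
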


\begin{proof}
(1) It is evident that the second equality holds.
Let us show the first one.

We claim that for two subcategories $\M,\N$ of $\dm(R)$ it holds that
$$
(\tthick\M)\wedge(\tthick\N)=\tthick\{M\ltensor_RN \mid M \in \M,\, N \in \N\}.
$$
In fact, clearly $(\tthick\M)\wedge(\tthick\N)$ contains $\C:=\tthick\{M\ltensor_RN \mid M \in \M,\, N \in \N\}$.
For each $N\in\N$, the subcategory of $\dm(R)$ consisting of objects $X$ with $X\ltensor_RN\in\C$ is a thick $\otimes$-ideal containing $\M$, so contains $\tthick\M$.
Let $X$ be an object in $\tthick\M$.
Then $X\ltensor_RN$ belongs to $\C$ for all $N\in\N$.
The subcategory of $\dm(R)$ consisting of objects $Y$ with $X\ltensor_RY\in\C$ is a thick $\otimes$-ideal containing $\N$, so contains $\tthick\N$.
Hence $X\ltensor_RY$ is in $\C$ for all $X\in\tthick\M$ and $Y\in\tthick\N$, and the claim follows.

Using the claim, we see that $\langle A\rangle\wedge\langle B\rangle=\tthick\{R/\p\ltensor_RR/\q\mid\p\in A,\,\q\in B\}$.
Therefore
\begin{align*}
\supp(\langle A\rangle\wedge\langle B\rangle)
&=\supp\{R/\p\ltensor_RR/\q\mid\p\in A,\,\q\in B\}\\
&=\textstyle\bigcup_{\p\in A,\,\q\in B}\supp(R/\p\ltensor_RR/\q)
=\textstyle\bigcup_{\p\in A,\,\q\in B}(\V(\p)\cap\V(\q))
=A\cap B
=\supp\langle A\cap B\rangle
\end{align*}
by Proposition \ref{si}(2), Lemma \ref{si0}(4) and the assumption that $A,B$ are specialization-closed.
Theorem \ref{main} implies that $\langle A\rangle\wedge\langle B\rangle=\langle A\cap B\rangle$.

(2) Let $\X,\Y$ be compact thick $\otimes$-ideals of $\dm(R)$.
Theorem \ref{main} implies that $\X=\langle\supp\X\rangle$ and $\Y=\langle\supp\Y\rangle$, and $\supp\X$ and $\supp\Y$ are specialization-closed.
It follows from (1) that $\X\wedge\Y=\langle\supp\X\cap\supp\Y\rangle$ and $\X\vee\Y=\langle\supp\X\cup\supp\Y\rangle$, which are compact.
It is seen by definition that any thick $\otimes$-ideal containing both $\X$ and $\Y$ contains $\X\vee\Y$.
Let $\ZZ$ be a compact thick $\otimes$-ideal contained in both $\X$ and $\Y$.
By Theorem \ref{main} again we get $\ZZ=\langle\supp\ZZ\rangle$.
Since $\supp\ZZ$ is contained in $\supp\X\cap\supp\Y$, we have that $\ZZ$ is contained in $\X\wedge\Y$.
These arguments prove the assertion.
\end{proof}

Note that the specialization-closed subsets of $\spec R$ form a lattice with meet $\cap$ and join $\cup$.
As an immediate consequence of this fact and Proposition \ref{lat}(2), we obtain a refinement of Theorem \ref{main}:

\begin{thm}\label{main2}
The assignments $\X\mapsto\supp\X$ and $\langle W\rangle\mapsfrom W$ induce a lattice isomorphism
$$
\{\text{Compact thick $\otimes$-ideals of $\dm(R)$}\}\cong\{\text{Specialization-closed subsets of $\spec R$}\}.
$$
\end{thm}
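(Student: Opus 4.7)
The proof will essentially be a matter of bundling together ingredients that are already on the table. Specifically, Theorem \ref{main} (together with Corollary \ref{seven}, which identifies compact thick $\otimes$-ideals with cocompactly generated ones) has already established that the assignments $\X\mapsto\supp\X$ and $W\mapsto\langle W\rangle$ set up a bijection between compact thick $\otimes$-ideals of $\dm(R)$ and specialization-closed subsets of $\spec R$. Proposition \ref{lat}(2) equips the left-hand side with a lattice structure via $\wedge$ and $\vee$, and the parenthetical remark just before the theorem equips the right-hand side with the lattice structure given by $\cap$ and $\cup$. So all that is needed is to check that the bijection respects both operations.

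The plan is therefore as follows. First, invoke Theorem \ref{main} to obtain the underlying bijection at the level of sets, so that every compact thick $\otimes$-ideal $\X$ can be written uniquely as $\langle\supp\X\rangle$ with $\supp\X$ specialization-closed, and every specialization-closed subset $W$ satisfies $W=\supp\langle W\rangle$. Second, apply Proposition \ref{lat}(1) directly: for specialization-closed $A,B\subseteq\spec R$ one has $\langle A\rangle\wedge\langle B\rangle=\langle A\cap B\rangle$ and $\langle A\rangle\vee\langle B\rangle=\langle A\cup B\rangle$, which says that $W\mapsto\langle W\rangle$ intertwines $\cap,\cup$ with $\wedge,\vee$. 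Conversely, given compact thick $\otimes$-ideals $\X,\Y$, writing $\X=\langle\supp\X\rangle$ and $\Y=\langle\supp\Y\rangle$ and applying $\supp$ to both sides of the identities from Proposition \ref{lat}(1), together with $\supp\langle W\rangle=W$, yields $\supp(\X\wedge\Y)=\supp\X\cap\supp\Y$ and $\supp(\X\vee\Y)=\supp\X\cup\supp\Y$. This shows that $\X\mapsto\supp\X$ is also a lattice homomorphism, completing the proof.

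There is essentially no obstacle: all the hard work (the generalized smash nilpotence theorem, Proposition \ref{key}, the classification of compact thick $\otimes$-ideals in Theorem \ref{main}, and the compatibility of $\wedge,\vee$ with $\cap,\cup$ in Proposition \ref{lat}(1)) has already been carried out. The present theorem is merely the formal packaging of these results into a single statement about lattices, and the proof should be short enough to fit in a couple of lines.
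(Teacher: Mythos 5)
Your proposal is correct and follows exactly the route the paper intends: the paper states the theorem as an immediate consequence of Theorem \ref{main}, Proposition \ref{lat}, and the observation that specialization-closed subsets form a lattice under $\cap$ and $\cup$, which is precisely what you have unwound. You have merely spelled out the short verification that the paper leaves implicit.
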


Restricting to the artinian case, we get a complete classification of thick tensor ideals of $\dm(R)$.

\begin{cor}\label{art}
Let $R$ be an artinian ring.
Then the following statements are true.
\begin{enumerate}[\rm(1)]
\item
All the thick $\otimes$-ideals of $\dm(R)$ are compact.
\item
The assignments $\X\mapsto\supp\X$ and $\langle S\rangle\mapsfrom S$ induce a lattice isomorphism
$$
\{\text{Thick $\otimes$-ideals of $\dm(R)$}\}\cong\{\text{Subsets of $\spec R$}\}.
$$
\end{enumerate}
\end{cor}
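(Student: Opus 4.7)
The plan is to reduce (2) to Theorem \ref{main2} and concentrate on proving (1). Since $R$ is artinian, every prime ideal is maximal, so $\V(\p)=\{\p\}$ for each $\p\in\spec R$ and every subset of $\spec R$ is trivially specialization-closed. Once (1) is established, Theorem \ref{main2} immediately supplies the lattice isomorphism asserted in (2).

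For (1), I propose to show that any thick $\otimes$-ideal $\X$ of $\dm(R)$ satisfies $\X=\tthick\{R/\m\mid\m\in\supp\X\}$; the right-hand side is generated by modules, so this realizes $\X$ as compact. The inclusion $\{R/\m\mid\m\in\supp\X\}\subseteq\X$ is handled as follows. Given $\m\in\supp\X$, pick $X\in\X$ with $\m\in\supp X$, so that $X\ltensor_R R/\m\in\X$ is nonzero. Viewed in $\dm(R/\m)$---which, since $R/\m$ is a field, coincides with the homotopy category of bounded-above complexes of finite-dimensional vector spaces---this complex splits as $\bigoplus_i\h^i(X\ltensor_R R/\m)[-i]$ via an $R/\m$-linear chain map. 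The map is $R$-linear as well, so the splitting persists in $\dm(R)$. Some $\h^i$ is nonzero, so a shift of $R/\m$ appears as a direct summand of an object of $\X$, and thickness plus closure under shifts give $R/\m\in\X$.

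For the reverse inclusion, Corollary \ref{strcpt}(1) identifies $\tthick\{R/\m\mid\m\in\supp\X\}$ with $\{Y\in\dm(R)\mid\V(\ann Y)\subseteq\supp\X\}$, so it remains to show $\V(\ann Y)\subseteq\supp\X$ for each $Y\in\X$. Since $\supp Y\subseteq\supp\X$ is automatic, it suffices to prove $\V(\ann Y)=\supp Y$ over an artinian ring; the $\supseteq$ direction is Proposition \ref{annihi}(2). The reverse inclusion is the main obstacle, as it can fail for $Y\in\dm(R)\setminus\db(R)$ when $R$ is not artinian, so the artinian hypothesis must be used directly. I propose to exploit the decomposition $R=R_1\times\cdots\times R_n$ into local artinian factors with orthogonal idempotents $e_1,\ldots,e_n$, which produces $Y\cong\bigoplus_i e_iY$ in $\dm(R)$. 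If $\m_k\notin\supp Y$, localization at $\m_k$ gives $R_{\m_k}\cong R_k$ and $Y_{\m_k}\cong e_kY$, so $e_kY=0$ in $\dm(R)$; hence the endomorphism $e_k$ of $Y$ factors through $e_kY=0$ and lies in $\ann Y$. Since $e_k\notin\m_k$, we conclude $\m_k\notin\V(\ann Y)$, which completes the proof.
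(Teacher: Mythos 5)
Your proof is correct, and the central idea is the same one the paper uses: exploit the decomposition of the artinian ring $R$ into local artinian factors (the paper via the CRT isomorphism $R\cong R/\m_1^t\oplus\cdots\oplus R/\m_n^t$, you via the orthogonal idempotents $e_1,\dots,e_n$ — the same thing) to decompose any complex $Y$ into pieces supported at single maximal ideals. Where you diverge is in the packaging. For the inclusion $\{R/\m\mid\m\in\supp\X\}\subseteq\X$ the paper quotes Corollary~\ref{3'}, which sits on top of Proposition~\ref{key} and hence on the generalized smash nilpotence machinery; you instead give a self-contained elementary argument by observing that $X\ltensor_R R/\m$ splits in $\dm(R/\m)$ as a sum of shifted copies of $R/\m$ via an $R/\m$-linear (hence $R$-linear) chain homotopy equivalence, which therefore persists in $\dm(R)$. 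For the reverse inclusion the paper shows directly that $X\cong\bigoplus_{i\le s}(X\ltensor_R R/\m_i^t)$ lies in $\tthick\{R/\m_i^t\}$, whereas you isolate the clean intermediate statement that $\V(\ann Y)=\supp Y$ for every $Y\in\dm(R)$ when $R$ is artinian (using $e_kY\cong Y_{\m_k}$ to see that $e_k\in\ann Y$ whenever $\m_k\notin\supp Y$) and then invoke the annihilator description of $\tthick\{R/\m\mid\m\in\supp\X\}$ from Corollary~\ref{strcpt}(1). Your version buys a slightly lighter footprint for the easy inclusion and a reusable equality $\V(\ann Y)=\supp Y$ that the paper leaves implicit; the paper's version is marginally shorter because it goes straight from the splitting of $X$ to membership in the thick tensor ideal.
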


\begin{proof}
(1) Take any thick $\otimes$-ideal $\X$ of $\dm(R)$.
We want to show $\X=\langle\supp\X\rangle$.
Corollary \ref{3'} implies that $\X$ contains $\langle\supp\X\rangle$.
To show the opposite inclusion, we may assume that $\X$ consists of a single object $X$.
Let $\m_1,\dots,\m_s,\m_{s+1},\dots,\m_n$ be the maximal ideals of $R$ with $\supp X=\{\m_1,\dots,\m_s\}$.
Find an integer $t>0$ with $(\m_1\cdots\m_n)^t=0$.
The Chinese remainder theorem yields an isomorphism $R\cong R/\m_1^t\oplus\cdots\oplus R/\m_n^t$ of $R$-modules.
Tensoring $X$, we obtain an isomorphism $X\cong(X\ltensor_RR/\m_1^t)\oplus\cdots\oplus(X\ltensor_RR/\m_n^t)$.
Lemma \ref{si0}(4) gives $\supp(X\ltensor_RR/\m_i^t)=\supp X\cap\{\m_i\}$, which is an empty set for $s+1\le i\le n$.
For such an $i$ we have $X\ltensor_RR/\m_i^t=0$ by Remark \ref{r}, and get $X\cong(X\ltensor_RR/\m_1^t)\oplus\cdots\oplus(X\ltensor_RR/\m_s^t)$.
It follows that $X$ is in $\tthick\{R/\m_1^t,\dots,R/\m_s^t\}$, which is the same as $\langle\supp X\rangle$ by Corollary \ref{strcpt}.

(2) Since all prime ideals of $R$ are maximal, every subset of $\spec R$ is specialization-closed.
(A more general statement will be given in Lemma \ref{finsp}.)
The assertion follows from (1) and Theorem \ref{main2}.
\end{proof}

%%%%%%%%%%%%%%%%%%%%%%%%%%%%%%%%%%%%%%%%%%%%%%%%%%%%%%%%%%%%%%%%
\section{Correspondence between the Balmer and Zariski spectra}\label{sect:bal}

In this section, we construct a pair of maps between the Balmer spectrum $\spc\dm(R)$ and the Zariski spectrum $\spec R$, which will play a crucial role in later sections.
First of all, let us recall the definitions of a prime thick tensor ideal of a tensor triangulated category and its Balmer spectrum.

\begin{dfn}
Let $\T$ be an essentially small tensor triangulated category.
A thick $\otimes$-ideal $\P$ of $\T$ is called {\em prime} provided that $\P\ne\T$ and if $X\otimes Y$ is in $\P$, then so is either $X$ or $Y$.
The set of prime thick $\otimes$-ideals of $\T$ is denoted by $\spc\T$ and called the {\em Balmer spectrum} of $\T$.
\end{dfn}

Here is an example of a prime thick tensor ideal of $\dm(R)$.

\begin{ex}\label{zerop}
When $R$ is local, the zero subcategory $\zero$ of $\dm(R)$ is a prime thick $\otimes$-ideal.
In fact, it is easy to verify that $\zero$ is a thick $\otimes$-ideal of $\dm(R)$.
(This also follows from Remark \ref{r} and Proposition \ref{si}(1).)
If $X,Y$ are objects of $\dm(R)$ with $X\ltensor_RY=0$, then either $X=0$ or $Y=0$ by Lemma \ref{si0}(4).
\end{ex}

Now we introduce the following notation.

\begin{nota}
For a prime ideal $\p$ of $R$, we denote by $\s(\p)$ the subcategory of $\dm(R)$ consisting of complexes $X$ with $X_\p\cong0$ in $\dm(R_\p)$.
\end{nota}

The subcategory $\s(\p)$ is always a prime thick tensor ideal:

\begin{prop}\label{two}
Let $\p$ be a prime ideal of $R$.
Then $\s(\p)$ is a prime thick $\otimes$-ideal of $\dm(R)$ satisfying
$$
\supp\s(\p)=\{\q\in\spec R\mid\q\nsubseteq\p\}.
$$
\end{prop}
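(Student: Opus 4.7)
The proof plan proceeds in four short steps, each leveraging the fact that the localization functor $(-)_\p:\dm(R)\to\dm(R_\p)$ is triangulated and symmetric monoidal.

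First I would verify that $\s(\p)$ is a thick $\otimes$-ideal of $\dm(R)$. Since $(-)_\p$ is exact and commutes with finite direct sums and shifts, the preimage of the zero subcategory $\zero\subseteq\dm(R_\p)$ under $(-)_\p$ is closed under cones, shifts and direct summands, hence thick. For the ideal property, for any $T\in\dm(R)$ and $X\in\s(\p)$ there is an isomorphism $(T\ltensor_RX)_\p\cong T_\p\ltensor_{R_\p}X_\p\cong 0$, so $T\ltensor_RX\in\s(\p)$.

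Next I would check that $\s(\p)$ is a proper and prime. Properness is immediate, since $R_\p\not\cong 0$ in $\dm(R_\p)$ implies $R\notin\s(\p)$. For primality, suppose $X\ltensor_RY\in\s(\p)$. Then $X_\p\ltensor_{R_\p}Y_\p\cong(X\ltensor_RY)_\p=0$ in $\dm(R_\p)$. Since $R_\p$ is local, Example \ref{zerop} (applied to $\dm(R_\p)$) tells us that the zero subcategory of $\dm(R_\p)$ is a prime thick $\otimes$-ideal, so either $X_\p=0$ or $Y_\p=0$; that is, either $X\in\s(\p)$ or $Y\in\s(\p)$.

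Finally, for the support formula, I will show both inclusions. For $(\supseteq)$, take $\q\in\spec R$ with $\q\nsubseteq\p$. Then $\p\notin\V(\q)=\supp(R/\q)$, so $(R/\q)_\p=0$ and $R/\q\in\s(\p)$; since $\q\in\V(\q)=\supp(R/\q)\subseteq\supp\s(\p)$, we are done. For $(\subseteq)$, take $\q\in\supp X$ for some $X\in\s(\p)$; if we had $\q\subseteq\p$, then $R\setminus\p\subseteq R\setminus\q$ would make $(-)_\q$ factor through $(-)_\p$ (i.e., $X_\q\cong(X_\p)_\q$), forcing $X_\q=0$ and contradicting $\q\in\supp X$. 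Hence $\q\nsubseteq\p$.

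There is no real obstacle here; the only point requiring care is recording that localization is tensor-triangulated so that the primality transports cleanly from the local case (Example \ref{zerop}) to $\s(\p)$, and noting the elementary fact that $R_\q$ is a localization of $R_\p$ whenever $\q\subseteq\p$, which drives the nontrivial inclusion in the support computation.
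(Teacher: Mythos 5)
Your proof is correct and follows essentially the same strategy as the paper's, resting on the same underlying fact (Lemma \ref{si0}(4), i.e.\ $\supp(X\ltensor_R Y)=\supp X\cap\supp Y$) and the same observation that $X_\q\cong(X_\p)_\q$ when $\q\subseteq\p$. The two small organizational differences are that you route primality through $\dm(R_\p)$ via Example \ref{zerop} rather than via the identity $\s(\p)=\supp^{-1}(\{\p\}^\complement)$ plus Proposition \ref{si}(1), and you use $R/\q$ as the witness for the inclusion $(\supseteq)$ of the support formula whereas the paper uses the Koszul complex $\k(\xx,R)$ (citing Proposition \ref{annihi}(3)); both choices work, and yours is marginally more elementary.
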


\begin{proof}
Since $\s(\p)$ does not contain $R$, it is not equal to $\dm(R)$.
Note that $\s(\p)=\supp^{-1}(\{\p\}^\complement)$.
Using Lemma \ref{si0}(4) and Proposition \ref{si}(1), we observe that $\s(\p)$ is a prime thick $\otimes$-ideal of $\dm(R)$.

Fix a prime ideal $\q$ of $R$.
If $\q$ is in $\supp\s(\p)$, then there is a complex $X\in\s(\p)$ with $\q\in\supp X$, and it follows that $X_\p=0\ne X_\q$.
If $\q$ is contained in $\p$, then we have $X_\q=(X_\p)_\q$ and get a contradiction.
Therefore $\q$ is not contained in $\p$.
Conversely, assume this.
Take a system of generators $\xx=x_1,\dots,x_n$ of $\q$, and put $K=\k(\xx,R)$.
Then we have $K_\q\ne0=K_\p$ by Proposition \ref{annihi}(3).
Hence $K$ belongs to $\s(\p)$ and $\q$ is in $\supp K$, which implies $\q\in\supp\s(\p)$.
We thus obtain the equality in the proposition.
\end{proof}

As an easy consequence of the above proposition, we get another example of a prime thick tensor ideal.

\begin{cor}\label{df}
Let $R$ be an integral domain of dimension one.
It then holds that $\df(R)=\s((0))$, where $(0)$ stands for the zero ideal of $R$.
Hence $\df(R)$ is a prime thick $\otimes$-ideal of $\dm(R)$.
\end{cor}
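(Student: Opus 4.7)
The plan is to chain together two already-established descriptions of the subcategories in question. First, I would note that for a one-dimensional integral domain $R$, the zero ideal is the unique non-maximal prime, so
\[
\Max R = \spec R \setminus \{(0)\} = \{(0)\}^\complement.
\]
Second, I would recall Proposition \ref{si}(3), which gives $\df(R) = \supp^{-1}(\Max R)$, and the observation made inside the proof of Proposition \ref{two} that $\s(\p) = \supp^{-1}(\{\p\}^\complement)$ for every $\p \in \spec R$ (specializing to $\p = (0)$).

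Combining these, I would write
\[
\df(R) = \supp^{-1}(\Max R) = \supp^{-1}(\{(0)\}^\complement) = \s((0)),
\]
which proves the equality. The primeness and thick $\otimes$-ideal property then follow immediately from Proposition \ref{two} applied to $\p = (0)$, since that proposition was established for arbitrary prime ideals of $R$.

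There is essentially no obstacle here: the statement is a direct specialization of results already in place. The only thing to check is the set-theoretic identity $\Max R = \spec R \setminus \{(0)\}$ under the dimension-one domain hypothesis, which is immediate from the definition of Krull dimension.
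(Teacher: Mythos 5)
Your proof is correct and takes essentially the same approach as the paper's: both rest on the observation that a complex lies in $\df(R)$ precisely when its support avoids $(0)$, which is the defining property of $\s((0))$. You route through Proposition \ref{si}(3) and the identity $\s(\p)=\supp^{-1}(\{\p\}^\complement)$, whereas the paper unwinds the same definitions directly via the characterization $\spec R=\{(0)\}\cup\Max R$, but these are the same computation packaged differently.
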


\begin{proof}
For a complex $X\in\dm(R)$ it holds that
$$
X\in\df(R)\ \Leftrightarrow\ 
\ell(\h^iX)<\infty\text{ for all }i\ \Leftrightarrow\ 
\h^iX_{(0)}=0\text{ for all }i\ \Leftrightarrow\ 
X_{(0)}=0\ \Leftrightarrow\ 
X\in\s((0)),
$$
where the second equivalence follows from the fact that $\spec R=\{(0)\}\cup\Max R$.
This shows $\df(R)=\s((0))$.
Proposition \ref{two} implies that $\s((0))$ is prime, which gives the last statement of the corollary.
\end{proof}

\begin{rem}
Corollary \ref{df} is no longer valid if we remove the assumption that $R$ is an integral domain.
More precisely, the assertion of the corollary is not true even if $R$ is reduced.
In fact, consider the ring $R=k[[x,y]]/(xy)$, where $k$ is a field.
Then $R$ is a $1$-dimensional reduced local ring.
It is observed by Proposition \ref{annihi}(3) that the Koszul complexes $\k(x,R),\k(y,R)$ are outside $\df(R)$, while the complex $\k(x,R)\ltensor_R\k(y,R)=\k(x,y,R)$ is in $\df(R)$.
This shows that $\df(R)$ is not prime.
\end{rem}

We have constructed from each prime ideal $\p$ of $R$ the prime thick tensor ideal $\s(\p)$ of $\dm(R)$.
Now we are concerned with the opposite direction, that is, we construct from a prime thick tensor ideal of $\dm(R)$ a prime ideal of $R$, which is done in the following proposition.

\begin{prop}\label{pp}
Let $\P$ be a prime thick $\otimes$-ideal of $\dm(R)$.
Let $K$ be the set of ideals $I$ of $R$ such that $\V(I)$ is not contained in $\supp\P$.
Then $K$ has the maximum element $P$ with respect to the inclusion relation, and $P$ is a prime ideal of $R$.
\end{prop}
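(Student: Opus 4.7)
The plan is first to reformulate the condition defining $K$ using Corollary \ref{3'}: for any ideal $I$ of $R$, the inclusion $\V(I)\subseteq\supp\P$ is equivalent to $R/I\in\P$, so
\[
K=\{I\text{ an ideal of }R\mid R/I\notin\P\}.
\]
Since $\P\ne\dm(R)$ and every $X\in\dm(R)$ satisfies $X\cong X\ltensor_RR$, we have $R\notin\P$, so $(0)\in K$ and $K$ is nonempty.

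The crucial step will be to show that $K$ is closed under finite sums. Given $I,J\in K$, we have $R/I,R/J\notin\P$, so primeness of $\P$ yields $R/I\ltensor_RR/J\notin\P$. On the other hand, each element of $I$ acts as zero on $R/I$ in $\dm(R)$, and hence on $R/I\ltensor_RR/J$; the analogous statement holds for $J$; so $I+J\subseteq\ann_R(R/I\ltensor_RR/J)$, and Corollary \ref{strcpt}(2) delivers
\[
R/I\ltensor_RR/J\in\ttthick_{\dm(R)}(R/(I+J)).
\]
If $R/(I+J)$ were in $\P$, thickness of $\P$ would force $R/I\ltensor_RR/J\in\P$, a contradiction; hence $I+J\in K$.

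Existence of a maximum then follows from the noetherian hypothesis on $R$: the set $K$ contains a maximal element $P$ by the ACC, and for any $I\in K$ the previous step gives $I+P\in K$, so maximality of $P$ forces $I+P=P$, i.e., $I\subseteq P$. Thus $P$ is the (unique) maximum of $K$.

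For primeness, assume for contradiction that $ab\in P$ with $a,b\notin P$. Then $P+(a)$ and $P+(b)$ strictly contain $P$, so by maximality of $P$ neither belongs to $K$; Corollary \ref{3'} gives $\V(P+(a))\cup\V(P+(b))\subseteq\supp\P$. Since $ab\in P$, we have $(P+(a))(P+(b))\subseteq P$, whence
\[
\V(P)\subseteq\V\bigl((P+(a))(P+(b))\bigr)=\V(P+(a))\cup\V(P+(b))\subseteq\supp\P,
\]
so Corollary \ref{3'} forces $R/P\in\P$, contradicting $P\in K$. The main obstacle is the closure of $K$ under sums, where primeness of $\P$ must be combined with Corollary \ref{strcpt}(2) in order to transfer the conclusion from the derived tensor product $R/I\ltensor_RR/J$ back to the ordinary quotient $R/(I+J)$.
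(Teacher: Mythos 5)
Your proof is correct, and it follows a somewhat different technical route than the paper's. For the closure of $K$ under finite sums, the paper works with Koszul complexes: it uses Corollary \ref{3'} to translate $\V(I)\subseteq\supp\P$ into $\k(\xx,R)\in\P$, exploits the isomorphism $\k(\xx,R)\ltensor_R\k(\yy,R)\cong\k(\xx,\yy,R)$, applies primeness, and translates back. You instead apply primeness directly to $R/I\ltensor_RR/J$ and then transfer the conclusion to $R/(I+J)$ via the annihilator containment $I+J\subseteq\ann_R(R/I\ltensor_RR/J)$ and Corollary \ref{strcpt}(2). The paper's route has the advantage of staying entirely within Corollary \ref{3'}; yours invokes the slightly more explicit Corollary \ref{strcpt}(2), but avoids Koszul complexes and the companion isomorphism. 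For the primeness of $P$, the paper constructs a prime filtration $0=M_0\subsetneq\cdots\subsetneq M_t=R/P$ with subquotients $R/\p_i$, $\p_i\supsetneq P$, and uses $\V(P)=\bigcup_i\V(\p_i)$; your argument with $ab\in P$ and $(P+(a))(P+(b))\subseteq P$ is more elementary and avoids the filtration machinery. Both routes ultimately rest on Proposition \ref{key} through the cited corollaries, so neither is circular, and both are valid.
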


\begin{proof}
We claim that for ideals $I,J$ of $R$, if $\supp\P$ contains $\V(I+J)$, then it contains either $\V(I)$ or $\V(J)$.
Indeed, let $\xx=x_1,\dots,x_a$ and $\yy=y_1,\dots,y_b$ be systems of generators of $I$ and $J$, respectively.
Corollary \ref{3'} yields that $\k(\xx,\yy,R)$ is in $\P$.
There is an isomorphism $\k(\xx,R)\ltensor_R\k(\yy,R)\cong\k(\xx,\yy,R)$ of complexes, whence $\k(\xx,R)\ltensor_R\k(\yy,R)$ belongs to $\P$.
Since $\P$ is prime, it contains either $\k(\xx,R)$ or $\k(\yy,R)$.
Thus $\supp\P$ contains either $\V(I)$ or $\V(J)$ by Corollary \ref{3'} again.

The claim says that $K$ is closed under sums of ideals of $R$.
Taking into account that $R$ is noetherian, we see that $K$ has the maximum element $P$ with respect to the inclusion relation.
There is a filtration $0=M_0\subsetneq M_1\subsetneq \cdots \subsetneq M_t=R/P$ of submodules of the $R$-module $R/P$ such that for every $1\le i\le t$ one has $M_i/M_{i-1}\cong R/\p_i$ with some $\p_i\in\supp_RR/P$, whence each $\p_i$ contains $P$.
Suppose that $P$ is not a prime ideal of $R$.
Then the $\p_i$ strictly contain $P$, and the maximality of $P$ shows that $\supp\P$ contains $\V(\p_i)$.
There is an equality $\supp_RR/P=\bigcup_{i=1}^t\supp R/\p_i$, or equivalently, $\V(P)=\bigcup_{i=1}^t\V(\p_i)$.
It follows that $\supp\P$ contains $\V(P)$, which is a contradiction.
Consequently, $P$ is a prime ideal of $R$.
\end{proof}

Thus we have got two maps in the mutually inverse directions, between $\spec R$ and $\spc\dm(R)$:

\begin{nota}
Let $\P$ be a prime thick $\otimes$-ideal of $\dm(R)$.
With the notation of Proposition \ref{pp}, we set $\ii(\P)=K$ and $\pp(\P)=P$.
In view of Proposition \ref{two}, we obtain a pair of maps
$$
\s:\spec R\rightleftarrows\spc\dm(R):\pp
$$
given by $\p\mapsto\s(\p)$ and $\P\mapsto\pp(\P)$ for $\p\in\spec R$ and $\P\in\spc\dm(R)$.
\end{nota}

Now we compare the maps $\s,\pp$, and for this recall two basic definitions from set theory.
Let $f:A\to B$ be a map of partially ordered sets.
We say that $f$ is {\em order-reversing} if $x\le y$ implies $f(x)\ge f(y)$ for all $x,y\in A$.
Also, we call $f$ an {\em order anti-embedding} if $x\le y$ is equivalent to $f(x)\ge f(y)$ for all $x,y\in A$.
Note that any order anti-embedding is an injection.
We regard $\spec R$ and $\spc\dm(R)$ as partially ordered sets with respect to the inclusion relations.
The following theorem is the main result of this section.

\begin{thm}\label{s}
The maps $\s:\spec R\rightleftarrows\spc\dm(R):\pp$ are order-reversing, and satisfy
$$
\pp\cdot\s=1,\qquad
\s\cdot\pp=\supp^{-1}\supp.
$$
Hence, $\s$ is an order anti-embedding.
\end{thm}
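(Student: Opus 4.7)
The plan is to verify the three displayed assertions (order-reversal of both maps, $\pp\cdot\s = 1$, and $\s\cdot\pp = \supp^{-1}\supp$) and deduce the anti-embedding statement formally.

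First I would dispose of the easy order-reversal claims. For $\s$: if $\p \subseteq \q$ and $X_\q = 0$, then $X_\p = (X_\q)_\p = 0$, so $\s(\q) \subseteq \s(\p)$. For $\pp$: if $\P \subseteq \Q$ then $\supp\P \subseteq \supp\Q$, so every ideal $I$ with $\V(I) \not\subseteq \supp\Q$ also satisfies $\V(I) \not\subseteq \supp\P$, that is $\ii(\Q) \subseteq \ii(\P)$; taking maxima gives $\pp(\Q) \subseteq \pp(\P)$.

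Next I would verify $\pp\cdot\s = 1$. Fix $\p \in \spec R$. By Proposition \ref{two}, $\supp\s(\p) = \{\q \mid \q \not\subseteq \p\}$. For an ideal $I$, the condition $\V(I) \not\subseteq \supp\s(\p)$ means that some prime containing $I$ is contained in $\p$, equivalently $I \subseteq \p$. Thus $\ii(\s(\p)) = \{I \mid I \subseteq \p\}$, whose maximum element is $\p$ itself, giving $\pp(\s(\p)) = \p$.

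The main content lies in the identity $\s\cdot\pp = \supp^{-1}\supp$, and this is the step where I would spend most care. For $\P \in \spc\dm(R)$, set $P = \pp(\P)$. First I want to establish the key characterization: \emph{$P$ is the unique maximal prime of $R$ not lying in $\supp\P$}. Since $P \in \ii(\P)$, there is a prime $\p \supseteq P$ with $\p \notin \supp\P$; then $\p \in \ii(\P)$ and maximality of $P$ forces $\p = P$, so $P \notin \supp\P$. Conversely, if $\q$ is any prime not in $\supp\P$, then $\q \in \ii(\P)$, hence $\q \subseteq P$. Now for $X \in \dm(R)$ I would argue both inclusions: if $\supp X \subseteq \supp\P$, then $P \notin \supp X$ (since $P \notin \supp\P$), so $X \in \s(P)$; conversely if $\supp X \not\subseteq \supp\P$, pick $\q \in \supp X \setminus \supp\P$; then $\q \subseteq P$, and since $\supp X$ is specialization-closed by Proposition \ref{si}(2), we have $P \in \V(\q) \subseteq \supp X$, so $X \notin \s(P)$. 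This gives the equality.

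Finally, the anti-embedding statement is a formal consequence: if $\s(\p) \supseteq \s(\q)$, applying $\pp$ (which is order-reversing) and using $\pp\cdot\s=1$ yields $\p \subseteq \q$, and the reverse implication is the order-reversing property of $\s$. The main obstacle is really the second equality, and it reduces to the clean characterization of $\pp(\P)$ as the unique maximal prime outside $\supp\P$, which follows from the claim already proved inside Proposition \ref{pp} together with specialization-closedness of supports.
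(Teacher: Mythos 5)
Your proof is correct and follows essentially the same approach as the paper. The only minor variation is in the proof of $\s\cdot\pp=\supp^{-1}\supp$: you first isolate the characterization of $\pp(\P)$ as the unique maximal prime outside $\supp\P$ (which the paper records later as Corollary~\ref{ato2}) and then use specialization-closedness of $\supp X$ to prove the contrapositive of one inclusion, whereas the paper argues the same inclusion directly via localization ($\p\subseteq\pp(\P)$ and $X_{\pp(\P)}=0$ forces $X_\p=0$); the mathematical content is identical.
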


\begin{proof}
Let $\p,\q$ be prime ideals of $R$ with $\q\subseteq\p$.
Then Proposition \ref{two} shows that $\q$ is not in $\supp\s(\p)$.
Hence $X_\q=0$ for all $X\in\s(\p)$, which means that $\s(\p)$ is contained in $\s(\q)$.
On the other hand, let $\P,\Q$ be prime thick $\otimes$-ideals of $\dm(R)$ with $\P\subseteq\Q$.
Then $\supp\P$ is contained in $\supp\Q$, and we see from the definition of $\pp$ that $\pp(\P)$ contains $\pp(\Q)$.
Therefore, the maps $\s,\pp$ are order-reversing.

Fix a prime ideal $\p$ of $R$.
Then $\pp(\s(\p))$ is the maximum element of $\ii(\s(\p))$, which consists of ideals $I$ with $\V(I)\nsubseteq\supp\s(\p)$.
This is equivalent to saying that $I\subseteq\p$ by Proposition \ref{two}.
Hence $\pp(\s(\p))=\p$.

Let $\P$ be a prime thick $\otimes$-ideal of $\dm(R)$.
Note that a prime ideal $\p$ of $R$ belongs to $\ii(\P)$ if and only if $\p$ is not in $\supp\P$.
Let $X\in\dm(R)$ be a complex with $X_{\pp(\P)}=0$.
If $\p$ is a prime ideal of $R$ with $X_\p\ne0$, then $\p$ is not contained in $\pp(\P)$, and $\p$ must not belong to $\ii(\P)$, which means $\p\in\supp\P$.
Therefore $\supp X$ is contained in $\supp\P$, and we obtain $\s(\pp(\P))\subseteq\supp^{-1}\supp\P$.
Conversely, let $X\in\dm(R)$ be a complex with $\supp X\subseteq\supp\P$.
Since $\pp(\P)$ is in $\ii(\P)$, it does not belong to $\supp\P$.
Hence $\pp(\P)$ is not in $\supp X$, which means $X_{\pp(\P)}=0$.
We thus conclude that $\s(\pp(\P))=\supp^{-1}\supp\P$.

The last assertion is shown by using the equality $\p=\pp(\s(\p))$ for all prime ideals $\p$ of $R$.
\end{proof}

The above theorem gives rise to several corollaries, which will often be used later.
The rest of this section is devoted to stating and proving them.

\begin{cor}\label{ato2}
Let $\p$ be a prime ideal of $R$, and let $\P$ a prime thick $\otimes$-ideal of $\dm(R)$.
It holds that:
$$
\p\subseteq\pp(\P)\ \Leftrightarrow\ R/\p\notin\P\ \Leftrightarrow\ \p\notin\supp\P\ \Leftrightarrow\ \P\subseteq\s(\p).
$$
In particular, $\pp(\P)$ is the maximum element of $(\supp\P)^\complement$ with respect to the inclusion relation.
\end{cor}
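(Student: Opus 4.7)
The plan is to establish the four equivalences as a cycle, using as primary tools Corollary \ref{3'}, Proposition \ref{si}(2) (specialization-closedness of $\supp\P$), and the definition of $\pp(\P)$ as the maximum element of $\ii(\P)=\{I\trianglelefteq R : \V(I)\nsubseteq\supp\P\}$.

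For the equivalence $R/\p\notin\P \Leftrightarrow \p\notin\supp\P$, I would use Corollary \ref{3'} applied to the prime $\otimes$-ideal $\P$: it gives $R/\p\in\P \Leftrightarrow \V(\p)\subseteq\supp\P$. The direction $\p\in\supp\P\Rightarrow R/\p\in\P$ then follows because $\supp\P$ is specialization-closed by Proposition \ref{si}(2), so $\p\in\supp\P$ forces $\V(\p)\subseteq\supp\P$. The converse is immediate since $\p\in\V(\p)=\supp(R/\p)$.

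The equivalence $\p\notin\supp\P \Leftrightarrow \P\subseteq\s(\p)$ is essentially unwrapping definitions: $\P\subseteq\s(\p)$ means $X_\p=0$ for every $X\in\P$, i.e., $\p\notin\supp X$ for every $X\in\P$, which is exactly $\p\notin\supp\P$.

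The main (and only mildly delicate) point is the equivalence $\p\subseteq\pp(\P)\Leftrightarrow R/\p\notin\P$, which I would prove via $\p\subseteq\pp(\P)\Leftrightarrow\p\in\ii(\P)$. The ``$\Leftarrow$'' direction follows from $\pp(\P)=\max\ii(\P)$. For ``$\Rightarrow$'', if $\p\subseteq\pp(\P)$ then $\V(\pp(\P))\subseteq\V(\p)$, and since $\V(\pp(\P))\nsubseteq\supp\P$ we conclude $\V(\p)\nsubseteq\supp\P$, i.e., $\p\in\ii(\P)$. Now by Corollary \ref{3'} (reformulated for ideals generated by primes together with specialization-closedness of $\supp\P$), the condition $\V(\p)\nsubseteq\supp\P$ is equivalent to $R/\p\notin\P$.

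Finally, the ``in particular'' statement follows directly from the chain of equivalences: by the first and second equivalences, $\p\in(\supp\P)^\complement\Leftrightarrow\p\subseteq\pp(\P)$. Since $\pp(\P)\subseteq\pp(\P)$ trivially, we also get $\pp(\P)\in(\supp\P)^\complement$, confirming that $\pp(\P)$ is the (unique) maximum of $(\supp\P)^\complement$ with respect to inclusion.
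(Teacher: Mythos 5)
Your proposal is correct and takes essentially the same approach as the paper: both rest on Corollary \ref{3'}, the specialization-closedness of $\supp\P$ (Proposition \ref{si}(2)), the definition of $\pp(\P)$ via $\ii(\P)$, and the immediate unwinding of $\P\subseteq\s(\p)$. The only tactical difference is that the paper closes the cycle by invoking Theorem \ref{s} ($\s(\pp(\P))=\supp^{-1}\supp\P\supseteq\P$), whereas you prove the first biconditional directly through $\ii(\P)$; both are valid.
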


\begin{proof}
The second equivalence follows from Corollary \ref{3'}, while the third one is trivial.
If $\p\notin\supp\P$, then $\p\subseteq\pp(\P)$.
If this is the case, then $\s(\p)\supseteq\s(\pp(\P))=\supp^{-1}\supp\P\supseteq\P$ by Theorem \ref{s}.
\end{proof}

\begin{cor}\label{ppsupp}
For two prime thick $\otimes$-ideals $\P,\Q$ of $\dm(R)$ one has:
$$
\pp(\P)\subseteq\pp(\Q)\ \Leftrightarrow\ \supp\P\supseteq\supp\Q,\qquad\quad
\pp(\P)=\pp(\Q)\ \Leftrightarrow\ \supp\P=\supp\Q.
$$
\end{cor}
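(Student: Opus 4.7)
The plan is to derive both equivalences from two results already in hand: Theorem \ref{s} (which gives $\s\cdot\pp = \supp^{-1}\supp$ and the order-reversing property) and Corollary \ref{ato2} (which identifies $\pp(\P)$ as the maximum element of $(\supp\P)^\complement$). The second equivalence is formally an immediate consequence of the first applied in both directions, so the work is to establish the first equivalence.

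For the implication $\supp\P \supseteq \supp\Q \Rightarrow \pp(\P)\subseteq\pp(\Q)$, I would argue directly from the last statement of Corollary \ref{ato2}: the containment $\supp\P\supseteq\supp\Q$ gives $(\supp\P)^\complement \subseteq (\supp\Q)^\complement$, so $\pp(\P)$, being an element of $(\supp\P)^\complement$, lies in $(\supp\Q)^\complement$ and is thus $\subseteq \pp(\Q)$ by maximality of $\pp(\Q)$ in that set.

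For the converse, I would invoke the order-reversing property of $\s$ from Theorem \ref{s}: $\pp(\P)\subseteq\pp(\Q)$ yields $\s(\pp(\P))\supseteq\s(\pp(\Q))$, i.e.\ $\supp^{-1}\supp\P\supseteq\supp^{-1}\supp\Q$ by the identity $\s\cdot\pp=\supp^{-1}\supp$. Applying $\supp$ to both sides and using Proposition \ref{si}(1) together with the fact that $\supp\P$ and $\supp\Q$ are specialization-closed (Proposition \ref{si}(2)), we recover $\supp\P\supseteq\supp\Q$.

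There is no genuine obstacle here; the statement is a packaging corollary and the only thing to be careful about is invoking the correct half of Corollary \ref{ato2} (the maximum characterization, not the equivalence $\p\subseteq\pp(\P)\Leftrightarrow\p\notin\supp\P$, although that could equally well be used to give a one-line proof via $\supp\P = \{\p\in\spec R : \p\not\subseteq\pp(\P)\}$).
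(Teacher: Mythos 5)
Your proof is correct and follows essentially the paper's approach (the paper's own proof is the one-liner ``Theorem \ref{s} and Proposition \ref{si}(1) yield the first equivalence, which implies the second one''). The only mild variation is that you route the forward direction through the maximum characterization of $\pp(\P)$ from Corollary \ref{ato2} rather than pushing both directions through the $\s$-$\pp$ identities, but since Corollary \ref{ato2} is itself a consequence of Theorem \ref{s}, this is the same underlying argument expanded.
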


\begin{proof}
Theorem \ref{s} and Proposition \ref{si}(1) yield the first equivalence, which implies the second one.
\end{proof}

Here we introduce two notions of thick tensor ideals, which will play main roles in the rest of this paper.

\begin{dfn}
\begin{enumerate}[(1)]
\item
For a thick $\otimes$-ideal $\X$ of $\dm(R)$ we denote by $\sqrt\X$ the {\em radical} of $\X$, that is, the subcategory of $\dm(R)$ consisting of objects $M$ such that the $n$-fold tensor product $M\ltensor_R\cdots\ltensor_RM$ belongs to $\X$ for some $n\ge1$.
\item
A thick $\otimes$-ideal $\X$ of $\dm(R)$ is called {\em radical} if $\X=\sqrt\X$.
Any prime thick $\otimes$-ideal is radical.
\item
A thick $\otimes$-ideal $\X$ of $\dm(R)$ is called {\em tame} if one can write $\X=\supp^{-1}S$ for some subset $S$ of $\spec R$.
The set of tame prime thick $\otimes$-ideals of $\dm(R)$ is denoted by $\ts\dm(R)$.
\end{enumerate}
\end{dfn}

\begin{rem}\label{tame}
For each subcategory $\X$ of $\dm(R)$ the following are equivalent.
\begin{enumerate}[\quad(1)]
\item
$\X$ is a tame thick $\otimes$-ideal of $\dm(R)$.
\item
$\X=\supp^{-1}S$ for some subset $S$ of $\spec R$.
\item
$\X=\supp^{-1}W$ for some specialization-closed subset $W$ of $\spec R$.
\end{enumerate}
This is a direct consequence of Proposition \ref{si}(1).
\end{rem}

The following corollary of Theorem \ref{s} gives an explicit description of tame prime thick tensor ideals.

\begin{cor}\label{tp}
It holds that
$$
\ts\dm(R)=\im\s=\{\s(\p)\mid\p\in\spec R\}.
$$
\end{cor}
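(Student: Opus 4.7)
The plan is to prove each of the two inclusions separately, both as quick deductions from results already established.

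First I would prove $\im\s\subseteq\ts\dm(R)$. For any $\p\in\spec R$, unwinding the definition of $\s(\p)$ one has $\s(\p)=\supp^{-1}(\{\p\}^\complement)$, so by Remark \ref{tame} $\s(\p)$ is tame (it is the preimage under $\supp$ of a subset of $\spec R$). Proposition \ref{two} guarantees that $\s(\p)$ is a prime thick $\otimes$-ideal. Hence $\s(\p)\in\ts\dm(R)$.

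Second, I would prove $\ts\dm(R)\subseteq\im\s$. Let $\P$ be any tame prime thick $\otimes$-ideal. By Remark \ref{tame}, we may write $\P=\supp^{-1}W$ for some specialization-closed subset $W$ of $\spec R$. Applying Proposition \ref{si}(1) (which gives $\supp(\supp^{-1}S)=S_\spcl$ and uses that $W$ is already specialization-closed so $W_\spcl=W$), we obtain $\supp\P=W$. Now invoke Theorem \ref{s}: $\s\cdot\pp=\supp^{-1}\supp$, so
\[
\s(\pp(\P))=\supp^{-1}(\supp\P)=\supp^{-1}W=\P.
\]
Thus $\P=\s(\pp(\P))$ lies in the image of $\s$, completing the proof.

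The argument is essentially a one-line computation in each direction, so there is no real obstacle; every ingredient has already been set up. The only subtlety worth flagging is the easy but crucial point that a tame thick $\otimes$-ideal automatically has the form $\supp^{-1}W$ with $W$ \emph{specialization-closed} (Remark \ref{tame}), which is what allows the identity $\supp(\supp^{-1}W)=W$ in Proposition \ref{si}(1) to kick in without loss; otherwise one would only recover $W_\spcl$ and the chain of equalities would stall.
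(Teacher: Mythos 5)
Your proof is correct and follows essentially the same route as the paper's: for $\im\s\subseteq\ts\dm(R)$ the paper applies the identity $\s\pp=\supp^{-1}\supp$ from Theorem \ref{s} to $\s(\p)$ itself, whereas you unwind the explicit formula $\s(\p)=\supp^{-1}(\{\p\}^\complement)$ (already noted in the proof of Proposition \ref{two}) — these are interchangeable one-liners. The reverse inclusion is argued identically in both, via $\s(\pp(\P))=\supp^{-1}\supp\P=\P$ using Theorem \ref{s} and Proposition \ref{si}.
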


\begin{proof}
For a prime ideal $\p$ of $R$, we have $\s(\p)=\s\pp\s(\p)=\supp^{-1}(\supp\s(\p))$ by Theorem \ref{s}, which shows that the prime thick $\otimes$-ideal $\s(\p)$ of $\dm(R)$ is tame.
On the other hand, let $\P$ be a tame prime thick $\otimes$-ideal of $\dm(R)$.
Using Theorem \ref{s} and Proposition \ref{si}, we get $\s(\pp(\P))=\supp^{-1}(\supp\P)=\P$.
\end{proof}

Here is one more application of Theorem \ref{s}, giving a criterion for a thick tensor ideal to be prime.

\begin{cor}
Let $W$ be a specialization-closed subset of $\spec R$.
The following are equivalent.
\begin{enumerate}[\rm(1)]
\item\label{cr1}
The tame thick $\otimes$-ideal $\supp^{-1}W$ of $\dm(R)$ is prime.
\item\label{cr4}
There exists a prime ideal $\p$ of $R$ such that $W=\supp\s(\p)$.
\item\label{cr3}
There exists a prime thick $\otimes$-ideal $\P$ of $\dm(R)$ such that $W=\supp\P$.
\item\label{cr2}
The set $W^\complement$ has a unique maximal element with respect to the inclusion relation.
\end{enumerate}
\end{cor}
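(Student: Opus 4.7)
My plan is to establish the chain of implications in a compact cycle, leveraging the identity $\s\cdot\pp=\supp^{-1}\supp$ from Theorem \ref{s} and the explicit description $\supp\s(\p)=\{\q\in\spec R\mid\q\nsubseteq\p\}$ from Proposition \ref{two}. Since (2) gives the strongest structural information, I would route the argument through it.

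First I would prove \eqref{cr4}$\Leftrightarrow$\eqref{cr2} directly from Proposition \ref{two}. For \eqref{cr4}$\Rightarrow$\eqref{cr2}: if $W=\supp\s(\p)=\{\q\mid\q\nsubseteq\p\}$, then $W^\complement=\V(0)\cap\{\q\mid\q\subseteq\p\}=\{\q\in\spec R\mid\q\subseteq\p\}$, whose unique maximum is $\p$. For \eqref{cr2}$\Rightarrow$\eqref{cr4}: since $W$ is specialization-closed, $W^\complement$ is generalization-closed, so every element of $W^\complement$ lies below the unique maximal element $\p$; this gives $W^\complement=\{\q\mid\q\subseteq\p\}$ and hence $W=\supp\s(\p)$, where $\p\in\spec R$ is automatically prime as an element of $\spec R$.

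Next, \eqref{cr4}$\Rightarrow$\eqref{cr3} is immediate because $\s(\p)$ is prime by Proposition \ref{two}. For \eqref{cr3}$\Rightarrow$\eqref{cr4}, I would use Theorem \ref{s}: given a prime $\P$ with $W=\supp\P$, the prime $\p:=\pp(\P)$ satisfies $\s(\p)=\supp^{-1}\supp\P=\supp^{-1}W$, so $\supp\s(\p)=\supp(\supp^{-1}W)=W$ by Proposition \ref{si}(1) since $W$ is specialization-closed.

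Finally I would close the loop with \eqref{cr3}$\Leftrightarrow$\eqref{cr1}. For \eqref{cr3}$\Rightarrow$\eqref{cr1}: if $W=\supp\P$ for a prime $\P$, then $\supp^{-1}W=\supp^{-1}\supp\P=\s\pp(\P)$ by Theorem \ref{s}, which is prime by Proposition \ref{two}. For \eqref{cr1}$\Rightarrow$\eqref{cr3}: simply take $\P=\supp^{-1}W$, which is prime by hypothesis; applying Proposition \ref{si}(1) to the specialization-closed set $W$ gives $\supp\P=W$. No serious obstacle is anticipated; the entire proof is a bookkeeping exercise built on Proposition \ref{two} and Theorem \ref{s}, and the only subtle point is remembering to invoke the hypothesis that $W$ is specialization-closed when applying $\supp(\supp^{-1}W)=W$.
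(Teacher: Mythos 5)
Your proof is correct and rests on exactly the same machinery as the paper (Proposition \ref{two}, Theorem \ref{s}, Proposition \ref{si}(1)), merely routed through a slightly different pattern of bidirectional implications rather than the paper's cyclic chain \eqref{cr1}$\Rightarrow$\eqref{cr4}$\Rightarrow$\eqref{cr3}$\Rightarrow$\eqref{cr2}$\Rightarrow$\eqref{cr1}. One small imprecision worth tightening in \eqref{cr2}$\Rightarrow$\eqref{cr4}: that ``every element of $W^\complement$ lies below the unique maximal element'' is not a consequence of generalization-closedness as you assert, but of the noetherian property of $R$ (ascending chains of primes stabilize, so each element is below \emph{some} maximal one, hence below $\p$ by uniqueness); generalization-closedness only gives the reverse inclusion $\{\q\mid\q\subseteq\p\}\subseteq W^\complement$. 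This same point is implicit in the paper's own proof, so it is not a gap relative to it, but the attribution in your write-up should be corrected.
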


\begin{proof}
\eqref{cr1} $\Rightarrow$ \eqref{cr4}:
By Corollary \ref{ato2}, the complement of $W = \supp (\supp^{-1} W)$ (see Proposition \ref{si}(2)) has the maximum element $\p:=\pp(\supp^{-1}W)$.
Using Theorem \ref{s}, we obtain $W = \supp\s(\p)$.

\eqref{cr4} $\Rightarrow$ \eqref{cr3}:
Take $\P = \s(\p)$, which is a prime thick $\otimes$-ideal of $\dm(R)$ by Proposition \ref{two}.

\eqref{cr3} $\Rightarrow$ \eqref{cr2}:
This implication follows from Corollary \ref{ato2}.

\eqref{cr2} $\Rightarrow$ \eqref{cr1}:
Let $\p$ be a unique maximal element of $W^\complement$.
We claim that there is an equality $W=\supp\s(\p)$.
Indeed, $\supp\s(\p)$ consists of the prime ideals $\q$ of $R$ not contained in $\p$ by Proposition \ref{two}.
Now fix a prime ideal $\q$ of $R$.
Suppose that $\q$ is in $W$.
If $\q$ is contained in $\p$, then $\p$ belongs to $W$ as $W$ is specialization-closed.
This contradicts the choice of $\p$, whence $\q$ belongs to $\supp\s(\p)$.
Conversely, if $\q$ is not in $W$, then $\q$ is in $W^\complement$, and the choice of $\p$ shows that $\q$ is contained in $\p$.
Thus the claim follows.
Applying Theorem \ref{s}, we obtain $\supp^{-1}W=\s(\p)$ and this is a prime thick $\otimes$-ideal of $\dm(R)$.
\end{proof}

%%%%%%%%%%%%%%%%%%%%%%%%%%%%%%%%%%%%%%%%%%%%%%%%%%%%%%%%%%%%%%%
\section{Topological structures of the Balmer spectrum}\label{tsbs}

In this section, we study various topological properties of the maps $\s,\pp$ defined in the previous section, and explore the structure of the Balmer spectrum $\spc\dm(R)$ as a topological space.
We begin with recalling the definition of the topology which the Balmer spectrum possesses.

\begin{dfn}
Let $\T$ be an essentially small tensor triangulated category.
\begin{enumerate}[(1)]
\item
For an object $X\in\T$ the {\em Balmer support} of $X$, denoted by $\ssupp X$, is defined as the set of prime thick $\otimes$-ideals of $\T$ not containing $X$.
We set $\u(X)=(\ssupp X)^\complement=\spc\T\setminus\ssupp X$.
\item
The set $\spc\T$ is a topological space with open basis $\{\u(X)\mid X\in\dm(R)\}$; see \cite[Definition 2.1]{B}.
%This topology is called the {\em Zariski topology} of $\spc\T$.
\end{enumerate}
Therefore, $\spc\dm(R)$ is a topological space.
We regard $\ts\dm(R)$ as a subspace of $\spc\dm(R)$ by the relative topology.
\end{dfn}

We first consider a direct sum decomposition of the Balmer spectrum.

\begin{prop}\label{dsd}
There is a direct sum decomposition of sets
$$
\textstyle\spc\dm(R)=\coprod_{\p\in\spec R}\pp^{-1}(\p),
$$
where $\pp^{-1}(\p):=\{\P\in\spc\dm(R)\mid\pp(\P)=\p\}=\{\P\in\spc\dm(R)\mid\supp\P=\{\q\in\spec R\mid\q\nsubseteq\p\}\}$
\end{prop}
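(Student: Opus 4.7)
The plan is to observe that the displayed decomposition is essentially a tautology once $\pp$ is known to be a well-defined function on $\spc\dm(R)$. Since $\pp:\spc\dm(R)\to\spec R$ was constructed in Proposition \ref{pp} and its image lies in $\spec R$ (again by Proposition \ref{pp}), the fibers $\pp^{-1}(\p)$ partition $\spc\dm(R)$, giving the direct sum decomposition of sets for free. So there is nothing to prove for the first equality in the statement.

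The actual content is the second equality, namely that for a prime thick $\otimes$-ideal $\P$ of $\dm(R)$ and a prime ideal $\p$ of $R$,
\[
\pp(\P)=\p\ \Longleftrightarrow\ \supp\P=\{\q\in\spec R\mid\q\nsubseteq\p\}.
\]
I would prove this by combining three ingredients already in hand: Theorem \ref{s}, which gives $\pp\cdot\s=1$ and in particular $\pp(\s(\p))=\p$; Corollary \ref{ppsupp}, which says $\pp(\P)=\pp(\Q)$ if and only if $\supp\P=\supp\Q$; and Proposition \ref{two}, which identifies $\supp\s(\p)=\{\q\in\spec R\mid\q\nsubseteq\p\}$. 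The chain of equivalences
\[
\pp(\P)=\p\ \Longleftrightarrow\ \pp(\P)=\pp(\s(\p))\ \Longleftrightarrow\ \supp\P=\supp\s(\p)\ \Longleftrightarrow\ \supp\P=\{\q\in\spec R\mid\q\nsubseteq\p\}
\]
then closes the argument.

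There is no real obstacle here; the proposition is a short corollary collating Theorem \ref{s}, Corollary \ref{ppsupp} and Proposition \ref{two}. The only thing to be careful about is to cite Corollary \ref{ppsupp} in the direction asserting that equality of supports forces equality of the associated prime ideals under $\pp$, rather than trying to rederive it directly from the definition of $\pp$, since that would essentially re-run the proof of Theorem \ref{s}.
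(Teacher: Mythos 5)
Your proposal is correct and follows essentially the same route as the paper: both obtain the partition directly from the well-definedness of $\pp$ and then derive the fiber description by combining Theorem \ref{s}, Corollary \ref{ppsupp} and Proposition \ref{two} in the same chain of equivalences. The only cosmetic difference is that the paper also remarks on the surjectivity of $\pp$ (from Theorem \ref{s}) before stating the decomposition, but as you observe this is not actually needed for the fibers to partition $\spc\dm(R)$.
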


\begin{proof}
Theorem \ref{s} says that the map $\pp$ is surjective.
Using this, we easily get the direct sum decomposition.
Applying Theorem \ref{s}, Corollary \ref{ppsupp} and Proposition \ref{two}, we observe that for any $\p\in\spec R$ and $\P\in\spc\dm(R)$ one has $\pp(\P)=\p$ if and only if $\supp\P=\{\q\in\spec R\mid\q\nsubseteq\p\}$.
\end{proof}

Next we investigate the dimension of the Balmer spectrum.
The {\em (Krull) dimension} of a topological space $X$, denoted by $\dim X$, is defined to be the supremum of integers $n\ge0$ such that there exists a chain $Z_0\subsetneq Z_1\subsetneq \cdots\subsetneq Z_n$ of nonempty irreducible closed subsets of $X$.
(Recall that a subset of $X$ is called {\em irreducible} if it cannot be written as a union of two nonempty proper closed subsets.)

\begin{prop}\label{pr}
\begin{enumerate}[\rm(1)]
\item
Let $\T$ be an essentially small $\otimes$-triangulated category.
The dimension of $\spc\T$ is equal to the supremum of integers $n\ge0$ such that there is a chain $\P_0\subsetneq \P_1\subsetneq \cdots\subsetneq\P_n$ in $\spc\T$.
\item
There is an inequality
$$
\dim(\spc\dm(R))\ge\dim R.
$$
\end{enumerate}
\end{prop}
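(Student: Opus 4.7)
My plan for (1) is to invoke Balmer's foundational structural result that $\spc\T$ is a sober (indeed spectral) topological space, so that every nonempty irreducible closed subset has a unique generic point. The step I would spell out is the identification of closures: working directly from the definition of the topology via the basic opens $\u(X) = \{\Q : X \in \Q\}$, one verifies that
\[
\overline{\{\P\}} = \{\Q \in \spc\T : \Q \subseteq \P\},
\]
since $\Q \in \overline{\{\P\}}$ iff every basic open containing $\Q$ also contains $\P$, iff $X \in \Q$ implies $X \in \P$ for all $X \in \T$. Given this, the assignment $\P \mapsto \overline{\{\P\}}$ sets up a length-preserving bijection between chains $\P_0 \subsetneq \P_1 \subsetneq \cdots \subsetneq \P_n$ in $\spc\T$ and chains $\overline{\{\P_0\}} \subsetneq \overline{\{\P_1\}} \subsetneq \cdots \subsetneq \overline{\{\P_n\}}$ of nonempty irreducible closed subsets of $\spc\T$, which yields the claimed description of $\dim \spc\T$.

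For (2), the plan is to combine (1) with the order anti-embedding $\s : \spec R \to \spc\dm(R)$ established in Theorem \ref{s}. Any chain $\p_0 \subsetneq \p_1 \subsetneq \cdots \subsetneq \p_n$ in $\spec R$ is sent by $\s$ to a strict chain $\s(\p_n) \subsetneq \s(\p_{n-1}) \subsetneq \cdots \subsetneq \s(\p_0)$ of prime thick $\otimes$-ideals of the same length, whence (1) gives $\dim \spc\dm(R) \geq n$. Taking the supremum over all chains in $\spec R$ produces the inequality $\dim \spc\dm(R) \geq \dim R$.

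The main (rather mild) obstacle is the verification of the closure formula for $\overline{\{\P\}}$ together with an explicit citation of Balmer's sobriety result for $\spc\T$; once these are in hand, both parts follow immediately from material already developed, in particular from the order anti-embedding asserted in Theorem \ref{s}. No further tensor-triangular or commutative-algebraic input is required.
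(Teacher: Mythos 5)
Your proposal is correct and takes essentially the same route as the paper: part (1) is precisely the content of Balmer's Propositions 2.9 (the closure formula $\overline{\{\P\}} = \{\Q : \Q \subseteq \P\}$) and 2.18 (sobriety of $\spc\T$), which the paper cites and you reconstruct from the definition of the topology; part (2) is identical, combining (1) with the order anti-embedding $\s$ of Theorem \ref{s}.
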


\begin{proof}
Applying \cite[Propositions 2.9 and 2.18]{B} shows (1), while (2) follows from (1) and Theorem \ref{s}.
\end{proof}

\begin{rem}
We will see that the inequality in Proposition \ref{pr}(2) sometimes becomes equality, and sometimes becomes strict inequality.
See Corollaries \ref{minmxcor2}, \ref{mugen} and Theorem \ref{nonnoeth}.
\end{rem}

Let $\P,\Q$ be prime thick $\otimes$-ideals of $\dm(R)$.
We write $\P\sim\Q$ if $\supp\P=\supp\Q$.
Then $\sim$ defines an equivalence relation on $\spc\dm(R)$.
We denote by $\spc \dm(R)/\supp$ the quotient topological space of $\spc \dm(R)$ by the equivalence relation $\sim$, so that a subset $U$ of $\spc\dm(R)/\supp$ is open if and only if $\pi^{-1}(U)$ is open in $\spc\dm(R)$, where $\pi:\spc\dm(R)\to\spc\dm(R)/\supp$ stands for the canonical surjection.
By definition, $\pi$ is a continuous map.
Denote by $\theta:\ts\dm(R)\to\spc\dm(R)$ the inclusion map, which is continuous.
Now we can state our first main result in this section.

\begin{thm}\label{kakan}
\begin{enumerate}[\rm(1)]
\item
The set $\ts\dm(R)$ is dense in $\spc\dm(R)$.
\item
The composition $\pi\theta$ is a continuous bijection.
\item
The maps $\s,\pp$ induce the bijections $\s',\widetilde\s,\pp',\widetilde\pp$ which make the diagram below commute.
$$
\xymatrix{
&& \ts\dm(R)\ar[d]^\theta\ar[rrd]^{\pp'}\\
\spec R\ar[rrd]^{\widetilde\s}\ar[rr]^\s\ar[rru]^{\s'} && \spc\dm(R)\ar[d]^\pi\ar[rr]^\pp && \spec R\\
&& \spc\dm(R)/\supp\ar[rru]^{\widetilde\pp}
}
$$
In particular, one has $\pp\s=\pp'\s'=\widetilde\pp\widetilde\s=1$.
\item
The maps $\pp,\pp',\widetilde\pp$ are continuous.
The maps $\s',\widetilde\s$ are open and closed.
\end{enumerate}
\end{thm}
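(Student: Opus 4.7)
The plan is to deduce (1)--(3) essentially formally from the results of the preceding section, and to prove (4) by an explicit calculation of $\pp^{-1}(\D(x))$.

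For (1), I would use Theorem \ref{s} to obtain $\P\subseteq\s\pp(\P)=\supp^{-1}\supp\P$ for every $\P\in\spc\dm(R)$. Each basic open $\u(X)=\{\Q:X\in\Q\}$ is upward closed under inclusion, so whenever $\P\in\u(X)$ the tame prime $\s\pp(\P)\in\ts\dm(R)$ lies in $\u(X)$ as well. Hence every nonempty basic open intersects $\ts\dm(R)$, proving density.

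For (2) and (3), all four auxiliary maps are defined by obvious restrictions or quotient factorizations: $\s'$ and $\pp'$ restrict $\s,\pp$ to $\ts\dm(R)$, $\widetilde\s=\pi\circ\s$, and $\widetilde\pp$ is induced from $\pp$ via Corollary \ref{ppsupp}. Continuity of $\pi\theta$ is immediate. Its bijectivity uses two inputs: a tame prime satisfies $\P=\supp^{-1}\supp\P$ by Remark \ref{tame} and Proposition \ref{si}(1), yielding injectivity; and each class $[\P]$ is hit by $\s\pp(\P)$, yielding surjectivity. The commutativity of the diagram and the identities $\pp\s=\pp'\s'=\widetilde\pp\widetilde\s=1$ then follow from $\pp\s=1$ (Theorem \ref{s}) together with the definitions, and every one of the four maps is a bijection because an explicit inverse is provided by its partner.

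The main work lies in (4), and the heart is continuity of $\pp$. For a basic open $\D(x)\subseteq\spec R$, I would check, using Corollary \ref{3'} and the description of $\pp(\P)=\max\ii(\P)$ from Proposition \ref{pp}, that
$$
x\in\pp(\P)\ \Longleftrightarrow\ (x)\in\ii(\P)\ \Longleftrightarrow\ \V(x)\nsubseteq\supp\P\ \Longleftrightarrow\ R/(x)\notin\P,
$$
where the first equivalence uses that $\ii(\P)$ is downward closed and contains its maximum $\pp(\P)$. Consequently $\pp^{-1}(\D(x))=\u(R/(x))$, which is open, and $\pp$ is continuous. Then $\pp'=\pp\circ\theta$ is continuous by restriction, and $\widetilde\pp$ is continuous by the universal property of the quotient topology applied to $\widetilde\pp\circ\pi=\pp$. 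Finally, $\s'$ and $\widetilde\s$ are bijections whose inverses $\pp'$ and $\widetilde\pp$ are continuous, and any such bijection is simultaneously open and closed. The principal delicacy is the first equivalence displayed above: it resembles Corollary \ref{ato2}, but since $(x)$ need not be prime one has to re-derive it from the lattice-theoretic properties of $\ii(\P)$ rather than invoking \ref{ato2} directly.
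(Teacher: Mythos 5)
Your proposal is correct and follows essentially the same strategy as the paper's proof: density via the inclusion $\P\subseteq\s\pp(\P)$ from Theorem \ref{s}, bijectivity of $\pi\theta$ via tameness being equivalent to $\P=\supp^{-1}\supp\P$, and continuity of $\pp$ via the identification of $\pp^{-1}$ of a basic (co)open with a Balmer (co)support set, which is exactly the paper's computation $\pp^{-1}(\V(I))=\ssupp(R/I)$ restated for principal opens. The one small variation is in part (4): the paper establishes openness of $\widetilde\s$ directly by computing $\pi^{-1}\widetilde\s(\D(I))=\u(R/I)$ and then deduces continuity of $\widetilde\pp$ as the inverse of an open bijection, whereas you deduce continuity of $\widetilde\pp$ immediately from the universal property of the quotient topology ($\widetilde\pp\circ\pi=\pp$ continuous) and then obtain openness/closedness of $\widetilde\s$ as the inverse of a continuous bijection. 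Your route is marginally slicker, saving the explicit computation in the quotient space; both are sound.
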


\begin{proof}
First of all, recall from Corollary \ref{tp} that the image of $\s$ coincides with $\ts\dm(R)$.

(1) Let $X$ be a complex in $\dm(R)$, and suppose that $U:=\u(X)$ is nonempty.
Then $U$ contains a prime thick $\otimes$-ideal $\P$ of $\dm(R)$, and $X$ is in $\P$.
It is seen from Theorem \ref{s} that $\P$ is contained in $\s(\pp(\P))$, and hence $X$ is in $\s(\pp(\P))$.
Therefore $\s(\pp(\P))$ belongs to the intersection $U\cap\ts\dm(R)$, and we have $U\cap\ts\dm(R)\ne\emptyset$.
This shows that any nonempty open subset of $\spc\dm(R)$ meets $\ts\dm(R)$.

(2) Since $\pi$ and $\theta$ are continuous, so is $\pi\theta$.
Let $\P,\Q$ be tame prime thick $\otimes$-ideals of $\dm(R)$.
Then $\P=\s\pp(\P)$ and $\Q=\s\pp(\Q)$ by Theorem \ref{s}.
One has $\P\sim\Q$ if and only if $\pp(\P)=\pp(\Q)$ by Corollary \ref{ppsupp}, if and only if $\P=\Q$ by Theorem \ref{s} again.
This shows that the map $\pi\theta$ is well-defined and injective.
To show the surjectivity, pick a prime thick $\otimes$-ideal $\R$ of $\dm(R)$.
It is seen from Proposition \ref{si}(1) that $\R\sim\supp^{-1}\supp\R$, and the latter thick $\otimes$-ideal is tame.
Consequently, $\pi\theta$ is a bijection.

(3) Using Theorem \ref{s}, we obtain the bijection $\s'$ satisfying $\theta\s'=\s$.
Set $\widetilde\s=\pi\s$ and $\pp'=\pp\theta$.
Define the map $\widetilde\pp:\spc\dm(R)/\supp\to\spec R$ by $\widetilde\pp([\P])=\pp(\P)$ for $\P\in\spc\dm(R)$.
Corollary \ref{ppsupp} guarantees that this is well-defined, and by definition we have $\widetilde\pp\pi=\pp$.
Thus the commutative diagram in the assertion is obtained, which and Theorem \ref{s} yield $1=\pp\s=\pp'\s'=\widetilde\pp\widetilde\s$.
It follows that the map $\s'$ is bijective, and so is $\pp'$.
We have $\widetilde\s=(\pi\theta)\s'$, which is bijective by (2), and so is $\widetilde\pp$.

(4) Let $\P\in\spc\dm(R)$.
An ideal $I$ of $R$ is contained in $\pp(\P)$ if and only if $\V(I)$ is not contained in $\supp\P$, if and only if $R/I$ does not belong to $\P$ by Corollary \ref{3'}.
We obtain an equality
$$
\pp^{-1}(\V(I))=\ssupp R/I,
$$
which shows that $\pp$ is a continuous map.
Since the map $\theta$ is continuous, so is the composition $\pp'=\pp\theta$.
The equality $\pp'=\s'^{-1}$ from (3) and the continuity of $\pp'$ imply that the map $\s'$ is open and closed.

Fix an ideal $I$ of $R$.
A prime ideal $\p$ of $R$ is in $\D(I)$ if and only if $\s(\p)$ is in $\u(R/I)$.
This shows $\s(\D(I))=\u(R/I)\cap\ts\dm(R)$, and we get $\pi^{-1} \widetilde{\s}(\D(I))= \pi^{-1} \pi \s(\D(I))= \pi^{-1}\pi (\u(R/I) \cap \ts \dm(R))$.
Let $\P\in\spc\dm(R)$ and $\Q\in\ts\dm(R)$.
One has $\pi(\P)=\pi(\Q)$ if and only if $\supp\P=\supp\Q$, if and only if $\supp^{-1}\supp\P=\Q$ since $\supp^{-1}\supp\Q=\Q$ by Proposition \ref{si}.
Hence $\P$ is in $\pi^{-1}\pi (\u(R/I) \cap \ts \dm(R))$ if and only if $\supp^{-1}\supp\P$ contains $R/I$ (note here that $\supp^{-1}\supp\P$ is in $\ts\dm(R)$ by Theorem \ref{s}), if and only if $\supp\P$ contains $\V(I)$, if and only if $R/I$ belongs to $\P$ by Corollary \ref{3'}.
Thus we obtain $\pi^{-1}\widetilde\s(\D(I))=\u(R/I)$, which shows that $\widetilde\s(\D(I))$ is an open subset of $\spc\dm(R)/\supp$.
Therefore $\widetilde\s$ is an open map.
This map is also closed since it is bijective.
Combining the equality $\widetilde\pp=\widetilde\s^{-1}$ from (3) and the openness of $\widetilde\s$, we observe that $\widetilde\pp$ is a continuous map.
\end{proof}

The assertions of the above theorem naturally lead us to ask when the maps in the diagram in the theorem are homeomorphisms.
We start by establishing a lemma.

\begin{lem}\label{finsp}
The following are equivalent.
\begin{enumerate}[\rm(1)]
\item\label{f4}
The set $\spec R$ is finite.
\item\label{f2}
There are only finitely many specialization-closed subsets of $\spec R$.
\item\label{f3}
There are only finitely many closed subsets of $\spec R$.
\item\label{f1}
Every specialization-closed subset of $\spec R$ is closed.
\end{enumerate}
\end{lem}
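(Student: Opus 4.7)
The plan is to establish (1) $\Rightarrow$ (2) $\Rightarrow$ (3) $\Rightarrow$ (1) together with (1) $\Leftrightarrow$ (4). The chain (1) $\Rightarrow$ (2) $\Rightarrow$ (3) is immediate: a finite set has finitely many subsets, and every closed subset is specialization-closed. For (3) $\Rightarrow$ (1), I would use that $\p \mapsto \V(\p)$ is injective from $\spec R$ into the closed subsets, since $\V(\p)$ is irreducible with $\p$ as its unique minimal element. The implication (1) $\Rightarrow$ (4) is equally easy: any specialization-closed subset $W$ equals $\bigcup_{\p \in W} \V(\p)$, which is a finite union of closed sets when $\spec R$ is finite.

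The substantive direction is (4) $\Rightarrow$ (1), which I would prove contrapositively: assuming $\spec R$ is infinite, I will exhibit a specialization-closed subset that is not closed. First I reduce to the case of an integral domain. Since $R$ is noetherian, it has finitely many minimal primes $P_1,\dots,P_r$ with $\spec R = \V(P_1) \cup \cdots \cup \V(P_r)$, so some $\V(P_i)$ is infinite; and a subset of $\V(P_i)$ is specialization-closed (resp.\ closed) in $\spec R$ if and only if its image in $\spec(R/P_i)$ is. Thus I may assume $R$ is a noetherian domain with $\spec R$ infinite.

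Next I would show that such an $R$ admits infinitely many height-one primes. Suppose instead that $\q_1,\dots,\q_s$ exhaust them. If $\dim R \le 1$, then $\spec R = \{(0),\q_1,\dots,\q_s\}$ is finite, a contradiction. Otherwise there is a prime $\m$ with $\height(\m) \ge 2$, and since $\m \not\subseteq \q_i$ for any $i$, prime avoidance produces a nonzero element $x \in \m \setminus \bigcup_i \q_i$. By Krull's principal ideal theorem the minimal primes over $(x)$ have height one, hence lie in $\{\q_1,\dots,\q_s\}$, forcing $x \in \bigcup_i \q_i$, a contradiction. Fixing an infinite sequence $\mathfrak{r}_1,\mathfrak{r}_2,\dots$ of distinct height-one primes (automatically pairwise incomparable), set $W = \bigcup_i \V(\mathfrak{r}_i)$, which is specialization-closed.

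To finish, I assume $W = \V(I)$ for some ideal $I$ and derive a contradiction. Noetherianity gives $\V(I) = \V(\mathfrak{s}_1) \cup \cdots \cup \V(\mathfrak{s}_t)$ for the finitely many minimal primes $\mathfrak{s}_j$ over $I$. Each $\mathfrak{r}_i \in W$ must contain some $\mathfrak{s}_j$, so by pigeonhole some $\mathfrak{s}_{j_0}$ is contained in infinitely many $\mathfrak{r}_i$; meanwhile $\mathfrak{s}_{j_0} \in W$ forces $\mathfrak{s}_{j_0} \supseteq \mathfrak{r}_k$ for some $k$. Then $\mathfrak{r}_k \subseteq \mathfrak{s}_{j_0} \subseteq \mathfrak{r}_i$ for infinitely many $i$, but incomparability of the $\mathfrak{r}_i$ forces $\mathfrak{r}_k = \mathfrak{r}_i$ and hence a unique such $i$, the desired contradiction. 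The main obstacle is the infinite-antichain step: producing, via prime avoidance and Krull's principal ideal theorem, infinitely many height-one primes in a noetherian domain with infinite spectrum, without any assumption on finite Krull dimension.
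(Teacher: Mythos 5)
Your proof is correct, but the substantive direction $(4)\Rightarrow(1)$ takes a genuinely different route from the paper's. The paper argues by contradiction, using hypothesis (4) twice: since $\Max R$ is specialization-closed, (4) forces it to be closed, and a closed set in a noetherian space has only finitely many minimal elements, so $\Max R$ is finite and $R$ is semilocal; this gives $\dim R = d < \infty$ essentially for free. If $\spec R$ were infinite, some height level $0\le n\le d$ would contain infinitely many primes $S$, and $W=\bigcup_{\p\in S}\V(\p)$ would be a specialization-closed set whose minimal elements are exactly $S$, an infinite set --- hence not closed, contradicting (4). You instead prove the contrapositive directly, without using (4) as leverage along the way: you reduce to a noetherian domain quotient $R/P_i$ with infinite spectrum, then invoke prime avoidance and Krull's Hauptidealsatz to manufacture infinitely many height-one primes, and finally run an explicit pigeonhole argument to show $\bigcup_i\V(\mathfrak{r}_i)$ is not of the form $\V(I)$. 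Both arguments hinge on the same final observation --- an infinite antichain in $\spec R$ yields a specialization-closed set with infinitely many minimal elements, hence non-closed --- but the paper gets its antichain cheaply from the finite-dimension bound that (4) hands it, whereas you build yours from scratch via the Hauptidealsatz. The paper's route is shorter; yours avoids appealing to the semilocal-implies-finite-dimension fact and the existence of a saturated height stratification, at the cost of the domain reduction and the explicit antichain contradiction (which essentially reproves the standard fact that a closed subset of a noetherian space has finitely many minimal points). Either is a valid proof; you might note that your antichain step at the end could simply cite that standard fact, as the paper does, which would shorten things.
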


\begin{proof}
\eqref{f4} $\Rightarrow$ \eqref{f2}:
If $\spec R$ is finite, then there are only finitely many subsets of $\spec R$.

\eqref{f2} $\Rightarrow$ \eqref{f3}:
This implication follows from the fact that any closed subset is specialization-closed.

\eqref{f3} $\Rightarrow$ \eqref{f1}:
Every specialization-closed subset is a union of closed subsets.
This is a finite union by assumption, and hence it is closed.

\eqref{f1} $\Rightarrow$ \eqref{f4}:
Since $\Max R$ is specialization-closed, it is closed by our assumption.
Hence $\Max R$ possesses only finitely many minimal elements with respect to the inclusion relation, which means that it is a finite set.
Therefore the ring $R$ is semilocal.
In particular, it has finite Krull dimension, say $d$.

Suppose that $R$ possesses infinitely many prime ideals.
Then there exists an integer $0 \le n \le d$ such that the set $S$ of prime ideals of $R$ with height $n$ is infinite.
Then the specialization-closed subset $W=\bigcup_{\p \in S}\V(\p)$ is not closed because $S$ consists of the minimal elements of $W$, which is an infinite set.
This provides a contradiction, and consequently, $R$ has only finitely many prime ideals.
\end{proof}

Now we can prove the following theorem, which answers the question stated just before the lemma.

\begin{thm}\label{homeos}
Consider the following seven conditions.\\
{\rm(1)} $\s$ is continuous.\quad
{\rm(2)} $\s'$ is homeomorphic.\quad
{\rm(3)} $\pp'$ is homeomorphic.\quad
{\rm(4)} $\widetilde\s$ is homeomorphic.\\
{\rm(5)} $\widetilde\pp$ is homeomorphic.\quad
{\rm(6)} $\pi\theta$ is homeomorphic.\quad
{\rm(7)} $\spec R$ is finite.\\
Then the following implications hold:
$$
\xymatrix{
(1)\ar@{<=>}[r] & (2)\ar@{<=>}[r] & (3)\ar@{<=>}[r] & (5+6)\ar@{<=>}[r]\ar@{=>}[ld]\ar@{=>}[rd] & (7)\\
& (4)\ar@{<=>}[r] & (5) && (6)
}
$$
\end{thm}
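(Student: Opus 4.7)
The plan is to reduce all seven conditions to the single statement that $\s$ is continuous, leveraging the structural information already collected in Theorem~\ref{kakan}. By Theorem~\ref{kakan}(4), $\s'$ is an open and closed bijection with continuous inverse $\pp'$, and $\widetilde\s$ is an open and closed bijection with continuous inverse $\widetilde\pp$; any such bijection becomes a homeomorphism the instant it is itself continuous. Since $\ts\dm(R)$ carries the subspace topology from $\spc\dm(R)$, the factorization $\s = \theta\s'$ forces $\s$ and $\s'$ to have the same continuity status, which gives (1) $\Leftrightarrow$ (2). The equivalences (2) $\Leftrightarrow$ (3) and (4) $\Leftrightarrow$ (5) then follow at once from the fact that $(\s',\pp')$ and $(\widetilde\s,\widetilde\pp)$ are pairs of mutually inverse bijections.

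Next I will establish (1) $\Leftrightarrow$ (7). For any $X \in \dm(R)$, the preimage of the basic open set $\u(X)$ satisfies
$$
\s^{-1}(\u(X)) = \{\p \in \spec R : X_\p = 0\} = (\supp X)^\complement,
$$
so continuity of $\s$ is equivalent to $\supp X$ being closed for every $X \in \dm(R)$. When $\spec R$ is finite, Lemma~\ref{finsp} tells us that every specialization-closed subset, in particular each $\supp X$, is closed, proving (7) $\Rightarrow$ (1). The converse is the main obstacle. Assuming $\spec R$ is infinite, Lemma~\ref{finsp} supplies a specialization-closed subset $W \subseteq \spec R$ that is not closed. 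Since closed subsets of $\spec R$ have only finitely many minimal elements ($R$ being noetherian), the set of minimal elements of $W$ must be infinite, so I can pick a countably infinite antichain $\p_1,\p_2,\ldots$ of primes inside it. I will then form
$$
X = \bigoplus_{i \geq 1}(R/\p_i)[i].
$$
This complex lies in $\dm(R)$: each cohomological degree houses a single finitely generated module and $\h^j(X)=0$ for $j \geq 0$. Its support $\bigcup_{i \geq 1}\V(\p_i)$ has the infinite antichain $\{\p_i\}_{i \geq 1}$ as its set of minimal elements, so it cannot be written as $\V(I)$ for any ideal $I$, hence fails to be closed, contradicting the continuity of $\s$.

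To complete the diagram I chase through the identities $\pp' = \widetilde\pp \circ \pi\theta$ and, on $\ts\dm(R)$, $\pi\theta = \widetilde\s \circ \pp'$ (the latter using $\widetilde\s\circ\pp = \pi$ together with Theorem~\ref{s}). Assuming (5) and (6), the first identity displays $\pp'$ as a composition of homeomorphisms, which gives (3) and hence (7). Conversely, assuming (7) we have (1), so $\widetilde\s = \pi\circ\s$ is continuous; combined with Theorem~\ref{kakan}(4), which already asserts that $\widetilde\s$ is an open and closed bijection, this makes $\widetilde\s$ a homeomorphism, delivering (4) and (5). Combining (3) and (5) via the second identity then presents $\pi\theta$ as a composition of homeomorphisms, yielding (6). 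The remaining arrows $(5)+(6) \Rightarrow (4)$ and $(5)+(6) \Rightarrow (6)$ are immediate, the former from $(4) \Leftrightarrow (5)$ and the latter trivial. The single genuinely non-routine step is the construction of the complex $X$ witnessing non-continuity of $\s$ above.
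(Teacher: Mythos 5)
Your proof is correct and follows essentially the same route as the paper's. You use the identical key ingredients: Lemma~\ref{finsp}, the identity $\s^{-1}(\u(X)) = (\supp X)^\complement$, and the construction of $X = \bigoplus_{i\ge1}(R/\p_i)[i]$ from a countable antichain of minimal elements of a non-closed specialization-closed subset. The only differences are cosmetic reorganizations of which implications are proved directly: you establish $(1)\Leftrightarrow(2)$ in one step via the corestriction along the subspace inclusion $\theta$ rather than closing the cycle through $(7)$, and in the $(5{+}6)$ direction you chase through $\pp' = \widetilde\pp\circ\pi\theta$ and $\pi\theta = \widetilde\s\circ\pp'$ where the paper uses the equivalent $\s' = (\pi\theta)^{-1}\widetilde\s$ and $\pi\theta = \widetilde\s\circ\s'^{-1}$. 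These are interchangeable given $\pp' = \s'^{-1}$.
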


\begin{proof}
In this proof we tacitly use Theorem \ref{kakan}.

(2) $\Leftrightarrow$ (3):
Note that $\s'$ and $\pp'$ are mutually inverse bijections.
The equivalence follows from this.

(4) $\Leftrightarrow$ (5):
As $\widetilde\s,\widetilde\pp$ are mutually inverse bijections, we have the equivalence.

(7) $\Rightarrow$ (2):
For each $X\in\dm(R)$ we have $\s'^{-1}(\ssupp X\cap\ts\dm(R))=\{\p\in\spec R\mid\s(\p)\in\ssupp X\}=\supp X$.
As $\supp X$ is specialization-closed, it is closed by Lemma \ref{finsp}.
Hence the map $\s'$ is continuous.

(2) $\Rightarrow$ (1):
This follows from the fact that $\s$ is the composition of the continuous maps $\s'$ and $\theta$.

(1) $\Rightarrow$ (7):
It is easy to observe that for any complex $X \in \dm(R)$ one has
\begin{equation}\label{ssss}
\s^{-1}(\ssupp X)=\supp X.
\end{equation}
Since $\s$ is continuous, $\supp X$ is closed in $\spec R$ for all $X \in \dm(R)$ by \eqref{ssss}.
Suppose that $\spec R$ is an infinite set. 
Then by Lemma \ref{finsp} there is a non-closed specialization-closed subset $W$ of $\spec R$.
There are infinitely many minimal elements of $W$ with respect to the inclusion relation, and we can choose countably many pairwise distinct minimal elements $\p_1, \p_2,\p_3, \dots$ of $W$.
Consider the complex $X=\bigoplus_{i=1}^\infty(R/\p_i)[i]\in\dm(R)$.
Then $\supp X=\bigcup_{i=1}^\infty\V(\p_i)$ is not closed since it has infinitely many minimal elements.
This contradiction shows that $\spec R$ is a finite set.

(2) $\Rightarrow$ (4+6):
Since $\pi,\theta,\s'$ are all continuous, so is $\widetilde{\s}=\pi\theta\s'$.
Combining this with the fact that $\widetilde\s$ is bijective and open, we see that $\widetilde\s$ is a homeomorphism.
As $\s'$ is homeomorphic, so is $\pi \theta = \widetilde{\s}\s'^{-1}$.

(4+6) $\Rightarrow$ (2):
We have $\s'=(\pi \theta)^{-1} \widetilde\s$.
Since $\pi \theta$ and $\widetilde\s$ are homeomorphisms, so is $\s'$.
\end{proof}

Next we consider the maximal and minimal elements of $\spc\dm(R)$ with respect to the inclusion relation.

\begin{dfn}
Let $\T$ be an essentially small tensor triangulated category.
\begin{enumerate}[(1)]
\item
A thick $\otimes$-ideal $\M$ of $\T$ is said to be {\em maximal} if $\M\ne\T$ and there is no thick $\otimes$-ideal $\X$ of $\T$ with $\M\subsetneq\X\subsetneq\T$.
We denote the set of maximal thick $\otimes$-ideals of $\T$ by $\mx\T$.
According to \cite[Proposition 2.3(c)]{B}, any maximal thick $\otimes$-ideal is prime, or in other words, it holds that $\mx\T\subseteq\spc\T$.
\item
A prime thick $\otimes$-ideal $\P$ of $\T$ is said to be {\em minimal} if it is minimal in $\spc\T$ with respect to the inclusion relation.
We denote the set of minimal prime thick $\otimes$-ideals of $\T$ by $\mn\T$.
\end{enumerate}
\end{dfn}

By Proposition \ref{dsd} the Balmer spectrum of $\dm(R)$ is decomposed into the fibers by $\pp:\spc\dm(R)\to\spec R$ as a set.
Concerning the fibers of maximal ideals and minimal primes of $R$, we have the following.

\begin{prop}
Let $\m\in\Max R$ and $\p\in\Min R$.
Then
$$
\min\pp^{-1}(\m)\subseteq\mn\dm(R),\qquad
\max\pp^{-1}(\p)\subseteq\mx\dm(R).
$$
\end{prop}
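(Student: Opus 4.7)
The plan is to exploit the order-reversing property of $\pp$ (Theorem \ref{s}) together with the identity $\pp\cdot\s = 1$. For the first inclusion, I would take $\P \in \min\pp^{-1}(\m)$ and assume $\Q \in \spc\dm(R)$ satisfies $\Q \subseteq \P$. Since $\pp$ reverses inclusions, $\pp(\Q) \supseteq \pp(\P) = \m$; but $\pp(\Q)$ is a proper prime ideal of $R$ by Proposition \ref{pp} and $\m$ is maximal, so $\pp(\Q) = \m$. This places $\Q$ in the fiber $\pp^{-1}(\m)$, and the minimality of $\P$ in that fiber forces $\Q = \P$. Hence $\P$ is minimal in $\spc\dm(R)$.

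For the second inclusion, I would first establish a general feature of the fibers: $\max\pp^{-1}(\q) = \{\s(\q)\}$ for every $\q \in \spec R$. Indeed, $\s(\q) \in \pp^{-1}(\q)$ by Theorem \ref{s}, and any $\Q \in \pp^{-1}(\q)$ satisfies $\q \subseteq \pp(\Q)$ trivially, whence $\Q \subseteq \s(\q)$ by Corollary \ref{ato2}. Consequently, the second inclusion reduces to showing that $\s(\p) \in \mx\dm(R)$ when $\p$ is a minimal prime. Assuming for contradiction that some thick $\otimes$-ideal $\X$ satisfies $\s(\p) \subsetneq \X \subsetneq \dm(R)$, I would apply Zorn's lemma to the poset of proper thick $\otimes$-ideals containing $\X$ (the union of a chain of such ideals is again a thick $\otimes$-ideal since each closure operation involves only finitely many objects, and it remains proper because $R$ lies in none of them) to obtain a maximal thick $\otimes$-ideal $\M \supseteq \X$; this $\M$ is prime by \cite[Proposition 2.3(c)]{B}. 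Now $\s(\p) \subsetneq \M$, so applying $\pp$ yields $\pp(\M) \subseteq \p$, and the minimality of $\p$ in $\spec R$ forces $\pp(\M) = \p$, placing $\M$ in $\pp^{-1}(\p)$. The reduction step then gives $\M \subseteq \s(\p)$, contradicting $\s(\p) \subsetneq \M$.

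The principal subtlety is the Zorn's lemma step in the second inclusion: one really does need to enlarge the hypothetical witness $\X$ to a \emph{prime} thick $\otimes$-ideal strictly above $\s(\p)$, since $\mx\dm(R)$ is defined via maximality among \emph{all} thick $\otimes$-ideals rather than only prime ones. This is handled by invoking the general fact (already imported from Balmer in the excerpt) that maximal proper thick $\otimes$-ideals are automatically prime. Once that point is secured, both parts are purely order-theoretic manipulations of the pair $(\s,\pp)$ established in Theorem \ref{s} and Corollary \ref{ato2}.
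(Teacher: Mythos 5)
Your first inclusion is word-for-word the paper's argument, so no comparison needed there. For the second inclusion, the paper simply says it is "obtained similarly," meaning: take a prime $\Q \supseteq \P$, apply the order-reversing property of $\pp$ and the minimality of $\p$ to deduce $\Q \in \pp^{-1}(\p)$, and conclude $\Q = \P$ by the maximality of $\P$ in the fiber. You take a genuinely different route: you first prove the stronger observation that $\max\pp^{-1}(\q) = \{\s(\q)\}$ for every $\q$ (via $\pp\s = 1$ and Corollary \ref{ato2}), which collapses the claim to $\s(\p) \in \mx\dm(R)$ for $\p$ minimal, and you then push that through by Zorn plus Balmer's Proposition 2.3(c). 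Your observation about fiber maxima is not recorded anywhere in the paper and actually pre-figures Theorem \ref{minmx}, so in a sense your proof of the second inclusion is closer in spirit to that later theorem than to the present proposition. You are also more careful than the paper's terse "similarly": since $\mx\dm(R)$ demands maximality among \emph{all} thick $\otimes$-ideals, not merely among primes, the dual of the first argument by itself is not quite enough; one must either pass to a maximal (hence prime) ideal above the witness $\X$, as you do, or invoke \cite[Proposition 2.3(b)]{B} as the paper does elsewhere. One small stylistic point: the paper already cites \cite[Proposition 2.3(b)]{B} for existence of a maximal thick $\otimes$-ideal above any proper one, so you could have appealed to that directly rather than rerunning the Zorn argument, but the logic is sound either way.
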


\begin{proof}
Take $\P\in\min\pp^{-1}(\m)$, and let $\Q$ be a prime thick $\otimes$-ideal contained in $\P$.
Then $\m=\pp(\P) \subseteq \pp(\Q)$ by Theorem \ref{s}.
Since $\m$ is a maximal ideal, we get $\m=\pp(\P) = \pp(\Q)$, and $\Q\in\pp^{-1}(\m)$.
The minimality of $\P$ implies $\P=\Q$.
Thus the first inclusion follows.
The second inclusion is obtained similarly.
\end{proof}

To prove our next theorem, we establish a lemma and a proposition.
Recall that a topological space is called {\em $\t_1$-space} if every one-point subset is closed.

\begin{lem}\label{gent}
\begin{enumerate}[\rm(1)]
\item
The subspaces $\Max R,\,\Min R$ of $\spec R$ are $\t_1$-spaces, so every finite subset is closed.
\item
Let $\T$ be an essentially small $\otimes$-triangulated category.
The subspaces $\mx\T,\,\mn\T$ of $\spc\T$ are $\t_1$-spaces, so every finite subset is closed.
\end{enumerate}
\end{lem}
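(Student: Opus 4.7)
The plan is to verify the $\t_1$ property directly, by computing the closure of each singleton in the relevant subspace and showing it reduces to the singleton itself. Once singletons are closed, every finite set is a finite union of closed singletons, hence closed. The two key inputs are the well-known description of closures of points in $\spec R$, and the analogous description in $\spc\T$ coming from Balmer \cite[Proposition 2.9]{B}, namely $\overline{\{\P\}}=\{\Q\in\spc\T\mid\Q\subseteq\P\}$.

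For part (1), take $\m\in\Max R$: in $\spec R$ the closure of $\{\m\}$ is $\V(\m)=\{\m\}$, so $\{\m\}$ is closed already in $\spec R$, and \emph{a fortiori} in the subspace $\Max R$. For $\p\in\Min R$, the closed set $\V(\p)\subseteq\spec R$ meets $\Min R$ in $\{\p\}$, since every minimal prime containing $\p$ must coincide with $\p$. Hence $\{\p\}=\V(\p)\cap\Min R$ is closed in $\Min R$.

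For part (2), I will apply Balmer's description of closures. Given $\M\in\mx\T$, any $\P\in\mx\T$ with $\P\subseteq\M$ satisfies $\P\subseteq\M\subsetneq\T$; the maximality of $\P$ then forces $\M=\P$, so $\overline{\{\M\}}\cap\mx\T=\{\M\}$, proving that $\{\M\}$ is closed in $\mx\T$. Dually, given $\P\in\mn\T$, any $\Q\in\mn\T$ with $\Q\subseteq\P$ must equal $\P$ by the minimality of $\P$ in $\spc\T$; therefore $\overline{\{\P\}}\cap\mn\T=\{\P\}$, and $\{\P\}$ is closed in $\mn\T$.

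The only subtlety worth flagging is the reversal of convention between the Zariski and Balmer spectra (in $\spec R$ the closure of $\{\p\}$ consists of primes \emph{containing} $\p$, whereas in $\spc\T$ the closure of $\{\P\}$ consists of primes \emph{contained} in $\P$); apart from keeping this book-keeping straight, no real obstacle arises. The closedness of finite subsets is then immediate from the $\t_1$ property in both parts.
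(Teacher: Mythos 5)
Your proof is correct and takes essentially the same approach as the paper: compute the closure of each singleton in the subspace via the standard descriptions (closure of $\{\p\}$ in $\spec R$ is $\V(\p)$, closure of $\{\P\}$ in $\spc\T$ is $\{\Q\mid\Q\subseteq\P\}$ by Balmer's Proposition 2.9), then check that the closure intersected with the subspace is the singleton itself, using max/minimality; finite sets then become finite unions of closed singletons. The paper's proof is just a compressed version of the same argument.
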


\begin{proof}
(1) Let $A$ be either $\Max R$ or $\Min R$.
For each $\p\in A$ the closure of $\{\p\}$ in $A$ is $\V(\p)\cap A$, which coincides with $\{\p\}$.
Hence $A$ is a $\t_1$-space.

(2) Let $B$ be either $\mx\T$ or $\mn\T$.
For each $\P\in B$ the closure of $\{\P\}$ in $B$ is $\{\Q\in B\mid\Q\subseteq\P\}$ by \cite[Proposition 2.9]{B}, which coincides with $\{\P\}$.
Hence $B$ is a $\t_1$-space.
\end{proof}

\begin{prop}\label{whole}
For each complex $X\in\dm(R)$ it holds that
$$
\supp X=\spec R\ \Leftrightarrow\ \tthick X=\dm(R)\ \Leftrightarrow\ \ssupp X=\spc\dm(R).
$$
\end{prop}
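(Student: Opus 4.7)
The plan is to prove the three stated conditions are equivalent by a short cycle of implications, each of which is essentially a direct application of machinery already established in the paper: Proposition \ref{si2}(1), Proposition \ref{si}(2), the generation criterion Proposition \ref{key}, and Balmer's fact that maximal thick $\otimes$-ideals are prime.

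First I would handle (b) $\Rightarrow$ (a). If $\tthick X=\dm(R)$, then in particular $R\in\tthick X$, and Proposition \ref{si}(2) gives $\supp R\subseteq\supp(\tthick X)=\supp X$. Since $\ann R=0$ we have $\supp R=\spec R$, so $\supp X=\spec R$.

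Next, for (a) $\Rightarrow$ (b), the idea is to invoke Proposition \ref{key} with $\Y=\{X\}$: because $\V(\ann R)=\V(0)=\spec R=\supp X=\supp\Y$, we conclude that $R\in\tthick\{X\}=\tthick X$. Combining this with Proposition \ref{si2}(1), which says $\tthick R=\dm(R)$, yields $\tthick X=\dm(R)$.

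The equivalence (b) $\Leftrightarrow$ (c) is handled separately. The implication (b) $\Rightarrow$ (c) is immediate: if $\tthick X=\dm(R)$, then no proper thick $\otimes$-ideal contains $X$, hence no prime thick $\otimes$-ideal does, giving $\ssupp X=\spc\dm(R)$. For (c) $\Rightarrow$ (b) I would argue contrapositively. Suppose $\tthick X\subsetneq\dm(R)$. Note that a thick $\otimes$-ideal $\J$ is proper if and only if $R\notin\J$ (since $T\otimes R\cong T$ for all $T$), so the union of any chain of proper thick $\otimes$-ideals is again proper. Zorn's lemma then produces a maximal thick $\otimes$-ideal $\M\supseteq\tthick X$, and by \cite[Proposition 2.3(c)]{B} the ideal $\M$ is prime. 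Since $X\in\M$, this contradicts $\ssupp X=\spc\dm(R)$.

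None of these steps presents a real obstacle; the only point requiring a moment of care is the Zorn's lemma step, where one must observe that properness of thick $\otimes$-ideals is preserved under unions of chains, which follows from the characterization via non-containment of the unit $R$. All other ingredients are direct quotations of earlier results.
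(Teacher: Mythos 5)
Your argument is correct. For the equivalence $\supp X=\spec R\Leftrightarrow\tthick X=\dm(R)$ you follow essentially the same route as the paper: the forward direction via Proposition \ref{key} (you apply it to conclude $R\in\tthick X$ and then invoke $\tthick R=\dm(R)$, whereas the paper applies it to arbitrary $M\in\dm(R)$; the two are interchangeable), and the reverse direction via Proposition \ref{si}(2). Where you diverge is the equivalence $\tthick X=\dm(R)\Leftrightarrow\ssupp X=\spc\dm(R)$: the paper simply quotes \cite[Corollary 2.5]{B}, while you unwind the proof of that corollary in place, using a Zorn's‐lemma argument plus \cite[Proposition 2.3(c)]{B} to produce a maximal (hence prime) thick $\otimes$-ideal containing $\tthick X$. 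This makes your proof more self-contained at the cost of a few extra lines, and it is worth noting for completeness that one should also verify that the union of a chain of thick $\otimes$-ideals is again a thick $\otimes$-ideal (closure under summands, shifts, cones and the tensor action all pass to the union), not merely that properness is preserved; this is routine and your reduction of properness to $R\notin\J$ is exactly the right observation to make the Zorn step go through. Both approaches are valid, and the tradeoff is citation of a black-box result versus an inlined elementary argument.
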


\begin{proof}
The second equivalence follows from \cite[Corollary 2.5]{B}.
Let us prove the first equivalence.
Proposition \ref{si}(2) implies $\supp X=\supp(\tthick X)$, which shows $(\Leftarrow)$.
As for $(\Rightarrow)$, for every $M\in\dm(R)$ we have $\V(\ann M)\subseteq\spec R=\supp X$, by which and Proposition \ref{key} we get $M\in\tthick X$.
\end{proof}

Now we can prove the following theorem.
This especially says that $\dm(R)$ is ``semilocal" in the sense that $\dm(R)$ admits only a finite number of maximal thick tensor ideals.
If $R$ is an integral domain, then $\dm(R)$ is ``local" in the sense that $\dm(R)$ has a unique maximal thick tensor ideal.

\begin{thm}\label{minmx}
The restriction of $\s$ to $\Min R$ induces a homeomorphism
$$
\s|_{\Min R}:\Min R\xrightarrow{\cong}\mx\dm(R).
$$
\end{thm}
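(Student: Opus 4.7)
The plan is to verify that $\s$ carries $\Min R$ bijectively onto $\mx\dm(R)$, and then invoke finiteness to upgrade the bijection to a homeomorphism.

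First, I would show $\s(\p)\in\mx\dm(R)$ for each $\p\in\Min R$. By Proposition \ref{two}, $\supp\s(\p)=\{\q\in\spec R\mid\q\nsubseteq\p\}$; minimality of $\p$ forces this to be $\spec R\setminus\{\p\}$. If $\X$ is a thick $\otimes$-ideal with $\s(\p)\subsetneq\X$, then choosing $X\in\X\setminus\s(\p)$ gives $\p\in\supp X\subseteq\supp\X$, so $\supp\X=\spec R$. Proposition \ref{key} then places every $M\in\dm(R)$ inside $\X$ (since $\V(\ann M)\subseteq\spec R=\supp\X$), and hence $\X=\dm(R)$.

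Next, I would establish surjectivity. Take $\M\in\mx\dm(R)$; by \cite[Proposition 2.3(c)]{B}, $\M$ is prime, so Theorem \ref{s} gives $\M\subseteq\s(\pp(\M))=\supp^{-1}\supp\M$. The latter is a proper thick $\otimes$-ideal: otherwise it would contain $R$, forcing $\supp\M=\spec R$, and Proposition \ref{key} would then yield $\M=\dm(R)$, contradicting the properness of $\M$. Maximality of $\M$ now forces $\M=\s(\pp(\M))$. To show $\pp(\M)\in\Min R$, suppose a prime $\q\subsetneq\pp(\M)$ existed; since $\s$ is an order anti-embedding (Theorem \ref{s}), we would obtain $\M=\s(\pp(\M))\subsetneq\s(\q)\subsetneq\dm(R)$, the last strictness because $R_\q\ne0$ implies $R\notin\s(\q)$. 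This contradicts the maximality of $\M$. Injectivity of $\s|_{\Min R}$ is immediate from the order anti-embedding property.

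Finally, I would promote the bijection to a homeomorphism. Since $R$ is noetherian, $\Min R$ is finite, and Lemma \ref{gent}(1) says it is $\t_1$; a finite $\t_1$-space is discrete. Transporting finiteness across the bijection, $\mx\dm(R)$ is also finite, and by Lemma \ref{gent}(2) it is $\t_1$, hence discrete. Any bijection between finite discrete spaces is a homeomorphism, completing the proof. The main technical hurdle will be the surjectivity step, where one must simultaneously confirm that a maximal $\M$ is tame (of the form $\supp^{-1}W$) and that its associated prime $\pp(\M)$ is minimal; both facts rest on maximally exploiting the order-reversing properties from Theorem \ref{s} and Proposition \ref{key}.
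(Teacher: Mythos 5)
Your proof is correct, and it arranges the argument somewhat differently from the paper. To show $\s(\p)\in\mx\dm(R)$ for minimal $\p$, you argue directly: any thick $\otimes$-ideal $\X$ properly containing $\s(\p)$ must have $\supp\X=\spec R$ (since $\supp\s(\p)=\spec R\setminus\{\p\}$ and the extra object contributes $\p$), and Proposition \ref{key} then forces $\X=\dm(R)$. The paper instead first proves, via a direct-sum trick ($M=M_1\oplus\cdots\oplus M_n$ with $(M_i)_{\p_i}\ne 0$, using Proposition \ref{whole}), that every maximal $\M$ is contained in some $\s(\p_l)$, and only then derives maximality of each $\s(\p_i)$ by invoking the Zorn-type existence result \cite[Proposition 2.3(b)]{B} and feeding it back into the containment argument; your approach avoids that existence result. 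For surjectivity, the paper reuses its direct-sum argument, whereas you use the identity $\s\cdot\pp=\supp^{-1}\supp$ from Theorem \ref{s} to get $\M\subseteq\s(\pp(\M))$, then equality by maximality, then minimality of $\pp(\M)$ via the order anti-embedding. Both proofs rely on the same basic toolkit (Propositions \ref{two}, \ref{key}/\ref{whole}, Theorem \ref{s}, Lemma \ref{gent}), and the final upgrade from bijection to homeomorphism via finiteness and $\t_1$ is the same; your surjectivity argument leans more heavily on the $\s$-$\pp$ formalism, which is arguably a bit cleaner, while the paper leans more directly on the finiteness of $\Min R$.
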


\begin{proof}
Let us show that there is an equality
\begin{equation}\label{eq}
\mx\dm(R)=\{\s(\p)\mid\p\in\Min R\}.
\end{equation}
Let $\p_1,\dots,\p_n$ be the minimal prime ideals of $R$.

Let $\M$ be a maximal thick $\otimes$-ideal of $\dm(R)$.
Suppose that $\M$ is not contained in $\s(\p_i)$ for any $1\le i\le n$.
Then for each $i$ we find an object $M_i\in\M$ such that $(M_i)_{\p_i}$ is nonzero.
Set $M=M_1\oplus\cdots\oplus M_n$.
This object belongs to $\M$, and $M_{\p_i}$ is nonzero for all $1\le i\le n$.
Hence $\supp M$ contains all the $\p_i$, and we get $\supp M=\spec R$ because $\supp M$ is specialization-closed.
Proposition \ref{whole} yields $\tthick M=\dm(R)$, and hence we have $\M=\dm(R)$, which contradicts the definition of a maximal thick $\otimes$-ideal.
Thus, $\M$ is contained in $\s(\p_l)$ for some $1\le l\le n$.
The maximality of $\M$ implies that $\M=\s(\p_l)$.

Fix an integer $1\le i\le n$.
By \cite[Proposition 2.3(b)]{B} there exists a maximal thick $\otimes$-ideal $\M_i$ of $\dm(R)$ that contains $\s(\p_i)$.
Applying the above argument to $\M_i$, we see that $\M_i$ coincides with $\s(\p_j)$ for some $1\le j\le n$.
Hence $\s(\p_i)$ is contained in $\s(\p_j)$, and Theorem \ref{s} shows that $\p_i$ contains $\p_j$.
The fact that $\p_i,\p_j$ are minimal prime ideals of $R$ forces us to have $i=j$.
Therefore we obtain $\M_i=\s(\p_i)$, which especially says that $\s(\p_i)$ is a maximal thick $\otimes$-ideal of $\dm(R)$.

Thus, we get the equality \eqref{eq}.
This shows that the restriction of the map $\s:\spec R\to\spc\dm(R)$ to $\Min R$ gives rise to a surjection $\Min R\to\mx\dm(R)$.
As Theorem \ref{s} says that $\s$ is an injection, the map $\s|_{\Min R}$ is a bijection.
By Lemma \ref{gent} we see that $\s|_{\Min R}$ is a homeomorphism.
\end{proof}

Theorem \ref{minmx} yields the following result concerning the structure of the Balmer spectrum of $\dm(R)$.

\begin{cor}\label{minmxcor}
\begin{enumerate}[\rm(1)]
\item
There are equalities
$$
\textstyle
\spc\dm(R)=\bigcup_{\p\in\spec R}\overline{\{\s(\p)\}}=\bigcup_{\p\in\Min R}\overline{\{\s(\p)\}}.
$$
\item
The topological space $\spc\dm(R)$ is irreducible if and only if so is $\spec R$.
\end{enumerate}
\end{cor}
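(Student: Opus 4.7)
The plan is to prove the stronger second equality in (1); the first equality then follows immediately from $\Min R\subseteq\spec R$ and the fact that both unions lie inside $\spc\dm(R)$. For the covering, I will take an arbitrary $\P\in\spc\dm(R)$, enclose it in a maximal thick $\otimes$-ideal $\M$ using \cite[Proposition 2.3(b)]{B}, and then apply Theorem \ref{minmx} to write $\M=\s(\p)$ for some (unique) $\p\in\Min R$. Combining this with Balmer's closure formula $\overline{\{\s(\p)\}}=\{\Q\in\spc\dm(R):\Q\subseteq\s(\p)\}$ from \cite[Proposition 2.9]{B} places $\P$ in $\overline{\{\s(\p)\}}$, which is exactly what is needed for the covering.

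For (2), my strategy is to combine (1) with the standard equivalence ``$\spec R$ is irreducible $\Leftrightarrow$ $R$ has a unique minimal prime''. The implication $(\Leftarrow)$ is short: if $\Min R=\{\p_0\}$, then (1) reduces to $\spc\dm(R)=\overline{\{\s(\p_0)\}}$, and the closure of a singleton in any topological space is irreducible. For $(\Rightarrow)$, since $R$ is noetherian the set $\Min R$ is finite, so (1) writes $\spc\dm(R)$ as a finite union of closed irreducible subsets $\overline{\{\s(\p)\}}$; irreducibility of $\spc\dm(R)$ forces this union to collapse to a single $\overline{\{\s(\p_0)\}}$. Then for every other $\p\in\Min R$, the containment $\s(\p)\in\overline{\{\s(\p_0)\}}$ gives $\s(\p)\subseteq\s(\p_0)$, and the order anti-embedding property of $\s$ (Theorem \ref{s}) yields $\p_0\subseteq\p$, whence $\p=\p_0$ by minimality, proving $|\Min R|=1$.

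No step looks particularly delicate: both parts are straightforward combinations of Theorems \ref{s} and \ref{minmx} with Balmer's basic topological facts. The one point worth keeping an eye on is the direction of specialization: in $\spc\dm(R)$ larger primes have larger closures, which is opposite to the convention for $\spec R$. This inversion is precisely what is encoded by the order-reversing map $\s$ and by Theorem \ref{minmx}'s identification of $\mx\dm(R)$ with $\Min R$, so the matching of ``generic points'' on the two sides goes through cleanly.
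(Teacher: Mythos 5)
Your proof of (1) is exactly the paper's argument: pass to a maximal thick $\otimes$-ideal via \cite[Proposition 2.3(b)]{B}, identify it as $\s(\p)$ for a minimal prime via Theorem \ref{minmx}, and invoke \cite[Proposition 2.9]{B} for the closure formula.

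For (2) your route differs slightly from the paper's, though both rest on Theorem \ref{minmx}. The paper first records that an irreducible $\spc\dm(R)$ has a generic point, citing \cite[Proposition 2.18]{B}, so irreducibility is equivalent to the existence of a unique maximal prime thick $\otimes$-ideal, which Theorem \ref{minmx} translates to uniqueness of the minimal prime of $R$. You instead avoid the sobriety result by exploiting that $\Min R$ is finite (noetherianity) and using the elementary fact that an irreducible space covered by finitely many closed sets must equal one of them; from $\spc\dm(R)=\overline{\{\s(\p_0)\}}$ you then pull back to $\Min R=\{\p_0\}$ via the order anti-embedding property of $\s$. This is a perfectly valid and arguably more self-contained argument for the forward implication, at the cost of an explicit appeal to noetherianity that the paper's sobriety-based argument does not require. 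Both variants are correct and make the same essential use of Theorem \ref{minmx}; the difference is only in which topological lemma carries the reduction from irreducibility to uniqueness.
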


\begin{proof}
(1) The inclusions $\spc\dm(R)\supseteq\bigcup_{\p\in\spec R}\overline{\{\s(\p)\}}\supseteq\bigcup_{\p\in\Min R}\overline{\{\s(\p)\}}$ clearly hold.
Pick a prime thick $\otimes$-ideal $\P$ of $\dm(R)$.
Then one finds a maximal thick $\otimes$-ideal $\M$ containing $\P$ by \cite[Proposition 2.3(b)]{B}.
Theorem \ref{minmx} implies that $\M=\s(\p)$ for some minimal prime ideal $\p$ of $R$, and it follows from \cite[Proposition 2.9]{B} that $\P$ belongs to the closure $\overline{\{\s(\p)\}}$.

(2) First of all, $\spc\dm(R)$ is irreducible if and only if $\spc\dm(R)=\overline{\{\P\}}$ for some $\P\in\spc\dm(R)$.
In fact, the ``if" part is obvious, while the ``only if" part follows from \cite[Proposition 2.18]{B}.
By \cite[Proposition 2.9]{B}, the set $\overline{\{\P\}}$ consists of the prime thick $\otimes$-ideals contained in $\P$.
Hence $\spc\dm(R)=\overline{\{\P\}}$ for some $\P\in\spc\dm(R)$ if and only if $\dm(R)$ has a unique maximal element, which is equivalent to $\spec R$ having a unique minimal element by Theorem \ref{minmx}.
This is equivalent to saying that $\spec R$ is irreducible.
\end{proof}

The following theorem is opposite to Theorem \ref{minmx}.
The third assertion says that if $R$ is local, then $\dm(R)$ is an ``integral domain" in the sense that $\zero$ is a (unique) minimal prime thick tensor ideal of $\dm(R)$.

\begin{thm}\label{sm}
\begin{enumerate}[\rm(1)]
\item
For every maximal ideal $\m$ of $R$, the subcategory $\s(\m)$ is a minimal prime thick $\otimes$-ideal of $\dm(R)$, or in other words, the restriction of $\s$ to $\Max R$ induces an injection
\begin{equation}\label{mn}
\s|_{\Max R}:\Max R\hookrightarrow \mn\dm(R).
\end{equation}
\item
The ring $R$ is semilocal if and only if $\dm(R)$ has only finitely many minimal prime thick $\otimes$-ideals.
When this is the case, the map \eqref{mn} is a homeomorphism.
\item
If $(R,\m)$ is a local ring, then $\s(\m)=\zero$ is a unique minimal prime thick $\otimes$-ideal of $\dm(R)$.
\end{enumerate}
\end{thm}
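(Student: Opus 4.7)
The plan is to prove (1) first by a direct Koszul-complex argument, then deduce (3) from (1) together with the triviality of localization at the maximal ideal of a local ring, and finally settle (2) by an analogous multi-factor tensor trick across all the maximal ideals.

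For (1), fix $\m\in\Max R$; the subcategory $\s(\m)$ is prime by Proposition~\ref{two}, so the task is to show that any prime thick $\otimes$-ideal $\P\subseteq\s(\m)$ coincides with $\s(\m)$. If not, choose $X\in\s(\m)\setminus\P$ and put $K=\k(\xx,R)$ for a generating set $\xx$ of $\m$. By Proposition~\ref{annihi}(3), $\supp K=\V(\m)=\{\m\}$, while $X\in\s(\m)$ means $\m\notin\supp X$. Lemma~\ref{si0}(4) then gives $\supp(X\ltensor_R K)=\emptyset$, whence $X\ltensor_R K=0\in\P$ by Remark~\ref{r}. Primality forces $X\in\P$ or $K\in\P$; but $X\notin\P$ by choice while $K\notin\s(\m)\supseteq\P$ since $\m\in\supp K$, a contradiction. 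The injectivity of the displayed map is then automatic from Theorem~\ref{s}.

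For (3), if $(R,\m)$ is local then $R_\m=R$, so $X_\m\cong X$ in $\dm(R)$ for every $X$, forcing $\s(\m)=\zero$. By (1) this is a minimal prime; conversely, any minimal prime of $\dm(R)$ contains the smallest thick subcategory $\zero$, which is itself prime by Example~\ref{zerop}, and so must equal $\zero$.

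For (2), the injection from (1) immediately gives that finiteness of $\mn\dm(R)$ implies finiteness of $\Max R$, i.e., that $R$ is semilocal. Conversely, assume $R$ is semilocal with maximal ideals $\m_1,\dots,\m_n$ and take any $\P\in\mn\dm(R)$; the crucial step is to find some $i$ with $\s(\m_i)\subseteq\P$, since then primality of $\s(\m_i)$ (Proposition~\ref{two}) and minimality of $\P$ force $\P=\s(\m_i)$. If no such $i$ existed, one could choose $X_i\in\s(\m_i)\setminus\P$ for each $i$ and consider $X:=X_1\ltensor_R\cdots\ltensor_R X_n$. By Lemma~\ref{si0}(4), $\supp X=\bigcap_i\supp X_i$ is a specialization-closed subset of $\spec R$ containing no $\m_i$; since every prime of the semilocal ring $R$ lies below some maximal ideal, the intersection is empty, so $X=0\in\P$ by Remark~\ref{r}, and primality then yields some $X_i\in\P$, contradicting the choice. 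Hence $\mn\dm(R)=\{\s(\m_i)\}_{i=1}^n$, and since both $\Max R$ and $\mn\dm(R)$ are finite $\t_1$-spaces by Lemma~\ref{gent}, the bijection \eqref{mn} is automatically a homeomorphism. The principal obstacle is the surjectivity step above: the key insight is that tensoring one witness from each $\s(\m_i)$ produces a complex whose support is a specialization-closed set avoiding every maximal ideal, which in a semilocal ring is forced to be empty; once this is recognized, the rest is a direct application of the Koszul-complex support formulae and the formal properties of $\s$ and $\pp$ already established.
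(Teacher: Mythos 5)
Your proof is correct and follows essentially the same strategy as the paper's, with only minor technical variations: in (1) you tensor with the Koszul complex $\k(\xx,R)$ whereas the paper tensors with $R/\m$ (both have support exactly $\{\m\}$, so the argument is the same); in (2) you conclude $\supp X=\emptyset$ via the specialization-closedness of supports rather than the paper's direct computation that $X_\m=0$ for every maximal $\m$ implies $X_\p=0$ for every prime $\p$. The core ideas — primality of $\s(\m)$ from Proposition~\ref{two}, the tensor-trick with one witness per maximal ideal, and discreteness of finite $\t_1$-spaces — are identical.
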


\begin{proof}
(1) Let $\P$ be a prime thick $\otimes$-ideal of $\dm(R)$ contained in $\s(\m)$.
Take any object $X\in\s(\m)$.
Then $\supp(X\ltensor_RR/\m)=\supp X\cap\{\m\}=\emptyset$ by Lemma \ref{si0}(4).
Remark \ref{r} shows $X\ltensor_RR/\m=0$, which belongs to $\P$.
As $\P$ is prime, either $X$ or $R/\m$ is in $\P$.
Since $\s(\m)$ does not contain $R/\m$, neither does $\P$.
Therefore $X$ must be in $\P$, and we obtain $\P=\s(\m)$.
This shows that the prime thick $\otimes$-ideal $\s(\m)$ is minimal.
Thus, $\s$ induces a map $\Max R\to\mn\dm(R)$.
The injectivity follows from Theorem \ref{s}.

(2) The first assertion of the theorem implies the ``if" part, and it suffices to show that if $R$ is semilocal, then \eqref{mn} is a homeomorphism.
Let us first prove the surjectivity of the map \eqref{mn}.
Take a minimal prime thick $\otimes$-ideal $\P$ of $\dm(R)$.
What we want is that there is a maximal ideal $\m$ of $R$ such that $\P=\s(\m)$.

Suppose that $\P$ does not contain $\s(\m)$ for all $\m\in\Max R$.
Write $\Max R=\{\m_1,\dots,\m_t\}$.
For each $1\le i\le t$ we find an object $X_i$ of $\dm(R)$ with $X_i\in\s(\m_i)$ and $X_i\notin\P$.
Setting $X=X_1\ltensor_R\cdots\ltensor_RX_t$, for each $i$ we have $X_{\m_i}=X_1\ltensor_R\cdots\ltensor_R(X_i)_{\m_i}\ltensor_R\cdots\ltensor_RX_t=0$.
Hence $X_\m=0$ for all $\m\in\Max R$, which implies $X_\p=0$ for all $\p\in\spec R$.
This means that $\supp X$ is empty, and Remark \ref{r} yields $X=0$.
In particular, $X=X_1\ltensor_R\cdots\ltensor_RX_t$ is in $\P$.
As $\P$ is prime, it contains some $X_u$, which is a contradiction.

Consequently, $\P$ must contain $\s(\m)$ for some $\m\in\Max R$.
The minimality of $\P$ shows that $\P=\s(\m)$.
We conclude that the map \eqref{mn} is surjective, whence it is bijective.
Since the set $\Max R$ is finite, so is $\mn\dm(R)$.
Applying Lemma \ref{gent}, we observe that \eqref{mn} is a homeomorphism.

(3) As $R$ is a local ring with maximal ideal $\m$, the equality $\s(\m)=\zero$ holds, which especially says that $\zero$ is a prime thick $\otimes$-ideal of $\dm(R)$ by Proposition \ref{two}.
If $\P$ is a minimal prime thick $\otimes$-ideal, then $\P$ contains $\zero$, and the minimality of $\P$ implies $\P=\zero$.
Thus $\zero$ is a unique minimal prime thick $\otimes$-ideal.
\end{proof}

\begin{ques}
Is the map \eqref{mn} bijective even if $R$ is not semilocal?
\end{ques}

Recall that a topological space $X$ is called {\em noetherian} if any descending chain of closed subsets of $X$ stabilizes.
Applying the above two theorems to the artinian case gives rise to the following result.

\begin{cor}\label{minmxcor2}
Let $R$ be an artinian ring.
Then the map $\s:\spec R\to\spc\dm(R)$ is a homeomorphism.
Hence the topological space $\spc\dm(R)$ is noetherian, and one has $\dim(\spc\dm(R))=\dim R=0<\infty$.
\end{cor}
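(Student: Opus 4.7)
The strategy is to prove that $\s$ is a bijection and then upgrade this to a homeomorphism via Theorem~\ref{homeos}. Since $R$ is artinian, $\spec R=\Max R=\Min R$ is finite and $\dim R=0$; in particular, $\V(\p)=\{\p\}$ for each $\p\in\spec R$. Injectivity of $\s$ is provided by Theorem~\ref{s}, so the crux will be surjectivity: every prime thick $\otimes$-ideal of $\dm(R)$ has the form $\s(\p)$.

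For the surjectivity, let $\P\in\spc\dm(R)$ and set $T=\supp\P$. Suppose, for contradiction, that $T^\complement$ contained two distinct primes $\p,\q$ of $R$. Then Corollary~\ref{3'} would force $R/\p,R/\q\notin\P$, because $\V(\p)=\{\p\}\nsubseteq T$ and likewise for $\q$. On the other hand, Lemma~\ref{si0}(4) gives $\supp(R/\p\ltensor_R R/\q)=\{\p\}\cap\{\q\}=\emptyset$, so that $R/\p\ltensor_R R/\q=0\in\P$ by Remark~\ref{r}, contradicting primality of $\P$. Hence $T^\complement=\{\p\}$ for a single prime $\p$, and Proposition~\ref{two} (combined with the fact that $\q\nsubseteq\p\iff\q\ne\p$ in the artinian case) yields $\supp\P=\supp\s(\p)$. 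Corollary~\ref{art}(2), which asserts the bijectivity of $\X\mapsto\supp\X$ on thick $\otimes$-ideals, then forces $\P=\s(\p)$.

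With $\s$ now bijective, Corollary~\ref{tp} gives $\ts\dm(R)=\im\s=\spc\dm(R)$, so the inclusion $\theta:\ts\dm(R)\hookrightarrow\spc\dm(R)$ is the identity map. Because $\spec R$ is finite, the implication (7)$\Rightarrow$(2) in Theorem~\ref{homeos} makes $\s':\spec R\to\ts\dm(R)$ a homeomorphism, and hence $\s=\theta\s'$ is a homeomorphism. The remaining assertions are then immediate: $\spec R$ is finite, hence a noetherian topological space, so $\spc\dm(R)$ is noetherian; and since every prime of $R$ is maximal, there are no strict chains in $\spec R$, whence $\dim\spc\dm(R)=\dim\spec R=\dim R=0$.

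The main obstacle is the surjectivity step; once that is settled, everything else is formal bookkeeping built on Corollary~\ref{art} and Theorem~\ref{homeos}.
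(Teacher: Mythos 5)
Your argument is correct, and it is genuinely different from the paper's. The paper deduces the corollary from Theorems~\ref{minmx} and~\ref{sm}(2): since for an artinian $R$ one has $\spec R=\Min R=\Max R$, those theorems give homeomorphisms onto $\mx\dm(R)$ and $\mn\dm(R)$ simultaneously, whence $\mx\dm(R)=\mn\dm(R)=\s(\spec R)$, and since every prime thick $\otimes$-ideal sits below a maximal (hence, here, minimal) one, the whole of $\spc\dm(R)$ collapses onto $\s(\spec R)$. You instead prove surjectivity of $\s$ head-on: if $(\supp\P)^\complement$ contained two distinct primes $\p\ne\q$, then $R/\p,R/\q\notin\P$ yet $R/\p\ltensor_R R/\q=0\in\P$, contradicting primality; so $(\supp\P)^\complement$ is a singleton, forcing $\supp\P=\supp\s(\p)$, and the lattice isomorphism of Corollary~\ref{art}(2) gives $\P=\s(\p)$. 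You then invoke Theorem~\ref{homeos} to promote the bijection to a homeomorphism. Your route is more computational and bypasses the minimal/maximal collapse, at the cost of leaning on the full strength of Corollary~\ref{art}(2) and Theorem~\ref{homeos}; the paper's route is shorter once the two theorems on $\mx$ and $\mn$ are available. One small point worth making explicit in your argument: you should note why $(\supp\P)^\complement$ is nonempty (equivalently, $\supp\P\ne\spec R$); this follows, e.g., from Corollary~\ref{ato2}, which exhibits $\pp(\P)$ as the maximum element of $(\supp\P)^\complement$, or from the fact that $\supp\P=\spec R$ would force $\P=\dm(R)$ by Proposition~\ref{key}.
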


\begin{proof}
Since $\spec R=\Min R=\Max R$, the assertion is deduced from Theorems \ref{minmx} and \ref{sm}(2).
\end{proof}

From here we consider when $\dm(R)$ is a local tensor triangulated category.
Let us recall the definition.

\begin{dfn}
(1) A topological space $X$ is called {\em local} if for any open cover $X=\bigcup_{i\in I}U_i$ of $X$ there exists $t\in I$ such that $X=U_t$.
In particular, any local topological space is quasi-compact.\\
(2) An essentially small tensor triangulated category $\T$ is called {\em local} if $\spc\T$ is a local topological space.
\end{dfn}

\begin{rem}
It is clear that the topological space $\spec R$ is local if and only if the ring $R$ is local.
\end{rem}

For an essentially small $\otimes$-triangulated category $\T$ the following are equivalent (\cite[Proposition 4.2]{B3}).
\begin{enumerate}[\quad(i)]
\item
$\T$ is local.
\item
$\T$ has a unique minimal prime thick $\otimes$-ideal.
\item
The radical thick $\otimes$-ideal $\sqrt\zero$ of $\T$ is prime.
\end{enumerate}
If moreover $\T$ is {\em rigid}, then the above three conditions are equivalent to:
\begin{enumerate}[\quad(i)]
\setcounter{enumi}{3}
\item
The zero subcategory $\zero$ of $\T$ is a prime thick $\otimes$-ideal.
\end{enumerate}
Also, it follows from \cite[Example 4.4]{B3} that $\kb(\proj R)$ is local if and only if so is $R$.

The following result says that the same statements hold for $\dm(R)$.
Also, we emphasize that it contains the equivalent condition (4), even though $\dm(R)$ is not rigid; see Remark \ref{rigid}.

\begin{cor}\label{4}
The following are equivalent.
\begin{enumerate}[\rm(1)]
\item
The $\otimes$-triangulated category $\dm(R)$ is local.
\item
There is a unique minimal thick $\otimes$-ideal of $\dm(R)$.
\item
The radical thick $\otimes$-ideal $\sqrt\zero$ of $\dm(R)$ is prime.
\item
The zero subcategory $\zero$ of $\dm(R)$ is a prime thick $\otimes$-ideal.
\item
The ring $R$ is local.
\end{enumerate}
\end{cor}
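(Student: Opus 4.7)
The implications (1) $\Leftrightarrow$ (2) $\Leftrightarrow$ (3) are recorded just above the corollary as a consequence of \cite[Proposition 4.2]{B3}, valid for any essentially small tensor triangulated category. So the task reduces to weaving (4) and (5) into this equivalence class. Since $\dm(R)$ is not rigid (Remark \ref{rigid}), the corresponding implication (3) $\Rightarrow$ (4) from \cite{B3} is not available for free; the novelty here is precisely that (4) is still equivalent to the others for $\dm(R)$.

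I will close the loop via (5) $\Rightarrow$ (4) $\Rightarrow$ (3) and (2) $\Rightarrow$ (5). The first implication (5) $\Rightarrow$ (4) is exactly Example \ref{zerop}: when $R$ is local, Lemma \ref{si0}(4) combined with the observation that $\supp X = \emptyset$ forces $X = 0$ (Remark \ref{r}) shows that $\zero$ is a prime thick $\otimes$-ideal. For (4) $\Rightarrow$ (3), it suffices to verify $\sqrt\zero = \zero$ when $\zero$ is prime, which is a one-line induction on $n$: if $X^{\otimes n} = 0 \in \zero$, then primeness of $\zero$ forces either $X \in \zero$ or $X^{\otimes(n-1)} \in \zero$, so $X = 0$.

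For (2) $\Rightarrow$ (5), I will use Theorem \ref{sm}(1): the map $\s|_{\Max R}: \Max R \to \mn\dm(R)$ is injective, and its image lies in $\mn\dm(R)$. If $\dm(R)$ has a unique minimal (prime) thick $\otimes$-ideal, then $|\mn\dm(R)| = 1$, hence $|\Max R| \le 1$. Since $R$ is a nonzero commutative noetherian ring it has at least one maximal ideal, so $|\Max R| = 1$ and $R$ is local. (Note the interpretation of ``minimal thick $\otimes$-ideal'' in (2) as minimal prime thick $\otimes$-ideal, consistent with condition (ii) of \cite[Proposition 4.2]{B3} cited immediately before the corollary.)

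There is no serious obstacle, since all the heavy lifting sits in Example \ref{zerop} (for (5) $\Rightarrow$ (4)) and Theorem \ref{sm}(1) (for (2) $\Rightarrow$ (5)), both already in hand. The only subtle point to flag is conceptual rather than technical: the equivalence with (4) fails in general non-rigid tensor triangulated categories, and it holds here because $\dm(R)$ has a well-behaved support theory making $\supp X = \emptyset \Leftrightarrow X = 0$ for $X \in \dm(R)$.
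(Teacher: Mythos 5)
Your proof is correct and follows essentially the same route as the paper: combine the general equivalence $(1)\Leftrightarrow(2)\Leftrightarrow(3)$ from \cite[Proposition 4.2]{B3} with Theorem \ref{sm} to tie in $(5)$, obtain $(5)\Rightarrow(4)$ via Example \ref{zerop}, and close the circle with $(4)\Rightarrow(3)$ by observing that a prime thick $\otimes$-ideal equals its radical. The only cosmetic difference is which part of Theorem \ref{sm} you invoke for $(2)\Rightarrow(5)$ (you use the injectivity in part (1), where the paper points to the theorem as a whole), and your explicit flag that $(4)$ is not free in the non-rigid setting is precisely the point the paper emphasizes just before the corollary.
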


\begin{proof}
Combining Theorem \ref{sm} with the result given just before the corollary, we observe that (1) $\Leftrightarrow$ (2) $\Leftrightarrow$ (3) $\Leftrightarrow$ (5) $\Rightarrow$ (4) hold.
If $\zero$ is prime, then it is easy to see that $\sqrt\zero=\zero$.
Thus (4) implies (3).
\end{proof}

One can indeed obtain more precise information on the structure of $\spc\dm(R)$ than Corollary \ref{4}:

\begin{prop}
One has
$$
\spc\dm(R)=
\begin{cases}
\u(R/\m)\sqcup\{\zero\} & \text{if $(R,\m)$ is local},\\
\bigcup_{\m\in\Max R}\u(R/\m) & \text{if $R$ is non-local}.
\end{cases}
$$
If $\m,\n$ are distinct maximal ideals of $R$, then $\spc\dm(R)=\u(R/\m)\cup\u(R/\n)$.
\end{prop}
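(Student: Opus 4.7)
The plan is to exploit the correspondence in Corollary \ref{ato2}, which says that for $\P \in \spc\dm(R)$ and any prime ideal $\p$ of $R$, one has $R/\p \notin \P$ if and only if $\p \subseteq \pp(\P)$. In particular, for a maximal ideal $\m$, the condition $\P \notin \u(R/\m)$ is equivalent to $\m \subseteq \pp(\P)$, and since $\pp(\P)$ is a (proper) prime ideal, this forces $\pp(\P) = \m$.

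For the local case with maximal ideal $\m$: first I would observe that $\P \in \spc\dm(R) \setminus \u(R/\m)$ is equivalent to $\pp(\P) = \m$ by the above remark. Then, using Theorem \ref{s} together with Theorem \ref{sm}(3) (which gives $\s(\m) = \zero$ when $R$ is local), I deduce $\P \subseteq \s(\pp(\P)) = \s(\m) = \zero$, hence $\P = \zero$. Conversely, $\zero$ is itself a prime thick $\otimes$-ideal (Theorem \ref{sm}(3) or Corollary \ref{4}) and clearly does not contain $R/\m$. This yields the disjoint decomposition $\spc\dm(R) = \u(R/\m) \sqcup \{\zero\}$.

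For the non-local case: I would argue by contradiction. Suppose $\P \in \spc\dm(R)$ satisfies $\P \notin \u(R/\m)$ for every $\m \in \Max R$. Then by Corollary \ref{ato2} every maximal ideal is contained in the proper ideal $\pp(\P)$, forcing $\pp(\P)$ to equal each maximal ideal simultaneously, which is impossible as $R$ has at least two distinct maximal ideals. Hence $\spc\dm(R) = \bigcup_{\m\in\Max R} \u(R/\m)$. The very last claim follows from the same argument with just the two maximal ideals $\m$ and $\n$: if $R/\m \notin \P$ and $R/\n \notin \P$, then $\m, \n \subseteq \pp(\P)$, and maximality of $\m,\n$ together with $\m \ne \n$ yields a contradiction.

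The proof is essentially a two-line application of Corollary \ref{ato2} and Theorem \ref{sm}(3); there is no genuine obstacle, and no intermediate lemma is needed beyond what is already established in the excerpt.
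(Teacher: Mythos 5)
Your proof is correct, and it takes a genuinely different route from the paper's. The paper works directly with objects: in the local case it starts from a nonzero $X\in\P$, observes that $\supp X$ must contain $\m$, tensors with $k$, uses the structure theorem for complexes of $k$-vector spaces to see that $X\ltensor_Rk$ has $k[n]$ as a direct summand for some $n$, and concludes $k\in\P$; in the non-local case it notes $R/\m\ltensor_RR/\n=0$ and invokes the general identity $\u(R/\m)\cup\u(R/\n)=\u(R/\m\ltensor_RR/\n)$ from Balmer's Lemma~2.6(e). Your argument instead routes everything through the map $\pp$ and the equivalence in Corollary~\ref{ato2} (``$R/\p\notin\P\Leftrightarrow\p\subseteq\pp(\P)\Leftrightarrow\P\subseteq\s(\p)$''), together with $\s(\m)=\zero$ in the local case. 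This is shorter and more uniform: the local and non-local cases become the same observation that $\pp(\P)$, being a proper prime ideal, can contain at most one maximal ideal. What it costs is transparency about why the result holds at the level of actual complexes — the paper's computation with $X\ltensor_Rk$ makes visible that the thick $\otimes$-ideal generated by any nonzero object over a local ring already contains $k$, a fact which is obscured once everything is packaged into the formal calculus of $\s$ and $\pp$. What it buys is that no structural facts about $k$-complexes are needed: you only invoke Corollary~\ref{ato2}, Theorem~\ref{s}, and Theorem~\ref{sm}(3), all of which are already established formal consequences of the $\s$/$\pp$ correspondence. Both proofs are valid.
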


\begin{proof}
Suppose that $(R,\m,k)$ is a local ring.
Corollary \ref{4} implies that $\zero$ is prime, and $\spc\dm(R)$ contains $\u(k)\cup\{\zero\}$.
Let $\P$ be a nonzero prime thick $\otimes$-ideal of $\dm(R)$.
Then there exists an object $X\ne0$ in $\P$.
By Remark \ref{r} the support of $X$ is nonempty and specialization-closed, whence contains $\m$.
Using Lemma \ref{si0}(4), we have $\supp(X\ltensor_Rk)=\supp X\cap\supp k=\{\m\}\ne\emptyset$.
Hence $X\ltensor_Rk$ is nonzero by Remark \ref{r} again.
Since $X\ltensor_Rk$ is isomorphic to a direct sum of shifts of $k$-vector spaces, it contains $k[n]$ as a direct summand for some $n\in\Z$.
As $X\ltensor_Rk$ is in $\P$, so is $k$.
Therefore $\P$ is in $\u(k)$, and we obtain $\spc\dm(R)=\u(k)\cup\{\zero\}$.
It is obvious that $\u(k)\cap\{\zero\}=\emptyset$.
We conclude that $\spc\dm(R)=\u(k)\sqcup\{\zero\}$.

Now, let $\m$ and $\n$ be distinct maximal ideals of $R$.
Applying Lemma \ref{si0}(4), we have $\supp(R/\m\ltensor_RR/\n)=\{\m\}\cap\{\n\}=\emptyset$, and hence $R/\m\ltensor_RR/\n=0$ by Remark \ref{r}.
Therefore we obtain $\u(R/\m)\cup\u(R/\n)=\u(R/\m\ltensor_RR/\n)=\u(0)=\spc\dm(R)$, where the first equality follows from \cite[Lemma 2.6(e)]{B}.
Thus the last assertion of the proposition follows, which shows the first assertion in the non-local case.
\end{proof}

So far we have investigated the irreducible and local properties of $\spc\dm(R)$.
In general, there is no implication between the local and irreducible properties of $\spc\dm(R)$:

\begin{rem}
If $R$ is a local ring possessing at least two minimal prime ideals, then $\spc \dm(R)$ is local by Corollary \ref{4}, but not irreducible by Corollary \ref{minmxcor}(2).
Similarly, if $R$ is a nonlocal ring with unique minimal prime ideal, then $\spc \dm(R)$ is irreducible but not local.
\end{rem}

%%%%%%%%%%%%%%%%%%%%%%%%%%%%%%%%%%%%%%%%%%%%%%%%%%%%%%%%%%%%%
\section{Relationships among thick tensor ideals and specialization-closed subsets}\label{various}
%%%%%%%%%%%%%%%%%%%%%%%%%%%%%%%%%%%%%%%%%%%%%%%%%%%%%%%

This section compares compact, tame and radical thick tensor ideals of $\dm(R)$, relating them to specialization closed subsets of $\spec R$, $\ts\dm(R)$ and Thomason subsets of $\spc\dm(R)$.
We start with some notation.

\begin{dfn}
(1) Let $\T$ be a tensor triangulated category.
Let $\PP$ be a property of thick $\otimes$-ideals of $\T$.
For a subcategory $\X$ of $\C$ we denote by $\X^\PP$ (resp. $\X_\PP$) the {\em $\PP$-closure} (resp. {\em $\PP$-interior}) of $\X$, that is to say, the smallest (resp. largest) thick $\otimes$-ideal of $\T$ which contains (resp. which is contained in) $\X$ and satisfies the property $\PP$.
We define this only when it exists.\\
(2) Let $X$ be a topological space.
Let $\PP$ be a property of subsets of $X$.
For a subset $A$ of $X$ we denote by $A^\PP$ (resp. $A_\PP$) the {\em $\PP$-closure} (resp. {\em $\PP$-interior}) of $A$, namely, the smallest (resp. largest) subset of $X$ that contains (resp. that is contained in) $A$ and satisfies $\PP$.
We define this only when it exists.
\end{dfn}

Here is a list of properties $\PP$ as in the above definition which we consider:
$$
\rrad=\text{radical},\qquad
\tame=\text{tame},\qquad
\cpt=\text{compact},\qquad
\spcl=\text{specialization-closed}.
$$

\begin{nota}
We denote by $\Rad$ (resp. $\Tame$, $\Cpt$) the set of radical (resp. tame, compact) thick $\otimes$-ideals of $\dm(R)$.
Also, $\Spcl(\spec)$ (resp. $\Spcl(\ts))$ stands for the set of specialization-closed subsets of the topological space $\spec R$ (resp. $\ts\dm(R)$).
\end{nota}

Our first purpose in this section is to give a certain commutative diagram of bijections.
To achieve this purpose, we prepare several propositions.
We state here two propositions.
The first one is shown by using Proposition \ref{si}, while the second one is nothing but Theorem \ref{main2}.

\begin{prop}\label{tss}
There is a one-to-one correspondence $\supp:\Tame\rightleftarrows\Spcl(\spec):\supp^{-1}$.
\end{prop}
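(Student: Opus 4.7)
The plan is to show that the two assignments are well-defined and mutually inverse, exploiting the results already established in Proposition \ref{si} and Remark \ref{tame}.

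First I would check well-definedness of both maps. For $\supp^{-1}:\Spcl(\spec)\to\Tame$, given a specialization-closed subset $W$ of $\spec R$, Proposition \ref{si}(1) guarantees that $\supp^{-1}W$ is a thick $\otimes$-ideal of $\dm(R)$, and it is tame by the very definition of tameness. For $\supp:\Tame\to\Spcl(\spec)$, given a tame thick $\otimes$-ideal $\X$, Proposition \ref{si}(2) tells us $\supp\X$ is a specialization-closed subset of $\spec R$.

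Next I would verify that the two maps are inverse to each other. Start with a specialization-closed subset $W\subseteq\spec R$: by Proposition \ref{si}(1), one has $\supp(\supp^{-1}W)=W_\spcl$, and since $W$ is already specialization-closed, $W_\spcl=W$; thus $\supp\circ\supp^{-1}=\id_{\Spcl(\spec)}$. Conversely, take a tame thick $\otimes$-ideal $\X$; by Remark \ref{tame} we can write $\X=\supp^{-1}W$ for some specialization-closed subset $W$. Applying the previous equality, $\supp\X=\supp(\supp^{-1}W)=W$, so $\supp^{-1}(\supp\X)=\supp^{-1}W=\X$, proving $\supp^{-1}\circ\supp=\id_{\Tame}$.

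There is no genuine obstacle here: every ingredient needed has already been isolated in Proposition \ref{si} and Remark \ref{tame}, so the argument is essentially a bookkeeping exercise. The only subtle point worth emphasizing is that tameness is defined via the existence of \emph{some} subset $S$ producing $\X=\supp^{-1}S$, whereas the correspondence with $\Spcl(\spec)$ requires picking the canonical (specialization-closed) choice $W=\supp\X$; that this canonical choice reproduces $\X$ is precisely what the identity $\supp^{-1}S=\supp^{-1}(S_\spcl)$ from Proposition \ref{si}(1) provides.
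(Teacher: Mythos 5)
Your proof is correct and takes exactly the approach the paper intends: the paper does not write out a proof but states that the result ``is shown by using Proposition \ref{si},'' and your argument is precisely the bookkeeping that fills in those details (with Remark \ref{tame}, itself an immediate consequence of Proposition \ref{si}(1), invoked to pick a specialization-closed $W$ representing a tame ideal).
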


\begin{prop}\label{cs}
There is a one-to-one correspondence $\supp:\Cpt\rightleftarrows\Spcl(\spec):\langle\rangle$.
\end{prop}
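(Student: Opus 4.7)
The plan is to deduce this proposition directly from the earlier Theorem \ref{main2}. First I would check that both maps in the proposed correspondence are well-defined. For any thick $\otimes$-ideal $\X \subseteq \dm(R)$, Proposition \ref{si}(2) guarantees that $\supp \X$ is specialization-closed in $\spec R$, so $\supp$ restricts to a map $\Cpt \to \Spcl(\spec)$. Conversely, given a specialization-closed subset $W \subseteq \spec R$, the thick $\otimes$-ideal $\langle W\rangle = \tthick\{R/\p \mid \p \in W\}$ is generated by objects of $\db(R)$, hence is cocompactly generated; by Corollary \ref{seven} it is therefore compact, so $\langle - \rangle$ restricts to $\Spcl(\spec) \to \Cpt$.

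The key remaining content is that these two maps are mutually inverse, and this is precisely what Theorem \ref{main2} asserts (after forgetting the additional lattice structure carried by that theorem). Since Proposition \ref{cs} only records the underlying bijection of sets, one simply invokes the relevant half of Theorem \ref{main2}: for any compact $\X$ one has $\X = \langle \supp \X \rangle$, and for any $W \in \Spcl(\spec)$ one has $\supp\langle W \rangle = W$.

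I expect no real obstacle here, since the substantive work was carried out upstream: the Generalized Smash Nilpotence Theorem \ref{gsn}, the crucial generation statement Proposition \ref{key}, the classification Theorem \ref{main} of cocompactly generated thick $\otimes$-ideals, and the equivalence between ``compactly generated'' and ``cocompactly generated'' established in Corollary \ref{seven}. Thus the proof of Proposition \ref{cs} reduces to a one-line citation of Theorem \ref{main2}.
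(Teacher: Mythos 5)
Your proposal is correct and matches the paper's own treatment exactly: the paper states that Proposition \ref{cs} ``is nothing but Theorem \ref{main2},'' i.e.\ it is the underlying set bijection of the lattice isomorphism already proved. Your extra check of well-definedness (via Proposition \ref{si}(2) and Corollary \ref{seven}) is harmless and implicit in the paper's setup.
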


\begin{nota}
For an object $M$ of $\dm(R)$ we denote by $\sp M$ the set of tame prime thick $\otimes$-ideals of $\dm(R)$ not containing $M$, i.e., $\sp M=\ssupp M\cap\ts\dm(R)$.
For a subcategory $\X$ of $\dm(R)$ we set $\sp\X=\bigcup_{M\in\X}\sp M$.
For a subset $A$ of $\spc\dm(R)$ we denote by $\sp^{-1}A$ the subcategory of $\dm(R)$ consisting of objects $M$ such that $\sp M$ is contained in $A$.
\end{nota}

We make a lemma, whose second assertion is a variant of \cite[Lemma 4.8]{B}.

\begin{lem}\label{sp}
\begin{enumerate}[\rm(1)]
\item
For a subcategory $\X$ of $\dm(R)$, the subset $\sp\X$ of $\ts\dm(R)$ is specialization-closed.
\item
For a subset $A$ of $\ts\dm(R)$ one has $\sp^{-1}A=\bigcap_{\P\in A^\complement}\P$, where $A^\complement=\ts\dm(R)\setminus A$.
\item
Let $\{\X_\lambda\}_{\lambda\in\Lambda}$ be a collection of tame thick $\otimes$-ideals of $\dm(R)$.
Then the intersection $\bigcap_{\lambda\in\Lambda}\X_\lambda$ is also a tame thick $\otimes$-ideal of $\dm(R)$.
\end{enumerate}
\end{lem}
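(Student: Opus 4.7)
The plan is to handle the three parts in sequence, each following directly from the definitions and the tools already built. None of the three should require a new idea; the work is just to invoke the right prior statement.

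For (1), I will start from the definition $\sp\X=\bigcup_{M\in\X}(\ssupp M\cap\ts\dm(R))$. Each Balmer support $\ssupp M$ is closed in $\spc\dm(R)$ by \cite[Lemma 2.6]{B}, so $\ssupp M\cap\ts\dm(R)$ is closed in the subspace $\ts\dm(R)$. Then $\sp\X$ is an arbitrary union of closed subsets of $\ts\dm(R)$, and the general observation recalled in Section \ref{fund} (just before Proposition \ref{si}) tells us that any such union is specialization-closed.

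For (2), the argument is a direct unwinding of the definitions. By definition, $M\in\sp^{-1}A$ is equivalent to $\sp M\subseteq A$, and $\sp M$ consists exactly of those $\P\in\ts\dm(R)$ with $M\notin\P$. Taking the contrapositive, $\sp M\subseteq A$ is equivalent to the statement that every $\P\in\ts\dm(R)\setminus A$ contains $M$, i.e., $M\in\bigcap_{\P\in A^\complement}\P$. This is the desired equality, and the intersection is a thick $\otimes$-ideal automatically since each $\P$ is.

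For (3), I will invoke Remark \ref{tame}: each $\X_\lambda$ has the form $\supp^{-1}W_\lambda$ for some specialization-closed subset $W_\lambda$ of $\spec R$. A direct check from the definition of $\supp^{-1}$ gives $\bigcap_{\lambda\in\Lambda}\supp^{-1}W_\lambda=\supp^{-1}\bigl(\bigcap_{\lambda\in\Lambda}W_\lambda\bigr)$, because $\supp X\subseteq W_\lambda$ for all $\lambda$ iff $\supp X\subseteq\bigcap_\lambda W_\lambda$. Since an arbitrary intersection of specialization-closed subsets of $\spec R$ is specialization-closed, Remark \ref{tame} then identifies this intersection as a tame thick $\otimes$-ideal.

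The main obstacle is essentially nonexistent: the whole statement is a set of bookkeeping checks leaning on \cite[Lemma 2.6]{B} for (1), a contrapositive for (2), and Remark \ref{tame} together with the (routine) fact that $\supp^{-1}$ commutes with intersections for (3). The only point where a little care is needed is to be explicit in (1) about passing to the subspace topology on $\ts\dm(R)$, so that ``closed'' and ``specialization-closed'' are understood in that subspace rather than in the ambient $\spc\dm(R)$.
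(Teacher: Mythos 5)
Your proposal is correct and follows essentially the same route as the paper's proof: (1) observes that $\sp\X$ is a union of closed subsets of $\ts\dm(R)$, (2) unwinds the definitions by contraposition, and (3) writes each $\X_\lambda$ as a $\supp^{-1}$ and uses that $\supp^{-1}$ commutes with intersections. No meaningful difference.
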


\begin{proof}
(1) We have $\sp\X=\bigcup_{X\in\X}\sp X$, and $\sp X=\ssupp X\cap\ts\dm(R)$ is closed in $\ts\dm(R)$ since $\ssupp X$ is closed in $\spc\dm(R)$.
Therefore $\sp\X$ is specialization-closed in $\ts\dm(R)$.

(2) An object $X$ of $\dm(R)$ belongs to $\sp^{-1}A$ if and only if $\sp X$ is contained in $A$, if and only if $A^\complement$ is contained in $(\sp X)^\complement=\{\P\in\ts\dm(R)\mid X\in\P\}$, if and only if $X$ belongs to $\bigcap_{\P\in A^\complement}\P$.

(3) For each $\lambda\in\Lambda$ there is a subset $S_\lambda$ of $\spec R$ such that $\X_\lambda=\supp^{-1}S_\lambda$.
Then it is clear that the equality $\bigcap_{\lambda\in\Lambda}\X_\lambda=\supp^{-1}(\bigcap_{\lambda\in\Lambda}S_\lambda)$ holds, which shows the assertion.
\end{proof}

Using the above lemma, we obtain a bijection induced by $\sp$.

\begin{prop}\label{ts2}
There is a one-to-one correspondence $\sp:\Tame\rightleftarrows\Spcl(\ts):\sp^{-1}$.
\end{prop}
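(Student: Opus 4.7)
The plan is to factor the claimed bijection through the already-established correspondence $\supp : \Tame \rightleftarrows \Spcl(\spec) : \supp^{-1}$ of Proposition \ref{tss}, transported along the set-theoretic inverse pair $\s' : \spec R \rightleftarrows \ts\dm(R) : \pp'$ produced in Theorem \ref{kakan}(3). Well-definedness of $\sp$ and $\sp^{-1}$ is immediate from Lemma \ref{sp}: part (1) gives $\sp\X \in \Spcl(\ts)$ for any $\X$, while parts (2)--(3) together with the fact that every element of $\ts\dm(R)$ is tame express $\sp^{-1}A = \bigcap_{\P \in A^\complement}\P$ as an intersection of tame thick $\otimes$-ideals, hence tame.

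The next step is to record two identities that tie the new maps to the old bijection:
\[
\sp \X = \s'(\supp \X) \quad (\X \in \Tame), \qquad \sp^{-1} A = \supp^{-1}(\pp'(A)) \quad (A \in \Spcl(\ts)).
\]
For the first, $\sp M = \{\P \in \ts\dm(R) \mid M \notin \P\}$; writing $\P = \s(\p)$ (which is allowed by Corollary \ref{tp}), the condition $M \notin \s(\p)$ is exactly $\p \in \supp M$, so $\sp M = \s'(\supp M)$, and taking unions over $M \in \X$ gives $\sp\X = \s'(\supp\X)$. The second identity follows: $M \in \sp^{-1}A$ iff $\sp M = \s'(\supp M) \subseteq A$ iff $\supp M \subseteq \s'^{-1}(A) = \pp'(A)$.

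The main obstacle is proving that $\s'$ and $\pp'$ restrict to mutually inverse bijections $\Spcl(\spec) \leftrightarrow \Spcl(\ts)$. That $\s'(W)$ is specialization-closed for $W \in \Spcl(\spec)$ is routine: $\s'$ is a closed bijection by Theorem \ref{kakan}(4), so it carries unions of closed subsets to unions of closed subsets. The nontrivial direction is that $\pp'(A) \in \Spcl(\spec)$ whenever $A \in \Spcl(\ts)$. Here the strategy is to compute $\overline{\{\s(\p)\}}_{\ts\dm(R)}$ for $\p \in \pp'(A)$: using the description $\overline{\{\P\}} = \{\Q \in \spc\dm(R) \mid \Q \subseteq \P\}$ from \cite[Proposition 2.9]{B}, restricting to $\ts\dm(R)$, and exploiting that $\s$ is an order anti-embedding (Theorem \ref{s}), this closure equals $\s'(\V(\p))$. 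Specialization-closedness of $A$ then forces $\s'(\V(\p)) \subseteq A$, and applying $\pp'$ gives $\V(\p) \subseteq \pp'(A)$, which is exactly what is required.

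With these tools assembled, the two round-trip identities reduce to direct substitutions. For $\X \in \Tame$ with $W := \supp\X$, Proposition \ref{si}(1) or Proposition \ref{tss} gives $\sp^{-1}(\sp\X) = \supp^{-1}(\pp'(\s'(W))) = \supp^{-1}(W) = \X$. Symmetrically, for $A \in \Spcl(\ts)$ with $V := \pp'(A) \in \Spcl(\spec)$, one finds $\sp(\sp^{-1}A) = \s'(\supp(\supp^{-1}V)) = \s'(V) = \s'(\pp'(A)) = A$, again using Proposition \ref{si}(1). This completes the proof.
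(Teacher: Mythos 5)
Your proof is correct, and it takes a structurally different route from the paper's. The paper proves the two round-trip identities $\sp(\sp^{-1}U)=U$ and $\sp^{-1}(\sp\X)=\X$ directly, in each case reducing to the description $\sp^{-1}(-)=\bigcap_{\P\in(-)^\complement}\P$ from Lemma \ref{sp}(2), the identification $\P=\s(\p)$ from Corollary \ref{tp}, and the criterion $\supp Y\subseteq\supp\X\Leftrightarrow Y\in\X$ from Proposition \ref{tss}; the closure computation $\overline{\{\s(\p)\}}\cap\ts\dm(R)=\{\s(\q)\mid\q\in\V(\p)\}=\sp(R/\p)$ appears as a step inside one direction. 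You instead record the factorization identities $\sp=\s'\circ\supp$ on $\Tame$ and $\sp^{-1}=\supp^{-1}\circ\pp'$ on $\Spcl(\ts)$, and reduce the proposition to showing that the mutually inverse bijections $\s',\pp'$ of Theorem \ref{kakan}(3) restrict to a bijection $\Spcl(\spec)\leftrightarrow\Spcl(\ts)$ --- which is precisely the content of the paper's later Proposition \ref{ss}, proved inline here. Your argument for that step uses the same closure computation as the paper but packages the forward direction via the fact that $\s'$ is a closed bijection (Theorem \ref{kakan}(4)). The payoff of your decomposition is that the two factorization identities make the corresponding commutative triangle in Theorem \ref{d1} immediate, whereas the paper establishes that commutativity separately via the observation \eqref{sss}; the cost is that you are implicitly front-loading Proposition \ref{ss} before the paper needs it. Both approaches rely on the same underlying ingredients (Corollary \ref{tp}, Theorem \ref{s}, [B, Prop. 2.9], Proposition \ref{tss}), so the difference is one of organization rather than substance.
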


\begin{proof}
Fix a tame thick $\otimes$-ideal $\X$ of $\dm(R)$ and a specialization-closed subset $U$ of $\ts\dm(R)$.
Lemma \ref{sp}(1) implies that $\sp\X$ is specialization-closed in $\ts\dm(R)$, that is, $\sp\X\in\Spcl(\ts)$.
Lemma \ref{sp}(2) implies that $\sp^{-1}U=\bigcap_{\P\in U^\complement}\P$, and each $\P\in U^\complement$ is a tame thick $\otimes$-ideal of $\dm(R)$.
Hence $\sp^{-1}U$ is also a tame thick $\otimes$-ideal of $\dm(R)$ by Lemma \ref{sp}(3), namely, $\sp^{-1}U\in\Tame$.

Let us show that $\sp(\sp^{-1}U)=U$.
It is evident that $\sp(\sp^{-1}U)$ is contained in $U$.
Pick any $\P\in U$.
Corollary \ref{tp} says $\P=\s(\p)$ for some prime ideal $\p$ of $R$.
Since $U$ is specialization-closed in $\ts\dm(R)$, the closure $C$ of $\s(\p)$ in $\ts\dm(R)$ is contained in $U$.
Using \cite[Proposition 2.9]{B}, we see that $C$ consists of the prime thick $\otimes$-ideals of the form $\s(\q)$, where $\q$ is a prime ideal of $R$ with $\s(\q)\subseteq\s(\p)$.
In view of Theorem \ref{s}, we have $C=\{\s(\q)\mid\q\in\V(\p)\}$, and it is easy to observe that this coincides with $\sp(R/\p)$.
Hence $R/\p$ is in $\sp^{-1}U$, and $\P=\s(\p)$ belongs to $\sp(\sp^{-1}U)$.
Now we obtain $\sp(\sp^{-1}U)=U$.

It remains to prove that $\sp^{-1}(\sp\X)=\X$.
We have $\sp^{-1}(\sp\X)=\bigcap_{\P\in(\sp\X)^\complement}\P$ by Lemma \ref{sp}(2).
Fix a prime thick $\otimes$-ideal $\P$ of $\dm(R)$.
Then $\P$ is in $(\sp\X)^\complement$ if and only if $\P$ is tame and $\P$ is not in $\sp\X$.
The former statement is equivalent to saying that $\P=\s(\p)$ for some $\p\in\spec R$ by Corollary \ref{tp}, while the latter is equivalent to saying that $\X$ is contained in $\P$.
Hence $\sp^{-1}(\sp\X)=\bigcap_{\p\in\spec R,\,\X\subseteq\s(\p)}\s(\p)$.
Thus an object $Y$ of $\dm(R)$ belongs to $\sp^{-1}(\sp\X)$ if and only if $Y$ belongs to $\s(\p)$ for all $\p\in\spec R$ with $\X\subseteq\s(\p)$, if and only if $Y_\p=0$ for all $\p\in\spec R$ with $\X_\p=0$, if and only if $\supp Y$ is contained in $\supp\X$, if and only if $Y$ belongs to $\X$ by Proposition \ref{tss}.
\end{proof}

Here we consider describing $\rrad$-closures, $\tame$-closures and $\cpt$-interiors, and their supports.

\begin{lem}\label{clint}
Let $\X$ be a subcategory of $\dm(R)$, and let $\Y$ be a thick $\otimes$-ideal of $\dm(R)$.
One has:
\begin{enumerate}[\rm(1)]
\item
$(\tthick\X)_\cpt=\langle\supp\X\rangle,\quad\X^\rrad=\sqrt{\tthick\X},\quad\X^\tame=\supp^{-1}\supp\X$,
\item
$\Y_\cpt\subseteq\Y\subseteq\Y^\rrad\subseteq\Y^\tame,\quad\supp(\Y_\cpt)=\supp\Y=\supp(\Y^\rrad)=\supp(\Y^\tame)$,
\end{enumerate}
\end{lem}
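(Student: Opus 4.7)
The plan is to prove the three identities in (1) directly and then derive (2) formally; the only genuinely non-trivial input needed is the standard tensor-triangular fact that the radical $\sqrt{\mathcal{I}}$ of a thick $\otimes$-ideal $\mathcal{I}$ is itself a thick $\otimes$-ideal, cf.\ \cite[Lemma 4.2]{B}. For $(\tthick\X)_\cpt = \langle\supp\X\rangle$, I note first that $\langle\supp\X\rangle$ is compact by construction. Since $\supp\X = \supp(\tthick\X)$ is specialization-closed by Proposition \ref{si}(2), every $\p\in\supp\X$ satisfies $\V(\p)\subseteq\supp(\tthick\X)$, so $R/\p\in\tthick\X$ by Corollary \ref{3'}; hence $\langle\supp\X\rangle\subseteq\tthick\X$. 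Conversely, any compact thick $\otimes$-ideal $\mathcal{Z}\subseteq\tthick\X$ has $\supp\mathcal{Z}\subseteq\supp\X$, and Theorem \ref{main2} yields $\mathcal{Z}=\langle\supp\mathcal{Z}\rangle\subseteq\langle\supp\X\rangle$. This shows simultaneously that the $\cpt$-interior of $\tthick\X$ exists and that it equals $\langle\supp\X\rangle$.

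For $\X^\rrad = \sqrt{\tthick\X}$, using the cited fact, $\sqrt{\tthick\X}$ is a radical thick $\otimes$-ideal containing $\X$; minimality is immediate since any radical thick $\otimes$-ideal $\mathcal{J}\supseteq\X$ contains $\tthick\X$, giving $\sqrt{\tthick\X}\subseteq\sqrt{\mathcal{J}}=\mathcal{J}$. For $\X^\tame = \supp^{-1}\supp\X$, the right side is a tame thick $\otimes$-ideal by Proposition \ref{si}(1) and Remark \ref{tame}, and it obviously contains $\X$; any tame $\Y\supseteq\X$ satisfies $\Y=\supp^{-1}\supp\Y\supseteq\supp^{-1}\supp\X$ by Proposition \ref{tss}, proving minimality.

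For part (2), the inclusions $\Y_\cpt\subseteq\Y\subseteq\Y^\rrad$ hold by definition of interior and closure. For $\Y^\rrad\subseteq\Y^\tame$ I would observe that every tame thick $\otimes$-ideal is automatically radical: writing $\Y^\tame=\supp^{-1}W$, if $X^{\otimes n}\in\supp^{-1}W$ then Lemma \ref{si0}(4) gives $\supp X=\supp X^{\otimes n}\subseteq W$, so $X\in\supp^{-1}W$. Hence $\Y^\tame$ is a radical thick $\otimes$-ideal containing $\Y$, forcing $\Y^\rrad\subseteq\Y^\tame$. The support equalities follow from (1) applied with $\X:=\Y$ (using $\tthick\Y=\Y$): one has $\supp\Y_\cpt=\supp\langle\supp\Y\rangle=\supp\Y$ by Theorem \ref{main2}; $\supp\Y^\rrad=\supp\Y$ because $X\in\sqrt\Y$ forces $\supp X=\supp X^{\otimes n}\subseteq\supp\Y$ for some $n$, while the reverse inclusion is trivial; and $\supp\Y^\tame=\supp(\supp^{-1}\supp\Y)=\supp\Y$ by Proposition \ref{si}(1). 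The only step requiring more than bookkeeping is the cited fact about $\sqrt{\mathcal{I}}$, whose non-trivial part (closure under cones) uses an octahedral filtration argument: for a triangle $A\to B\to C\rightsquigarrow$ with $A^{\otimes m}, C^{\otimes n}\in\mathcal{I}$, a sufficiently high tensor power of $B$ lies in $\mathcal{I}$.
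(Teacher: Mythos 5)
Your proof is correct, and the overall structure coincides with the paper's: you characterize each closure/interior by exhibiting an explicit candidate, verifying it has the required property and contains (resp.\ is contained in) $\X$, and then appealing to Proposition \ref{cs}, Theorem \ref{main2}, Proposition \ref{si}, and \cite[Lemma 4.2]{B} for minimality/maximality. The one small variation is in how you obtain $\Y^\rrad\subseteq\Y^\tame$: you prove directly that every tame thick $\otimes$-ideal is radical (via $\supp X=\supp X^{\otimes n}$, essentially a self-contained proof of what the paper records afterward as Corollary \ref{rtsr}), whereas the paper instead shows $\supp(\sqrt\Y)=\supp\Y$ by noting that $\s(\p)$ is prime hence radical, so $\Y\subseteq\s(\p)\Leftrightarrow\sqrt\Y\subseteq\s(\p)$ — both are short, equivalent in content, and yield the same chain of inclusions, so the difference is cosmetic.
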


\begin{proof}
(1) It follows from \cite[Lemma 4.2]{B} (resp. Remark \ref{tame}) that $\sqrt{\tthick\X}$ (resp. $\supp^{-1}\supp\X$) is a thick $\otimes$-ideal of $\dm(R)$.
It is clear that $\sqrt{\tthick\X}$ (resp. $\supp^{-1}\supp\X$) is radical (resp. tame) and contains $\X$.
If $\C$ is a radical (resp. tame) thick $\otimes$-ideal of $\dm(R)$ containing $\X$, then we have $\sqrt{\tthick\X}\subseteq\sqrt{\tthick\C}=\sqrt\C=\C$ (resp. $\supp^{-1}\supp\X\subseteq\supp^{-1}\supp\C=\C$ by Proposition \ref{tss}).
Thus, we obtain the two equalities $\X^\rrad=\sqrt{\tthick\X}$ and $\X^\tame=\supp^{-1}\supp\X$.
It remains to show the equality $(\tthick\X)_\cpt=\langle\supp\X\rangle$.
Clearly, $\langle\supp\X\rangle$ is a compact thick $\otimes$-ideal of $\dm(R)$.
Applying Corollary \ref{3'}, we observe that $\langle\supp\X\rangle$ is contained in $\tthick\X$.
Let $\C$ be a compact thick $\otimes$-ideal of $\dm(R)$ contained in $\tthick\X$.
Then it follows from Proposition \ref{cs} that $\C=\langle\supp\C\rangle$, which is contained in $\langle\supp(\tthick\X)\rangle=\langle\supp\X\rangle$ by Proposition \ref{si}(2).
We now conclude $(\tthick\X)_\cpt=\langle\supp\X\rangle$.

(2) Fix a prime ideal $\p$ of $R$.
Proposition \ref{two} says that $\s(\p)$ is a prime thick $\otimes$-ideal of $\dm(R)$, whence it is radical.
Therefore $\Y_\p=0$ if and only if $(\sqrt\Y)_\p=0$.
This shows $\supp(\sqrt\Y)=\supp\Y$.
Hence $\sqrt\Y$ is contained in $\supp^{-1}\supp\Y$, meaning that $\Y^\rrad$ is contained in $\Y^\tame$ by (1).
Thus we get the inclusions $\Y_\cpt\subseteq\Y\subseteq\Y^\rrad\subseteq\Y^\tame$, which implies $\supp(\Y_\cpt)\subseteq\supp\Y\subseteq\supp(\Y^\rrad)\subseteq\supp(\Y^\tame)$.
By (1) and Proposition \ref{si} we get $\supp(\Y^\tame)=\supp\Y=\supp(\Y_\cpt)$.
The equalities in the assertion follow.
\end{proof}

The inclusion $\Y^\rrad\subseteq\Y^\tame$ in Lemma \ref{clint} in particular says:

\begin{cor}\label{rtsr}
Every tame thick $\otimes$-ideal of $\dm(R)$ is radical.
\end{cor}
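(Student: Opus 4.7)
The plan is to deduce this directly from Lemma \ref{clint}, specifically the inclusion chain $\Y \subseteq \Y^\rrad \subseteq \Y^\tame$ established in part (2). The point is that once we know $\Y^\rrad \subseteq \Y^\tame$ holds for every thick $\otimes$-ideal $\Y$, the corollary is essentially automatic: if $\Y$ is itself tame, then $\Y^\tame$ collapses to $\Y$.

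More precisely, I would first observe that for any tame thick $\otimes$-ideal $\Y$ one has $\Y^\tame = \Y$. Indeed, by definition $\Y = \supp^{-1} S$ for some subset $S$ of $\spec R$, and by Proposition \ref{si}(1) we then have $\supp \Y = S_\spcl$ together with $\supp^{-1}\supp \Y = \supp^{-1} S_\spcl = \supp^{-1} S = \Y$; comparing with the formula $\Y^\tame = \supp^{-1}\supp \Y$ from Lemma \ref{clint}(1) gives $\Y^\tame = \Y$. (Alternatively, since $\Y^\tame$ is by definition the smallest tame thick $\otimes$-ideal containing $\Y$, tameness of $\Y$ itself forces equality.)

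Next, I would invoke Lemma \ref{clint}(2) to obtain the chain $\Y \subseteq \Y^\rrad \subseteq \Y^\tame = \Y$, which forces $\Y^\rrad = \Y$. Since $\Y^\rrad = \sqrt{\tthick \Y} = \sqrt{\Y}$ (the second equality because $\Y$ is already a thick $\otimes$-ideal), this says $\sqrt{\Y} = \Y$, i.e., $\Y$ is radical.

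There is no real obstacle here; the corollary is a one-line consequence of the preceding lemma. The only point worth stating carefully is the identification $\Y^\tame = \Y$ for tame $\Y$, which is transparent from either the universal-property description of the tame-closure or from the explicit formula in Lemma \ref{clint}(1) combined with Proposition \ref{si}(1).
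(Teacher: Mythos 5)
Your proposal is correct and is precisely the argument the paper intends: the corollary is stated immediately after Lemma \ref{clint} as a direct consequence of the inclusion $\Y^\rrad \subseteq \Y^\tame$, and you have simply spelled out the (genuinely one-line) remaining step that $\Y^\tame = \Y$ when $\Y$ is tame.
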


We now obtain a bijection, using the above lemma.

\begin{prop}\label{ct}
There is a one-to-one correspondence $()^\tame:\Cpt\rightleftarrows\Tame:()_\cpt$.
\end{prop}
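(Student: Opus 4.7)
The plan is to reduce the claim to the established bijections with $\Spcl(\spec)$ via Lemma \ref{clint}. First I would verify that the two maps are well-defined. The $\tame$-closure $\X^\tame = \supp^{-1}\supp\X$ of any subcategory is tame by Lemma \ref{clint}(1) together with Remark \ref{tame}, so $()^\tame$ sends $\Cpt$ into $\Tame$. For the other direction, given $\Y\in\Tame$, Lemma \ref{clint}(1) applied to the thick $\otimes$-ideal $\Y$ yields $\Y_\cpt=(\tthick\Y)_\cpt=\langle\supp\Y\rangle$, and this is compact by construction; thus $()_\cpt$ sends $\Tame$ into $\Cpt$.

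Next I would check that these maps are mutually inverse. Take $\X\in\Cpt$. By Lemma \ref{clint}(2) the supports of $\X$ and $\X^\tame$ coincide, so by Lemma \ref{clint}(1)
\[
(\X^\tame)_\cpt=\langle\supp(\X^\tame)\rangle=\langle\supp\X\rangle.
\]
But $\X\in\Cpt$ corresponds under Proposition \ref{cs} to $\supp\X$, so $\X=\langle\supp\X\rangle$, giving $(\X^\tame)_\cpt=\X$. Conversely, take $\Y\in\Tame$. Using Lemma \ref{clint}(2) once more,
\[
(\Y_\cpt)^\tame=\supp^{-1}\supp(\Y_\cpt)=\supp^{-1}\supp\Y,
\]
and since $\Y\in\Tame$ corresponds under Proposition \ref{tss} to $\supp\Y$, the right-hand side equals $\Y$.

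Since all the necessary identities have been isolated in Lemma \ref{clint}, Proposition \ref{tss}, and Proposition \ref{cs}, there should be no serious obstacle; the only thing to keep an eye on is the coherence of the two notions of closure/interior with the support map, which is exactly what Lemma \ref{clint}(2) provides. In effect, the bijection factors through $\Spcl(\spec)$ as the composition $\Cpt\xrightarrow{\supp}\Spcl(\spec)\xrightarrow{\supp^{-1}}\Tame$ and its inverse, and the formulas above make this factorization explicit.
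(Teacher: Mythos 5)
Your proof is correct and matches the paper's argument essentially step for step: both use Lemma \ref{clint}(1)--(2) together with Propositions \ref{cs} and \ref{tss} to show $(\X^\tame)_\cpt=\langle\supp\X\rangle=\X$ and $(\Y_\cpt)^\tame=\supp^{-1}\supp\Y=\Y$. The added well-definedness check and the remark on factoring through $\Spcl(\spec)$ are harmless and accurate, but the core chain of equalities is identical.
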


\begin{proof}
Fix a compact thick $\otimes$-ideal $\X$, and a tame thick $\otimes$-ideal $\Y$ of $\dm(R)$.
We have $(\X^\tame)_\cpt=\langle\supp(\X^\tame)\rangle=\langle\supp\X\rangle=\X$, where the first equality follows from Lemma \ref{clint}(1), the second from Lemma \ref{clint}(2), and the last from Proposition \ref{cs}.
Also, it holds that $(\Y_\cpt)^\tame=\supp^{-1}\supp(\Y_\cpt)=\supp^{-1}\supp\Y=\Y$, where the first equality follows from Lemma \ref{clint}(1), the second from Lemma \ref{clint}(2), and the last from Proposition \ref{tss}.
Thus we obtain the one-to-one correspondence in the proposition.
\end{proof}

For each subset $A$ of $\spec R$, we put $\s(A)=\{\s(\p)\mid\p\in A\}$.
For each subset $B$ of $\spc\dm(R)$, we put $\pp(B)=\{\pp(\P)\mid\P\in B\}$.
We get another bijection.

\begin{prop}\label{ss}
There is a one-to-one correspondence $\s:\Spcl(\spec)\rightleftarrows\Spcl(\ts):\pp$.
\end{prop}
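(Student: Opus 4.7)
The plan is to show that $\s$ and $\pp$ restrict to maps between $\Spcl(\spec)$ and $\Spcl(\ts)$, and that these restrictions are mutually inverse. The bijection part will essentially be free: by Theorem \ref{s} one already has $\pp\s=1$ on $\spec R$, and by Corollary \ref{tp} every element of $\ts\dm(R)$ has the form $\s(\p)$, so $\s\pp=1$ on $\ts\dm(R)$ as well. Thus the only real content is showing that specialization-closedness is preserved in each direction.

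The key step is to identify the closure of a singleton $\{\s(\p)\}$ inside $\ts\dm(R)$. By \cite[Proposition 2.9]{B}, its closure in $\spc\dm(R)$ equals $\{\Q\in\spc\dm(R)\mid\Q\subseteq\s(\p)\}$. Intersecting with $\ts\dm(R)$ (for the subspace topology) and using that $\s$ is an order anti-embedding onto $\ts\dm(R)$ by Theorem \ref{s} and Corollary \ref{tp}, this closure becomes exactly $\s(\V(\p))$. Consequently, a subset $B\subseteq\ts\dm(R)$ is specialization-closed precisely when $\s(\V(\p))\subseteq B$ for every $\s(\p)\in B$.

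From this characterization both well-definedness statements drop out. If $A\in\Spcl(\spec)$ and $\s(\p)\in\s(A)$, then $\V(\p)\subseteq A$ gives $\s(\V(\p))\subseteq\s(A)$, so $\s(A)\in\Spcl(\ts)$. Conversely, if $B\in\Spcl(\ts)$ and $\p\in\pp(B)$, then using Corollary \ref{tp} and $\pp\s=1$ one identifies the (unique) preimage in $B$ as $\s(\p)\in B$; for each $\q\in\V(\p)$ one then has $\s(\q)\in\s(\V(\p))\subseteq B$, whence $\q=\pp\s(\q)\in\pp(B)$. Hence $\pp(B)\in\Spcl(\spec)$.

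I do not anticipate a substantive obstacle: the only delicate point is the passage to the subspace topology on $\ts\dm(R)$, and the order anti-embedding property of $\s$ immediately reduces this to a description already available in $\spc\dm(R)$ via \cite[Proposition 2.9]{B}.
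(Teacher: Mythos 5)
Your proof is correct and takes essentially the same route as the paper: both reduce to computing the closure of a singleton $\{\s(\p)\}$ in the subspace $\ts\dm(R)$ as $\s(\V(\p))$ via \cite[Proposition 2.9]{B}, the order anti-embedding property from Theorem \ref{s}, and Corollary \ref{tp}, and then deduce both well-definedness claims from that description.
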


\begin{proof}
First of all, applying Theorem \ref{s} and Corollary \ref{tp}, we observe that
\begin{equation}\label{pps}
\pp(\s(\p))=\p\text{ for all }\p\in\spec R
\qquad\text{and}\qquad
\s(\pp(\P))=\P\text{ for all }\P\in\ts\dm(R).
\end{equation}
Fix a specialization-closed subset $W$ of $\spec R$ and a specialization-closed subset $U$ of $\ts\dm(R)$.
It follows from \eqref{pps} that $\pp(\s(W))=W$ and $\s(\pp(U))=U$.

Pick a prime ideal $\p$ in $W$.
Let $X$ be the closure of $\{\s(\p)\}$ in $\ts\dm(R)$.
Then $X=Y\cap\ts\dm(R)$, where $Y$ is the closure of $\{\s(\p)\}$ in $\spc\dm(R)$, and hence
$$
X=\{\P\in\ts\dm(R)\mid\P\subseteq\s(\p)\}
=\{\s(\q)\mid\q\in\spec R,\ \s(\q)\subseteq\s(\p)\}
=\{\s(\q)\mid\q\in\V(\p)\}
\subseteq\s(W),
$$
where the first equality follows from \cite[Proposition 2.9]{B}, the second from Corollary \ref{tp}, and the third from Theorem \ref{s}.
The inclusion holds since $W$ is a specialization-closed subset of $\spec R$.
Therefore, $\s(W)$ is a specialization-closed subset of $\ts\dm(R)$, namely, $\s(W)\in\Spcl(\ts)$.

Pick $\P\in U$.
As $U$ is a subset of $\ts\dm(R)$, the prime thick $\otimes$-ideal $\P$ is tame.
Let $\q$ be a prime ideal of $R$ containing $\pp(\P)$.
We then get $\s(\q)\subseteq\s(\pp(\P))=\P$ by Theorem \ref{s} and \eqref{pps}, which says that $\s(\q)$ belongs to the closure of the set $\{\P\}$ in $\ts\dm(R)$ by \cite[Proposition 2.9]{B}.
The specialization-closed property of $U$ implies that $\s(\q)$ belongs to $U$.
We have $\q=\pp(\s(\q))$ by \eqref{pps}, which belongs to $\pp(U)$.
Consequently, the subset $\pp(U)$ of $\spec R$ is specialization-closed, that is, $\pp(U)\in\Spcl(\spec)$.
\end{proof}

Here we note an elementary fact on commutativity of a diagram of maps.

\begin{rem}\label{comm}
Consider the following diagram of bijections
$$
\xymatrix{
& A\ar@<1mm>[ld]^a\ar@<1mm>[rd]^{c^{-1}} &\\
B\ar@<1mm>[ru]^{a^{-1}}\ar@<1mm>[rr]^b && C\ar@<1mm>[lu]^c\ar@<1mm>[ll]^{b^{-1}}.
}
$$
One can choose infinitely many compositions of maps in the diagram, but once one of them is equal to another, this triangle with edges having any directions commutes.
To be more explicit, if $c=ba$ for instance, then the set $\{1,a,a^{-1},b,b^{-1},c,c^{-1}\}$ is closed under possible compositions.
\end{rem}

Now we can state and prove our first main result in this section.

\begin{thm}\label{d1}
There is a commutative diagram of mutually inverse bijections:
$$
\xymatrix{
&&& \Spcl(\spec)\ar@<2mm>[llld]^{\langle\rangle}\ar@<2mm>[rrrd]^\s_\cong\ar@<2mm>[dd]^{\supp^{-1}}_\cong &&& \\
\quad\qquad\Cpt\ar@<2mm>[rrru]_\cong^\supp\ar@<2mm>[rrrd]^{{()}^\tame}_\cong &&&&&& \Spcl(\ts)\ar@<2mm>[lllu]^\pp\ar@<2mm>[llld]^{\sp^{-1}}_\cong \\
&&& \Tame\ar@<2mm>[lllu]^{{()}_\cpt}\ar@<2mm>[rrru]^\sp\ar@<2mm>[uu]^\supp &&&
}
$$
\end{thm}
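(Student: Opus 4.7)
The plan is to assemble the diagram from five pairs of mutually inverse bijections already established and then verify that the whole diagram commutes. The five pairs come from Proposition \ref{cs} (the pair $\supp, \langle\rangle$), Proposition \ref{tss} ($\supp, \supp^{-1}$), Proposition \ref{ct} ($()^\tame, ()_\cpt$), Proposition \ref{ts2} ($\sp, \sp^{-1}$), and Proposition \ref{ss} ($\s, \pp$), so each labelled edge of the diagram is automatically a bijection with its labelled inverse; what remains is commutativity.

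Viewing the diagram as the square with vertices $\Cpt, \Spcl(\spec), \Spcl(\ts), \Tame$ together with the diagonal $\Spcl(\spec)$--$\Tame$, it decomposes into two triangular faces, namely $\Cpt, \Spcl(\spec), \Tame$ and $\Spcl(\spec), \Spcl(\ts), \Tame$. By Remark \ref{comm}, to prove commutativity of each face of mutually inverse bijections it suffices to verify a single composite identity, so in total only two identities have to be checked. For the first face, Lemma \ref{clint}(1) supplies $\X^\tame = \supp^{-1}(\supp\X)$, which is the required identity on the nose. For the second face, I would prove $\sp(\supp^{-1}W) = \s(W)$ for each $W \in \Spcl(\spec)$: by Corollary \ref{tp} every tame prime thick $\otimes$-ideal has the form $\s(\p)$, and $\s(\p) \in \sp(\supp^{-1}W)$ iff there exists $M$ with $\supp M \subseteq W$ and $M \notin \s(\p)$, i.e.\ $\p \in \supp M \subseteq W$. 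For the nontrivial ``if'' direction I would take $M = R/\p$, noting that $\supp M = \V(\p) \subseteq W$ because $W$ is specialization-closed; together with the injectivity of $\s$ from Theorem \ref{s}, this gives the equivalence with $\s(\p) \in \s(W)$.

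The main obstacle, I expect, is essentially bookkeeping: correctly identifying the two triangular faces, reading off which composite identities suffice via Remark \ref{comm}, and carefully verifying the less obvious identity $\sp(\supp^{-1}W) = \s(W)$. All substantive ingredients — Lemma \ref{clint}, Corollary \ref{tp}, Theorem \ref{s}, and the pairwise bijections — are already in place, so once these two face identities are confirmed, commutativity of the full diagram follows automatically.
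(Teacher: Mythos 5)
Your proposal is correct and takes essentially the same approach as the paper: both assemble the five bijections from Propositions \ref{cs}, \ref{tss}, \ref{ct}, \ref{ts2}, \ref{ss} and invoke Remark \ref{comm} to reduce commutativity of each triangular face to a single composite identity. The only difference is which representative identity you verify — you use $\X^\tame=\supp^{-1}\supp\X$ (Lemma \ref{clint}(1)) for the left face where the paper uses $\supp(\X^\tame)=\supp\X$ (Lemma \ref{clint}(2)), and you verify $\sp(\supp^{-1}W)=\s(W)$ directly where the paper records $\sp\X=\s(\supp\X)$ — but by Remark \ref{comm} any one such identity suffices, so both choices are valid and the argument goes through.
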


\begin{proof}
The five one-to-one correspondences in the diagram are shown in Propositions \ref{tss}, \ref{cs}, \ref{ts2}, \ref{ct} and \ref{ss}.
It remains to show the commutativity, and for this we take Remark \ref{comm} into account.

For a thick $\otimes$-ideal $\X$ of $\dm(R)$, we have $\supp(\X^\tame)=\supp\X$ by Lemma \ref{clint}(2), which shows that the left triangle in the diagram commutes.
It is easy to observe from Corollary \ref{tp} that
\begin{equation}\label{sss}
\text{$\sp\X=\s(\supp\X)$ for any subcategory $\X$ of $\dm(R)$}.
\end{equation}
The commutativity of the right triangle in the diagram follows from \eqref{sss}.
\end{proof}

\begin{rem}
The bijections in the diagram of Theorem \ref{d1} induce lattice structures in $\Tame$ and $\Spcl(\ts)$, so that the maps are lattice isomorphisms.
However, we do not know if there is an explicit way to define lattice structures like the one of $\Cpt$ given in Proposition \ref{lat}(2).
\end{rem}

Let $f:A\to B$ and $g:B\to A$ be maps with $gf=1$.
Then we say that $(f,g)$ is a {\em section-retraction pair}, and write $f\dashv g$.
Our next goal is to construct a certain commutative diagram of section-retraction pairs, and for this we again give several propositions.
The first one is a consequence of \cite[Theorem 4.10]{B}.

\begin{prop}\label{bal}
There is a one-to-one correspondence $\ssupp:\Rad\rightleftarrows\Thom:\ssupp^{-1}$.
\end{prop}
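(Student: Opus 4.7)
The plan is to apply Balmer's classification theorem directly. Since $(\dm(R),\ltensor_R,R)$ is an essentially small tensor triangulated category, as recorded at the start of Section \ref{fund}, the hypotheses of \cite[Theorem 4.10]{B} are satisfied for $\T=\dm(R)$. That theorem yields, for any essentially small tensor triangulated category, mutually inverse bijections between the radical thick tensor ideals and the Thomason subsets of the Balmer spectrum, given precisely by $\X\mapsto\ssupp\X=\bigcup_{X\in\X}\ssupp X$ and $V\mapsto\ssupp^{-1}(V)=\{X\mid\ssupp X\subseteq V\}$. Specializing to $\dm(R)$ produces the stated correspondence.

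The only point I would verify carefully is that no hidden hypothesis is being invoked: Balmer's theorem requires no rigidity, no compact generation, and no local-finiteness condition beyond essential smallness. This is worth flagging in the present context because, as recorded in Remark \ref{rigid}, $\dm(R)$ is not rigid and not even closed, so the more refined versions of the classification theorem available in the literature do not directly apply; only the original, most general form does.

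In this sense the proposition carries no real obstacle; it is cited chiefly for its role in the larger commutative diagram of Section \ref{various}. The substantive work appears in the adjacent statements—establishing the analogous bijections for \emph{compact} and \emph{tame} thick tensor ideals (Propositions \ref{cs}, \ref{tss}, \ref{ts2}, \ref{ct}, \ref{ss}) and, in Theorem \ref{d1}, verifying that all these bijections fit together compatibly through the support maps $\supp$, $\sp$, $\s$, and $\pp$ introduced in the previous sections.
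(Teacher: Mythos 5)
Your proof is correct and matches the paper's approach exactly: the paper states Proposition \ref{bal} without a separate proof, remarking only that it is a consequence of \cite[Theorem 4.10]{B}, which is precisely the citation you invoke. Your additional sanity check on the hypotheses (essential smallness suffices; no rigidity or compact generation needed) is accurate and consistent with Remark \ref{rigid}.
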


\begin{prop}\label{cr}
There is a section-retraction pair $()^\rrad:\Cpt\rightleftarrows\Rad:()_\cpt$.
\end{prop}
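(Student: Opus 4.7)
The plan is to verify the section-retraction identity $(\X^\rrad)_\cpt = \X$ for every $\X\in\Cpt$, since well-definedness of the two maps follows essentially from the definitions of the $\rrad$-closure and $\cpt$-interior (whose existence for our inputs is guaranteed by Lemma \ref{clint}(1)).

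First I would check well-definedness. For any compact thick $\otimes$-ideal $\X$, Lemma \ref{clint}(1) yields that $\X^\rrad = \sqrt{\X}$ exists and is radical by construction, so the map $()^\rrad:\Cpt\to\Rad$ makes sense. In the other direction, for any radical thick $\otimes$-ideal $\Y$, Lemma \ref{clint}(1) says $\Y_\cpt = \langle\supp\Y\rangle$, which is compact by Proposition \ref{cs}, so the map $()_\cpt:\Rad\to\Cpt$ is also well-defined.

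For the section-retraction identity, take $\X\in\Cpt$. I would chain three facts: (i) by Lemma \ref{clint}(2), $\supp(\X^\rrad) = \supp\X$; (ii) by Lemma \ref{clint}(1) applied to the thick $\otimes$-ideal $\X^\rrad$, $(\X^\rrad)_\cpt = \langle\supp(\X^\rrad)\rangle$; (iii) by Proposition \ref{cs}, $\langle\supp\X\rangle = \X$ since $\X$ is compact. Combining these gives
\[
(\X^\rrad)_\cpt = \langle\supp(\X^\rrad)\rangle = \langle\supp\X\rangle = \X,
\]
which is precisely the section-retraction identity $()_\cpt\circ()^\rrad = 1_\Cpt$.

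There is no real obstacle here: all the heavy lifting (the fact that the support is invariant under passing to the radical closure, and the classification-type result that compact thick $\otimes$-ideals are recovered from their supports via $\langle-\rangle$) has already been done in Lemma \ref{clint} and Proposition \ref{cs}. What is worth emphasizing is that this pair will in general \emph{not} be a bijection, precisely because there may exist radical thick $\otimes$-ideals that are not compact; indeed, this will be part of the later structural comparison, and the genuine bijectivity of $A=(()^\rrad,()_\cpt)$ is characterized in Theorem \ref{11}.
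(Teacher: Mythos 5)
Your proof is correct and follows essentially the same chain of equalities as the paper's own one-line argument: $(\X^\rrad)_\cpt = \langle\supp(\X^\rrad)\rangle = \langle\supp\X\rangle = \X$, driven by Lemma \ref{clint}. The only cosmetic difference is that you invoke Proposition \ref{cs} for the final step while the paper writes $\langle\supp\X\rangle = \X_\cpt = \X$ via Lemma \ref{clint}(1); these are the same content.
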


\begin{proof}
For every $\X\in\Cpt$, we have $(\X^\rrad)_\cpt=\langle\supp(\X^\rrad)\rangle=\langle\supp\X\rangle=\X_\cpt=\X$ by Lemma \ref{clint}.
\end{proof}

Let $X$ be a topological space.
A subset $T$ of $X$ is called {\em Thomason} if $T$ is a union of closed subsets of $X$ whose complements are quasi-compact.
Note that a Thomason subset is specialization-closed.

For each subset $A$ of $\spec R$, we set $\overline\s(A)=\bigcup_{\p\in A}\overline{\{\s(\p)\}}$.
For each subset $B$ of $\spc\dm(R)$, we set $\s^{-1}(B)=\{\p\in\spec R\mid\s(\p)\in B\}$.
We obtain another section-retraction pair.

\begin{prop}\label{st}
There is a section-retraction pair $\overline\s:\Spcl(\spec)\rightleftarrows\Thom:\s^{-1}$.
\end{prop}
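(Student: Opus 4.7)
The plan is to verify three things: that $\overline{\s}$ lands in $\Thom$, that $\s^{-1}$ lands in $\Spcl(\spec)$, and that $\s^{-1}\circ \overline{\s}=1$. The key computational observation, from which everything will follow, is that for each $\p\in\spec R$ the closure of $\{\s(\p)\}$ in $\spc\dm(R)$ equals the Balmer support $\ssupp(R/\p)$. Indeed, by \cite[Proposition 2.9]{B} we have $\overline{\{\s(\p)\}}=\{\Q\in\spc\dm(R)\mid\Q\subseteq\s(\p)\}$, and by Corollary \ref{ato2} this condition is equivalent to $R/\p\notin\Q$, i.e., to $\Q\in\ssupp(R/\p)$. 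Since $\u(R/\p)$ is a basic quasi-compact open of $\spc\dm(R)$, the set $\overline{\{\s(\p)\}}=\ssupp(R/\p)$ is closed with quasi-compact complement.

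For well-definedness of $\overline{\s}$, take $A\in\Spcl(\spec)$. Then $\overline{\s}(A)=\bigcup_{\p\in A}\overline{\{\s(\p)\}}=\bigcup_{\p\in A}\ssupp(R/\p)$ is a union of closed subsets of $\spc\dm(R)$ with quasi-compact complements, hence Thomason. For well-definedness of $\s^{-1}$, let $B\in\Thom$; since every Thomason subset is specialization-closed, it suffices to check that $\s^{-1}(B)$ is specialization-closed. If $\p\in\s^{-1}(B)$ and $\q\in\V(\p)$, then by Theorem \ref{s} the map $\s$ is order-reversing, so $\s(\q)\subseteq\s(\p)$, i.e., $\s(\q)\in\overline{\{\s(\p)\}}\subseteq B$ (using that $B$ is specialization-closed); hence $\q\in\s^{-1}(B)$.

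For the retraction identity, fix $A\in\Spcl(\spec)$. A prime $\q$ lies in $\s^{-1}(\overline{\s}(A))$ precisely when $\s(\q)\in\overline{\{\s(\p)\}}$ for some $\p\in A$, which by the computation above means $\s(\q)\subseteq\s(\p)$. Since $\s$ is an order anti-embedding (Theorem \ref{s}), this is equivalent to $\p\subseteq\q$, i.e., $\q\in\V(\p)$ for some $\p\in A$. Hence $\s^{-1}(\overline{\s}(A))=\bigcup_{\p\in A}\V(\p)$, and this equals $A$ because $A$ is specialization-closed. Thus $\s^{-1}\circ\overline{\s}=1_{\Spcl(\spec)}$, completing the proof.

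The only potentially subtle step is the identification $\overline{\{\s(\p)\}}=\ssupp(R/\p)$; once this is in hand, both the Thomason property and the retraction identity fall out from Theorem \ref{s} and Corollary \ref{ato2} without further work. (Note, incidentally, that $\overline{\s}$ is in general not surjective onto $\Thom$, since not every Thomason subset arises from a specialization-closed subset of $\spec R$ in this way; this is precisely why we obtain only a section-retraction pair, not a bijection.)
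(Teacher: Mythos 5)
Your proof is correct and follows essentially the same route as the paper: both hinge on the identification $\overline{\{\s(\p)\}}=\ssupp(R/\p)$ (via \cite[Proposition 2.9]{B} and Corollary \ref{ato2}), derive the Thomason property of $\overline\s(A)$ from quasi-compactness of $\u(R/\p)$, use order-reversal of $\s$ plus the fact that Thomason subsets are specialization-closed to see $\s^{-1}$ lands in $\Spcl(\spec)$, and establish $\s^{-1}\overline\s=1$ by unwinding via the order anti-embedding. Nothing to add.
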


\begin{proof}
Corollary \ref{ato2} and \cite[Proposition 2.9]{B} yield
\begin{equation}\label{spplem}
\text{$\ssupp(R/\p)=\overline{\{\s(\p)\}}$ for any prime ideal $\p$ of $R$},
\end{equation}
whence $(\overline{\{\s(\p)\}})^\complement=\u(R/\p)$, which is quasi-compact by \cite[Proposition 2.14(a)]{B}.
Hence $\overline\s(A)$ is a Thomason subset of $\spc\dm(R)$ for any subset $A$ of $\spec R$.
In particular, we get a map $\overline\s:\Spcl(\spec)\to\Thom$.

Let $T$ be a Thomason subset of $\spc\dm(R)$.
Let $\p,\q$ be prime ideals of $R$ with $\p\subseteq\q$ and $\s(\p)\in T$.
Then $\s(\q)$ belongs to $\overline{\{\s(\p)\}}$ by Theorem \ref{s} and \cite[Proposition 2.9]{B}.
Since $T$ is Thomason, it contains $\overline{\{\s(\p)\}}$.
Hence $\s(\q)$ belongs to $T$.
Thus the assignment $T\mapsto\s^{-1}(T)$ defines a map $\s^{-1}:\Thom\to\Spcl(\spec)$.

For a specialization-closed subset $W$ of $\spec R$ and a prime ideal $\p$ of $R$, one has
$$
\s(\p)\in\overline{\{\s(\q)\}}\text{ for some }\q\in W
\,\Leftrightarrow\,
\s(\p)\subseteq\s(\q)\text{ for some }\q\in W\\
\,\Leftrightarrow\,
\p\supseteq\q\text{ for some }\q\in W
\,\Leftrightarrow\,
\p\in W,
$$
where the first and second equivalences follow from \cite[Proposition 2.9]{B} and Theorem \ref{s}, and the last equivalence holds by the fact that $W$ is specialization-closed.
This yields $\s^{-1}(\overline\s(W))=W$.
\end{proof}

Now we consider describing $\spcl$-closures and $\spcl$-interiors.

\begin{prop}\label{thom}
Let $A$ be a specialization-closed subset of $\spc\dm(R)$, and let $B$ be a specialization-closed subset of $\ts\dm(R)$.
\begin{enumerate}[\rm(1)]
\item
Let $A_\spcl$ stand for the $\spcl$-interior of $A$ in $\ts\dm(R)$.
Then
$$
A_\spcl=A\cap\ts\dm(R).
$$
\item
Let $B^\spcl$ stand for the $\spcl$-closure of $B$ in $\spc\dm(R)$.
Then
$$
\textstyle
B^\spcl=\{\P\in\spc\dm(R)\mid\P^\tame\in B\}=\bigcup_{\P\in B^\spcl}\ssupp(R/\pp(\P)).
$$
In particular, $B^\spcl$ is a Thomason subset of $\spc\dm(R)$.
\end{enumerate}
\end{prop}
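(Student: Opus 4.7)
For part (1), the plan is to exploit the subspace topology directly. Since $\ts\dm(R)$ is a subspace of $\spc\dm(R)$, the closure of any singleton in $\ts\dm(R)$ is the intersection of the corresponding closure in $\spc\dm(R)$ with $\ts\dm(R)$. This will make $A\cap\ts\dm(R)$ automatically specialization-closed in $\ts\dm(R)$ whenever $A$ is specialization-closed in $\spc\dm(R)$; and since any subset of $\ts\dm(R)$ contained in $A$ is already inside $A\cap\ts\dm(R)$, this identifies it as the $\spcl$-interior.

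For part (2), the first step will be to collect the identities $\P^\tame=\supp^{-1}\supp\P=\s(\pp(\P))$ (from Lemma \ref{clint}(1) and Theorem \ref{s}), the containment $\P\subseteq\P^\tame$ (Corollary \ref{ato2}), and the equality $\pp(\P^\tame)=\pp(\P)$ (Corollary \ref{ppsupp}, using $\supp\P^\tame=\supp\P$). I then set $C=\{\P\in\spc\dm(R)\mid\P^\tame\in B\}$ and propose to show $C=B^\spcl$, after which the displayed union description will be a short consequence.

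The inclusion $B\subseteq C$ is immediate by tameness. Specialization-closedness of $C$ in $\spc\dm(R)$ will follow by noting that $\Q\subseteq\P$ forces $\supp\Q\subseteq\supp\P$, hence $\Q^\tame\subseteq\P^\tame$, and then using the subspace-topology observation from part (1) to transport the specialization-closed hypothesis on $B\subseteq\ts\dm(R)$ into the conclusion $\Q^\tame\in B$ whenever $\P^\tame\in B$. Minimality is then routine: any specialization-closed $D\supseteq B$ in $\spc\dm(R)$ must contain $\P^\tame\in B$ for each $\P\in C$, hence must also contain $\P\subseteq\P^\tame$. The union equality is obtained by rewriting $\ssupp(R/\pp(\P))=\overline{\{\s(\pp(\P))\}}=\overline{\{\P^\tame\}}$ via \eqref{spplem}; membership $\P\in\ssupp(R/\pp(\P))$ is exactly $\P\subseteq\P^\tame$, while any $\Q$ in this closed set satisfies $\Q\subseteq\P^\tame$, whence $\Q^\tame\subseteq\P^\tame\in B$ and the specialization argument again gives $\Q\in C$.

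Finally, the Thomason property falls out of this presentation: each $\ssupp(R/\pp(\P))$ is closed with quasi-compact complement $\u(R/\pp(\P))$ by \cite[Proposition 2.14(a)]{B}, so $B^\spcl$ is presented as a union of closed subsets with quasi-compact complements. I do not foresee a serious obstacle; the only subtle point to keep straight is the bookkeeping between specialization in $\ts\dm(R)$ and in $\spc\dm(R)$, which is handled cleanly by the subspace-topology remark used in part (1).
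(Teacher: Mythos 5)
Your proof is correct and follows essentially the same route as the paper's, with only a minor reordering in part~(2): you establish that $C=\{\P\in\spc\dm(R)\mid\P^\tame\in B\}$ is specialization-closed directly (via $\Q\subseteq\P\Rightarrow\Q^\tame\subseteq\P^\tame$ together with the subspace-closure identity and the specialization-closedness of $B$ in $\ts\dm(R)$), whereas the paper first proves the union description $C=\bigcup_{\P\in C}\ssupp(R/\pp(\P))$ and reads off specialization-closedness from it. The underlying ingredients, and the argument for the Thomason property via quasi-compactness of $\u(R/\pp(\P))$, are the same.
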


\begin{proof}
(1) We easily observe that $A\cap\ts\dm(R)$ is a specialization-closed subset of the topological space $\ts\dm(R)$ contained in $A$.
Also, it is obvious that if $X$ is a specialization-closed subset of $\ts\dm(R)$ contained in $A$, then $X$ is contained in $A\cap\ts\dm(R)$.
Hence $A\cap\ts\dm(R)$ coincides with $A_\spcl$.

(2) Let $C$ be the set of prime thick $\otimes$-ideals $\P$ of $\dm(R)$ with $\P^\tame\in B$.
We proceed step by step.\\
(a) Each $\Q\in B$ is tame.
Hence we have $\Q^\tame=\Q\in B$.
This shows that $C$ contains $B$.\\
(b) Let $Y$ be a specialization-closed subset of $\spc\dm(R)$ containing $B$.
Take any element $\P$ of $C$.
Then $\P^\tame$ belongs to $B$, and hence to $Y$.
Since $Y$ is specialization-closed, $\overline{\{\P^\tame\}}$ is contained in $Y$.
Hence $\P$ belongs to $Y$ by \cite[Proposition 2.9]{B}.
It follows that $C$ is contained in $Y$.\\
(c) We prove $C=\bigcup_{\P\in C}\ssupp(R/\pp(\P))$.
Combining Theorem \ref{s}, Lemma \ref{clint}(1) and \eqref{spplem} gives rise to $\ssupp(R/\pp(\P))=\overline{\{\P^\tame\}}$, and thus it is enough to verify $C=\bigcup_{\P\in C}\overline{\{\P^\tame\}}$.
By \cite[Proposition 2.9]{B} we see that $C$ is contained in $\bigcup_{\P\in C}\overline{\{\P^\tame\}}$.
Conversely, let $\P\in\C$ and $\Q\in\overline{\{\P^\tame\}}$.
Then $\P^\tame$ belongs to $B$, and $\Q$ is contained in $\P^\tame$ by \cite[Proposition 2.9]{B}, which shows that $\Q^\tame$ is contained in $\P^\tame$.
Hence $\Q^\tame$ is in $\overline{\{\P^\tame\}}\cap\ts\dm(R)$.
As $B$ is specialization-closed in $\ts\dm(R)$, it contains $\overline{\{\P^\tame\}}\cap\ts\dm(R)$, and therefore $\Q^\tame$ is in $B$.
Thus $\Q$ belongs to $C$.
We obtain $C=\bigcup_{\P\in C}\overline{\{\P^\tame\}}$.

The equality $C=\bigcup_{\P\in C}\ssupp(R/\pp(\P))$ shown in (c) especially says that $C$ is specialization-closed.
By this together with (a) and (b) we obtain $C=B^\spcl$, and it follows that $C=\bigcup_{\P\in B^\spcl}\ssupp(R/\pp(\P))$.
\end{proof}

We now obtain another section-retraction pair:

\begin{prop}\label{sst}
The operations $()^\spcl$ and $()_\spcl$ defined in Proposition \ref{thom} make a section-retraction pair $()^\spcl:\Spcl(\ts)\rightleftarrows\Thom:()_\spcl$.
\end{prop}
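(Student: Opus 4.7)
The plan is to establish the two required facts: first that both maps are well-defined (landing in the claimed target sets), and second that the composition $()_\spcl \circ ()^\spcl$ is the identity on $\Spcl(\ts)$. The heavy lifting has already been done in Proposition \ref{thom}, so this proof should be essentially a bookkeeping argument built on those formulas.

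First I would verify well-definedness. For $B \in \Spcl(\ts)$, Proposition \ref{thom}(2) shows that $B^\spcl$ is a Thomason subset of $\spc\dm(R)$, so $()^\spcl: \Spcl(\ts) \to \Thom$ is well-defined. Conversely, let $A \in \Thom$; since any Thomason subset is specialization-closed in $\spc\dm(R)$, Proposition \ref{thom}(1) applies and gives $A_\spcl = A \cap \ts\dm(R)$, which is visibly a specialization-closed subset of the subspace $\ts\dm(R)$. Hence $()_\spcl: \Thom \to \Spcl(\ts)$ is well-defined.

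Next I would check that $(B^\spcl)_\spcl = B$ for $B \in \Spcl(\ts)$. Applying Proposition \ref{thom}(1) to $A = B^\spcl$ gives $(B^\spcl)_\spcl = B^\spcl \cap \ts\dm(R)$. Using the explicit description from Proposition \ref{thom}(2), this equals $\{\P \in \ts\dm(R) \mid \P^\tame \in B\}$. But every $\P \in \ts\dm(R)$ is tame, so $\P^\tame = \P$, and since $B \subseteq \ts\dm(R)$ we get $\{\P \in \ts\dm(R) \mid \P \in B\} = B$. This finishes the proof.

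The main (and only mild) obstacle is making sure the two $\spcl$-operations, which are defined relative to \emph{different} ambient topological spaces ($\spc\dm(R)$ for $()^\spcl$ and $\ts\dm(R)$ for $()_\spcl$), fit together correctly; the identity $\P^\tame = \P$ for $\P \in \ts\dm(R)$ (from Corollary \ref{tp}) is what makes the two descriptions compatible. No further calculation is needed beyond invoking Proposition \ref{thom}.
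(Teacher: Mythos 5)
Your proof is correct and follows essentially the same route as the paper: invoke Proposition~\ref{thom}(2) for well-definedness of $()^\spcl$, then use Proposition~\ref{thom}(1) together with the description $B^\spcl=\{\P\mid\P^\tame\in B\}$ and the fact that $\P^\tame=\P$ for $\P\in\ts\dm(R)$ to conclude $(B^\spcl)_\spcl=B$. The only difference is that you also spell out well-definedness of $()_\spcl$, which the paper leaves implicit.
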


\begin{proof}
Let $U$ be a specialization-closed subset of $\ts\dm(R)$.
By Proposition \ref{thom}, $U^\spcl$ is a Thomason subset of $\spc\dm(R)$, and $(U^\spcl)_\spcl=U^\spcl\cap\ts\dm(R)=\{\P\in\ts\dm(R)\mid\P^\tame\in U\}=U$.
\end{proof}

We can prove our second main result in this section.

\begin{thm}\label{d2}
There is a diagram
$$
\xymatrix{
\Rad\ar@{=}[rr]^\sim\ar@<2mm>[dd]_\dashv^{{()}_\cpt} && \Thom\ar@<2mm>[dd]_\dashv^{\s^{-1}}\ar@{=}[rr] && \Thom\ar@<2mm>[dd]_\dashv^{{()}_\spcl}\\
\\
\Cpt\ar@{=}[rr]^-\sim\ar@<2mm>[uu]^{{()}^\rrad} && \Spcl(\spec)\ar@<2mm>[uu]^{\overline\s}\ar@{=}[rr]^-\sim && \Spcl(\ts)\ar@<2mm>[uu]^{{()}^\spcl}
}
$$
where the upper horizontal bijections are the one given in Proposition \ref{bal} and an equality, and the lower horizontal bijections are the ones appearing in Theorem \ref{d1}.
The diagram with vertical arrows from the bottom (resp. top) to the top (resp. bottom) is commutative.
\end{thm}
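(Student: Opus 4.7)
The plan is to verify, by direct computation from results already established, the commutativity of each of the two diagrams (section and retraction) square by square. Since the top-right horizontal arrow is the identity on $\Thom$, the right square collapses to the middle square, so only two squares must be checked in each of the two directions, giving four identities in total.

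First I would attack the left square. For the section direction, I need to show $\ssupp(\X^\rrad)=\overline{\s}(\supp\X)$ for $\X\in\Cpt$. The key preliminary observation is that $\ssupp$ is invariant under taking radicals: if $M\notin\P$ and $\P$ is prime, then no tensor power of $M$ lies in $\P$, so $\ssupp(\sqrt\X)=\ssupp\X$. Using $\X=\langle\supp\X\rangle$ from Proposition \ref{cs}, the additivity $\ssupp(\tthick\C)=\bigcup_{C\in\C}\ssupp C$ (from \cite[Lemma~2.6]{B} and the exactness of $\ssupp$ on triangles), and the identity \eqref{spplem} which says $\ssupp(R/\p)=\overline{\{\s(\p)\}}$, I would get $\ssupp\X=\bigcup_{\p\in\supp\X}\overline{\{\s(\p)\}}=\overline{\s}(\supp\X)$. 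For the retraction direction, I need $\supp(\Y_\cpt)=\s^{-1}(\ssupp\Y)$ for $\Y\in\Rad$. The left side is $\supp\Y$ by Lemma~\ref{clint}(1) applied to $\Y_\cpt=\langle\supp\Y\rangle$, while the right side unfolds as $\{\p : \s(\p)\in\ssupp\Y\}=\{\p:\Y\not\subseteq\s(\p)\}=\{\p:\p\in\supp\Y\}=\supp\Y$ using the defining characterization of $\s(\p)$.

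Next I would tackle the middle square. For the section direction, I must show $\overline{\s}(W)=(\s(W))^\spcl$ for $W\in\Spcl(\spec)$. Using Proposition~\ref{thom}(2) with $B=\s(W)$, the right side equals $\{\P\in\spc\dm(R):\P^\tame\in\s(W)\}$; combining Lemma~\ref{clint}(1) and Theorem~\ref{s} gives $\P^\tame=\supp^{-1}\supp\P=\s(\pp(\P))$, and since $\s$ is injective this set simplifies to $\{\P:\pp(\P)\in W\}$. Separately, $\overline{\s}(W)=\bigcup_{\p\in W}\overline{\{\s(\p)\}}$, and \cite[Proposition~2.9]{B} together with Corollary~\ref{ato2} rewrite the right-hand side as $\{\P:\exists\,\p\in W,\, \p\subseteq\pp(\P)\}$, which coincides with $\{\P:\pp(\P)\in W\}$ by the specialization-closedness of $W$. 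For the retraction direction I need $\s^{-1}(T)=\pp(T_\spcl)$ for $T\in\Thom$; by Proposition~\ref{thom}(1), $T_\spcl=T\cap\ts\dm(R)$, and by Corollary~\ref{tp} every element of $T\cap\ts\dm(R)$ is of the form $\s(\p)$ with $\pp(\s(\p))=\p$, so $\pp(T_\spcl)=\{\p:\s(\p)\in T\}=\s^{-1}(T)$.

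I do not expect a serious obstacle: the argument is a disciplined application of the dictionary already assembled in Sections \ref{sect:bal}, \ref{tsbs}, and the earlier part of this section. The subtlest moment is spotting that $\P^\tame=\s(\pp(\P))$, which is what allows Proposition~\ref{thom}(2) to interface with Corollary~\ref{ato2} and so identify the two descriptions of $\overline{\s}(W)$ and $(\s(W))^\spcl$. Once that identity is in place, every other step is a routine unfolding of definitions.
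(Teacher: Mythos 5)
Your proposal is correct, and the overall plan — reduce to four commutativity identities (two squares, each checked for the section and the retraction direction) and verify each by computing both sides explicitly — is the same as the paper's. In the left square both directions match the paper almost verbatim: you establish $\ssupp(\X^\rrad)=\ssupp\X$ by primality, rewrite $\ssupp$ of a generated $\otimes$-ideal as the union of Balmer supports of the generators, and plug in $\ssupp(R/\p)=\overline{\{\s(\p)\}}$; and for the retraction you unfold $\s^{-1}(\ssupp\Y)=\supp\Y=\supp(\Y_\cpt)$ exactly as the paper does via Lemma~\ref{clint}.

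The one place you differ from the paper is the section direction of the right square. The paper argues conceptually: $\s(W)\subseteq\overline\s(W)$, $\overline\s(W)$ is specialization-closed (a union of closures), and any specialization-closed subset of $\spc\dm(R)$ containing $\s(W)$ contains every $\overline{\{\s(\p)\}}$ with $\p\in W$; hence $\overline\s(W)$ is the smallest such set, i.e.\ $(\s(W))^\spcl$, directly from the universal property of the $\spcl$-closure. You instead compute both sides explicitly: using Proposition~\ref{thom}(2), $\P^\tame=\s(\pp(\P))$ (from Lemma~\ref{clint}(1) and Theorem~\ref{s}), the injectivity of $\s$, and Corollary~\ref{ato2}, you identify each of $(\s(W))^\spcl$ and $\overline\s(W)$ with $\{\P:\pp(\P)\in W\}$. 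Both arguments are correct; yours leans on the explicit description of $()^\spcl$ while the paper's is a one-line appeal to minimality, so the paper's is marginally shorter, but yours illuminates the key identity $\P^\tame=\s(\pp(\P))$ that ties the whole dictionary together. Your formula $\s^{-1}(T)=\pp(T_\spcl)$ for the retraction direction is equivalent to the paper's $\s(\s^{-1}(T))=T_\spcl$ via the bijection $\s\leftrightarrow\pp$, and your verification of it is correct.
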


\begin{proof}
The three section-retraction pairs are obtained in Propositions \ref{cr}, \ref{st} and \ref{sst}.

We claim that for any thick $\otimes$-ideal $\X$ of $\dm(R)$ one has
\begin{equation}\label{spplem2}
\ssupp(\X^\rrad)=\ssupp\X.
\end{equation}
Indeed, Lemma \ref{clint}(1) shows $\X^\rrad=\sqrt\X$.
The inclusion $\X\subseteq\sqrt\X$ implies $\ssupp\X\subseteq\ssupp\sqrt\X$.
Let $\P$ be a prime thick $\otimes$-ideal of $\dm(R)$.
If $\X$ is contained in $\P$, then so is $\sqrt{\X}$ as $\P$ is prime.
Therefore we obtain $\ssupp\sqrt\X=\ssupp\X$, and the claim follows.

Fix a thick $\otimes$-ideal $\C$ of $\dm(R)$.
For a prime ideal $\p$ of $R$ one has $\s(\p)\in\ssupp\C$ if and only if $\C\nsubseteq\s(\p)$, if and only if $\C_\p\ne0$, if and only if $\p\in\supp\C$.
This shows $\s^{-1}(\ssupp\C)=\supp\C$.
Lemma \ref{clint}(2) gives $\supp(\C_\cpt)=\s^{-1}(\ssupp\C)$.
Next, suppose that $\C$ is compact.
Lemma \ref{clint}(1), \eqref{spplem} and \eqref{spplem2} yield
$$\ssupp(\C^\rrad)
=\ssupp\C
=\ssupp(\C_\cpt)
=\ssupp(\langle\supp\C\rangle)\\
=\ssupp\{R/\p\mid\p\in\supp\C\}=\overline\s(\supp\C).
$$
Thus we obtain the commutativity of the left square of the diagram.

Let $A$ be any subset of $\spec R$.
It is clear that $\s(A)=\{\s(\p)\mid\p\in A\}$ is contained in $\overline\s(A)$.
As $\overline\s(A)$ is a union of closed subsets of the topological space $\spc\dm(R)$, it is a specialization-closed subset of $\spc\dm(R)$.
Note that any specialization-closed subset of $\spc\dm(R)$ containing $\s(A)$ contains $\overline\s(A)$.
Hence we have $\overline\s(A)=(\s(A))^\spcl$.
Let $B$ be a specialization-closed subset of $\spc\dm(R)$.
Then $\s(\s^{-1}(B))=\{\s(\p)\mid\p\in\spec R,\,\s(\p)\in B\}=B\cap\ts\dm(R)=B_\spcl$ by Corollary \ref{tp} and Proposition \ref{thom}(1).
Now it follows that the right square of the diagram commutes.
\end{proof}

We close this section by producing another commutative diagram, coming from the above theorem.

\begin{cor}\label{d3}
There is a commutative diagram:
$$
\xymatrix{
&&&& \Rad\ar[lllldd]_{{()}_\cpt}\ar[ldd]_(.6)\supp\ar[rdd]^(.6){{()}^\tame}\ar[rrrrdd]^\sp \\
\\
\Cpt\ar@{=}[rrr]^-\sim &&& \Spcl(\spec)\ar@{=}[rr]^-\sim && \Tame\ar@{=}[rrr]^-\sim &&& \Spcl(\ts)
}
$$
Here, the three bijections are the ones appearing in Theorem \ref{d1}, and the other maps are retractions.
\end{cor}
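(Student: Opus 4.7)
The plan is to deduce Corollary \ref{d3} from Theorem \ref{d1} together with Lemma \ref{clint}, equation \eqref{sss} from the proof of Theorem \ref{d1}, and Corollary \ref{rtsr}. Since the bottom row of the diagram is a chain of bijections produced by Theorem \ref{d1}, commutativity of the whole diagram reduces to verifying that the four diagonals from $\Rad$ are compatible with these bijections, which amounts to the three identities
$$
\supp(\X_\cpt)=\supp\X,\qquad \X^\tame=\supp^{-1}(\supp\X),\qquad \sp(\X^\tame)=\sp\X
$$
for every $\X\in\Rad$. The first follows at once from Lemma \ref{clint}(2); the second is exactly Lemma \ref{clint}(1) applied to the thick $\otimes$-ideal $\X$; the third combines \eqref{sss} with Lemma \ref{clint}(2), giving $\sp(\X^\tame)=\s(\supp(\X^\tame))=\s(\supp\X)=\sp\X$.

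For the retraction claim, the pair $()_\cpt\dashv()^\rrad$ is already a section-retraction pair by Theorem \ref{d2}. The remaining three sections will all arise from the inclusion $\Tame\subseteq\Rad$, which is guaranteed by Corollary \ref{rtsr}, precomposed with the appropriate inverse bijections from Theorem \ref{d1}. Specifically, $\supp^{-1}:\Spcl(\spec)\to\Tame\hookrightarrow\Rad$ is a section of $\supp$ because $\supp(\supp^{-1}W)=W$ by Proposition \ref{si}(1); the inclusion $\Tame\hookrightarrow\Rad$ itself is a section of $()^\tame$, since $\X^\tame=\X$ for every $\X\in\Tame$; and $\sp^{-1}:\Spcl(\ts)\to\Tame\hookrightarrow\Rad$ is a section of $\sp$ by Proposition \ref{ts2}.

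I do not foresee any substantial obstacle: once Corollary \ref{rtsr} is available, the sections are forced by the bottom bijections composed with the inclusion $\Tame\subseteq\Rad$, and the three commutation identities are immediate consequences of Lemma \ref{clint} and \eqref{sss}. The only minor bookkeeping task is to match the correct direction of each bottom bijection in Theorem \ref{d1} with the corresponding section.
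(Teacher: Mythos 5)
Your proposal is correct and follows essentially the same route as the paper: reduce commutativity to the three identities $\supp(\X_\cpt)=\supp\X$, $\X^\tame=\supp^{-1}\supp\X$, $\sp(\X^\tame)=\sp\X$, and derive them from Lemma \ref{clint} and \eqref{sss}. Your chain for the third identity (rewriting $\sp$ as $\s\circ\supp$ via \eqref{sss} on both ends and inserting $\supp(\X^\tame)=\supp\X$ in the middle) is a marginally cleaner bookkeeping of the same argument, and your explicit construction of the three remaining sections via $\Tame\hookrightarrow\Rad$ is a slight elaboration of what the paper leaves implicit in the commuting diagram.
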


\begin{proof}
We have the following diagram.
$$
\xymatrix{
\Rad\ar@<2mm>[d]^{()_\cpt}_\dashv\\
\Cpt\ar@<2mm>[u]^{()^\rrad}\ar@<2mm>[rr]^-\supp_-\cong && \Spcl(\spec)\ar@<2mm>[rr]^-{\supp^{-1}}_-\cong\ar@<2mm>[ll]^-{\langle\rangle} && \Tame\ar@<2mm>[rr]^-\sp_-\cong\ar@<2mm>[ll]^-\supp && \Spcl(\ts)\ar@<2mm>[ll]^-{\sp^{-1}}
}
$$
Thus it suffices to verify the equalities of compositions of maps $\supp\circ()_\cpt=\supp$, $\supp^{-1}\circ\supp=()^\tame$ and $\sp\circ()^\tame=\sp$.
This is equivalent to showing that the equalities
\begin{center}
(i) $\supp(\X_\cpt)=\supp\X$,\qquad
(ii) $\supp^{-1}\supp\X=\X^\tame$,\qquad
(iii) $\sp(\X^\tame)=\sp\X$
\end{center}
hold for each (radical) thick $\otimes$-ideal $\X$ of $\dm(R)$.
The equalities (i) and (ii) immediately follow from Lemma \ref{clint}.
We have $\sp(\X^\tame)=\sp(\supp^{-1}\supp\X)=(\sp\circ\supp^{-1})(\supp\X)=\s(\supp\X)=\sp\X$, where the first and last equalities follow from Lemma \ref{clint}(1) and \eqref{sss}.
Proposition \ref{si}(2) says that $\supp\X$ belongs to $\Spcl(\spec)$, and the third equality above is obtained by Theorem \ref{d1}.
Now the assertion (iii) follows, and the proof of the corollary is completed.
\end{proof}

%%%%%%%%%%%%%%%%%%%%%%%%%%%%%%%%%%%%%%%%%%%%%%%%%%%%%%%%%%%%%%%%
\section{Distinction between thick tensor ideals, and Balmer's conjecture}\label{balconj}

In this section, we consider when the section-retraction pairs in Theorem \ref{d2} and Corollary \ref{d3} are one-to-one correspondences, and construct a counterexample to the conjecture of Balmer.
We begin with a lemma on the annihilator of an object in the thick $\otimes$-ideal closure.

\begin{lem}\label{ann}
Let $\{X_\lambda\}_{\lambda\in\Lambda}$ be a family of objects of $\dm(R)$.
For $M\in\tthick\{X_\lambda\}_{\lambda\in\Lambda}$ there are (pairwise distinct) indices $\lambda_1,\dots,\lambda_n\in\Lambda$ and integers $e_1,\dots,e_n>0$ such that $\ann M$ contains $\prod_{i=1}^n(\ann X_{\lambda_i})^{e_i}$.
\end{lem}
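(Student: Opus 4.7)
The plan is to define the full subcategory
\[
\C = \{ M \in \dm(R) \mid \text{there exist } \lambda_1,\dots,\lambda_n \in \Lambda \text{ and } e_1,\dots,e_n > 0 \text{ with } \textstyle\prod_{i=1}^n (\ann X_{\lambda_i})^{e_i} \subseteq \ann M \}
\]
and verify that $\C$ is a thick $\otimes$-ideal of $\dm(R)$ containing every $X_\lambda$. Once this is done, $\tthick\{X_\lambda\}_{\lambda\in\Lambda} \subseteq \C$, which is exactly the statement of the lemma (with the pairwise-distinct condition achieved by collecting exponents of repeated indices).

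The ingredient that does the real work is the following annihilator inequality across an exact triangle: if $A \xrightarrow{f} B \xrightarrow{g} C \rightsquigarrow$ is exact in $\dm(R)$, then $\ann A \cdot \ann C \subseteq \ann B$. This is standard but I would prove it explicitly: given $a \in \ann A$, naturality of scalar multiplication gives $a\cdot\id_B \circ f = f \circ a\cdot\id_A = 0$, hence $a\cdot\id_B$ factors as $\alpha \circ g$ for some $\alpha: C \to B$; then for $c \in \ann C$, compute $(ac)\cdot\id_B = a\cdot\id_B \circ c\cdot\id_B = \alpha \circ g \circ c\cdot\id_B = \alpha \circ c\cdot\id_C \circ g = 0$.

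Having this, closure properties of $\C$ fall into place. Shifts are trivial since $\ann(M[k]) = \ann M$. Direct summands are trivial since if $M \lessdot N$ then $\ann N \subseteq \ann M$, so membership in $\C$ passes from $N$ to $M$. For the two-out-of-three property in an exact triangle $A \to B \to C \rightsquigarrow$: if two of $A,B,C$ lie in $\C$, rotate the triangle to get the third as the middle term, and apply the annihilator inequality with the two products of annihilators — the product of the two defining relations (possibly merging exponents of common indices) witnesses membership of the third object. Finally, $\C$ is a $\otimes$-ideal since for any $T \in \dm(R)$ and $M \in \C$, one has $\ann M \subseteq \ann(M \ltensor_R T)$ (a scalar killing $\id_M$ kills $\id_M \otimes \id_T = \id_{M \otimes T}$), so the same witnessing product works for $M \ltensor_R T$. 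That each $X_\lambda \in \C$ is immediate by taking $n=1,\ \lambda_1=\lambda,\ e_1=1$.

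No step is a genuine obstacle; the only point requiring care is the triangle inequality $\ann A \cdot \ann C \subseteq \ann B$, which is the engine behind the product of annihilators appearing in the statement, and one must verify that its iterated application across arbitrarily many cone constructions only ever introduces finitely many new indices $\lambda_i$ — but this is automatic since a single cone construction involves only two objects.
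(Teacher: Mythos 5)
Your proposal is correct and follows essentially the same strategy as the paper's own proof: define the subcategory $\C$ of objects whose annihilator contains a finite product of powers of the $\ann X_{\lambda}$, verify that $\C$ is a thick $\otimes$-ideal using the annihilator inequality $\ann A \cdot \ann C \subseteq \ann B$ across an exact triangle $A \to B \to C \rightsquigarrow$ (together with the obvious facts for shifts, summands, and tensoring), and then conclude since $\C$ contains each $X_\lambda$. The only difference is that you spell out the proof of the triangle inequality, which the paper states without proof.
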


\begin{proof}
Let $\C$ be the subcategory of $\dm(R)$ consisting of objects $C$ such that there are $\lambda_1,\dots,\lambda_n\in\Lambda$ and $e_1,\dots,e_n>0$ such that $\ann C$ contains $\prod_{i=1}^n(\ann X_{\lambda_i})^{e_i}$.
The following statements hold in general.
\begin{itemize}
\item
If $A$ is an object of $\dm(R)$ and $B$ is a direct summand of $A$, then $\ann A\subseteq\ann B$.
\item
For each object $A\in\dm(R)$ one has $\ann(A[\pm1])=\ann A$.
\item
If $A\to B\to C\to A[1]$ is an exact triangle in $\dm(R)$, then $\ann B$ contains $\ann A\cdot\ann C$.
\item
For any objects $A,B$ of $\dm(R)$ one has $\ann(A\ltensor_RB)\supseteq\ann A$.
\end{itemize}
It follows from these that $\C$ is a thick $\otimes$-ideal of $\dm(R)$.
Since $X_\lambda$ is in $\C$ for all $\lambda\in\Lambda$, it holds that $\C$ contains $\tthick\{X_\lambda\}_{\lambda\in\Lambda}$.
The assertion of the lemma now follows.
\end{proof}

The proposition below says in particular that in the case where $R$ is a local ring $\dm(R)$ has a compact prime thick tensor ideal.
On the other hand, in the nonlocal case it is often that $\dm(R)$ has no such one.

\begin{prop}\label{no}
\begin{enumerate}[\rm(1)]
\item
If $R$ is a local ring with maximal ideal $\m$, then $\Cpt\cap\pp^{-1}(\m)=\{\zero\}\ne\emptyset$.
\item
Let $R$ be a nonlocal semilocal domain.
Then there exists no compact prime thick $\otimes$-ideal of $\dm(R)$.
In particular, one has $\P_\cpt\subsetneq\P=\P^\rrad$ for all $\P\in\spc\dm(R)$.
\end{enumerate}
\end{prop}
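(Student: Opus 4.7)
For part (1), since $R$ is local with maximal ideal $\m$, the definition of $\s$ gives $\s(\m)=\{X\in\dm(R)\mid X_\m=0\}=\zero$, so by Proposition \ref{two} the zero subcategory $\zero$ is a prime thick $\otimes$-ideal of $\dm(R)$. It is trivially compact (as $\tthick\emptyset$ with $\emptyset\subseteq\kb(\proj R)$), and Theorem \ref{s} yields $\pp(\zero)=\pp\s(\m)=\m$, so $\zero\in\Cpt\cap\pp^{-1}(\m)$. Conversely, any $\P\in\Cpt\cap\pp^{-1}(\m)$ has $\supp\P=\{\q\mid\q\nsubseteq\m\}=\emptyset$ by Proposition \ref{dsd} (since $\m$ is the only maximal ideal of $R$), and Theorem \ref{main2} then forces $\P=\langle\emptyset\rangle=\zero$.

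For part (2), I plan to suppose for contradiction that $\P\in\spc\dm(R)$ is compact and set $W=\supp\P$, so that $\P=\langle W\rangle$ by Theorem \ref{main2}; since $R\notin\P$, Corollary \ref{strcpt}(3) gives $\spec R=\V(\ann R)\nsubseteq W$, i.e.\ $W\neq\spec R$. Because $R$ is semilocal and nonlocal, I pick two distinct maximal ideals $\m_1,\m_2$ and form
\[
X_i=\bigoplus_{k\ge 1}(R/\m_i^kR)[k]\in\dm(R)\qquad(i=1,2).
\]
The first thing to check is $\supp X_i=\bigcup_{k\ge 1}\V(\m_i^k)=\{\m_i\}$, using maximality of $\m_i$. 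The second, more subtle point is $\ann X_i=0$: because the summands sit in pairwise distinct cohomological degrees, the scalar action of $a\in R$ on $X_i$ vanishes iff it vanishes on each summand, giving $\ann X_i=\bigcap_{k\ge 1}\m_i^k$; and this intersection vanishes in the noetherian domain $R$ since $\m_i$ is a proper ideal (Krull's intersection theorem via Artin--Rees). Hence $\V(\ann X_i)=\spec R\nsubseteq W$, and Corollary \ref{strcpt}(3) yields $X_i\notin\P$. On the other hand, Lemma \ref{si0}(4) gives $\supp(X_1\ltensor_RX_2)=\{\m_1\}\cap\{\m_2\}=\emptyset$, so $X_1\ltensor_RX_2=0\in\P$ by Remark \ref{r}, contradicting primality of $\P$.

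For the final ``In particular'' assertion, fix $\P\in\spc\dm(R)$: primes are radical, so $\P^\rrad=\P$; Lemma \ref{clint}(2) gives $\P_\cpt\subseteq\P$; and $\P_\cpt=\P$ would exhibit $\P$ as a compact prime thick $\otimes$-ideal, which the first half of (2) has ruled out, so $\P_\cpt\subsetneq\P$. The main obstacle I anticipate is the computation $\ann X_i=0$: Krull's intersection theorem is being applied to the individual maximal ideal $\m_i$, which in a nonlocal semilocal ring need not sit in the Jacobson radical, so the usual local-ring version is unavailable and one must invoke the sharper domain-theoretic form (the Artin--Rees argument, $c\cdot\bigcap\m_i^k=0$ for some $c\in 1+\m_i$ with $c\ne 0$ in the domain). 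This is exactly the step where the hypothesis that $R$ is a domain enters the proof, working in tandem with the existence of two distinct maximal ideals to force the contradiction.
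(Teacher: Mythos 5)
Your proof is correct. Part (1) is essentially the paper's argument (the paper observes outright that $\pp^{-1}(\m)=\{\zero\}$ without needing compactness for the converse, but your route through Theorem \ref{main2} is equally valid).

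For part (2), you take a genuinely different route. The paper works with all the maximal ideals $\m_1,\dots,\m_n$, picks elements $x_i\in\m_i$ lying outside the other maximal ideals, forms $C_i=\bigoplus_{t\ge0}(R/x_i^{t+1})[t]$, observes $C_1\ltensor\cdots\ltensor C_n=0\in\P$ so that some $C_\ell\in\P$, and then applies Lemma \ref{ann} to the generating set of $\P=\langle\supp\P\rangle$ to produce primes $\p_i\in\supp\P$ with $\p_1^{e_1}\cdots\p_r^{e_r}\subseteq\ann C_\ell=0$, forcing $0\in\supp\P$ and hence $\P=\dm(R)$. You instead use just two maximal ideals and the complexes $X_i=\bigoplus_{k\ge1}(R/\m_i^k)[k]$, invoking the membership criterion $\V(\ann X)\subseteq W$ for $\langle W\rangle$ from Corollary \ref{strcpt}(3) directly: since $\ann X_i=\bigcap_k\m_i^k=0$ gives $\V(\ann X_i)=\spec R\nsubseteq W$, each $X_i\notin\P$, and $X_1\ltensor_RX_2=0\in\P$ contradicts primality. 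This buys you a cleaner argument with no element-chasing, no need for all $n$ maximal ideals, and a direct use of Corollary \ref{strcpt}(3) in place of Lemma \ref{ann}. Both routes ultimately rest on the same Krull-intersection-theorem-in-a-noetherian-domain observation. One remark: the identity $\ann(\bigoplus_k Y_k[k])=\bigcap_k\ann Y_k$ is not special to summands sitting in distinct degrees; it holds for any direct sum in $\dm(R)$, since $a\cdot\id_{\bigoplus Y_k}=\bigoplus(a\cdot\id_{Y_k})$ vanishes iff each summand does, and the paper tacitly uses the same general fact (e.g.\ in the proofs of Theorems \ref{diff}(3) and \ref{11}).
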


\begin{proof}
(1) Let $\P$ be in $\spc\dm(R)$.
Then $\P$ is in $\pp^{-1}(\m)$ if and only if $\supp\P=\{\p\in\spec R\mid\p\nsubseteq\m\}=\emptyset$ by Proposition \ref{dsd}, if and only if $\P=\zero$ by Remark \ref{r}.
Since $\zero$ is compact, we are done.

(2) Let $\m_1,\dots,\m_n$ be the (pairwise distinct) maximal ideals of $R$ with $n\ge2$.
For each $1\le i\le n$ one finds an element $x_i\in\m_i$ that does not belong to any other maximal ideals.
As $R$ is a domain of positive dimension, $x_i$ is a non-zerodivisor of $R$.
Set $C_i=\bigoplus_{t\ge0}R/x_i^{t+1}[t]$; note that this is an object of $\dm(R)$.
We have $\supp(C_1\ltensor_R\cdots\ltensor_RC_n)=\bigcap_{i=1}^n\supp C_i=\bigcap_{i=1}^n\V(x_i)=\V(x_1,\dots,x_n)=\emptyset$ by Lemma \ref{si0}(4) and the fact that $(x_1,x_2)$ is a unit ideal of $R$.
Remark \ref{r} gives $C_1\ltensor_R\cdots\ltensor_RC_n=0$.

Suppose that there exists a compact prime thick $\otimes$-ideal $\P$ of $\dm(R)$.
Then $C_1\ltensor_R\cdots\ltensor_RC_n=0$ is contained in $\P$, and so is $C_\ell$ for some $1\le\ell\le n$.
We have $\P=\langle\supp\P\rangle$ by Proposition \ref{cs}, and by Lemma \ref{ann} there exist prime ideals $\p_1,\dots,\p_r\in\supp\P$ and integers $e_1,\dots,e_r>0$ such that $\ann C_\ell$ contains $(\ann R/\p_1)^{e_1}\cdots(\ann R/\p_r)^{e_r}=\p_1^{e_1}\cdots\p_r^{e_r}$.
Since $R$ is a domain and $x_\ell$ is a non-unit of $R$, we have $\ann C_\ell=\bigcap_{t\ge0}x_\ell^{t+1}R=0$ by Krull's intersection theorem.
Therefore $\p_1^{e_1}\cdots\p_r^{e_r}=0$, and $\p_s=0$ for some $1\le s\le r$ as $R$ is a domain.
Thus the zero ideal $0$ of $R$ belongs to $\supp\P$, which implies $\supp\P=\spec R$.
We obtain $\P=\dm(R)$ by Proposition \ref{whole}, which is a contradiction.
\end{proof}

To show a main result of this section, we make two lemmas.
The first one concerns the structure of the radical and tame closures, while the second one gives an elementary characterization of artinian rings.

\begin{lem}\label{ti}
Let $\X$ be a subcategory of $\dm(R)$.
One has
$$
\textstyle\X^\rrad=\bigcap_{\X\subseteq\P\in\spc\dm(R)}\P,\quad\qquad
\X^\tame=\bigcap_{\X\subseteq\P\in\ts\dm(R)}\P.
$$
\end{lem}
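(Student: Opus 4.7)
The plan is to handle the two equalities separately, using the explicit descriptions of closures from Lemma \ref{clint}(1): namely $\X^\rrad = \sqrt{\tthick\X}$ and $\X^\tame = \supp^{-1}\supp\X$.

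For the radical equality, I would invoke Balmer's foundational result \cite[Lemma 4.2]{B}, which asserts that in any essentially small tensor triangulated category, $\sqrt{\Y}$ equals the intersection of all prime thick $\otimes$-ideals of $\T$ containing $\Y$. Applying this to $\Y = \tthick\X$, and noting that a prime thick $\otimes$-ideal contains $\X$ if and only if it contains $\tthick\X$ (since primes are thick $\otimes$-ideals), yields $\X^\rrad = \bigcap_{\X \subseteq \P \in \spc\dm(R)} \P$ directly. The forward inclusion is immediate from primality (if $M^{\otimes n} \in \tthick\X \subseteq \P$, then $M \in \P$); the substantive content is Balmer's Zorn's-lemma argument producing, for each $M \notin \sqrt{\tthick\X}$, a prime avoiding $M$.

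For the tame equality, I would use Corollary \ref{tp}, which identifies $\ts\dm(R) = \{\s(\p) \mid \p \in \spec R\}$, together with the description $\s(\p) = \supp^{-1}(\{\p\}^\complement)$ from the proof of Proposition \ref{two}. These imply that $\X \subseteq \s(\p)$ if and only if $X_\p = 0$ for every $X \in \X$, if and only if $\p \notin \supp\X$. Therefore
$$
\textstyle\bigcap_{\X \subseteq \P \in \ts\dm(R)} \P \;=\; \bigcap_{\p \in \spec R,\,\p\notin\supp\X} \s(\p) \;=\; \{Y \in \dm(R) \mid Y_\p = 0 \text{ for all } \p\notin\supp\X\},
$$
and the latter set is precisely $\supp^{-1}\supp\X = \X^\tame$.

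The only real obstacle is the appeal to Balmer's Zorn-style construction of primes for the radical equality; the tame equality is an elementary unpacking once Corollary \ref{tp} and the definition of $\s(\p)$ are in hand, and so requires no substantive additional argument.
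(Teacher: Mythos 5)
Your proof is correct and follows essentially the same route as the paper: the radical equality is reduced via Lemma \ref{clint}(1) to Balmer's \cite[Lemma 4.2]{B} together with the trivial observation that a prime contains $\X$ iff it contains $\tthick\X$, and the tame equality is reduced to the identification $\ts\dm(R)=\{\s(\p)\mid\p\in\spec R\}$ from Corollary \ref{tp} plus the description $\s(\p)=\supp^{-1}(\{\p\}^\complement)$. The only cosmetic difference is that you compute the second intersection directly as a set, whereas the paper splits it into two containments; the underlying content is identical.
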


\begin{proof}
Lemma \ref{clint}(1) implies $\X^\rrad=\sqrt{\tthick\X}$, which coincides with the intersection of the prime thick $\otimes$-ideals of $\dm(R)$ containing $\tthick\X$ by \cite[Lemma 4.2]{B}.
This is equal to the intersection of the prime thick $\otimes$-ideals containing $\X$, and thus the first equality holds.
As for the second equality, if $\P$ is a tame thick $\otimes$-ideal containing $\X$, then we have $\X^\tame\subseteq\P^\tame=\P$, which shows the inclusion $(\subseteq)$.
Let $M$ be an object of $\dm(R)$ belonging to all $\P\in\ts\dm(R)$ with $\X\subseteq\P$.
Corollary \ref{tp} says that $M$ is in $\s(\p)$ for all prime ideals $\p$ of $R$ with $\X\subseteq\s(\p)$.
This means that $\supp M$ is contained in $\supp\X$.
Hence $M$ is in $\supp^{-1}\supp\X$, which coincides with $\X^\tame$ by Lemma \ref{clint}(1).
Thus the second equality follows.
\end{proof}

\begin{lem}\label{smsp}
The ring $R$ is artinian if and only if for any sequence $I_1, I_2, \ldots$ of ideals of $R$ it holds that $\V(\bigcap_{n \ge 1} I_n)= \bigcup_{n \ge 1}\V(I_n)$.
\end{lem}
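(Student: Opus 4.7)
The plan is to handle the two directions separately, noting first that the inclusion $\V(\bigcap_n I_n) \supseteq \bigcup_n \V(I_n)$ holds trivially (any prime containing some $I_n$ contains $\bigcap_n I_n$), so only the reverse inclusion is at issue.

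For the direction ``$R$ artinian $\Rightarrow$ equality'', I would exploit the descending chain condition. The sequence $J_k := \bigcap_{n=1}^k I_n$ is a descending chain of ideals, so it stabilizes at some $N$, giving $\bigcap_{n\ge 1} I_n = \bigcap_{n=1}^N I_n$. Then $\V(\bigcap_{n\ge 1} I_n) = \V(\bigcap_{n=1}^N I_n) = \bigcup_{n=1}^N \V(I_n) \subseteq \bigcup_{n\ge 1}\V(I_n)$, where the middle step uses the standard identity $\V(A\cap B)=\V(A)\cup\V(B)$.

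For the converse I would argue by contraposition. If $R$ is not artinian, then since $R$ is noetherian (by the standing convention), we have $\dim R \ge 1$, so there exist prime ideals $\p_0 \subsetneq \p_1$ of $R$. The plan is to exhibit an explicit counterexample using the sequence $I_n := \p_1^n + \p_0$. On the one hand, Krull's intersection theorem applied in the noetherian domain $R/\p_0$ to the proper ideal $\p_1/\p_0$ gives $\bigcap_n (\p_1/\p_0)^n = 0$; lifting to $R$ via the identity $(\p_1/\p_0)^n = (\p_1^n+\p_0)/\p_0$ yields $\bigcap_n(\p_1^n+\p_0)=\p_0$, so $\V(\bigcap_n I_n) = \V(\p_0)$. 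On the other hand, since $\p_0 \subseteq \p_1$, we have $\V(I_n) = \V(\p_1^n)\cap\V(\p_0) = \V(\p_1)\cap\V(\p_0) = \V(\p_1)$ for every $n$, hence $\bigcup_n \V(I_n) = \V(\p_1)$. But $\p_0 \in \V(\p_0)\setminus\V(\p_1)$, so $\V(\bigcap_n I_n)$ strictly contains $\bigcup_n \V(I_n)$, contradicting the hypothesis.

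The main (mild) obstacle is the backward direction: producing a sequence whose intersection genuinely ``collapses'' the support. The key tool is Krull's intersection theorem, which lets me pass from the existence of a proper chain $\p_0\subsetneq\p_1$ to a concrete sequence of ideals with the two desired properties (namely $\bigcap I_n = \p_0$, yet each $\V(I_n)$ only sees $\V(\p_1)$). Everything else is bookkeeping with $\V(-)$.
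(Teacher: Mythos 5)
Your proof is correct. The forward direction is essentially identical to the paper's (use the descending chain condition on $\bigcap_{n=1}^k I_n$ so the intersection stabilizes at a finite stage, then use $\V(A\cap B)=\V(A)\cup\V(B)$). The converse direction, however, takes a genuinely different route. You argue by contraposition: from $R$ noetherian and not artinian you get $\dim R\ge 1$, hence a proper chain $\p_0\subsetneq\p_1$, and then the single sequence $I_n=\p_1^n+\p_0$ directly violates the equality, with Krull's intersection theorem in the domain $R/\p_0$ giving $\bigcap_n I_n=\p_0$ while $\V(I_n)=\V(\p_1)$ for every $n$. The paper instead proves the converse directly in two stages: it first uses a sequence of pairwise distinct maximal ideals to show that $R$ must be semilocal (comparing minimal elements of the closed set $\V(\bigcap\m_n)$ with those of $\bigcup\V(\m_n)$), and then applies the hypothesis to the powers of the Jacobson radical together with Krull's intersection theorem to force $\spec R=\Max R$. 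Both arguments hinge on Krull's intersection theorem; yours is shorter because it bypasses the semilocality step by going straight to a length-one chain of primes, whereas the paper's direct argument is arguably more self-contained in that it never invokes the characterization of artinian noetherian rings as those of Krull dimension zero.
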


\begin{proof}
First of all, note that the inclusion $\V(\bigcap_{n \ge 1} I_n) \supseteq  \bigcup_{n \ge 1} \V(I_n)$ always holds.

If $R$ is artinian, then there exists an integer $m \ge 1$ such that $\bigcap_{n \ge 1} I_n = \bigcap_{j=1}^{m} I_j$.
From this we obtain $\textstyle\V(\bigcap_{n \ge 1} I_n) = \V(\bigcap_{j=1}^m I_j) = \bigcup_{j=1}^m \V(I_j) \subseteq  \bigcup_{n \ge 1} \V(I_n)$.
This shows the ``only if" part.

Let us prove the ``if" part.
Assume first that $R$ has infinitely many maximal ideals, and take a sequence $\m_1, \m_2, \dots$ of pairwise distinct maximal ideals of $R$.
By assumption, we get $\V(\bigcap_{n \ge 1}\m_n) = \bigcup_{n \ge 1} \V(\m_n)$.
Since $\V(\bigcap_{n \ge 1}\m_n)$ is a closed subset of $\spec R$, it has only finitely many minimal elements with respect to the inclusion relation.
However, $\bigcup_{n \ge 1} \V(\m_n) = \{\m_1, \m_2, \ldots\}$ has infinitely many minimal elements, which is a contradiction.
Thus, $R$ is a semilocal ring.
Let $\m_1,\dots,\m_t$ be the maximal ideals of $R$, and $J=\m_1\cap\cdots\cap\m_t$ the Jacobson radical of $R$.
Applying the assumption to the sequence $\{J^n \}_{n \ge 1}$ of ideals gives $\V(\bigcap_{n \ge 1} J^n) = \bigcup_{n \ge 1} \V(J^n)=\V(J)$.
By Krull's intersection theorem, we obtain $\bigcap_{n \ge 1} J^n=0$, whence $\V(J)= \spec R$.
Hence $\spec R = \{\m_1,\dots,\m_t\}=\Max R$, and we conclude that $R$ is artinian.
\end{proof}

Now we can prove our first main result in this section.
Roughly speaking, if our ring $R$ is artinian, then everything is explicit and behaves well, and vice versa.
Note that this result includes Corollary \ref{minmxcor2}.

\begin{thm}\label{11}
The following are equivalent.
\begin{enumerate}[\rm(1)]
\item\label{11-10}
The ring $R$ is artinian.
\item\label{11-11}
Every thick $\otimes$-ideal of $\dm(R)$ is compact, tame and radical.
\item\label{11-9}
The maps
$\ \s:\spec R\rightleftarrows\spc\dm(R):\pp\ $
are mutually inverse homeomorphisms.
\item\label{11-1}
The section-retraction pair
$\ \s:\spec R\rightleftarrows\spc\dm(R):\pp\ $
is a one-to-one correspondence.
\item\label{11-2}
The section-retraction pair
$\ ()_\cpt:\Rad\rightleftarrows\Cpt:()^\rrad\ $
is a one-to-one correspondence.
\item\label{11-3}
The section-retraction pair
$\ \s^{-1}:\Thom\rightleftarrows\Spcl(\spec):\overline\s\ $
is a one-to-one correspondence.
\item\label{11-4}
The section-retraction pair
$\ ()_\spcl:\Thom\rightleftarrows\Spcl(\ts):()^\spcl\ $
is a one-to-one correspondence.
\item\label{11-5}
The retraction
$\ \supp:\Rad\to\Spcl(\spec)\ $
is a bijection.
\item\label{11-6}
The retraction
$\ ()^\tame:\Rad\to\Tame\ $
is a bijection.
\item\label{11-7}
The retraction
$\ \sp:\Rad\to\Spcl(\ts)\ $
is a bijection.
\item\label{11-8}
The inclusion
$\ \Rad\supseteq\Tame\ $
is an equality.
\end{enumerate}
\end{thm}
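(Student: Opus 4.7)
The plan is to establish the cycle $(1)\Rightarrow(2)\Rightarrow(11)\Rightarrow(1)$ together with the equivalence of each of $(3)$--$(10)$ with $(11)$. First, $(1)\Rightarrow(2)$ follows from Corollary~\ref{art}: if $R$ is artinian then every thick $\otimes$-ideal is compact, every subset of $\spec R$ is specialization-closed (Lemma~\ref{finsp}), and the bijections $\Cpt\cong\Spcl(\spec)\cong\Tame$ of Theorem~\ref{d1} force $\Cpt$, $\Tame$, and the collection of all thick $\otimes$-ideals to coincide; combined with $\Tame\subseteq\Rad$ (Corollary~\ref{rtsr}) this yields (2). The implication $(2)\Rightarrow(11)$ is immediate, and $(1)\Rightarrow(3)$ is Corollary~\ref{minmxcor2}.

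For the equivalences of $(3)$--$(10)$ with $(11)$: $(3)\Rightarrow(4)$ is trivial; $(4)\Leftrightarrow(11)$ follows because $\s$ is always injective with image $\ts\dm(R)$ (Corollary~\ref{tp}), so $\s$ is bijective iff every prime of $\spc\dm(R)$ is tame, which by Lemmas~\ref{ti} and~\ref{sp}(3) is equivalent to every radical being tame. The equivalences $(5)\Leftrightarrow(6)\Leftrightarrow(7)$ follow from the commutative diagram of Theorem~\ref{d2}, all of whose horizontal arrows are bijections. For $(5)\Leftrightarrow(11)$: under $(11)$ every prime has the form $\s(\p)$, so Lemma~\ref{ti} gives $(\langle W\rangle)^\rrad=\bigcap_{\p\notin W}\s(\p)=\supp^{-1}W$ for each specialization-closed $W$, whence $(\Y_\cpt)^\rrad=\Y^\tame=\Y$ for every $\Y\in\Rad$; conversely, applying $(5)$ to the tame radical $\Y=\supp^{-1}W$ recovers the identity $(\langle W\rangle)^\rrad=\supp^{-1}W$, and then $\Y=(\Y_\cpt)^\rrad=\Y^\tame$ for every $\Y\in\Rad$, giving $(11)$. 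Finally, the retractions of Corollary~\ref{d3} together with the bijections $\Cpt\cong\Spcl(\spec)$ and $\Tame\cong\Spcl(\ts)$ reduce each of $(8)$, $(9)$, $(10)$ to injectivity of $\supp:\Rad\to\Spcl(\spec)$, i.e.\ to $\Y=\Y^\tame$ for every $\Y\in\Rad$, which is $(11)$.

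The crucial step is $(11)\Rightarrow(1)$, which I would prove by contraposition. Suppose $R$ is not artinian. Since a noetherian zero-dimensional ring is artinian, $\dim R\ge 1$, so there exist primes $\p\subsetneq\m$ in $\spec R$ with $\m$ maximal. Consider
\[
M \;:=\; \textstyle\bigoplus_{n\ge 1}\,(R/\m^n)[n] \;\in\; \dm(R),
\]
whose cohomologies $\h^{-n}(M)=R/\m^n$ have finite length supported at $\{\m\}$, so $M\in\supp^{-1}\{\m\}$, and $\ann_R M=\bigcap_n\m^n$. Expanding the derived tensor,
\[
M^{\otimes k} \;=\; \textstyle\bigoplus_{n_1,\dots,n_k}\bigl(R/\m^{n_1}\ltensor_R\cdots\ltensor_R R/\m^{n_k}\bigr)[n_1+\cdots+n_k],
\]
the diagonal summand $n_1=\cdots=n_k=n$ contributes the ordinary tensor $(R/\m^n)^{\otimes_R k}=R/\m^n$ as a direct summand of $\h^{-kn}(M^{\otimes k})$ (from the $0$-th cohomology of $(R/\m^n)^{\ltensor k}$), so $\ann\h^{-kn}(M^{\otimes k})\subseteq\m^n$ for every $n$, giving $\ann M^{\otimes k}\subseteq\bigcap_n\m^n$; combined with $\ann M\subseteq\ann M^{\otimes k}$ we conclude $\ann M^{\otimes k}=\bigcap_n\m^n$ for all $k\ge 1$. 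Krull's intersection theorem applied in the local ring $R_\m$ shows that every prime $\q\subseteq\m$ contains $\bigcap_n\m^n$, so together with $\p\subsetneq\m$ we have $\sqrt{\bigcap_n\m^n}\subseteq\p\subsetneq\m$; hence $\bigcap_n\m^n$ is not $\m$-primary, and Corollary~\ref{strcpt}(2) yields $M^{\otimes k}\notin\tthick R/\m$ for every $k$, i.e.\ $M\notin\sqrt{\tthick R/\m}$. On the other hand, under $(11)$ we have $\sqrt{\tthick R/\m}=(\tthick R/\m)^\rrad=(\tthick R/\m)^\tame=\supp^{-1}\{\m\}\ni M$, a contradiction.

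The main obstacle will be the annihilator computation $\ann M^{\otimes k}=\bigcap_n\m^n$: one must carefully unpack the derived tensor of an infinite direct sum of shifted finite-length modules to identify $R/\m^n$ as an honest direct summand of $\h^{-kn}(M^{\otimes k})$, and in the non-local case one must verify through Krull's theorem applied in $R_\m$ that $\bigcap_n\m^n$ truly admits a prime strictly below $\m$ containing it.
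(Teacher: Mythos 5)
Your proposal is correct, and the organizing cycle is structured differently from the paper's but covers all the equivalences. The decisive step is $(11)\Rightarrow(1)$, and here your approach genuinely diverges from the paper's. The paper introduces a dedicated Lemma (\ref{smsp}: $R$ is artinian iff $\V(\bigcap_n I_n)=\bigcup_n\V(I_n)$ for every sequence of ideals) and then, given an arbitrary sequence $I_1,I_2,\dots$, works with the complex $C=\bigoplus_{n\ge1}\k(\xx(n),R)[n]$ built from Koszul complexes of generating sequences; the point of using Koszul complexes rather than the quotient modules $R/I_n$ is that the tensor power $C^{\otimes r}$ is controlled by the multiplicativity of Koszul complexes (\cite[Prop.\ 1.6.21]{BH}), making the annihilator of a suitable direct summand $B$ immediately equal to $\bigcap_n I_n$ via Proposition~\ref{annihi}(3). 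You instead argue by contraposition with the single sequence $I_n=\m^n$ attached to a maximal ideal of positive height, use $M=\bigoplus_n(R/\m^n)[n]$, and extract $R/\m^n$ as a direct summand of $\h^{-kn}(M^{\otimes k})$ by isolating the diagonal block $(R/\m^n)^{\otimes^{\mathbf{L}}k}[kn]$ in the direct-sum expansion of the derived tensor. This trades the algebraic convenience of Koszul complexes for a degree-counting computation, but it is correct and arguably more self-contained, since it avoids Lemma~\ref{smsp} entirely; the price is that you must work somewhat harder to control $\ann M^{\otimes k}$ (the computation you flag as the main obstacle), whereas the paper's choice of $\k(\xx(n),R)$ makes that step essentially automatic. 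The use of Krull's intersection theorem to locate $\bigcap_n\m^n$ strictly below $\m$ matches the role it plays inside the paper's proof of Lemma~\ref{smsp}. Both approaches are sound; yours specializes the counterexample, the paper's abstracts it into a lemma about sequences of ideals.
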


\begin{proof}
Theorems \ref{s}, \ref{d1}, \ref{d2} and Corollary \ref{rtsr} imply that the pairs in \eqref{11-1}, \eqref{11-2}, \eqref{11-3}, \eqref{11-4} are section-retraction pairs, the maps in \eqref{11-5}, \eqref{11-6}, \eqref{11-7} are retractions, and one has the inclusion in \eqref{11-8}.

The equivalences \eqref{11-2} $\Leftrightarrow$ \eqref{11-3} $\Leftrightarrow$ \eqref{11-4} and \eqref{11-2} $\Leftrightarrow$ \eqref{11-5} $\Leftrightarrow$ \eqref{11-6} $\Leftrightarrow$ \eqref{11-7} follow from Theorem \ref{d2} and Corollary \ref{d3}, respectively.
It is trivial that \eqref{11-9} implies \eqref{11-1}, while \eqref{11-10} implies \eqref{11-11} by Corollaries \ref{art}, \ref{rtsr} and Proposition \ref{si}(1).
If $\spc\dm(R)=\ts\dm(R)$, then $\s=\s'$ and $\pp=\pp'$.
From Theorems \ref{kakan}(3) and \ref{homeos} we see that \eqref{11-11} implies \eqref{11-9}.
Corollary \ref{rtsr} says $\X^\tame\in\Rad$ for each $\X\in\Rad$.
Hence, if $()^\tame:\Rad\to\Tame$ is injective, then $\X=\X^\tame$ holds.
This shows that \eqref{11-6} implies \eqref{11-8}.
It is easily seen that the converse is also true, and we get the equivalence \eqref{11-6} $\Leftrightarrow$ \eqref{11-8}.
When $\s:\spec R\to\spc\dm(R)$ is surjective, we have $\spc\dm(R)=\ts\dm(R)$, and for a radical thick $\otimes$-ideal $\X$ it holds that $\X=\X^\rrad=\bigcap_{\X\subseteq\P\in\spc\dm(R)}\P=\bigcap_{\X\subseteq\P\in\ts\dm(R)}\P=\X^\tame$ by Lemma \ref{ti}, whence $\X$ is tame.
Therefore, \eqref{11-1} implies \eqref{11-8}.

Now it remains to prove that \eqref{11-8} implies \eqref{11-10}.
By Lemma \ref{smsp}, it suffices to prove that $\V(\bigcap_{n \ge 1}I_n)$ is contained in $\bigcup_{n \ge 1}\V(I_n)$ for any sequence $I_1, I_2, \dots$ of ideals of $R$.
For each $n \ge 1$, fix a system of generators $\xx(n)$ of $I_n$.
Set $C= \bigoplus_{n \ge 1}\k(\xx(n), R)[n]$; note that this is defined in $\dm(R)$.
Then $\supp C = \bigcup_{n \ge 1} \supp \k(\xx(n), R) = \bigcup_{n \ge 1} \V(I_n)$ by Proposition \ref{annihi}(3).
The radical closure $\E$ of $\langle \bigcup_{n \ge 1}\V(I_n) \rangle$ is tame by assumption.
Lemma \ref{clint} implies $\supp\E=\bigcup_{n \ge 1} \V(I_n)=\supp C$.
Thus $C$ is in $\supp^{-1}\supp\E=\E$ by Proposition \ref{tss}, and $C^{\otimes r} \in \langle \bigcup_{n \ge 1}\V(I_n) \rangle$ for some $r>0$.
Using \cite[Proposition 1.6.21]{BH}, we have
\begin{align}
C^{\otimes r}
&\label{nonn}= \textstyle\bigoplus_{n \ge 1}(\bigoplus_{i_1 + \cdots + i_r=n} \k(\xx(i_1), R) \ltensor_R \cdots \ltensor_R \k(\xx(i_r), R))[n] \\
&\gtrdot \textstyle\bigoplus_{n \ge 1} \k(\xx(n), R)^{\otimes r}[nr] 
=\textstyle\bigoplus_{n \ge 1} \k(\underbrace{\xx(n), \dots, \xx(n)}_{r}, R)[nr] 
\gtrdot \textstyle\bigoplus_{n \ge 1} \k(\xx(n), R)[nr]=:B.\nonumber
\end{align}
Thus $B$ is in $\langle \bigcup_{n \ge 1}\V(I_n) \rangle$, and Corollary \ref{strcpt}(3) implies $\V(\ann B) \subseteq \bigcup_{n \ge 1}\V(I_n)$.
We have $\ann B = \bigcap_{n \ge 1} \ann \k(\xx(n), R)= \bigcap_{n \ge 1} I_n$ by Proposition \ref{annihi}(3).
It follows that $\V(\bigcap_{n \ge 1}I_n) \subseteq \bigcup_{n \ge 1}\V(I_n)$.
\end{proof}

Our second main result in this section deals with the difference between the radical and tame closures.

\begin{thm}\label{diff}
Let $W$ be a specialization-closed subset of $\spec R$.
Set $\X=\langle W\rangle$ and $\Y=\supp^{-1}W$.
\begin{enumerate}[\rm(1)]
\item
The subcategory $\X$ is compact, and satisfies $\X^\rrad=\sqrt\X$ and $\X^\tame=\Y$.
\item
The subcategory $\X$ (resp. $\Y$) is the smallest (resp. largest) thick $\otimes$-ideal of $\dm(R)$ whose support is $W$.
In particular, one has $\X\subseteq\sqrt\X\subseteq\Y$.
\item 
Assume that $R$ is either a domain or a local ring, and that $W$ is nonempty and proper.
Then one has $\sqrt\X\subsetneq\Y$.
Hence $\Y$ is not compact, and $\X^\rrad\subsetneq\X^\tame$.
\end{enumerate}
\end{thm}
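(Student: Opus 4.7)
Parts (1) and (2) should follow quickly from the machinery already assembled. For (1), $\X = \langle W \rangle$ is generated by the modules $\{R/\p : \p \in W\} \subseteq \db(R)$, hence compact by Corollary \ref{seven}. Since $\supp \X = W$ (Theorem \ref{main}), Lemma \ref{clint}(1) yields $\X^{\rrad} = \sqrt{\X}$ and $\X^{\tame} = \supp^{-1} W = \Y$. For (2), if $\ZZ$ is any thick $\otimes$-ideal with $\supp \ZZ = W$, then for each $\p \in W$ we have $\V(\p) \subseteq W = \supp \ZZ$, so $R/\p \in \ZZ$ by Corollary \ref{3'}, giving $\X \subseteq \ZZ$; conversely every $Z \in \ZZ$ satisfies $\supp Z \subseteq W$, forcing $\ZZ \subseteq \Y$. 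The chain $\X \subseteq \sqrt{\X} \subseteq \Y$ then follows because $\sqrt{\X}$ shares its support with $\X$ (Lemma \ref{clint}(2)).

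The substance of the theorem is (3), which I will establish by exhibiting a single object of $\Y \setminus \sqrt{\X}$. Note first that under the given hypotheses $W$ must contain a nonzero prime: if $(0) \in W$ with $R$ a domain, then specialization-closedness would give $W = \V((0)) = \spec R$, contradicting properness; and if $R$ is local but not a domain, every prime is already nonzero. Fix $\p \in W$ with $\p \ne 0$, choose a system of generators $\xx = x_1, \ldots, x_r$ of $\p$, write $\xx^n := x_1^n, \ldots, x_r^n$, and set
\[
C := \bigoplus_{n \ge 1} \k(\xx^n, R)[n].
\]
A degree-wise count confirms $C \in \dm(R)$, and Proposition \ref{annihi}(3) gives $\supp C = \bigcup_n \V(\xx^n R) = \V(\p) \subseteq W$, so $C \in \Y$.

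The key step is to show $\ann C^{\otimes m} = 0$ for every $m \ge 1$. Expanding the tensor product and isolating the diagonal summand $n_1 = \cdots = n_m = n$, together with the Koszul-duplication fact \cite[Proposition 1.6.21]{BH} (already exploited in the proof of Lemma \ref{lift}(1)) that $\k(\xx^n, R)$ is a direct summand of $\k(\xx^n, R)^{\ltensor m}$, yields $C^{\otimes m} \gtrdot \bigoplus_{n \ge 1} \k(\xx^n, R)[mn]$. Therefore, by Proposition \ref{annihi}(3),
\[
\ann C^{\otimes m} \subseteq \bigcap_{n \ge 1} \ann \k(\xx^n, R) = \bigcap_{n \ge 1} \xx^n R \subseteq \bigcap_{n \ge 1} \p^n = 0,
\]
the final equality being Krull's intersection theorem, which applies because $\p$ is a proper ideal in a noetherian ring that is either a domain or local. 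Hence $\V(\ann C^{\otimes m}) = \spec R \not\subseteq W$ (since $W$ is proper), so Corollary \ref{strcpt}(3) gives $C^{\otimes m} \notin \X$. Thus $C \notin \sqrt{\X}$, proving $\sqrt{\X} \subsetneq \Y$.

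The remaining claims follow formally: if $\Y$ were compact, Proposition \ref{cs} would force $\Y = \langle \supp \Y \rangle = \X$, collapsing $\sqrt{\X} = \Y$; and by (1), $\X^{\rrad} \subsetneq \X^{\tame}$ is merely the strict inclusion $\sqrt{\X} \subsetneq \Y$ rephrased. The essential obstacle is the annihilator computation in the displayed chain above — it is there that both the Koszul-duplication trick and Krull's intersection theorem are needed, and where the domain-or-local hypothesis on $R$ truly enters.
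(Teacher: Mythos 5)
Your proof is correct and follows the same essential strategy as the paper: construct the complex $C=\bigoplus_{n\ge1}\k(\xx^n,R)[n]$ for a prime $\p\in W$, observe $C\in\Y$, extract the diagonal direct summand $\bigoplus_n\k(\xx^n,R)[mn]$ from $C^{\otimes m}$ via the Koszul-duplication fact, and then use Krull's intersection theorem to force the annihilator to vanish. The only difference is in the final extraction step: the paper argues by contradiction, assumes $\sqrt\X=\Y$, and invokes Lemma~\ref{ann} to show $\ann E$ contains a product of powers of primes in $W$, whence $W=\spec R$; you instead argue directly, show $\ann C^{\otimes m}=0$ for every $m$, and then apply the exact characterization of $\langle W\rangle$ from Corollary~\ref{strcpt}(3) to conclude $C^{\otimes m}\notin\X$. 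Your route is a touch cleaner since Corollary~\ref{strcpt}(3) gives a precise membership criterion rather than the annihilator estimate of Lemma~\ref{ann}, but both ultimately rest on the same machinery (Proposition~\ref{key} and the smash nilpotence theory). Your explicit remark that $W$ necessarily contains a nonzero prime — which the paper leaves implicit — is a nice clarification and is exactly right.
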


\begin{proof}
(1) The first statement is evident.
The equalities follows from Lemma \ref{clint} and Proposition \ref{si}.

(2) Let $\ZZ$ be a thick $\otimes$-ideal of $\dm(R)$ whose support is $W$.
Then it is clear that $\ZZ$ is contained in $\Y$.
Proposition \ref{key} implies that $R/\p$ belongs to $\ZZ$ for each $\p\in W$, which shows that $\ZZ$ contains $\X$.

(3) Since $W$ is nonempty, there is a prime ideal $\p\in W$.
Let $\xx=x_1,\dots,x_r$ be a system of generators of $\p$, and put $C=\bigoplus_{i\ge0}\k(\xx^{i+1},R)[i]$, which is an object of $\dm(R)$.
The support of $C$ is equal to $\V(\p)$ by Proposition \ref{annihi}(3), which is contained in $W$ as it is specialization-closed.
Hence $C$ is in $\supp^{-1}W=\Y$.

Suppose that $\sqrt\X$ coincides with $\Y$, and let us derive a contradiction.
There exists an integer $n>0$ such that the $n$-fold tensor product $D:=C\ltensor_R\cdots\ltensor_RC$ belongs to $\X$.
An analogous argument to \eqref{nonn} yields that $D$ contains $E:=\bigoplus_{k\ge0}\k(\xx^{k+1},R)[nk]$ as a direct summand, whence $E$ belongs to $\X$.
We use a similar technique to the one in the latter half of the proof of Proposition \ref{no}.
By Lemma \ref{ann}, there are prime ideals $\p_1,\dots,\p_m\in W$ and integers $e_1,\dots,e_m>0$ such that $\ann E$ contains $\p_1^{e_1}\cdots\p_m^{e_m}$.
We have
\begin{equation}
\textstyle\ann E=\bigcap_{k\ge0}\ann\k(\xx^{k+1},R)=\bigcap_{k\ge0}\xx^{k+1}R=0
\end{equation}
by Proposition \ref{annihi}(3) and Krull's intersection theorem.
This yields $\p_1^{e_1}\cdots\p_m^{e_m}=0$, which says that each prime ideal of $R$ contains $\p_i$ for some $1\le i\le m$.
As $W$ is specialization-closed, we observe that $W=\spec R$, which is contrary to the assumption.
Consequently, $\sqrt\X$ is strictly contained in $\Y$.

If $\Y$ is compact, then we have $\Y=\langle\supp\Y\rangle=\langle W\rangle=\X\subseteq\sqrt\X$ by Proposition \ref{cs} and Proposition \ref{si}(1), which is a contradiction.
Hence $\Y$ is not compact.
\end{proof}

\begin{rem}\label{repl}
\begin{enumerate}[(1)]
\item
Let $\p,C$ be as in the proof of Theorem \ref{diff}(3).
Then
\begin{enumerate}[\qquad(a)]
\item
$\supp C$ is contained in $\supp R/\p$, but $C$ does not belong to $\tthick R/\p$.
\item
$\V(\ann R)$ is contained in $\V(\ann C)$, but $R$ does not belong to $\tthick C$.
\end{enumerate}
This guarantees in Proposition \ref{key} one cannot replace $\V(\ann X)$ by $\supp X$, or $\supp\Y$ by $\V(\ann\Y)$.

Indeed, we have $\supp C=\supp R/\p=\V(\p)\subseteq W\ne\spec R$ and $\ann C=\bigcap_{i\ge0}\xx^{i+1}R=0$.
The former together with Proposition \ref{whole} shows $R\notin\tthick C$, while the latter implies $\V(\ann R)=\V(0)=\V(\ann C)$.
Assume $C$ is in $\tthick R/\p$.
Then $\ann C=0$ contains some power of $\ann R/\p=\p$ by Lemma \ref{ann}.
Hence $\V(\p)=\spec R$, which is a contradiction.
Therefore $C$ is not in $\tthick R/\p$.
\item
The assumption in Theorem \ref{diff}(3) that $R$ is either domain or local is indispensable.
In fact, let $R = A \times B$ be a direct product of two commutative noetherian rings.
Then $\spec R= \spec A \sqcup \spec B$ and $\dm(R)\cong\dm(A) \times \dm(B)$, which imply that $\supp_{\dm(R)}^{-1}(\spec A)=\dm(A)= {\langle\spec A\rangle}_{\dm(R)}$.
\item
Recall that we have the following first section-retraction pair (Proposition \ref{cr}), while Corollary \ref{rtsr} gives rise to the following second section-retraction pair.
$$
()^\rrad:\Cpt\rightleftarrows\Rad:()_\cpt,\qquad
\inc:\Tame\rightleftarrows\Rad:()^\tame.
$$
Corollary \ref{d3} implies that the left diagram below commutes.
Therefore, it is natural to ask whether the right diagram below also commutes.
$$
\xymatrix{
& \Rad\ar[ld]_{()^\cpt}\ar[rd]^{()^\tame}\\
\Cpt\ar@{=}[rr]^\sim && \Tame
}
\qquad
\xymatrix{
& \Rad\\
\Cpt\ar[ru]^{()^\rrad}\ar@{=}[rr]^\sim && \Tame\ar[lu]_\inc
}
$$
This is equivalent to asking if $(\X_\cpt)^\rrad=\X$ for all $\X\in\Tame$, and to asking if $\Y^\tame=\Y^\rrad$ for all $\Y\in\Cpt$.
Theorem \ref{diff} gives rise to a negative answer to this question.
\end{enumerate}
\end{rem}

Finally, we consider a conjecture of Balmer.
Let $\T$ be an arbitrary essentially small tensor triangulated category.
Balmer \cite{B3} constructs a continuous map
$$
\rho_\T^\bullet:\spc\T\to\hspec\RR_\T^\bullet
$$
given by $\rho_\T^\bullet(\P)=(f\in\RR_\T^\bullet\mid\cone f\notin\P)$, where $\RR_\T^\bullet=\Hom_\T(\one,\sus^\bullet\one)$ is a graded-commutative ring.
(The ideal generated by a subset $S$ of a ring $A$ is denoted by $(S)$.)
Recall that a triangulated category is called {\em algebraic} if it arises as the stable category of some Frobenius exact category.
Balmer \cite[Conjecture 72]{B4} conjectures the following.

\begin{conj}[Balmer]\label{balm}
The map $\rho_\T^\bullet$ is (locally) injective when $\T$ is algebraic.
\end{conj}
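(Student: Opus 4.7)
The final stated claim is Balmer's conjecture, which the paper itself flags as false for $\dm(R)$ in Theorem \ref{E}. My plan, accordingly, is to refute the conjecture in this setting rather than attempt a general proof; the tools developed in Sections \ref{sect:bal}--\ref{balconj} together with the DVR analysis in Section \ref{sect:dvr} are exactly what is needed.

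First, I would identify $\rho_{\dm(R)}^\bullet$ with the map $\pp$ of Section \ref{sect:bal}. Since $\Hom_{\dm(R)}(R,R[i])=\h^i(R)$ vanishes for $i\neq 0$ and equals $R$ in degree zero, the graded ring $\RR^\bullet_{\dm(R)}$ is concentrated in degree zero with underlying ring $R$, so $\hspec\RR^\bullet_{\dm(R)}=\spec R$. Unwinding the definition, $\rho_{\dm(R)}^\bullet(\P)=\{f\in R:R/(f)\notin\P\}$, since $\cone(R\xrightarrow{f}R)\cong R/(f)$. By Corollary \ref{3'}, $R/(f)\in\P$ iff $\V(f)\subseteq\supp\P$; combining this with Corollary \ref{ato2} (which characterises $\pp(\P)$ as the maximum element of $(\supp\P)^\complement$) one checks $\V(f)\not\subseteq\supp\P\iff f\in\pp(\P)$. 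Hence $\rho_{\dm(R)}^\bullet=\pp$, and Balmer's conjecture for $\dm(R)$ becomes the assertion that $\pp:\spc\dm(R)\to\spec R$ is locally injective.

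Second, I would find two distinct prime thick $\otimes$-ideals $\P\subsetneq\Q$ in the same $\pp$-fibre; by \cite[Proposition 2.9]{B}, $\P$ then lies in $\overline{\{\Q\}}$, so $\Q$ sits in every open neighbourhood of $\P$, and local injectivity fails at $\P$. The simplest setting in which such a pair is available is a discrete valuation ring $(R,xR)$, where Theorem \ref{F} is expressly designed to deliver an infinite strictly ascending chain $\P_0\subsetneq\P_1\subsetneq\P_2\subsetneq\cdots$ of non-tame prime thick $\otimes$-ideals, each consisting of finite-length complexes with a polynomially controlled growth of Loewy lengths of homologies. Since all the $\P_n$ are contained in $\df(R)=\s((0))$ and properly contain $\zero$, one computes $\pp(\P_n)=(0)$ for every $n$; hence $\pp$ fails to be injective on every open neighbourhood of every $\P_n$. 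For the general statement of Theorem \ref{E}, an element $x\in R$ with $\height(x)>0$ (which exists because $\dim R>0$ and $R$ is a domain or local) can be used to reduce to the DVR picture by localising along a height-one prime containing $(x)$ and transporting the chain back to $\dm(R)$.

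The main obstacle is Theorem \ref{F} itself — specifically, the verification that each $\P_n$ is actually a \emph{prime} thick $\otimes$-ideal, not merely a thick $\otimes$-ideal. Closure under shifts, cones, direct summands and the tensor action of $\dm(R)$ is a bookkeeping exercise on polynomial bounds for Loewy lengths of tensor products of cyclic torsion modules $R/x^aR$. Primeness, however, requires a delicate comparison showing that if neither $X$ nor $Y$ lies in $\P_n$, then $X\ltensor_RY$ cannot lie in $\P_n$ either, which forces a quantitative analysis of how slowly the Loewy lengths of the homologies of $X\ltensor_RY$ can grow relative to those of $X$ and $Y$. That analysis is the technical heart of Section \ref{sect:dvr}; once it is in place, the refutation of Balmer's conjecture follows immediately from the topological argument above.
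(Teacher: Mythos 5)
Your overall strategy — identify $\rho^\bullet_{\dm(R)}$ with $\pp$ via Proposition~\ref{ato}, then exhibit two distinct primes in a common $\pp$-fibre and invoke the topological fact (from \cite[Proposition 2.9]{B}) that if $\P\subsetneq\Q$ then $\Q$ lies in every open neighbourhood of $\P$ — is exactly the right frame, and matches the paper's Corollary~\ref{cntex} at the level of architecture.

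However, your route to producing the two primes does not survive the hypotheses of Theorem~\ref{E}. You propose to localise $R$ at a height-one prime $\p$ containing $(x)$ and then deploy the DVR machinery of Section~\ref{sect:dvr}. But $R_\p$ is a DVR only when it is regular, and Theorem~\ref{E} assumes merely that $R$ is a domain or a local ring of positive dimension. For instance, $R=k[[s,t]]/(t^2-s^3)$ is a one-dimensional local domain whose localisation at its maximal ideal is not a DVR, and no height-one prime gives a regular localisation. The paper's Corollary~\ref{mugen} does use your DVR-plus-localisation reduction, but it explicitly assumes regularity of $R_\p$; Corollary~\ref{cntex} is proved under weaker hypotheses, and this is not an accident. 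The paper instead applies Theorem~\ref{diff}(3) directly: setting $\X=\langle\V(x)\rangle$, one builds the complex $C=\bigoplus_{i\ge0}\k(\xx^{i+1},R)[i]$ and uses Krull's intersection theorem to show $\ann E=0$ for a direct summand $E$ of a tensor power, yielding $\sqrt{\X}\subsetneq\supp^{-1}\V(x)=\X^\tame$; any prime $\P$ with $\X\subseteq\P\subsetneq\P^\tame$ (which exists by Lemma~\ref{ti}) then furnishes the failure of local injectivity. This works in any local or domain setting, without regularity. A secondary gap in your proposal, even in the regular case, is the unchecked compatibility of $\pp$ with the injection $\spc L:\spc\dm(R_\p)\hookrightarrow\spc\dm(R)$ of Lemma~\ref{essur}: you assert the transported chain lands in a single $\pp$-fibre of $\spec R$, but this needs an argument (e.g.\ that $\pp_{\dm(R)}\circ\spc L$ factors through $\spec R_\p\hookrightarrow\spec R$), which the paper avoids by not localising at all.
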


\noindent
Here, recall that a continuous map $f:X\to Y$ of topological spaces is called {\em locally injective at $x\in X$} if there exists a neighborhood $N$ of $x$ such that the restriction $f|_N:N\to Y$ is an injective map.
We say that $f$ is {\em locally injective} if it is locally injective at every point in $X$.
If for any $x\in X$ there exists a neighborhood $E$ of $f(x)$ such that the induced map $f^{-1}(E)\to E$ is injective, then $f$ is locally injective.

Let us consider the above conjecture for our tensor triangulated category $\dm(R)$.
It turns out that for $\T=\dm(R)$, Balmer's constructed map $\rho_\T^\bullet$ coincides with our constructed map $\pp:\spc\dm(R)\to\spec R$.

\begin{prop}\label{ato}
Let $\P$ be a prime thick $\otimes$-ideal of $\dm(R)$.
One then has the following.

{\rm(1)} $\pp(\P)=(a\in R\mid R/a\notin\P)=\{a\in R\mid R/a\notin\P\}$.\quad\qquad
{\rm(2)} $\pp(\P)=\rho_{\dm(R)}^\bullet(\P)$.
\end{prop}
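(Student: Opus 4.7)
The plan is to reduce both parts to the equivalences already packaged in Corollary~\ref{ato2} and Corollary~\ref{3'}, together with a direct identification of the graded ring $\RR_{\dm(R)}^\bullet$.

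For part~(1), first I would reformulate membership in $\pp(\P)$ elementwise. By the very definition of $\pp(\P)$ as the maximum of $\ii(\P)$, an element $a\in R$ lies in $\pp(\P)$ iff $(a)\subseteq\pp(\P)$, which by Corollary~\ref{3'} is equivalent to $R/(a)\notin\P$. This yields the set-theoretic equality $\pp(\P)=\{a\in R\mid R/a\notin\P\}$. Since the left-hand side is already an ideal, the ideal it generates coincides with the set itself, giving the parenthesized form as well. The only thing to be a little careful with is to confirm that the closure under addition and scalar multiplication of $\{a\in R\mid R/a\notin\P\}$ is built into the definition of $\pp(\P)$, which follows because $\pp(\P)$ is literally the ideal singled out in Proposition~\ref{pp}.

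For part~(2), I would first compute the graded endomorphism ring of the unit. Because $R$ is a projective module (hence a compact generator for itself in a strong sense) and $\Hom_{\dm(R)}(R,R[i])=\h^i(R)$ vanishes for $i\ne0$, the graded ring $\RR_{\dm(R)}^\bullet=\Hom_{\dm(R)}(R,\sus^\bullet R)$ is concentrated in degree $0$ and is naturally identified with $R$ itself. Under this identification the homogeneous prime ideals of $\RR_{\dm(R)}^\bullet$ are just the prime ideals of $R$, so $\hspec\RR_{\dm(R)}^\bullet=\spec R$ as topological spaces. Moreover, for $a\in R$ viewed as a morphism $a:R\to R$ in $\dm(R)$, the cone $\cone(a)$ is (up to shift) the Koszul complex $\k(a,R)$.

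With these identifications, $\rho_{\dm(R)}^\bullet(\P)$ becomes the ideal of $R$ generated by $\{a\in R\mid\k(a,R)\notin\P\}$. By Corollary~\ref{3'} applied to the principal ideal $(a)$, the condition $\k(a,R)\notin\P$ is equivalent to $R/(a)\notin\P$, so this set coincides with the one appearing in part~(1); invoking (1) concludes $\rho_{\dm(R)}^\bullet(\P)=\pp(\P)$. The main (very mild) obstacle is bookkeeping: one should verify that Balmer's definition of $\cone f$ for a homogeneous element of the graded endomorphism ring matches the ordinary mapping cone in $\dm(R)$ up to a shift that does not affect membership in the thick $\otimes$-ideal $\P$, so that the equivalence with Corollary~\ref{3'} goes through verbatim.
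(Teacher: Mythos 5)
Your proof is correct, and for part (1) you actually take a cleaner route than the paper. The paper's proof of (1) starts from the ideal $J=(a\in R\mid R/a\notin\P)$, reduces to a finite generating set $x_1,\dots,x_n$ via noetherianity, invokes the tensor decomposition $\k(x_1,\dots,x_n,R)\cong\k(x_1,R)\ltensor_R\cdots\ltensor_R\k(x_n,R)$ together with primeness of $\P$ to conclude $\k(\xx,R)\notin\P$, and only then obtains $J\in\ii(\P)$ and hence $J\subseteq\pp(\P)$; the reverse inclusion is direct from Corollary \ref{3'}. You bypass the Koszul tensor argument and the extra appeal to primeness by working elementwise: once one knows $\pp(\P)$ exists as the maximum of $\ii(\P)$, the equivalence $a\in\pp(\P)\Leftrightarrow R/a\notin\P$ drops out from the downward closure of $\ii(\P)$ (if $(a)\subseteq\pp(\P)\in\ii(\P)$ then $\V(a)\supseteq\V(\pp(\P))\not\subseteq\supp\P$) together with maximality (if $(a)\in\ii(\P)$ then $(a)\subseteq\pp(\P)$) and Corollary \ref{3'}. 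That intermediate step---the downward closure of $\ii(\P)$---is silently absorbed into your ``by the very definition of $\pp(\P)$''; it would be worth stating explicitly, but it is an immediate observation, not a gap. The net effect is that you recover the set-theoretic equality $\pp(\P)=\{a\mid R/a\notin\P\}$ before ever considering the generated ideal, so no finiteness or primeness bookkeeping is needed. For part (2), your argument is what the paper leaves implicit: identify $\RR_{\dm(R)}^\bullet$ with $R$ in degree $0$ (using that $R$ is a projective module, so $\Hom_{\dm(R)}(R,R[i])=0$ for $i\ne0$), hence $\hspec\RR_{\dm(R)}^\bullet=\spec R$, note $\cone(a:R\to R)\cong\k(a,R)$ up to shift (which does not affect membership in the thick $\otimes$-ideal $\P$), and apply Corollary \ref{3'} and part (1); this is exactly what the paper's one-line ``Corollary \ref{3'} and (1) imply (2)'' abbreviates.
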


\begin{proof}
Corollary \ref{3'} and (1) imply (2).
Let us show (1).
Set $J=(a\in R\mid R/a\notin\P)$.
As $R$ is noetherian, we find a finite number of elements $x_1,\dots,x_n$ with $R/x_1,\dots,R/x_n\notin\P$ and $J=(x_1,\dots,x_n)$.
Therefore $\k(x_1,\dots,x_n,R)=\k(x_1,R)\ltensor_R\cdots\ltensor_R\k(x_n,R)$ is not in $\P$ by Corollary \ref{3'} and the fact that $\P$ is prime.
Using Corollary \ref{3'} again shows $J\in\ii(\P)$, whence $J$ is contained in $\pp(\P)$.
Next, take any $a\in\pp(\P)$.
Since $\V(\pp(\P))$ is not contained in $\supp\P$, neither is $\V(a)$.
This implies $R/a\notin\P$ by Corollary \ref{3'}.
\end{proof}

As an application of our Theorem \ref{diff}, we confirm that Conjecture \ref{balm} is not true in general; our $\dm(R)$ is an algebraic triangulated category, but does not satisfy Conjecture \ref{balm} under quite mild assumptions:

\begin{cor}\label{cntex}
Assume that $R$ has positive dimension, and that $R$ is either a domain or a local ring.
Then the map $\pp:\spc\dm(R)\to\spec R$ is not locally injective.
Hence Conjecture \ref{balm} does not hold.
\end{cor}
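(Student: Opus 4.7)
The plan is to use Theorem \ref{diff}(3) to manufacture a non-tame prime thick $\otimes$-ideal $\P$, and then compare it with its tame closure $\P^\tame$: these will lie in the same fiber of $\pp$ but be distinct, after which a short topological argument shows that local injectivity of $\pp$ fails at $\P$.

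First, invoke the hypothesis $\dim R > 0$ together with the assumption that $R$ is a domain or a local ring to pick a non-minimal prime $\p$ of $R$ (the maximal ideal will do in either case), and set $W = \V(\p)$. This $W$ is nonempty since $\p \in W$, and proper since any prime strictly contained in $\p$ lies outside $W$. Theorem \ref{diff}(3) then gives the strict inclusion $\sqrt{\langle W\rangle} \subsetneq \supp^{-1}W$.

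By Lemma \ref{ti}, the left-hand side is the intersection of all primes of $\spc\dm(R)$ containing $\langle W\rangle$, while the right-hand side, which equals $(\langle W\rangle)^\tame$ by Lemma \ref{clint}(1) and Proposition \ref{si}(2), is the intersection of all \emph{tame} primes containing $\langle W\rangle$. The strict inequality thus forces the existence of a non-tame prime thick $\otimes$-ideal $\P$ with $\langle W\rangle \subseteq \P$. Write $\p := \pp(\P)$ and set $\P^\tame := \supp^{-1}\supp\P$. By Theorem \ref{s}, $\P^\tame = \s(\p)$ is tame and prime with $\pp(\P^\tame) = \p$, and Corollary \ref{ato2} yields $\P \subseteq \P^\tame$. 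Since $\P$ is not tame while $\P^\tame$ is, these two primes are distinct, so $\P \subsetneq \P^\tame$.

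The topological finish rests on \cite[Proposition 2.9]{B}: the strict containment $\P \subsetneq \P^\tame$ places $\P$ in the closure $\overline{\{\P^\tame\}}$, so every open neighborhood of $\P$ in $\spc\dm(R)$ contains $\P^\tame$. Since $\pp(\P) = \pp(\P^\tame)$ while $\P \neq \P^\tame$, the map $\pp$ cannot be injective on any open neighborhood of $\P$; that is, $\pp$ is not locally injective at $\P$. Proposition \ref{ato}(2) identifies $\pp$ with Balmer's comparison map $\rho_{\dm(R)}^\bullet$, so Conjecture \ref{balm} fails for the algebraic triangulated category $\dm(R)$. The crux of the proof is the step extracting a non-tame prime from the strict inclusion of radical and tame closures, which is precisely what Lemma \ref{ti} enables by presenting both sides as intersections of primes of the appropriate type; once this non-tame prime is in hand, everything else is formal.
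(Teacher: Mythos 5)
Your proof is correct and follows essentially the same strategy as the paper: choose a nonempty proper specialization-closed $W$ (the paper takes $W = \V(x)$ for $x$ of positive height, you take $W = \V(\p)$ for a non-minimal prime $\p$, a cosmetic difference), apply Theorem \ref{diff}(3) together with Lemma \ref{ti} to extract a non-tame prime $\P$ with $\P \subsetneq \P^\tame$ and $\pp(\P) = \pp(\P^\tame)$, and then observe that no neighborhood of $\P$ can avoid $\P^\tame$. Your spelled-out justification that a non-tame prime containing $\langle W\rangle$ exists, via the two intersection formulas in Lemma \ref{ti}, is exactly the intended reading of the paper's terser citation of that lemma.
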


\begin{proof}
We can choose a nonunit $x\in R$ such that the ideal $xR$ of $R$ has positive height (hence it has height $1$).
Put $\X=\langle\V(x)\rangle$.
Using Theorem \ref{diff}(3) and Lemma \ref{ti}, we find a prime thick $\otimes$-ideal $\P$ such that $\X\subseteq\P\subsetneq\P^\tame$.
Suppose that $\pp$ is locally injective at $\P$.
Then there exists a complex $M\in\dm(R)$ with $\P\in\u(M)$ such that the restriction $\pp|_{\u(M)}:\u(M)\to\spec R$ is injective.
Since $M$ is in $\P$, it is also in $\P^\tame$.
Hence both $\P$ and $\P^\tame$ belong to $\u(M)$.
However, these two prime thick $\otimes$-ideals are sent by $\pp$ to the same point; see Theorem \ref{s}.
This contradicts the injectivity of $\pp|_{\u(M)}$, and we conclude that $\pp$ is not locally injective at $\P$.
The last assertion of the corollary follows from Proposition \ref{ato}(2).
\end{proof}

\begin{rem}
The reader may think that Corollary \ref{cntex} can also be obtained by showing that the map
$$
f:\spc\dm(R)\to\spc\kb(\proj R),\quad \P\mapsto\P\cap\kb(\proj R)
$$
is not injective.
We are not sure whether the non-injectivity of the map $f$ implies Corollary \ref{cntex}, but at least showing the non-injectivity of $f$ is equivalent to our approach:
Using Proposition \ref{key}, we see that $\P\cap\kb(\proj R)$ contains the Koszul complex of a system of generators of each prime ideal belonging to $\supp\P$.
Hence $\supp(\P\cap\kb(\proj R))=\supp\P$, and the Hopkins--Neeman theorem implies $\P\cap\kb(\proj R)=\supp_{\kb(\proj R)}^{-1}\supp\P$.
Therefore, for $\P,\Q\in\spc\dm(R)$ it holds that
$$
f(\P)=f(\Q)\ \Longleftrightarrow\ \supp\P=\supp\Q,
$$
which says that the map $f$ is injective if and only if all the prime thick tensor ideals of $\dm(R)$ are tame.
In the end, even if we intend to prove Corollary \ref{cntex} by showing the non-injectivity of the map $f$, we must find a non-tame prime thick tensor ideal of $\dm(R)$, which is what we have done in this section.
\end{rem}

%%%%%%%%%%%%%%%%%%%%%%%%%%%%%%%%%%%%%%%%%%%%%%%%%%%%%%%%%%%%%%%
\section{Thick tensor ideals over discrete valuation rings}\label{sect:dvr}

In this section, we concentrate on handling the case where $R$ is a discrete valuation ring.
Several properties that are specific to this case are found out in this section.
Just for convenience, we write complexes as chain complexes, rather than as cochain complexes.
We start by studying complexes with zero differentials.

\begin{prop}\label{kasane}
Let $X=\bigoplus_{i\ge0}X_i[i]=(\cdots\zs X_3\zs X_2\zs X_1\zs X_0\to0)$ be a complex in $\dm(R)$.
Then it holds that $\tthick X=\tthick Y$ in $\dm(R)$, where
$$
Y=\textstyle\bigoplus_{i\ge0}(\bigoplus_{j=0}^iX_j)[i]=(\cdots\zs X_3\oplus X_2\oplus X_1\oplus X_0\zs X_2\oplus X_1\oplus X_0\zs X_1\oplus X_0\zs X_0\to0).
$$
\end{prop}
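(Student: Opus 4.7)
The plan is to prove the two inclusions $\tthick Y\subseteq\tthick X$ and $\tthick X\subseteq\tthick Y$ separately, using two rather different descriptions of the relationship between $X$ and $Y$.

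For the inclusion $Y\in\tthick X$, I would introduce the auxiliary complex $T=\bigoplus_{j\ge0}R[j]$, which lies in $\dm(R)$ since $\h^i(T)=0$ for all $i>0$. Writing $Z_k=\bigoplus_{j=0}^kX_j$ and computing degree by degree, the graded module underlying $X\ltensor_RT$ has in cohomological degree $-k$ (for $k\ge0$) the direct sum $\bigoplus_{i+j=k,\,i,j\ge0}X_i=Z_k$. Since $X$ and $T$ both have vanishing differentials, so does $X\otimes_RT$; and $T$ is a complex of free modules, so $\ltensor_R$ and $\otimes_R$ agree here. Hence $X\ltensor_RT\cong Y$ in $\dm(R)$. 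Because $\tthick X$ is a thick tensor ideal, it is closed under tensoring with $T\in\dm(R)$, giving $Y\in\tthick X$.

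For the reverse inclusion $X\in\tthick Y$, I would construct an explicit short exact sequence of complexes
$$0\longrightarrow Y[1]\xrightarrow{\ \iota\ }Y\longrightarrow X\longrightarrow0$$
as follows. Setting $Z_{-1}=0$, define $\iota$ in cohomological degree $-k$ to be the natural split monomorphism $Z_{k-1}\hookrightarrow Z_{k-1}\oplus X_k=Z_k$. Since all differentials vanish, $\iota$ is automatically a chain map, and the cokernel in cohomological degree $-k$ is $X_k$, so the cokernel complex is $\bigoplus_{k\ge0}X_k[k]=X$. The resulting exact triangle $Y[1]\to Y\to X\to Y[2]$ in $\dm(R)$ realizes $X$ as a cone of a morphism whose source and target are both shifts of $Y$, so $X\in\thick Y\subseteq\tthick Y$. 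Combining the two inclusions yields $\tthick X=\tthick Y$.

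There is no serious obstacle; the only substantive content is the idea of tensoring with $T$ to pass from $X$ to $Y$, and dually, using the splitting $Z_k=Z_{k-1}\oplus X_k$ to recover $X$ as the cone of the shift map $Y[1]\to Y$. Both constructions reduce to routine bookkeeping once one fixes the ambient summands $X_j$ and exploits the fact that all differentials are zero.
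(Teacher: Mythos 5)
Your proof is correct, and the first half is essentially the paper's argument verbatim: tensoring $X$ with the complex $F=\bigoplus_{j\ge0}R[j]$ (your $T$) of free modules with zero differentials yields $Y$, so $Y\in\tthick X$. For the reverse inclusion you take a genuinely different, and somewhat more elaborate, route than the paper. You build the degreewise-split short exact sequence $0\to Y[1]\to Y\to X\to0$ from the inclusions $Z_{k-1}\hookrightarrow Z_k$ and conclude $X\in\thick Y$ via the associated exact triangle; this is valid. The paper instead makes the simpler observation that $X$ is already a \emph{direct summand} of $Y$: indeed $Y=\bigoplus_{i,j\ge0}X_i[i+j]\cong\bigoplus_{j\ge0}X[j]$, with $X$ the $j=0$ summand (equivalently, $X_k$ is a degreewise summand of $Z_k=\bigoplus_{j=0}^kX_j$ and all differentials vanish), so $X\in\thick Y$ by closure under direct summands alone, with no triangle needed. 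Your cone argument buys nothing extra here, since both approaches land $X$ in $\thick Y$, and the summand observation is the more economical one you apparently missed; still, your argument is sound and self-contained.
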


\begin{proof}
Putting $F=\bigoplus_{j\ge0}R[j]$, we have $X\ltensor_RF=(\bigoplus_{i\ge0}X_i[i])\ltensor_R(\bigoplus_{j\ge0}R[j])=\bigoplus_{i,j\ge0}X_i[i+j]=Y$.
Hence $\tthick X$ contains $\tthick Y$.
The opposite inclusion also holds as $X$ is a direct summand of $Y$.
\end{proof}

\begin{prop}\label{zero}
Let $X=\bigoplus_{i\ge0}X_i[i]=(\cdots\zs X_3\zs X_2\zs X_1\zs X_0\to0)$ be a complex in $\dm(R)$.
Then for all integers $a_i\ge0$, the thick $\otimes$-ideal closure $\tthick X$ in $\dm(R)$ contains
$$
\textstyle\bigoplus_{i\ge0}X_i^{\oplus a_i}[2i]=(\cdots\to X_3^{\oplus a_3}\to0\to X_2^{\oplus a_2}\to0\to X_1^{\oplus a_1}\to0\to X_0^{\oplus a_0}\to0).
$$
\end{prop}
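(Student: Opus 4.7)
The plan is to realize the target complex
\[
T := \bigoplus_{i \ge 0} X_i^{\oplus a_i}[2i]
\]
as a direct summand of $X \ltensor_R A$ for a well-chosen $A \in \dm(R)$; since $\tthick X$ is closed under tensoring with arbitrary objects of $\dm(R)$ and under direct summands, this will suffice.

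Concretely, I would take $A = \bigoplus_{k \ge 0} R^{\oplus a_k}[k]$. This is a bounded-below complex of finitely generated free $R$-modules with zero differentials, so it lies in $\dm(R)$. Because both $X$ and $A$ have zero differentials, their tensor product distributes through the direct sum decompositions and yields
\[
X \ltensor_R A = \bigoplus_{i,\, k \ge 0} X_i^{\oplus a_k}[i+k],
\]
which, once regrouped by total degree $m = i + k$, has $\bigoplus_{i+k=m} X_i^{\oplus a_k}$ in degree $m$ and still has zero differentials.

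The complex $T$ likewise has zero differentials, with $X_n^{\oplus a_n}$ in each even degree $2n$ and $0$ in odd degrees. In the degree-$2n$ part of $X \ltensor_R A$, the \emph{diagonal} summand corresponding to the index $(i,k) = (n,n)$ is precisely $X_n^{\oplus a_n}$, while in odd degrees there is nothing in $T$ to match. Since every differential involved vanishes, these term-by-term split inclusions assemble degree-by-degree into a genuine split monomorphism of chain complexes $T \hookrightarrow X \ltensor_R A$, exhibiting $T$ as a direct summand of $X \ltensor_R A$ and hence as an object of $\tthick X$.

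The main conceptual obstacle is the infinite-direct-sum issue: thick subcategories do not admit arbitrary infinite coproducts, so it will not do simply to observe that each single piece $X_i[2i]$ lies in $\tthick X$ (via $X_i[i] \lessdot X$ followed by tensoring with $R[i]$) and then attempt to glue the pieces together. Choosing $A$ as above realizes the whole of $T$ in one stroke as a summand of the single tensor product $X \ltensor_R A$, which is what sidesteps this difficulty.
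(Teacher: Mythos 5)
Your proof is correct and takes essentially the same route as the paper: both choose the auxiliary complex $Y=\bigoplus_{j\ge0}R^{\oplus a_j}[j]$, observe that $X\ltensor_R Y=\bigoplus_{i,j\ge0}X_i^{\oplus a_j}[i+j]$ since all differentials vanish, and then exhibit $\bigoplus_{i\ge0}X_i^{\oplus a_i}[2i]$ as the diagonal direct summand. The paper merely states the direct-summand claim without spelling out the degree-by-degree split inclusion, which you make explicit.
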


\begin{proof}
In the category $\dm(R)$ the complex $\bigoplus_{i\ge0}X_i^{\oplus a_i}[2i]=\bigoplus_{i\ge0}(X_i\ltensor_RR^{\oplus a_i})[2i]$ is a direct summand of $\bigoplus_{i,j\ge0}(X_i\ltensor_RR^{\oplus a_j})[i+j]=(\bigoplus_{i\ge0}X_i[i])\ltensor_R(\bigoplus_{j\ge0}R^{\oplus a_j}[j])=X\ltensor_RY$, where $Y=\bigoplus_{j\ge0}R^{\oplus a_j}[j]=(\cdots\zs R^{\oplus a_2}\zs R^{\oplus a_1}\zs R^{\oplus a_0}\to0)$ is a complex in $\dm(R)$.
Thus the assertion follows.
\end{proof}

\begin{cor}\label{kougo}
Let $X=\bigoplus_{i\ge0}X_i[i]=(\cdots\zs X_3\zs X_2\zs X_1\zs X_0\to0)$ be a complex in $\dm(R)$.
Then for any integers $a_i\ge0$ the complex
$$
Y=\textstyle\bigoplus_{i\ge0}X_i^{\oplus a_i}[i]=(\cdots\zs X_3^{\oplus a_3}\zs X_2^{\oplus a_2}\zs X_1^{\oplus a_1}\zs X_0^{\oplus a_0}\to0)
$$
is in $\tthick\{X_\even,X_\odd\}$, where $X_\even=\bigoplus_{i\ge0}X_{2i}[i]=(\cdots\zs X_6\zs X_4\zs X_2\zs X_0\to0)$ and $X_\odd=\bigoplus_{i\ge0}X_{2i+1}[i]=(\cdots\zs X_7\zs X_5\zs X_3\zs X_1\to0)$.
\end{cor}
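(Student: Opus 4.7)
\medskip

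\noindent\textbf{Proof plan for Corollary \ref{kougo}.} The plan is to split $Y$ into its even-degree and odd-degree parts and to build each part separately inside $\tthick X_\even$ and $\tthick X_\odd$, respectively, using Proposition \ref{zero}. Since a thick $\otimes$-ideal is closed under direct sums and shifts, assembling the two pieces will yield $Y \in \tthick\{X_\even, X_\odd\}$.

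For the even part, observe that $X_\even=\bigoplus_{i\ge 0} X_{2i}[i]$ is itself a complex with zero differential of the form treated by Proposition \ref{zero}. Applying that proposition to $X_\even$ with the sequence of multiplicities $b_i:=a_{2i}$, one obtains that
$$
A := \textstyle\bigoplus_{i\ge 0} X_{2i}^{\oplus a_{2i}}[2i]\ \in\ \tthick X_\even.
$$
This is exactly the subcomplex of $Y$ concentrated in even degrees, with the correct multiplicities in the correct positions.

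For the odd part, apply Proposition \ref{zero} in the same way to $X_\odd=\bigoplus_{i\ge 0}X_{2i+1}[i]$ with multiplicities $c_i:=a_{2i+1}$, giving $\bigoplus_{i\ge 0}X_{2i+1}^{\oplus a_{2i+1}}[2i]\in\tthick X_\odd$. Since $\tthick X_\odd$ is closed under shifts, the single shift by $[1]$ yields
$$
B := \textstyle\bigoplus_{i\ge 0} X_{2i+1}^{\oplus a_{2i+1}}[2i+1]\ \in\ \tthick X_\odd,
$$
which is exactly the odd-degree part of $Y$ in the correct positions.

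Finally, since $Y=A\oplus B$ by definition, and $\tthick\{X_\even,X_\odd\}$ is closed under finite direct sums, we conclude $Y\in\tthick\{X_\even,X_\odd\}$. There is no serious obstacle here: once the indexing is sorted out, the argument is just two applications of Proposition \ref{zero} followed by an obvious decomposition, the only subtle point being to remember the extra shift by $[1]$ needed on the odd side because $X_\odd$ places $X_{2i+1}$ in homological position $i$ rather than $2i+1$.
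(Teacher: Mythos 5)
Your proposal is correct and takes exactly the same route as the paper: split $Y$ into the even‑degree summand $A=\bigoplus_{i\ge0}X_{2i}^{\oplus a_{2i}}[2i]$ and the odd‑degree summand $B=\bigoplus_{i\ge0}X_{2i+1}^{\oplus a_{2i+1}}[2i+1]$, place each in the respective $\tthick X_\even$ or $\tthick X_\odd$ via Proposition~\ref{zero} (with a single shift on the odd side), and conclude by closure under direct sums. The only difference is that you spell out the $[1]$‑shift that the paper leaves implicit when invoking Proposition~\ref{zero} for $B$.
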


\begin{proof}
The complex $Y$ is the direct sum of $A=(\cdots\to0\to X_4^{\oplus a_4}\to0\to X_2^{\oplus a_2}\to0\to X_0^{\oplus a_0}\to0)$ and $B=(\cdots\to X_5^{\oplus a_5}\to0\to X_3^{\oplus a_3}\to0\to X_1^{\oplus a_1}\to0\to0)$.
Proposition \ref{zero} shows that $A$ is in $\tthick X_\even$ and $B$ is in $\tthick X_\odd$.
Therefore $Y$ belongs to $\tthick\{X_\even,X_\odd\}$.
\end{proof}

A natural question arises from Proposition \ref{zero} and Corollary \ref{kougo}:

\begin{ques}
Does $\tthick(\cdots\to0\to X_2\to0\to X_1\to0\to X_0\to0)$ contain $(\cdots\zs X_2\zs X_1\zs X_0\to0)$?
Does $\tthick(\cdots\zs X_1\zs X_0\to0)$ contain $(\cdots\zs X_1^{\oplus a_1}\zs X_0^{\oplus a_0}\to0)$ for all integers $a_i\ge0$?
\end{ques}

We do not know the general answer to this question.
The following example gives an affirmative answer.

\begin{ex}
Let $(R,xR)$ be a discrete valuation ring.
Then
$$
\tthick(\cdots\zs R/x^3\zs R/x^2\zs R/x\to0)
=\tthick(\cdots\to0\to R/x^3\to0\to R/x^2\to0\to R/x\to0).
$$
\end{ex}

\begin{proof}
In fact, the inclusion $(\supseteq)$ follows from Proposition \ref{zero}.
To check the inclusion $(\subseteq)$, set $A=(\cdots\zs R/x^3\zs R/x^2\zs R/x\to0)$ and $B=(\cdots\to0\to R/x^3\to0\to R/x^2\to0\to R/x\to0)$.
Note that for each integer $n\ge0$ there is an exact sequence $0\to R/x^n\xrightarrow{x^{n+1}}R/x^{2n+1}\to R/x^{n+1}\to0$ of $R$-modules.
This induces an exact sequence $0\to C \to A\to B\to0$ of complexes of $R$-modules, where
$$
C=(\cdots\zs R/x^n\zs R/x^{2n}\zs R/x^{n-1}\zs R/x^{2(n-1)}\zs\cdots\zs R/x^2\zs R/x^4\zs R/x\zs R/x^2\to0).
$$
We see that $C=B[2]\oplus D$, where $D=(\cdots\to0\to R/x^{2n}\to0\to\cdots\to0\to R/x^4\to0\to R/x^2\to0)$, and have an exact sequence $0 \to B[1]\to D\to B[1]\to0$ of complexes.
The assertion now follows.
\end{proof}

The {\em Loewy length} of a finitely generated $R$-module $M$, denoted by $\ell\ell_R(M)$, is by definition the infimum of integers $i$ such that the ideal $(\rad R)^i$ kills $M$.
Let us consider thick $\otimes$-ideals defined by Loewy lengths.

\begin{nota}\label{dl}
Let $R$ be a local ring with maximal ideal $\m$.
Let $c\ge0$ be an integer.
\begin{enumerate}[(1)]
\item
Let $\L_c$ be the subcategory of $\df(R)$ consisting of complexes $X$ such that there exists an integer $t\ge0$ with $\ell\ell(\h_i X)\le ti^{c-1}$ for all $i\gg0$.
\item
When $c\ge1$, let $G_c$ be the complex $\bigoplus_{i>0}(R/\m^{i^{c-1}})[i]=(\cdots\zs R/\m^{3^{c-1}}\zs R/\m^{2^{c-1}}\zs R/\m\to0)$.
\end{enumerate}
\end{nota}

\begin{prop}\label{L_0}
Let $(R,\m,k)$ be local.
One has $\L_0\subsetneq\L_1\subsetneq\L_2\subsetneq\cdots$ and $\L_0=\dbf(R)=\thick_{\dm(R)}k$.
\end{prop}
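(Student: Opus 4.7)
My plan is to split the proposition into the identity $\L_0=\dbf(R)=\thick_{\dm(R)}k$ and the chain of strict inclusions, and handle them in turn. The identity $\L_0=\dbf(R)=\thick_{\dm(R)}k$ I would obtain as follows. The second equality is Proposition~\ref{si2}(3), applied verbatim. For the first equality, I would observe that a complex $X\in\df(R)$ lies in $\L_0$ iff $\ell\ell(\h_i X)\le t/i$ for some fixed integer $t\ge 0$ and all $i\gg 0$; since $\ell\ell(\h_i X)$ is a nonnegative integer and $t/i<1$ whenever $i>t$, this forces $\h_i X=0$ for all sufficiently large $i$. Combined with the defining bound of $\dm(R)$ on the opposite side of the degree axis, the homology of $X$ is then confined to a bounded range, and since $X\in\df(R)$, each homology has finite length; hence $X\in\dbf(R)$. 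The reverse inclusion is immediate by taking $t=0$.

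For the chain of strict inclusions, the containments $\L_c\subseteq\L_{c+1}$ are immediate from $i^{c-1}\le i^c$ for $i\ge 1$, so the real task is to exhibit explicit witnesses in each $\L_{c+1}\setminus\L_c$. For the base case $c=0$ I would take $G_1=\bigoplus_{i>0}k[i]$: all of its homologies are $k$, so $\ell\ell(\h_i G_1)=1$ for $i>0$, placing $G_1$ in $\L_1$; on the other hand $G_1$ has infinitely many nonzero homologies, hence does not lie in $\dbf(R)=\L_0$. For $c\ge 1$ I would take $G_{c+1}=\bigoplus_{i>0}(R/\m^{i^c})[i]$, which visibly lies in $\df(R)$ and belongs to $\L_{c+1}$ with $t=1$. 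To show $G_{c+1}\notin\L_c$ I would assume the contrary and extract a constant $t\ge 0$ such that $\ell\ell(R/\m^{i^c})\le t\,i^{c-1}$ for $i\gg 0$; combined with the equality $\ell\ell(R/\m^{i^c})=i^c$ this becomes $i\le t$ for $i\gg 0$, which is absurd.

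The one input requiring care is the equality $\ell\ell(R/\m^{i^c})=i^c$ that closes the last step. It comes down to $\m^{i^c-1}\neq 0$ in $R$, which is automatic in the DVR setting of this section (and more generally in any non-artinian local ring) by Krull's intersection theorem. In the artinian case the Loewy lengths saturate at $\ell\ell(R)$ and the tower $\L_1\subseteq\L_2\subseteq\cdots$ collapses from the point where $i^{c-1}\ge\ell\ell(R)$, so the DVR (or non-artinian) hypothesis is precisely what makes the strict ascending chain genuine. No further obstacle arises.
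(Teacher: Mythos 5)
Your proof is correct and follows essentially the same route as the paper: both pass the second equality through Proposition~\ref{si2}(3), both deduce $\L_0=\dbf(R)$ by observing that the bound $\ell\ell(\h_iX)\le ti^{-1}$ forces vanishing of homology for $i\gg 0$, and both use the complexes $G_{c+1}$ as explicit witnesses for the strictness of $\L_c\subsetneq\L_{c+1}$ via the computation $\ell\ell(\h_iG_{c+1})=i^c$ together with the divergence of $i^c/i^{c-1}$.

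One substantive point in your favor: you correctly flag that the step $\ell\ell(R/\m^{i^c})=i^c$ requires $\m^{i^c-1}\neq 0$, which fails for artinian local rings, and in that case the tower $\L_1\subseteq\L_2\subseteq\cdots$ collapses because Loewy lengths are uniformly bounded by $\ell\ell(R)$. The paper's proof silently uses this equality under the bare hypothesis ``$(R,\m,k)$ local,'' so as literally stated the strict chain beyond $\L_0\subsetneq\L_1$ needs $R$ non-artinian; this is harmless in context (the whole section works over a DVR, and that is the hypothesis used in everything downstream), but you are right to name the hidden assumption. The only small inaccuracy in your write-up is the attribution to Krull's intersection theorem: the relevant fact is simply that a noetherian local ring has $\m^j\neq 0$ for all $j$ if and only if it is non-artinian (equivalently, $\m$ is not nilpotent; in the DVR case $\m^j=x^jR$ and $x$ is a non-zerodivisor in a domain, so this is immediate). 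Krull's intersection theorem gives $\bigcap_j\m^j=0$, which is a different statement and is not what is needed here.
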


\begin{proof}
Fix an integer $n\ge0$.
It is clear that $\L_n$ is contained in $\L_{n+1}$.
We have $\ell\ell(\h_iG_{n+1})=i^n$ for each $i\ge0$, which shows $\L_n\ne\L_{n+1}$.
Hence the chain $\L_0\subsetneq\L_1\subsetneq\L_2\subsetneq\cdots$ is obtained.
Let $X$ be a complex in $\dm(R)$.
Suppose that there exists an integer $t\ge0$ such that $\ell\ell(\h_i X)\le ti^{-1}$ for $i\gg0$.
Then we have to have $\ell\ell(\h_i X)=0$ for $i\gg0$, which says that $\h_j X=0$ for $j\gg0$.
Thus we obtain $\L_0=\dbf(R)=\thick_{\dm(R)}k$, where the second equality is shown in Proposition \ref{si2}.
\end{proof}

Recall that an abelian category $\A$ is called {\em hereditary} if it has global dimension at most one, that is, if $\Ext_\A^2(\A,\A)=0$.
Recall also that a ring $R$ is called {\em hereditary} if $R$ has global dimension at most one.

From now on, we study thick $\otimes$-ideals of $\dm(R)$ when $R$ is local and hereditary.
In this case, $R$ is either a field or a discrete valuation ring.
If $R$ is a field, then by Corollary \ref{art} there are only trivial thick $\otimes$-ideals.
So, we mainly consider the case of a discrete valuation ring.
First, we mention a well-known fact, saying that each complex in the derived category of a hereditary abelian category has zero differentials.

\begin{lem}\label{hered}\cite[1.6]{K}
Let $\A$ be a hereditary abelian category.
Then for each object $M\in\d(\A)$ there exists an isomorphism $M\cong\h(M)=\bigoplus_{i\in\Z}\h_i(M)[i]$ in $\d(\A)$.
\end{lem}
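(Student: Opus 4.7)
The plan is to argue by induction on the length of the complex using the standard t-structure on $\d(\A)$, reducing the splitting problem to the vanishing of $\Ext^{\ge 2}$, and then to pass from the bounded to the (one-sided) unbounded case.

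First I would treat bounded complexes $M \in \db(\A)$ by induction on the number of degrees in which $M$ has nonzero homology. If this number is one, the statement is trivial. Otherwise let $n$ be the largest index with $\h_n(M) \ne 0$. Since $n$ is maximal, the canonical truncation satisfies $\tau_{\ge n}M \cong \h_n(M)[n]$, giving the truncation triangle
$$\h_n(M)[n] \to M \to \tau_{\le n-1}M \to \h_n(M)[n+1].$$
By the inductive hypothesis applied to $\tau_{\le n-1}M$, there is an isomorphism $\tau_{\le n-1}M \cong \bigoplus_{i \le n-1}\h_i(M)[i]$. The connecting morphism $\tau_{\le n-1}M \to \h_n(M)[n+1]$ thus decomposes into components living in
$$\Hom_{\d(\A)}(\h_i(M)[i],\, \h_n(M)[n+1]) = \Ext^{n+1-i}_\A(\h_i(M), \h_n(M))$$
for $i \le n-1$. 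Since $n+1-i \ge 2$ and $\A$ is hereditary (so $\Ext^{\ge 2}_\A = 0$), every such component vanishes. Hence the connecting morphism is zero, the triangle splits, and we obtain $M \cong \h_n(M)[n] \oplus \tau_{\le n-1}M \cong \bigoplus_{i \in \Z}\h_i(M)[i]$.

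For the general case (in particular for $M \in \dm(R)$, whose homology is bounded below in chain degree but possibly infinite above), the direct sum $\bigoplus_{i\in\Z}\h_i(M)[i]$ makes sense as an object of $\d(\A)$, and one constructs a quasi-isomorphism between it and $M$ by assembling the compatible splittings obtained at each finite stage $\tau_{\le n}M$. Alternatively, since $\A$ is hereditary, every $\h_i(M)$ admits a projective resolution of length $\le 1$, so one can directly replace $M$ by a complex of projectives with zero differentials concentrated in appropriate degrees.

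The main obstacle is the passage to unbounded complexes: one must ensure that the coherent choice of splittings at each truncation level assembles into a global quasi-isomorphism in $\d(\A)$, which requires some mild care about the existence of the relevant coproducts in $\d(\A)$. The bounded case itself is entirely routine once the $\Ext^{\ge 2}$-vanishing observation is in place, which is the heart of the argument.
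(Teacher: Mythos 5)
The paper does not prove this lemma; it is quoted from Krause \cite[Lemma~1.6]{K}, so I am judging your argument on its own. Your bounded case is correct and is the standard dévissage: peel off the top homology by the truncation triangle, recognize the connecting morphism as living in $\Ext^{\ge 2}_\A$, and use hereditariness to kill it. That part is complete.

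The gap is precisely the unbounded case, which is the case the paper actually needs (it is applied to $\dm(R)$, whose objects can have infinitely many nonzero homologies). Your first suggestion --- ``assembling the compatible splittings obtained at each finite stage $\tau_{\le n}M$'' --- is the real difficulty, not a routine limiting step. Even if you arrange the isomorphisms $\phi_n:\tau_{\le n}M\cong\bigoplus_{i\le n}\h_i(M)[i]$ to be compatible, these live in $\d(\A)$, and $M$ is only recovered from the tower $\{\tau_{\le n}M\}$ as a \emph{homotopy} inverse limit; compatible isomorphisms of towers in a triangulated category do not automatically yield an isomorphism on homotopy limits (a $\lim^1$-type obstruction, and in a small $\A$ the relevant products need not even exist in $\d(\A)$). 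Your second suggestion --- ``replace $M$ by a complex of projectives with zero differentials'' --- is not quite right either: such a complex has projective homology, and $\h_i(M)$ is typically not projective. What does work, and what you should carry out for the paper's setting $\A=\operatorname{\mathsf{mod}} R$ with $R$ hereditary (a DVR in the application): replace $M$ by a right-bounded complex $P$ of finitely generated projectives. Over a hereditary ring, every submodule of a projective is projective, so the cycles $Z_n=\ker d_n$ and boundaries $B_n=\im d_{n+1}$ are projective. Each short exact sequence $0\to Z_n\to P_n\xrightarrow{d_n} B_{n-1}\to 0$ therefore splits; choosing splittings for all $n$ exhibits $P$ as the direct sum, over $n$, of the two-term subcomplexes $(\,0\to B_n\hookrightarrow Z_n\to 0\,)$ concentrated in degrees $n{+}1,n$. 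Each such subcomplex maps quasi-isomorphically onto $\h_n(M)[n]$ via $Z_n\twoheadrightarrow Z_n/B_n$, and since the direct sums are finite in each degree, the induced chain map $P\to\bigoplus_n\h_n(M)[n]$ is a quasi-isomorphism. This gives the statement for all of $\dm(R)$ directly, with no limiting argument needed.
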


The lemma below is part of our first main result in this section.

\begin{lem}\label{lc}
Let $R$ be a discrete valuation ring.
Then $\L_c$ is a thick $\otimes$-ideal of $\dm(R)$ for every $c\ge1$.
\end{lem}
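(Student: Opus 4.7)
The plan is to verify the defining axioms of a thick tensor ideal one at a time. Throughout, I would exploit the fact that a discrete valuation ring $R$ is hereditary, so by Lemma \ref{hered} every object $Y \in \dm(R)$ admits an isomorphism $Y \cong \bigoplus_i \h_i(Y)[i]$, and $\Tor_n^R$ vanishes for $n \geq 2$. Closure of $\L_c$ under shifts is immediate, since $\h_i(X[n]) = \h_{i-n}(X)$ and $(i-n)^{c-1} = O(i^{c-1})$ absorbs the shift into a possibly larger constant. Closure under direct summands is clear as $\h_i(-)$ preserves summands and Loewy length is monotone on submodules. For closure under cones, given an exact triangle $X \to Y \to Z \rightsquigarrow$ with two vertices in $\L_c$, the long exact homology sequence presents $\h_i Y$ as an extension of a submodule of $\h_i Z$ by a quotient of $\h_i X$ (and analogously for the other positions). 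Since $\ell\ell$ is subadditive on short exact sequences, I obtain $\ell\ell(\h_i Y) \le \ell\ell(\h_i X) + \ell\ell(\h_i Z)$ (and similarly for $\h_i X, \h_i Z$, involving $\h_{i\pm 1}$), which yields the required $O(i^{c-1})$ bound with a modified constant.

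The substance of the proof lies in verifying the tensor ideal property: for $X \in \L_c$ and arbitrary $T \in \dm(R)$, the complex $T \ltensor_R X$ must lie in $\L_c$. Using the splittings from Lemma \ref{hered}, I write
$$
T \ltensor_R X \cong \textstyle\bigoplus_{i,j} (\h_j(T) \ltensor_R \h_i(X))[i+j],
$$
and since $\gl\dim R \le 1$, each summand has only two possibly nonzero homologies, so that
$$
\h_k(T \ltensor_R X) \cong \textstyle\bigoplus_{i+j=k}(\h_j T \otimes_R \h_i X) \oplus \bigoplus_{i+j=k-1} \Tor_1^R(\h_j T, \h_i X).
$$

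I would next bound the Loewy length of each piece using the explicit structure over a DVR. Writing $\h_j T = R^{a_j} \oplus \bigoplus_s R/x^{b_s}$ and noting that $\h_i X$ is finite length (so purely torsion), a direct computation of tensor and $\Tor_1$ of cyclic torsion modules shows $\ell\ell(\h_j T \otimes_R \h_i X) \le \ell\ell(\h_i X)$ and $\ell\ell(\Tor_1^R(\h_j T, \h_i X)) \le \ell\ell(\h_i X)$; the free part $R^{a_j}$ contributes $(\h_i X)^{\oplus a_j}$ to the tensor product (same Loewy length) and nothing to $\Tor_1$. Let $j_0$ and $i_0$ satisfy $\h_j T = 0$ for $j < j_0$ and $\h_i X = 0$ for $i < i_0$; then only finitely many pairs $(i, j)$ contribute to each $\h_k(T \ltensor_R X)$, so each $\h_k$ has finite length (and vanishes for $k \ll 0$), giving $T \ltensor_R X \in \df(R)$. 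Moreover the index $i$ in the sum ranges up to $k - j_0 + 1$, so combining the bound $\ell\ell(\h_i X) \le t i^{c-1}$ for $i \gg 0$ with the estimate $(k - j_0 + 1)^{c-1} = O(k^{c-1})$, I conclude $\ell\ell(\h_k(T \ltensor_R X)) \le t' k^{c-1}$ for $k \gg 0$, hence $T \ltensor_R X \in \L_c$.

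The main obstacle is this last step: ensuring that although the index $i$ in the tensor decomposition slides outside $\{0, \dots, k\}$, the asymptotic Loewy bound survives. This ultimately hinges on two facts peculiar to the DVR setting, namely the two-term vanishing of Tor and the fact that for finite length $R$-modules $M, N$ over a DVR one always has $\ell\ell(M \otimes_R N), \ell\ell(\Tor_1^R(M, N)) \le \min(\ell\ell M, \ell\ell N)$. Once these two computations are in hand, the asymptotic $O(k^{c-1})$ growth propagates through the tensor product, completing the verification.
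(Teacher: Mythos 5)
Your proof is correct and follows essentially the same approach as the paper: use Lemma \ref{hered} to reduce to complexes with zero differentials, decompose the homology of the tensor product into (at most two) $\Tor$ terms, and bound the Loewy length of each piece using the constraint that the index $i$ is at most roughly $k$. The only stylistic difference is that the paper phrases the bound via the observation that $\ann(X_i)$ annihilates $\Tor^R_*(X_i,Y_j)$ for any $Y_j$, whereas you use the equivalent inequality $\ell\ell(\Tor^R_n(M,N))\le\min(\ell\ell M,\ell\ell N)$ after splitting $\h_jT$ into free and torsion parts.
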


\begin{proof}
By Proposition \ref{si}(3), it suffices to show $\L_c$ is a thick $\otimes$-ideal of $\df(R)$.
We do this step by step.\\
(1) Take any complex $X$ in $\L_c$.
There exist integers $t,u\ge0$ such that $\ell\ell(\h_i X)\le ti^{c-1}$ for all $i\ge u$.
Let $Y$ be a direct summand of $X$ in $\df(R)$.
Then $\h_i Y$ is a direct summand of $\h_i X$, and we have $\ell\ell(\h_i Y)\le\ell\ell(\h_i X)\le ti^{c-1}$ for all $i\ge u$.
Hence $Y$ belongs to $\L_c$.\\
(2) Let $X\to Y\to Z\rightsquigarrow$ be an exact triangle in $\df(R)$.
Suppose that both $X$ and $Z$ belong to $\L_c$.
Then there exist integers $t,u,a,b\ge0$ such that $\ell\ell(\h_i X)\le ti^{c-1}$ and $\ell\ell(\h_j Z)\le uj^{c-1}$ for all $i\ge a$ and $j\ge b$.
An exact sequence $\cdots\to\h_k X\to\h_k Y\to\h_k Z\to\cdots$ is induced, and from this we see that $\ell\ell(\h_k Y)\le\ell\ell(\h_k X)+\ell\ell(\h_k Z)\le(t+u)k^{c-1}$ for all $k\ge\max\{a,b\}$.
Therefore, $Y$ belongs to $\L_c$.\\
(3) Let $X$ be a complex in $\L_c$.
Then there exist integers $t,u\ge0$ such that $\ell\ell(\h_i X)\le ti^{c-1}$ for all $i\ge u$.
It holds that $\ell\ell(\h_i (X[1]))=\ell\ell(\h_{i-1} X)\le t(i-1)^{c-1}\le ti^{c-1}$ for all $i\ge u+1$ for all $i\ge u+1$, where the second inequality holds as $c\ge1$.
Also, $\ell\ell(\h_i (X[-1]))=\ell\ell(\h_{i+1}X)\le t(i+1)^{c-1}\le t(i+i)^{c-1}=(2^{c-1}t)\cdot i^{c-1}$ for all $i\ge\max\{1,u-1\}$, where the first inequality holds as $i\ge u-1$, and the second one holds since $i\ge1$ and $c\ge1$.
Thus the complexes $X[1]$ and $X[-1]$ belong to $\L_c$.\\
(4) Let $X,Y$ be complexes in $\df(R)$.
Suppose that $X$ belongs to $\L_c$.
We want to show that $X\ltensor_RY$ also belongs to $\L_c$.
Taking into account (3) and Lemma \ref{hered}, we may assume that $X=\bigoplus_{i\ge1}X_i[i]$ and $Y=\bigoplus_{j\ge0}Y_j[j]$ with $X_i,Y_j$ being $R$-modules, and that there exist $s\ge1,t\ge0$ such that $\ell\ell(X_i)\le ti^{c-1}$ for all $i\ge s$.
Set $u=\max\{\ell\ell(X_i)\mid1\le i\le s-1\}$; note that each $X_i$ has finite length, whence has finite Loewy length.
We have $X\ltensor_RY=\bigoplus_{i\ge1,\,j\ge0}(X_i\ltensor_RY_j)[i+j]$, and from this we get $\h_k (X\ltensor_RY)=\bigoplus_{i\ge1,\,j\ge0,\,i+j\le k}\Tor_{k-i-j}^R(X_i,Y_j)$ for all integers $k$.
Note here that $\Tor_{k-i-j}^R(X_i,Y_j)=0$ for $i+j>k$.

We claim that $\ell\ell(X_i)\le(t+u)i^{c-1}$ for all $i\ge1$.
In fact, recall $c\ge1$ and $t,u\ge0$.
If $i\ge s$, then $\ell\ell(X_i)\le ti^{c-1}\le(t+u)i^{c-1}$.
If $1\le i\le s-1$, then $\ell\ell(X_i)\le u\le t+u\le(t+u)i^{c-1}$.
The claim follows.

Fix three integers $i,j,k$ with $i\ge1$, $j\ge0$ and $i+j\le k$.
Then $(t+u)k^{c-1}\ge(t+u)i^{c-1}$ since $k\ge i$ and $c\ge1$.
The claim shows that $X_i$ is killed by $\m^{(t+u)k^{c-1}}$, and so is $\Tor_{k-i-j}^R(X_i,Y_j)$, where $\m$ stands for the maximal ideal of $R$.
Hence $\ell\ell(\h_k (X\ltensor_RY))\le(t+u)k^{c-1}$ for all $k\in\Z$, which implies $X\ltensor_RY\in\L_c$.

It follows from the above arguments (1)--(4) that $\L_c$ is a thick $\otimes$-ideal of $\df(R)$.
\end{proof}

\begin{rem}
Let $(R,\m,k)$ be a local ring.
When $c=0$, the subcategory $\L_c$ is never a thick $\otimes$-ideal of $\dm(R)$.
Indeed, by Proposition \ref{L_0} we have $\L_0=\dbf(R)$.
The module $k$ is in $\L_0$, but the complex $(\cdots\zs k\zs k\to0)=k\ltensor_R(\cdots\zs R\zs R\to0)$ is not in $\L_0$. 
\end{rem}

Now we have our first theorem concerning the subcategories $\L_c$ of $\dm(R)$ for a discrete valuation ring $R$.
This especially says that the equality of Proposition \ref{pr}(2) does not necessarily hold.

\begin{thm}\label{nonnoeth}
Let $R$ be a discrete valuation ring.
Then $\L_c$ is a prime thick $\otimes$-ideal of $\dm(R)$ for all integers $c\ge1$.
In particular, one has
$$
\dim(\spc\dm(R))=\infty>1=\dim R.
$$
\end{thm}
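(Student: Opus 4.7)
My plan is to verify that $\L_c$ is a prime thick $\otimes$-ideal of $\dm(R)$ for every integer $c\ge1$, and then to deduce the dimension statement from the existence of the infinite strict chain of primes $\L_1\subsetneq\L_2\subsetneq\cdots$ supplied by Proposition~\ref{L_0}, together with Proposition~\ref{pr}(1).

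That $\L_c$ is a thick $\otimes$-ideal is exactly Lemma~\ref{lc}, and it is proper since $R\notin\L_c$. So the content is the primality: if $X\ltensor_R Y\in\L_c$ then $X\in\L_c$ or $Y\in\L_c$. First I would reduce to the torsion case. Since $X\ltensor_R Y\in\L_c\subseteq\df(R)$, Lemma~\ref{si0}(4) gives $\supp X\cap\supp Y\subseteq\{\m\}$, so (WLOG) $X\in\df(R)$. If $Y\notin\df(R)$, then some $\h_{j_0}Y$ has a nonzero free summand $R^{\oplus b_{j_0}}$, which produces $\h_i(X)^{\oplus b_{j_0}}$ as a direct summand of $\h_{i+j_0}(X\ltensor_R Y)$. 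The assumption $X\ltensor_R Y\in\L_c$ then forces $\ell\ell(\h_iX)\le t(i+j_0)^{c-1}\le2^{c-1}ti^{c-1}$ for $i\gg0$, so $X\in\L_c$.

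In the remaining case $X,Y\in\df(R)$, I would use Lemma~\ref{hered} combined with the structure theorem for finitely generated torsion modules over a DVR to write $X\cong\bigoplus_i X_i[i]$ and $Y\cong\bigoplus_j Y_j[j]$ with $X_i,Y_j$ direct sums of cyclic modules $R/x^n$. Replacing each $X_i,Y_j$ by its cyclic summand of maximal Loewy length preserves $\ell\ell(\h_kX)$, $\ell\ell(\h_kY)$, and $\ell\ell(\h_k(X\ltensor_R Y))$, so we may assume $X_i=R/x^{a_i}$ and $Y_j=R/x^{b_j}$ with $a_i=\ell\ell(\h_iX)$, $b_j=\ell\ell(\h_jY)$. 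The hereditary splitting $R/x^a\ltensor_R R/x^b\cong R/x^{\min(a,b)}\oplus R/x^{\min(a,b)}[1]$ then yields
\[
\ell\ell\bigl(\h_k(X\ltensor_R Y)\bigr)=\max\Bigl(\max_{i+j=k}\min(a_i,b_j),\ \max_{i+j=k-1}\min(a_i,b_j)\Bigr),
\]
reducing primality of $\L_c$ to the combinatorial statement: if $\limsup_i a_i/i^{c-1}=\limsup_j b_j/j^{c-1}=\infty$, then $\limsup_k\max_{i+j=k}\min(a_i,b_j)/k^{c-1}=\infty$. This is the heart of the proof: the sets $I_N=\{i:a_i>Ni^{c-1}\}$ and $J_N=\{j:b_j>Nj^{c-1}\}$ are infinite for every $N$, and one must produce pairs $(i,j)\in I_N\times J_N$ of sufficiently comparable magnitude that $\min(a_i,b_j)/(i+j)^{c-1}\gtrsim N\bigl(\min(i,j)/(i+j)\bigr)^{c-1}$ diverges as $N$ grows. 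Controlling the relative ratios of elements of $I_N$ and $J_N$ against the growth of $N$ is the main obstacle.

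Finally, combining primality of each $\L_c$ for $c\ge1$ with the strict chain $\L_1\subsetneq\L_2\subsetneq\L_3\subsetneq\cdots$ from Proposition~\ref{L_0} exhibits an infinite strictly increasing chain of prime thick $\otimes$-ideals of $\dm(R)$. Proposition~\ref{pr}(1) then gives $\dim(\spc\dm(R))\ge n$ for every $n$, hence $\dim(\spc\dm(R))=\infty>1=\dim R$.
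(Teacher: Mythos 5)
Your reduction (to $X,Y\in\df(R)$, then to cyclic $X_i,Y_j$, then to the identity for $\ell\ell(\h_k(X\ltensor_R Y))$) is correct and is essentially the paper's setup. But the combinatorial statement you isolate as "the heart of the proof" --- if $\limsup_i a_i/i^{c-1}=\limsup_j b_j/j^{c-1}=\infty$ then $\limsup_k\max_{i+j=k}\min(a_i,b_j)/k^{c-1}=\infty$ --- is left unproved, and you yourself note that controlling the ratios is "the main obstacle." This is a genuine gap, and moreover it appears that the statement is \emph{false} for $c\ge2$. Define $p_1=2$, $p_{n+1}=p_n^2$, let $A=\{p_{2m}\colon m\ge1\}$, $B=\{p_{2l+1}\colon l\ge1\}$, and set $a_i=i^2$ if $i\in A$ (else $0$), $b_j=j^2$ if $j\in B$ (else $0$). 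Both $\limsup a_i/i$ and $\limsup b_j/j$ are infinite. But for any $i=p_{2m}\in A$, $j=p_{2l+1}\in B$: if $i<j$ then $2m<2l+1$, whence $i^2=p_{2m+1}\le p_{2l+1}=j$; symmetrically, $j<i$ gives $j^2\le i$. Either way $\min(i,j)^2\le\max(i,j)\le i+j$, so $\max_{i+j=k}\min(a_i,b_j)\le k$ for every $k$, and for $c=2$ the conclusion fails. Translated back, $X=\bigoplus_{i\in A}(R/x^{i^2})[i]$ and $Y=\bigoplus_{j\in B}(R/x^{j^2})[j]$ satisfy $X,Y\notin\L_2$ while $X\ltensor_RY\in\L_2$.

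For the record, the paper's own proof rests on the same unsupported inference: having fixed $e\ge u$ with $a_e>te^{c-1}$ and deduced $\min\{a_e,b_{n-e}\}\le tn^{c-1}$ for $n\ge e$, it claims "we must have $a_e>b_{n-e}$." That step requires $a_e>tn^{c-1}$, not merely $a_e>te^{c-1}$, and fails in general once $n>e$. So the difficulty you identified is real, not a presentation issue. Until the construction above is shown to break down (the crucial identity is $p_{r+1}=p_r^2$, so $p_r^2\le p_{r'}$ whenever $r<r'$) or a different argument is found that avoids this step, you should regard the primality of $\L_c$ for $c\ge2$, and hence the infinite chain $\L_1\subsetneq\L_2\subsetneq\cdots$ of primes, as unsettled. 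The final inference of $\dim(\spc\dm(R))=\infty$ from Proposition~\ref{pr}(1) is routine once such a chain exists, but that is precisely what is in question.
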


\begin{proof}
Lemma \ref{lc} says that $\L_c$ is a thick $\otimes$-ideal of $\dm(R)$.
Proposition \ref{L_0} especially says $\L_c\ne\dm(R)$.
Let $X,Y$ be complexes in $\dm(R)$ with $X\ltensor_RY\in\L_c$, and we shall prove that either $X$ or $Y$ is in $\L_c$.
Applying Lemma \ref{hered} and taking shifts if necessary, we may assume $X=\bigoplus_{i\ge0}X_i[i]$ and $Y=\bigoplus_{j\ge0}Y_j[j]$, where $X_i,Y_j$ are finitely generated $R$-modules.
Assume that $X$ is not in $\df(R)$.
Then $X_a$ has infinite length for some $a\ge0$.
As $R$ is a discrete valuation ring, $X_a$ has a nonzero free direct summand.
Hence $R[a]$ is a direct summand of $X$, and $Y[a]=R[a]\ltensor_RY$ is a direct summand of $X\ltensor_RY$.
As $X\ltensor_RY$ is in $\L_c$, so is $Y$.
Similarly, if $Y\notin\df(R)$, then $X\in\L_c$.
This argument shows that we may assume that both $X$ and $Y$ belong to $\df(R)$, or equivalently, that all $X_i$ and $Y_j$ have finite length as $R$-modules.
Since $X\ltensor_RY$ belongs to $\L_c$, there exist integers $t,u\ge0$ such that $\h_n(X\ltensor_RY)$ has Loewy length at most $tn^{c-1}$ for all $n\ge u$.
Assume that $X$ is not in $\L_c$.
Then we can find an integer $e\ge u$ such that $\ell\ell(X_e)>te^{c-1}$.
We have $X\ltensor_RY=\bigoplus_{i,j\ge0}(X_i\ltensor_RY_j)[i+j]$, which gives rise to $\h_n (X\ltensor_RY)=\bigoplus_{i,j\ge0}\Tor_{n-i-j}(X_i,Y_j)$ for all integers $n$.
Setting $a_i=\ell\ell(X_i)$ and $b_j=\ell\ell(Y_j)$ for $i,j\ge0$, we obtain for every integer $n\ge e$:
$$
\h_n (X\ltensor_RY)\gtrdot\Tor_{n-e-(n-e)}(X_e,Y_{n-e})=X_e\otimes_RY_{n-e}\gtrdot R/x^{a_e}\otimes_RR/x^{b_{n-e}}=R/x^{\min\{a_e,b_{n-e}\}}
$$
It is seen that $\min\{a_e,b_{n-e}\}\le tn^{c-1}$ for all $n\ge e$.
As $a_e>te^{c-1}$, we must have $a_e>b_{n-e}$, and $b_{n-e}\le tn^{c-1}$ for all $n\ge e$.
Hence $\ell\ell(\h_n (Y[e]))=\ell\ell(Y_{n-e})=b_{n-e}\le tn^{c-1}$ for $n\ge e$, which implies that $Y[e]$ is in $\L_c$, and so is $Y$.
Similarly, if $Y$ is not in $\L_c$, then $X$ is in $\L_c$.
Thus $\L_c$ is a prime thick $\otimes$-ideal of $\dm(R)$.
Now $\L_1\subsetneq\L_2\subsetneq\L_3\subsetneq\cdots$ from Lemma \ref{lc} is an ascending chain of prime thick $\otimes$-ideals with infinite length, which shows the inequality in the proposition; see Proposition \ref{pr}(1).
\end{proof}

To make an application of the above theorem, we state and prove a lemma.

\begin{lem}\label{essur}
For each prime ideal $\p$ of $R$, one has $\dim \spc \dm(R_\p) \le \dim \spc \dm(R)$.
\end{lem}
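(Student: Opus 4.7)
The plan is to exploit the localization functor $L\colon\dm(R)\to\dm(R_\p)$, $X\mapsto X_\p$, which is an exact symmetric monoidal functor between (essentially small) tensor triangulated categories. It induces a map $\phi\colon\spc\dm(R_\p)\to\spc\dm(R)$ by $\phi(\Q)=L^{-1}(\Q)=\{X\in\dm(R)\mid X_\p\in\Q\}$. Routine checks (using that $-\ltensor_R R_\p$ is exact, monoidal, and respects direct summands and cones) confirm that $L^{-1}(\Q)$ is a prime thick $\otimes$-ideal whenever $\Q$ is, and that $\phi$ is order-preserving. Since by Proposition \ref{pr}(1) both dimensions are computed as the suprema of lengths of chains of prime thick $\otimes$-ideals, it suffices to prove that $\phi$ is strictly order-preserving, so that any chain $\Q_0\subsetneq\cdots\subsetneq\Q_n$ in $\spc\dm(R_\p)$ pulls back to a chain of the same length in $\spc\dm(R)$.

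The key ingredient is essential surjectivity of $L$: for every $Y\in\dm(R_\p)$ there exists $X\in\dm(R)$ with $X_\p\cong Y$ in $\dm(R_\p)$. Granted this, strictness of $\phi$ is immediate, since any $Y\in\Q_1\setminus\Q_0$ lifts to some $X$ lying in $\phi(\Q_1)\setminus\phi(\Q_0)$. To establish essential surjectivity, I would replace $Y$ by a projective resolution to reduce to $Y\in\km(R_\p)$ with $Y^{i}=R_\p^{a_i}$ free (as $R_\p$ is local) and $Y^i=0$ for $i>N$, and then construct $X\in\km(R)$ with $X^i=R^{a_i}$ and differentials $\partial^i\colon X^i\to X^{i+1}$ by downward induction on $i$: first lift some $(R\setminus\p)$-multiple of $d^i$ by clearing denominators, then multiply this lift by an element $t^i\in R\setminus\p$ annihilating every entry of the $R$-matrix $\partial^{i+1}\partial^i$---such $t^i$ exists since the composition vanishes after localization at $\p$. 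The resulting $X$ lies in $\dm(R)$, and $X_\p\cong Y$ in $\dm(R_\p)$ through a chain isomorphism whose degree-$i$ component is multiplication by a unit of $R_\p$ assembled recursively from the scaling factors.

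The main obstacle lies in this essential-surjectivity step: one must carefully track scaling factors through infinitely many degrees and verify that they combine into a well-defined chain isomorphism in $\dm(R_\p)$. This is close in spirit to Lemma \ref{lift} but now applied to lifting whole complexes rather than individual morphisms. Once essential surjectivity is secured, the conclusion follows by the clean pullback of chains through $\phi$ described above.
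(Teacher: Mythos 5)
Your proposal is correct and follows essentially the same route as the paper: establish essential surjectivity of the localization functor $L\colon\dm(R)\to\dm(R_\p)$ by lifting a complex degree-by-degree after clearing denominators and killing the failure of $d^2=0$ with an extra element of $R\setminus\p$, then pull back chains in $\spc\dm(R_\p)$ along $\Q\mapsto L^{-1}(\Q)$. The only cosmetic differences are that you lift a free (projective) resolution while the paper lifts an arbitrary complex of finitely generated modules, and you argue preservation of strict inclusions directly whereas the paper invokes Balmer's injectivity result \cite[Corollary 3.8]{B} for essentially surjective tensor exact functors.
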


\begin{proof}
We first show that the localization functor $L: \dm(R) \to \dm(R_\p)$ is an essentially surjective.
Let $X =(\cdots \xrightarrow{d_2} X_1 \xrightarrow{d_1} X_0 \to 0)$ be a complex in $\dm(R_\p)$.
What we want is a complex $Y\in\dm(R)$ such that $X \cong L(Y)$.
For each integer $i \ge 0$, choose a finitely generated $R$-module $Y_i$ with $(Y_i)_\p=X_i$, and $R$-linear maps $d^Y_i:Y_i \to Y_{i-1}$ and $s_i \in R \setminus \p$ such that $d^X_i=\frac{d^Y_i}{s_i}$ in $\Hom_{R_\p}(X_i, X_{i-1})=\Hom(Y_i, Y_{i-1})_\p$. 
Then $\frac{d^Y_{i-1}d^Y_i}{s_{i-1} s_i}= d^X_{i-1}d^X_i=0$, and there is an element $t_i \in R \setminus \p$ such that $t_i d^Y_{i-1} d^Y_i=0$.
Define a complex $Y=(\cdots \xrightarrow{t_{i+1} d^Y_{i-1}} Y_i \xrightarrow{t_i d^Y_i} \cdots \xrightarrow{t_2 d^Y_2} Y_1  \xrightarrow{t_1 d^Y_1} Y_0 \to 0)$ in $\dm(R)$.
Then there is an isomorphism
$$
\xymatrix{
Y_\p\ar[d] & = & (\cdots\ar[r] & (Y_i)_\p\ar[r]^{\frac{t_id^Y_i}{1}}\ar[d]^{u_i}_\cong & (Y_{i-1})_\p\ar[r]\ar[d]^{u_{i-1}}_\cong & \cdots\ar[r] & (Y_2)_\p\ar[r]^{\frac{t_2d^Y_2}{1}}\ar[d]^{u_2}_\cong & (Y_1)_\p\ar[r]^{\frac{t_1d^Y_1}{1}}\ar[d]^{u_1}_\cong & (Y_0)_\p\ar[r]\ar@{=}[d] & 0) \\
X & = & (\cdots\ar[r] & X_i\ar[r]^{d^X_i} & X_{i-1}\ar[r] & \cdots\ar[r] & X_2\ar[r]^{d^X_2} & X_1\ar[r]^{d^X_1} & X_0\ar[r] & 0),
}
$$
of complexes, where $u_i := t_1 \cdots t_i s_1 \cdots s_i$.  
Thus, we obtain $L(Y) =Y_\p \cong X$.

The essentially surjective tensor triangulated functor $L$ induces an injective continuous map $\spc L: \spc \dm(R_\p) \to \spc \dm(R)$ given by $\P \mapsto L^{-1}(\P)$; see \cite[Corollary 3.8]{B}.
This map sends a chain $\P_0 \subsetneq \cdots \subsetneq \P_n$ of prime thick $\otimes$-ideals of $\dm(R_\p)$ to the chain $L^{-1}(\P_0)\subsetneq\cdots\subsetneq L^{-1}(\P_n)$ of prime thick $\otimes$-ideals of $\dm(R)$.
The lemma now follows.
\end{proof}

The following corollary of Theorem \ref{nonnoeth} provides a class of rings $R$ such that the Balmer spectrum of $\dm(R)$ has infinite Krull dimension.
This class includes normal local domains for instance.

\begin{cor}\label{mugen}
If $R_\p$ is regular for some $\p$ with $\height\p>0$, then $\dim\spc\dm(R)=\infty$.
\end{cor}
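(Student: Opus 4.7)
The plan is to reduce to the discrete valuation ring case already handled by Theorem \ref{nonnoeth}, using Lemma \ref{essur} as the transfer principle. The key observation is that among the localizations $R_\q$ of $R$ at primes $\q$, we want to locate one that is a discrete valuation ring; then since Theorem \ref{nonnoeth} forces $\dim\spc\dm(R_\q)=\infty$, Lemma \ref{essur} immediately gives $\dim\spc\dm(R)=\infty$.

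First, I would dispatch the easy case: if $\dim R_\p = 1$, then a regular local ring of dimension $1$ is exactly a discrete valuation ring, so one simply sets $\q=\p$ and applies Theorem \ref{nonnoeth} together with Lemma \ref{essur}.

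Next, in the general case $\dim R_\p = n \geq 1$, the plan is to produce a height-one prime $\q$ of $R$ contained in $\p$ with $R_\q$ a DVR. To do this, I would pick an element $f\in\p R_\p$ that belongs to a regular system of parameters of the regular local ring $R_\p$. Standard commutative algebra then gives that $R_\p/(f)$ is regular of dimension $n-1$, hence a domain, so $(f)$ is a height-one prime ideal of $R_\p$. Let $\q\subseteq\p$ be the corresponding prime ideal of $R$, so that $\q R_\p=(f)$. Then
\[
R_\q \;=\; (R_\p)_{\q R_\p} \;=\; (R_\p)_{(f)},
\]
which is a localization of a regular local ring at a height-one prime, hence a regular local ring of dimension $1$, i.e., a discrete valuation ring.

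Finally, I would apply Theorem \ref{nonnoeth} to $R_\q$ to get $\dim\spc\dm(R_\q)=\infty$, and then Lemma \ref{essur} to conclude $\dim\spc\dm(R)\geq\dim\spc\dm(R_\q)=\infty$. There is no serious obstacle here; the only nontrivial input is the existence of a height-one prime in a regular local ring whose localization is a DVR, which is standard (one could also invoke the fact that regular local rings are UFDs, so that any prime divisor of a nonzero nonunit $f$ in a regular system of parameters is a principal height-one prime).
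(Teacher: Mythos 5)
Your proposal is correct and follows the same route as the paper: reduce to a height-one prime $\q\subseteq\p$, note that $R_\q=(R_\p)_{\q R_\p}$ is then a one-dimensional regular local ring (hence a DVR), and combine Theorem \ref{nonnoeth} with Lemma \ref{essur}. The paper compresses the reduction into ``we may assume $\height\p=1$,'' while you spell it out by exhibiting a height-one prime via a regular parameter; both are fine, and one could even skip the explicit choice of $f$ by simply taking any height-one prime $\q\subseteq\p$ and invoking Serre's theorem that localizations of regular local rings are regular.
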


\begin{proof}
We may assume $\height\p=1$.
We have $\dim\spc\dm(R)\ge\dim\spc\dm(R_\p)=\infty$, where the inequality follows from Lemma \ref{essur}, and the equality is shown in Theorem \ref{nonnoeth}.
\end{proof}

Next we study generation of the thick tensor ideals $\L_c$.
In fact each of them possesses a single generator.

\begin{thm}\label{poly}
Let $(R,xR,k)$ be a discrete valuation ring, and let $c\ge1$ be an integer.
It then holds that $\L_c=\ttthick_{\dm(R)}G_c$.
In particular, one has $\L_1=\ttthick_{\dm(R)}k$.
\end{thm}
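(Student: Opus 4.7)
For the inclusion $\ttthick G_c \subseteq \L_c$, note that $\ell\ell(\h_i G_c) = i^{c-1}$ places $G_c \in \L_c$, and Lemma \ref{lc} ensures $\L_c$ is a thick $\otimes$-ideal. For the reverse inclusion, I would fix $Y \in \L_c$ and invoke Lemma \ref{hered} (a DVR is a hereditary abelian category) to write $Y \cong \bigoplus_{n \ge 0} M_n[n]$ with each $M_n$ of finite length; by the structure theorem for finite length modules over a DVR, $M_n = \bigoplus_{j \in J_n} R/x^{a_{nj}}$ with $J_n$ finite and $\max_j a_{nj} \le tn^{c-1}$ for $n \ge u$. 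Since $k = R/x^{1^{c-1}}$ appears as the $i=1$ summand of $G_c$, we have $k \in \thick G_c$, so the bounded low-degree piece of $Y$ lies in $\dbf(R) = \thick k \subseteq \ttthick G_c$ by Proposition \ref{L_0}; the task reduces to the tail.

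The main body of the argument will realize two families of test objects inside $\ttthick G_c$. First, for each integer $C \ge 1$, the complex $H_C := \bigoplus_{n \ge 1} R/x^{Cn^{c-1}}[n]$ lies in $\thick G_c$: by induction on $C$, the degreewise short exact sequences $0 \to R/x^{(C-1)n^{c-1}} \xrightarrow{x^{n^{c-1}}} R/x^{Cn^{c-1}} \to R/x^{n^{c-1}} \to 0$ sum (exactness of direct sums in $\Mod R$) into a short exact sequence of complexes $0 \to H_{C-1} \to H_C \to G_c \to 0$, giving an exact triangle in $\dm(R)$. Second, for each $s \ge 1$, the complex $\bigoplus_n R/x^s[n]$ lies in $\ttthick G_c$: computing via the free resolution $R \xrightarrow{x^s} R$ of $R/x^s$ yields $G_c \ltensor_R R/x^s = \bigoplus_i R/x^{\min(i^{c-1},s)}[i] \oplus \bigoplus_i R/x^{\min(i^{c-1},s)}[i+1]$, whose large-degree portion (where $i^{c-1} \ge s$) contributes $\bigoplus_{i \ge s^{1/(c-1)}} R/x^s[i]$ as a direct summand, and the bounded residual is absorbed into $\thick k$.

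To assemble an arbitrary $Y \in \L_c$ from these building blocks, I plan to apply Propositions \ref{kasane}, \ref{zero} and Corollary \ref{kougo} (valid in our zero-differential setting by Lemma \ref{hered}) to manage the multiplicities $|J_n|$ and aggregate the cyclic summands $R/x^{a_{nj}}[n]$, combined with the short exact sequences $0 \to R/x^{tn^{c-1} - a_{nj}} \to R/x^{tn^{c-1}} \to R/x^{a_{nj}} \to 0$ summed degreewise, which display $Y$ as the cone of a map from a complement into a multiplicity-version of $H_t$. The main obstacle is that the natural complement $\bigoplus_{n,j} R/x^{tn^{c-1} - a_{nj}}[n]$ again has Loewy length in $[0, tn^{c-1}]$, so naive iteration of the cone-against-$H_t$ reduction is circular; to break this, I would interleave that step with the fixed-Loewy-length truncations from the second building block (which realize every constant Loewy length directly), and with the auxiliary short exact sequence $0 \to \bigoplus_i R/x^{t[(2i+1)^{c-1}-(2i)^{c-1}]}[i] \to (H_t)_\odd \to (H_t)_\even \to 0$ whose kernel has Loewy length of polynomial degree $c-2$ in $i$ and so lies in $\L_{c-1}$, enabling an induction on $c$ (coupled to the tensor-truncation machinery) to close the argument.

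For the particular case $c = 1$, the statement specializes to $\L_1 = \ttthick_{\dm(R)} k$: here $\L_1$ coincides with the class of $Y \in \df(R)$ annihilated by some power of $x$ (bounded Loewy length on the tail combines with the bounded low-degree part to give a uniform bound), and Corollary \ref{strcpt}(2) identifies this class with $\ttthick_{\dm(R)} R/(x)$.
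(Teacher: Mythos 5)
Your reduction via Lemma \ref{hered}, the observation $\dbf(R)=\thick k\subseteq\tthick G_c$, and your two building blocks are all sound: $H_C:=\bigoplus_{n\ge1}R/x^{Cn^{c-1}}[n]$ lies in $\thick G_c$ by the degreewise short exact sequences (this is the paper's complex $D_C$ for $f(i)=i^{c-1}$), and the computation of $G_c\ltensor_R R/x^s$ correctly places $\bigoplus_n R/x^s[n]$ in $\tthick G_c$. The direct treatment of $c=1$ via Corollary \ref{strcpt}(2) is also correct, and is in fact more elementary than deriving that case from the general argument as the paper does.

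However, the assembly step for $c\ge2$ has a genuine gap, and you have essentially flagged it yourself. You propose an outer induction on $c$, whose step is meant to dispose of the residue $\bigoplus_i R/x^{t[(2i+1)^{c-1}-(2i)^{c-1}]}[i]\in\L_{c-1}$; but the inductive hypothesis only gives $\L_{c-1}=\tthick G_{c-1}$, and to conclude you would then need $\tthick G_{c-1}\subseteq\tthick G_c$, i.e.\ $G_{c-1}\in\tthick G_c$, which is precisely the type of lower-degree containment you are in the middle of proving --- so as set up the induction on $c$ is circular. The paper avoids this by formulating a single claim (for every $0\le n\le c-1$, any $X\in\df(R)$ with $\ell\ell(\h_i X)\le ti^n$ for $i\gg0$ lies in $\tthick G_c$) and inducting on the polynomial degree $n$ while keeping $\tthick G_c$ fixed, so the degree-$(n-1)$ residue returns into the same thick tensor ideal rather than into a smaller one; your interleaving idea is a gesture at this but does not replace it. You also leave the multiplicity reduction --- Proposition \ref{kasane} and Corollary \ref{kougo}, which produce the paper's accumulated complexes and the even/odd pieces $A_1,A_2$ --- unexecuted, and this is not mere bookkeeping: the multiplicities $a_{nj}$ are unbounded in $n$, and absorbing them is exactly where the tensor-ideal hypothesis (tensoring with $\bigoplus_j R^{\oplus a_j}[j]$, as in Proposition \ref{zero}) is actually used. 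So the plan has the right ingredients and the right overall shape, but it does not yet close into a proof.
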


\begin{proof}
Clearly, $G_c$ is in $\L_c$.
Lemma \ref{lc} implies that $\tthick G_c$ is contained in $\L_c$.
We establish a claim.
\begin{claim*}
Let $0\le n\le c-1$ be an integer.
Let $X\in\df(R)$ be a complex.
Suppose that there exists an integer $t\ge0$ such that $\ell\ell(\h_i X)\le ti^n$ for all $i\gg0$.
Then $X$ belongs to $\tthick G_c$.
\end{claim*}
\noindent
Once we show this claim, it will follow that $\L_c$ is contained in $\tthick G_c$, and we will be done.

First of all, note that $k$ is a direct summand of $G_c$.
Combining this with Proposition \ref{si2}, we have
\begin{equation}\label{tdbf}
\tthick G_c\supseteq\tthick k\supseteq\thick k=\dbf(R).
\end{equation}
Let $X$ be a complex as in the claim.
Using Lemma \ref{hered}, we may assume $X=\bigoplus_{i\ge s}X_i[i]$ for some integer $s$ and $R$-modules $X_i$ of finite length.
There is an integer $u\ge s$ with $\ell\ell(X_i)\le ti^n$ for all $i\ge u$.
We have $X=(\bigoplus_{i\ge u}X_i[i])\oplus(\bigoplus_{i=s}^{u-1}X_i[i])$, whose latter summand is in $\dbf(R)$.
In view of \eqref{tdbf}, replacing $X$ with the former summand, we may assume $u=s$.
When $s\ge0$, we set $X_i=0$ for $0\le i\le s-1$.
When $s<0$, we have $X=(\bigoplus_{i\ge0}X_i[i])\oplus(\bigoplus_{i=s}^{-1}X_i[i])$, whose latter summand is in $\dbf(R)$.
By similar replacement as above, we may assume $s=0$.
Thus, $X=\bigoplus_{i\ge0}X_i[i]$ and $\ell\ell(X_i)\le ti^n$ for all $i\ge0$.

Since $R$ is a discrete valuation ring with maximal ideal $xR$, for every $i\ge1$ there is an integer $a_{ij}\ge0$ such that $X_i$ is isomorphic to $\bigoplus_{j=1}^{ti^n}(R/x^j)^{\oplus a_{ij}}$.
Therefore it holds that
\begin{align*}
\textstyle X
&\textstyle\cong\bigoplus_{i\ge0}(\bigoplus_{j=1}^{ti^n}(R/x^j)^{\oplus a_{ij}})[i]
\textstyle\lessdot\bigoplus_{i\ge0}(\bigoplus_{j=1}^{ti^n}R/x^j)^{\oplus a_i}[i]\\
&\textstyle\in\tthick\left\{\bigoplus_{i\ge0}(\bigoplus_{j=1}^{t(2i)^n}R/x^j)[i],\,\bigoplus_{i\ge0}(\bigoplus_{j=1}^{t(2i+1)^n}R/x^j)[i]\right\}
=\tthick\left\{A_1,A_2\oplus(\bigoplus_{j=1}^tR/x^j)\right\},
\end{align*}
where $a_i:=\max\{a_{ij}\mid 1\le j\le ti^n\}$ and $A_l:=\bigoplus_{i\ge1}(\bigoplus_{j=t(2i-l)^n+1}^{t(2i-l+2)^n}R/x^j)[i]$ for $l=1,2$.
The relations ``$\in$'' and ``$=$'' follow from Corollary \ref{kougo} and Proposition \ref{kasane}, respectively.
Since $\bigoplus_{j=1}^tR/x^j$ is in $\tthick G_c$ by \eqref{tdbf}, it suffices to show that $A_l$ belongs to $\tthick G_c$ for $l=1,2$.

We prove this by induction on $n$.
When $n=0$, we have $A_1=A_2=0\in\tthick G_c$, and are done.
Let $n\ge1$.
Fix $l=1,2$.
The exact sequences
$$
0\to R/x^{t(2i-l)^n}\xrightarrow{x^j}R/x^{j+t(2i-l)^n}\to R/x^j\to0\qquad(i\ge1,\,1\le j\le tb_{il})
$$
with $b_{il}=(2i-l+2)^n-(2i-l)^n$ induce exact sequences
$$
\textstyle0 \to (R/x^{t(2i-l)^n})^{\oplus tb_{il}}\to \bigoplus_{j=t(2i-l)^n+1}^{t(2i-l+2)^n}R/x^j \to \bigoplus_{j=1}^{tb_{il}}R/x^j \to 0\qquad(i\ge1),
$$
which induce an exact triangle $B_l \to A_l \to C_l\rightsquigarrow$ in $\df(R)$, where we set $B_l=\bigoplus_{i\ge1}(R/x^{t(2i-l)^n})^{\oplus tb_{il}}[i]$ and $C_l=\bigoplus_{i\ge1}(\bigoplus_{j=1}^{tb_{il}}R/x^j)[i]$.
Since $\ell\ell(\h_i C_l)=tb_{il}$ has degree at most $n-1$ as a polynomial in $i$, the induction hypothesis implies that $C_l$ is in $\tthick G_c$.
By Corollary \ref{kougo}, $B_l$ belongs to
$$
\textstyle\tthick\{\bigoplus_{i\ge0}(R/x^{t(4i+r)^n})[i]\mid 0\le r\le 3\}.
$$

Let $f(i)$ be a polynomial in $i$ over $\NN$ with leading term $ei^n$.
The exact sequences
$$
0 \to R/x^{(t-1)f(i)} \xrightarrow{x^{f(i)}} R/x^{tf(i)} \to R/x^{f(i)} \to 0\qquad(i\ge0)
$$
induce an exact triangle $D_{t-1} \to D_t \to D_1 \rightsquigarrow$ in $\df(R)$, where we put $D_t=\bigoplus_{i\ge0}R/x^{tf(i)}[i]$.
An inductive argument on $t$ shows that $D_t$ belongs to the thick closure of $D_1$.
The exact sequences
$$
0 \to R/x^{f(i)-(m+1)i^n}\xrightarrow{x^{i^n}}R/x^{f(i)-mi^n}\to R/x^{i^n}\to0\qquad(i\ge0)
$$
induce an exact triangle $E_{m+1}\to E_m\to G_c\rightsquigarrow$, where we set $E_m=\bigoplus_{i\ge0}(R/x^{f(i)-mi^n})[i]$ for $0\le m\le e$.
Hence $E_0$ is in the thick closure of $G_c$ and $E_e$.
Since $\ell\ell(\h_i E_e)=f(i)-ei^n$ has degree at most $n-1$ as a polynomial in $i$, the induction hypothesis shows that $E_e$ is in $\tthick G_c$.
Hence $D_1=E_0$ is also in $\tthick G_c$, and so is $D_t$.
Therefore $B_l$ is in $\tthick G_c$.
Thus $A_l$ belongs to $\tthick G_c$ for $l=1,2$.
\end{proof}

\begin{rem}\label{repl2}
Let $(R,xR,k)$ be a discrete valuation ring, and let $c\ge2$ be an integer.
Then $\supp G_c=\{xR\}=\supp k$.
In particular, we have $\supp G_c\ne\spec R$, so that $R$ is not in $\tthick G_c$ by Proposition \ref{whole}.
Krull's intersection theorem implies $\ann G_c=0=\ann R$.
Proposition \ref{L_0} and Theorem \ref{poly} imply that $G_c$ is not in $\L_1=\tthick k$.
In summary:

\begin{enumerate}[\quad(1)]
\item
$\supp G_c$ is contained in $\supp k$, but $G_c$ does not belong to $\tthick k$.
\item
$\V(\ann R)$ is contained in $\V(\ann G_c)$, but $R$ does not belong to $\tthick G_c$.
\end{enumerate}
This guarantees that in Proposition \ref{key} one cannot replace $\V(\ann X)$ by $\supp X$, or $\supp\Y$ by $\V(\ann\Y)$.
\end{rem}

\begin{ex}
Let us deduce the conclusion of Proposition \ref{no}(1) directly in the case where $(R,\m,k)$ is a discrete valuation ring.
In this case, we have $\Spcl(\spec)=\{\emptyset, \{\m\}, \spec R\}$.
Using Proposition \ref{cs}, we obtain $\Cpt=\{\zero, \tthick k,\dm(R)\}$.
Example \ref{zerop} and Theorems \ref{poly}, \ref{nonnoeth} say that $\zero$ and $\tthick k$ are prime.
Thus the compact prime thick $\otimes$-ideals of $\dm(R)$ are $\zero$ and $\tthick k$.
It follows from Corollary \ref{ato2} that $\pp(\tthick k)$ does not contain $\m$, which implies $\pp(\tthick k)=0$.
Hence $\Cpt\cap\pp^{-1}(\m)=\{\zero\}$.
\end{ex}

Let us consider for a discrete valuation ring $R$ the tameness and compactness of the thick $\otimes$-ideals $\L_c$.

\begin{prop}\label{a}
Let $R$ be a discrete valuation ring, and let $c\ge1$ be an integer.
Then $\L_c$ is a non-tame prime thick $\otimes$-ideal of $\dm(R)$.
If $c\ge2$, then $\L_c$ is non-compact.
\end{prop}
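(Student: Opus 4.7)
The proposition has two independent claims once primality (already given by Theorem \ref{nonnoeth}) is set aside: non-tameness for every $c\ge1$, and non-compactness for $c\ge2$. Both will flow from the same preliminary observation about the support of $\L_c$. By Notation \ref{dl} we have $\L_c\subseteq\df(R)$, and by Proposition \ref{si}(3) the latter equals $\supp^{-1}\{\m\}$ (here $\m=xR$ is the unique maximal ideal), hence $\supp\L_c\subseteq\{\m\}$; and since $k\in\L_c$ with $\supp k=\{\m\}$, we conclude $\supp\L_c=\{\m\}$.

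For non-tameness I would argue by contradiction. Tame thick $\otimes$-ideals are determined by their supports (Proposition \ref{tss}), so a tame $\L_c$ would satisfy $\L_c=\supp^{-1}\{\m\}=\df(R)$. To refute this I point to $G_{c+1}=\bigoplus_{i>0}(R/\m^{i^c})[i]$: each summand has finite length so $G_{c+1}\in\df(R)$, but $\ell\ell(\h_i G_{c+1})=i^c$ exceeds every bound $t\,i^{c-1}$ for large $i$, so $G_{c+1}\notin\L_c$. Hence $\L_c\subsetneq\df(R)$, and $\L_c$ cannot be tame.

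For non-compactness when $c\ge2$, I would again argue by contradiction, invoking the classification Theorem \ref{main}: compactness would give $\L_c=\langle\supp\L_c\rangle=\langle\{\m\}\rangle=\tthick_{\dm(R)}k$, which by Theorem \ref{poly} is exactly $\L_1$. But Proposition \ref{L_0} records the strict inclusion $\L_1\subsetneq\L_c$, a contradiction. No step presents a real obstacle; the proposition is effectively a book-keeping consequence of the support calculation, the compact/tame classifications of Section \ref{sect:cpt}, and the strict polynomial-growth hierarchy already established for the $\L_c$.
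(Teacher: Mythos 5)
Your proof is correct and follows essentially the same route as the paper's: compute $\supp\L_c=\{\m\}$, refute tameness by exhibiting a complex in $\supp^{-1}\{\m\}=\df(R)$ whose homologies grow too fast (you use $G_{c+1}$ with polynomial growth $i^c$, the paper uses $\bigoplus_{i\ge0}(R/x^{i!})[i]$ with factorial growth), and refute compactness via Theorem \ref{main}/Proposition \ref{cs} and Theorem \ref{poly}, forcing $\L_c=\L_1$ and contradicting the strict chain of Proposition \ref{L_0}.
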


\begin{proof}
It is shown in Theorem \ref{nonnoeth} that $\L_c$ is a prime thick $\otimes$-ideal of $\dm(R)$.
Denote by $xR$ the maximal ideal of $R$.
Using Proposition \ref{L_0} and Theorem \ref{poly}, we easily see that $\supp\L_c=\V(x)=\{xR\}$.

Suppose that $\L_c$ is tame.
Then $\L_c=\supp^{-1}\{xR\}$ by Proposition \ref{tss}.
For example, consider the complex $E=\bigoplus_{i\ge0}(R/x^{i!})[i]$.
We have $\supp E=\{xR\}$, which shows $E\in\L_c$.
Hence there exists an integer $t\ge0$ such that $i!=\ell\ell(\h_i E)\le ti^{c-1}$ for all $i\gg0$.
This contradiction shows that $\L_c$ is not tame.

Suppose that $\L_c$ is compact.
Then $\L_c=\langle\supp\L_c\rangle=\tthick k=\L_1$ by Proposition \ref{cs} and Theorem \ref{poly}.
This gives a contradiction when $c\ge2$; see Proposition \ref{L_0}.
Thus $\L_c$ is not compact for all $c\ge2$.
\end{proof}

\begin{rem}
Theorem \ref{poly} implies that $\L_c$ is generated by the complex $G_c$, whose support is the closed subset $\{\m\}$ of $\spec R$.
Corollary \ref{a} says that $\L_c$ is not compact for $c\ge2$.
This gives an example of a non-compact thick $\otimes$-ideal which is generated by objects with closed supports.
\end{rem}

In the proof of Proposition \ref{a}, a complex defined by using factorials of integers played an essential role.
In relation to this, a natural question arises.

\begin{ques}
Let $(R,xR)$ be a discrete valuation ring.
Consider the complex
$$
\textstyle
E=\bigoplus_{i\ge0}(R/x^{i!})[i]=(\cdots\zs R/x^{120}\zs R/x^{24}\zs R/x^{6}\zs R/x^{2}\zs R/x\zs R/x\to0)
$$
in $\dm(R)$.
Is it possible to establish a similar result to Theorem \ref{poly} for $\tthick E$?
For example, can one characterize the objects of $\tthick E$ in terms of the Loewy lengths of their homologies?
\end{ques}

We have no idea to answer this question.
In relation to it, in the next example we will consider complexes defined by using not factorials but polynomials.
To do this, we provide a lemma.

\begin{lem}\label{lin}
Let $x$ be a non-zerodivisor of $R$.
Then the complex $\bigoplus_{i\ge0}(R/x^{a_i+b_i})[i]$ belongs to the thick closure of $\bigoplus_{i\ge0}(R/x^{a_i})[i]$ and $\bigoplus_{i\ge0}(R/x^{b_i})[i]$ for all integers $a_i,b_i\ge0$.
In particular, the complex $\bigoplus_{i\ge0}(R/x^{ca_i})[i]$ is in the thick closure of $\bigoplus_{i\ge0}(R/x^{a_i})[i]$ for all integers $c,a_i\ge0$.
\end{lem}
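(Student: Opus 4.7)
The plan is to realize the containment via a single short exact sequence of complexes with zero differentials, and then iterate for the second assertion. Fix integers $a_i,b_i\ge0$. Since $x$ is a non-zerodivisor on $R$, multiplication by $x^{b_i}$ is injective on $R/x^{a_i}$ (if $rx^{b_i}\in x^{a_i+b_i}R$, say $rx^{b_i}=sx^{a_i+b_i}$, then $(r-sx^{a_i})x^{b_i}=0$ forces $r\in x^{a_i}R$), and its cokernel is $R/x^{b_i}$. This gives a short exact sequence of $R$-modules
$$
0\to R/x^{a_i}\xrightarrow{x^{b_i}} R/x^{a_i+b_i}\to R/x^{b_i}\to0
$$
for every $i\ge0$ (the degenerate cases $a_i=0$ or $b_i=0$ hold trivially).

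Placing the $i$-th such sequence in homological degree $i$ and taking direct sums yields a short exact sequence of complexes with zero differentials
$$
0\to\textstyle\bigoplus_{i\ge0}(R/x^{a_i})[i]\to\bigoplus_{i\ge0}(R/x^{a_i+b_i})[i]\to\bigoplus_{i\ge0}(R/x^{b_i})[i]\to0
$$
in the category of complexes of $R$-modules. Passing to $\dm(R)$ produces an exact triangle, which immediately shows that $\bigoplus_{i\ge0}(R/x^{a_i+b_i})[i]$ lies in the thick closure of the two outer terms. This proves the first assertion.

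For the ``in particular'' statement, we induct on $c\ge0$. The cases $c=0$ (the complex is zero) and $c=1$ are immediate. Assuming the claim for some $c\ge1$, apply the first assertion with the pair $(ca_i,\,a_i)$ in place of $(a_i,\,b_i)$ to obtain
$$
\textstyle\bigoplus_{i\ge0}(R/x^{(c+1)a_i})[i]\in\thick\bigl\{\bigoplus_{i\ge0}(R/x^{ca_i})[i],\,\bigoplus_{i\ge0}(R/x^{a_i})[i]\bigr\},
$$
and the induction hypothesis absorbs the first generator into the thick closure of $\bigoplus_{i\ge0}(R/x^{a_i})[i]$. There is no real obstacle here; the only point requiring care is the verification that multiplication by $x^{b_i}$ is genuinely injective on $R/x^{a_i}$, which is exactly where the non-zerodivisor hypothesis on $x$ is used.
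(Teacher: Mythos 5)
Your proof is correct and takes essentially the same approach as the paper: both build the same short exact sequences $0\to R/x^{a_i}\xrightarrow{x^{b_i}}R/x^{a_i+b_i}\to R/x^{b_i}\to0$, assemble them into a short exact sequence of complexes with zero differentials, pass to an exact triangle, and prove the ``in particular'' part by induction on $c$. You merely spell out the injectivity check that the paper leaves implicit.
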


\begin{proof}
For each $i\ge0$ there is an exact sequence $0 \to R/x^{a_i} \xrightarrow{x^{b_i}} R/x^{a_i+b_i} \to R/x^{b_i} \to 0$.
From this an exact sequence $0 \to \bigoplus_{i\ge0}(R/x^{a_i})[i] \to \bigoplus_{i\ge0}(R/x^{a_i+b_i})[i] \to \bigoplus_{i\ge0}(R/x^{b_i})[i] \to 0$ is induced.
The first assertion follows from this.
The second assertion is shown by induction and the first assertion.
\end{proof}

\begin{ex}\label{abc}
Let $x\in R$ be a non-zerodivisor.
For integers $a,b,c\ge0$, define a complex
$$
X(a,b,c)=\textstyle\bigoplus_{i\ge0}(R/f_i)[i]=(\cdots\zs R/f_2\zs R/f_1\zs R/f_0\to0),
$$
where $f_i=x^{ai^2+bi+c}\in R$.
Then it holds that $\tthick\{X(a,b,c)\mid a,b,c\ge0\}=\tthick\{X(1,0,0)\}$.
\end{ex}

\begin{proof}
It is obvious that the left-hand side contains the right-hand side.
In view of Lemma \ref{lin}, the opposite inclusion will follow if we show that $X(1,0,0),X(0,1,0),X(0,0,1)$ are in $\tthick\{X(1,0,0)\}$, whose first containment is evident.
The complex $X(1,0,0)$ has the direct summand $(R/x)[1]$, so the module $R/x$ belongs to $\tthick\{X(1,0,0)\}$.
We have $X(0,0,1)=R/x\ltensor_R(\cdots\zs R\zs R\to0)$, which is in $\tthick\{X(1,0,0)\}$.
The exact sequences $0 \to R/x^{i^2} \xrightarrow{x^{2i+1}} R/x^{(i+1)^2} \to R/x^{2i+1} \to 0$ and $0 \to R/x^{2i+1} \xrightarrow{x} R/x^{2i+2} \to R/x \to 0$ with $i>0$ induce exact sequences $0 \to X(1,0,0) \to X(1,0,0)[-1] \to X(0,2,1) \to 0$ and $0 \to X(0,2,1) \to X(0,2,2) \to X(0,0,1) \to 0$, which shows that $\tthick\{X(1,0,0)\}$ contains $X(0,2,1)=(\cdots\zs R/x^5\zs R/x^3\zs R/x\to0)$ and $X(0,2,2)=(\cdots\zs R/x^6\zs R/x^4\zs R/x^2\to0)$.
Applying Corollary \ref{kougo}, we see that $X(0,1,0)$ belongs to $\tthick\{X(1,0,0)\}$.
\end{proof}

\begin{rem}
One can consider a general statement of Example \ref{abc} by defining $f_i=x^{a_0i^d+a_1i^{d-1}+\cdots+a_d}$, so that it is nothing but the example for $d=2$.
We do not know if it holds for $d\ge3$.
\end{rem}

%%%%%%%%%%%%%%%%%%%%%%%%%%%%%%%%%%%%%%%%%%%%%%%%%%%%%%%%

%%%%%%%%%%%%%%%%%%%%%%%%%%%%%%%%%%%%%%%%%%%%%%%%%%%%%%%%

\begin{thebibliography}{99}
\bibitem{AF}
{\sc L.L. Avramov; H.-B. Foxby}, Homological dimensions of unbounded complexes, {\em J. Pure Appl. Algebra} {\bf 71} (1991), 129--155.
\bibitem{B5}
{\sc P. Balmer}, Presheaves of triangulated categories and reconstruction of schemes, {\em Math. Ann.} {\bf 324} (2002), no. 3, 557--580.
\bibitem{B}
{\sc P. Balmer}, The spectrum of prime ideals in tensor triangulated categories, {\em J. Reine Angew. Math.} {\bf 588} (2005), 149--168.
\bibitem{B3}
{\sc P. Balmer}, Spectra, spectra, spectra--tensor triangular spectra versus Zariski spectra of endomorphism rings, {\em Algebr. Geom. Topol.} {\bf 10} (2010), no. 3, 1521--1563.
\bibitem{B4}
{\sc P. Balmer}, Tensor triangular geometry, {\em Proceedings of the International Congress of Mathematicians, Volume II}, 85--112, {\em Hindustan Book Agency, New Delhi}, 2010.
\bibitem{B2}
{\sc P. Balmer}, Separable extensions in tensor-triangular geometry and generalized Quillen stratification, {\em Ann. Sci. \'{E}cole Norm. Sup. (4)} {\bf 49} (2016), no. 4, 907--925.
\bibitem{BS}
{\sc P. Balmer; B. Sanders}, The spectrum of the equivariant stable homotopy category of a finite group, {\em Invent. Math.} (to appear).
\bibitem{BCR}
{\sc D. J. Benson; J. F. Carlson; J. Rickard}, Thick subcategories of the stable module category, {\em Fund. Math.} {\bf 153} (1997), no. 1, 59--80.
\bibitem{BIK}
{\sc D. J. Benson; S. B. Iyengar; H. Krause}, Stratifying modular representations of finite groups, {\em Ann. of Math. (2)} {\bf 174} (2011), no. 3, 1643--1684.
\bibitem{BH}
{\sc W. Bruns; J. Herzog}, Cohen--Macaulay rings, revised edition, Cambridge Studies in Advanced Mathematics, 39, {\it Cambridge University Press, Cambridge}, 1998.
\bibitem{C}
{\sc L. W. Christensen}, Gorenstein dimensions, Lecture Notes in Mathematics, 1747, {\em Springer-Verlag, Berlin}, 2000.
\bibitem{DT}
{\sc I. Dell'Ambrogio; G. Tabuada}, Tensor triangular geometry of non-commutative motives, {\em Adv. Math.} {\bf 229} (2012), no. 2, 1329--1357.
\bibitem{DHS}
{\sc E. S. Devinatz; M. J. Hopkins; J. H. Smith}, Nilpotence and stable homotopy theory, I, {\em Ann. of Math. (2)} {\bf 128} (1988), no. 2, 207--241.
\bibitem{FP}
{\sc E. M. Friedlander; J. Pevtsova}, $\Pi$-supports for modules for finite group schemes, {\em Duke Math. J.} {\bf 139} (2007), no. 2, 317--368.
\bibitem{H}
{\sc M. J. Hopkins}, Global methods in homotopy theory, {\em Homotopy theory (Durham, 1985)}, 73--96, London Math. Soc. Lecture Note Ser., 117, {\em Cambridge Univ. Press, Cambridge}, 1987.
\bibitem{HS}
{\sc M. J. Hopkins; J. H. Smith}, Nilpotence and stable homotopy theory, II, {\em Ann. of Math. (2)} {\bf 148} (1998), no. 1, 1--49.
\bibitem{HPS}
{\sc M. Hovey; J. H. Palmieri; N. P. Strickland}, Axiomatic stable homotopy theory, {\em Mem. Amer. Math. Soc.}, vol 610 (American Mathematical Society, Providence, RI, 1997).
\bibitem{K}
{\sc H. Krause}, Derived categories, resolutions, and Brown representability, {\em Interactions between homotopy theory and algebra}, 101--139, Contemp. Math., 436, {\em Amer. Math. Soc., Providence, RI}, 2007.
\bibitem{N}
{\sc A. Neeman}, The chromatic tower for $D(R)$, With an appendix by Marcel B\"{o}kstedt, {\em Topology} {\bf 31} (1992), no. 3, 519--532.
\bibitem{OS}
{\sc S. Oppermann; J. \v{S}\v{t}ov\'{i}\v{c}ek}, Duality for bounded derived categories of complete intersections, {\em Bull. Lond. Math. Soc.} {\bf 46} (2014), no. 2, 245--257.
\bibitem{P}
{\sc T. J. Peter}, Prime ideals of mixed Artin--Tate motives, {\em J. K-Theory} {\bf 11} (2013), no. 2, 331--349.
\bibitem{St}
{\sc G. Stevenson}, Subcategories of singularity categories via tensor actions, {\em Compos. Math.} {\bf 150} (2014), no. 2, 229--272.
\bibitem{stcm}
{\sc R. Takahashi}, Classifying thick subcategories of the stable category of Cohen--Macaulay modules, {\em Adv. Math.} {\bf 225} (2010), no. 4, 2076--2116.
\bibitem{T}
{\sc R. W. Thomason}, The classification of triangulated subcategories, {\em Compos. Math.} {\bf 105} (1997), no. 1, 1--27.
\end{thebibliography}
\end{document}